\newtheorem{theorem}{Theorem}[section]
\newtheorem{lemma}[theorem]{Lemma}
\newtheorem{theoremAlph}{Theorem}
\newtheorem{question}[theorem]{Question}
\newtheorem{proposition}[theorem]{Proposition}
\newtheorem{corollary}[theorem]{Corollary}
\theoremstyle{definition}
\newtheorem{definition}[theorem]{Definition}
\newtheorem*{definition-nono}{Definition}
\newtheorem{assumption}[theorem]{Assumption}
\newtheorem{example}[theorem]{Example}
\newtheorem{remark}[theorem]{Remark}
\newcommand{\N}{\mathbb{N}}
\newcommand{\Z}{\mathbb{Z}}
\newcommand{\Q}{\mathbb{Q}}
\newcommand{\R}{\mathbb{R}}
\newcommand{\C}{\mathbb{C}}
\newcommand{\mc}{\mathcal}
\newcommand{\mf}{\mathfrak}
\newcommand{\mbf}{\mathbf}
\newcommand{\mrm}{\mathrm}
\renewcommand{\a}{\alpha}
\renewcommand{\b}{\beta}
\newcommand{\g}{\gamma}
\newcommand{\G}{\Gamma}
\renewcommand{\d}{\delta}
\newcommand{\e}{\varepsilon}
\renewcommand{\l}{\lambda}
\renewcommand{\L}{\Lambda}
\newcommand{\w}{\omega}
\newcommand{\s}{\sigma}
\newcommand{\vp}{\varphi}
\renewcommand{\k}{\kappa}
\newcommand{\set}[1]{\left\{#1\right\}}
\renewcommand{\r}{\rightarrow}
\def\multiset#1#2{\ensuremath{\left(\kern-.3em\left(\genfrac{}{}{0pt}{}{#1}{#2}\right)\kern-.3em\right)}}
\newcommand{\norm}[1]{\left\lVert#1\right\rVert}
\numberwithin{equation}{section}
\title[Height Functions and Expanding Curves]{Bounded and Divergent Trajectories And Expanding Curves on Homogeneous Spaces}
\author{OSAMA KHALIL}
\address{Department of Mathematics, Ohio State University, Columbus, OH}
\email{khalil.37@osu.edu}
\subjclass[2010]{37A17, 22F30, 11J83}
\keywords{height functions, Hausdorff dimension, divergent trajectories, Schmidt games}
\date{}
\begin{document}

\begin{abstract}
Suppose $g_t$ is a $1$-parameter $\mathrm{Ad}$-diagonalizable subgroup of a Lie group $G$ and $\Gamma < G$ is a lattice.
We study the dimension of bounded and divergent orbits of $g_t$ emanating from a class of curves lying on leaves of the unstable foliation of $g_t$ on the homogeneous space $G/\G$.
We obtain sharp upper bounds on the Hausdorff dimension of divergent on average orbits and show that the set of bounded orbits is winning in the sense of Schmidt (and, hence, has full dimension).
The class of curves we study is roughly characterized by being tangent to copies of $\mrm{SL}(2,\R)$ inside $G$, which are not contained in a proper parabolic subgroup of $G$.

 We describe applications of our results to problems in Diophantine approximation by number fields and intrinsic Diophantine approximation on spheres. Our methods also yield the following result for lines in the space of square systems of linear forms: suppose $\varphi(s) = sY + Z$ where $Y\in \mathrm{GL}(n,\mathbb{R})$ and $Z\in M_{n,n}(\mathbb{R})$. Then, the dimension of the set of points $s$ such that $\varphi(s)$ is singular is at most $1/2$ while badly approximable points have Hausdorff dimension equal to $1$.
\end{abstract}

\maketitle
%\singlespacing
{\hypersetup{linkcolor=black}
\tableofcontents
}
    \section{Introduction}

\subsection{Summary of the results}

    The purpose of this article is to study the Hausdorff dimension of bounded and divergent orbits of diagonalizable flows emanating from curves on homogeneous spaces.
    The motivation for studying these problems comes from the theory of Diophantine approximation. 
    The class of curves we study is roughly characterized by being tangent to maximal representations of $\mrm{SL}(2,\R)$ into the ambient Lie group $G$. These are representations whose images are not contained in a proper parabolic subgroup $G$.
    See Definition~\ref{defn: deformations} for a precise description.
    In this setting, we provide a sharp upper bound on the dimension of divergent on average trajectories (Definition~\ref{defn: divergence on average}) and show that bounded orbits are winning for a Schmidt game on intervals of the real line (see Section~\ref{section: schmidt games} for detailed definitions).
    Moreover, we establish, in a quantitative form, the non-divergence of push-forwards of shrinking curve segments (cf. Proposition~\ref{propn: non-divergence of shrinking curves}).
    
    For concreteness, we state our results in the introduction in the examples which are most relevant to applications in Diophantine approximation, deferring the more general statements to Theorems~\ref{thrm: Hdim and non-divergence},~\ref{thrm: CH for maximal representations}, and~\ref{thrm: products of so(n,1)}.
    These concrete examples include homogeneous spaces of products of real rank $1$ Lie groups (Theorem~\ref{thrm: rank 1 DOA}), a more general class of curves on homogeneous spaces of products of $\mrm{SO}(n,1)$ (Theorem~\ref{thrm: products of sl2} and~\ref{thrm: products of so(n,1)}), and actions of $\mrm{SL}(2,\R)$ on any homogeneous space of finite volume (Theorem~\ref{thrm: sl2 actions}).
    Curves on more general arithmetic homogeneous spaces are studied in Section~\ref{section: CH higher rank}.

	In Section~\ref{section: applications}, we present applications of our results to problems in intrinsic Diophantine approximation on spheres (Corollary~\ref{cor: dioph approx on Sn}), Diophantine approximation by number fields (Corollary~\ref{cor: dioph approx in num fields}), and Diophantine properties of lines in the space of square systems of linear forms $M_{n,n}$ (Corollary~\ref{cor: dioph approx of linear forms}).

\subsection{Historical context}
    
    To the best of our knowledge, the problem of the dimension of divergent orbits starting from curves has not been previously addressed in the literature.
	Among the motivations for studying this problem is a well-known deep conjecture, due to Wirsing, concerning the approximability of transcendental numbers by algebraic numbers of bounded degree. By the work of Bugeaud and Laurent, the Hausdorff dimension of the set of counterexamples to Wirsing's conjecture in degree $n$ is bounded above by the dimension of singular vectors in $M_{1,n}\cong\R^n$ lying on the Veronese curve $\set{(\xi,\xi^2,\dots,\xi^n):\xi\in\R}$ \cite{BugeaudLaurent}.
    
    To place our results in context, we briefly survey the history of the subject.
    In~\cite{Dani-Bounded,Dani-Bounded2}, Dani studied the problem of bounded orbits in two settings: orbits of diagonalizable flows on homogeneous spaces of rank $1$ Lie groups and orbits in $\mrm{SL}(m+n,\R)/\mrm{SL}(m+n,\Z)$ of the form $g_t u_Y \G$, where, for $t\in \R$ and $Y\in M_{m,n}$ an $m\times n$ real matrix,
    \begin{equation} \label{Dani's thrms}
    g_t = \mrm{diag}(e^{t/m},\dots,e^{t/m},e^{-t/n},\dots,e^{-t/n}),\quad u_Y = \begin{pmatrix}
    		\mathrm{I}_m & Y \\ \mathbf{0} & \mathrm{I}_n \end{pmatrix}.
    \end{equation}
    We refer to $g_t$ as a diagonal element with weight $(1/m,\dots,1/m,1/n,\dots,1/n)$.
    It is shown that bounded orbits of diagonalizable flows on rank $1$ homogeneous spaces have full Hausdorff dimension.
    It is also shown that orbits of the form $(g_t u_Y \G)_{t\geqslant 0}$ are bounded if and only if $Y$ is badly approximable, i.e., there exists $\d>0$ such that for all $(\mbf{p},\mbf{q}) \in \Z^m\times \Z^n$, with $\mbf{q} \neq 0$,
    \begin{equation*}
    	\norm{Y \mbf{q} -\mbf{p} }^n \norm{\mbf{q}}^m > \d.
    \end{equation*}
	Using the results of Schmidt on badly approximable systems of linear forms~\cite{Schmidt-BadLinearSystems}, this implies that bounded orbits for $g_t$ as in~\eqref{Dani's thrms} have full dimension.
    These results were generalized in~\cite{KleinbockWeiss-SchmidtSLn+m,KleinbockWeiss-SchmidtExpanding} where bounded orbits of \emph{non-quasiunipotent} flows were shown to have full dimension.
    
    All of these results were obtained by showing that bounded orbits are winning for variants of a game invented by Schmidt in~\cite{Schmidt-Games}.
    The winning property is much stronger than having full Hausdorff dimension since it is stable under countable intersections and implies thickness, i.e., the intersection of a winning set with any non-empty open set has full dimension. We refer the reader to~\cite{KleinbockWeiss-SchmidtSLn+m} for more details on Schmidt's original game as well as a new variant introduced by the authors.
    More recently, far reaching generalizations of these results were obtained in~\cite{BadziahinEtal-SchmidtConjecture}, in particular settling an old conjecture of Schmidt on the intersection of sets of weighted badly approximable vectors with different weights.
    
    Dani also studied the existence and classification of divergent orbits of diagonalizable flows on homogeneous spaces in~\cite{Dani-Divergent}.
    Among the results obtained by Dani is the fact that divergent orbits on non-compact homogeneous spaces of a rank $1$ Lie group $G$ are \emph{degenerate}, i.e., can be detected using the behavior of finitely many vectors in some fixed representation of $G$.
    In particular, the set of divergent orbits consists of a countable collection of immersed submanifolds in $G/\G$.
    This result also holds for quotients of Lie groups by arithmetic lattices of rational rank $1$.
    By contrast, quotients by higher rank arithmetic lattices always admit non-degenerate divergent orbits~\cite{Dani-Divergent,Weiss-Divergent}.
    
    In a landmark paper, the precise Hausdorff dimension of divergent orbits under the flow induced by $g_t$ in~\eqref{Dani's thrms} was calculated when $(m,n)=(2,1)$ in~\cite{Cheung-SingularPairs}. This result was extended in~\cite{CheungChevallier} to the case when $\min(m,n)=1$.
    These results build on earlier ideas of Cheung in~\cite{Cheung-Sl2products} where the Hausdorff dimension of divergent orbits in $\mrm{SL}(2,\R)^n/\mrm{SL}(2,\Z)^n$ for $n\geq 2$ under the flow induced by a diagonal matrix in each coordinate was determined to be $3n-1/2$.
    In~\cite{KKLM-SingSystems}, a sharp upper bound on the dimension of divergent orbits for general $m$ and $n$ was obtained by different methods.
    The proof in~\cite{KKLM-SingSystems} relies on the powerful technique of systems of integral inequalities introduced in~\cite{EskinMargulisMozes} in the context of quantifying Margulis' work on the Oppenheim conjecture.

    Parallel to these developments and motivated by problems in Diophantine approximation, the study of the evolution of curves on homogeneous spaces under diagonal flows attracted a lot of interest.
    In~\cite{KleinbockMargulis}, Kleinbock and Margulis showed that the push-forward of certain ``non-degenerate" smooth curves in the group $\set{u_Y:Y \in M_{1,n}}$ by diagonal elements similar to $g_t$ in~\eqref{Dani's thrms} do not diverge in $\mrm{SL}(n+1,\R)/\mrm{SL}(n+1,\Z)$.
    This allowed them to settle a conjecture due to Baker and Sprind\v{z}uk showing that the Lebesgue measure of very well approximable vectors belonging to such curves is $0$.
    This result has been generalized in numerous directions, cf.~\cite{KleinbockLindenstraussWeiss,BeresnevichKleinbock-WeakNonPlanar,Aka-ExtremalCriterion} for notable examples.
    
    In~\cite{Shah-Inventiones,Shah-JAMS}, using Ratner's theorems and the linearization technique, Shah extended the results of Kleinbock and Margulis by showing that the push-forwards of the parameter measure on these curves, in fact, become equidistributed towards the Haar measure on $G/\G$.
    These results build on earlier work of Shah in~\cite{Shah-Duke1,Shah-Duke2} where the push-forwards of certain smooth curves on the unit tangent bundle of hyperbolic manifolds by the geodesic flow were shown to be equidistributed towards the Haar measure.
    
    On the other hand, the problem of determining the Hausdorff dimension of bounded and divergent orbits restricted to curves as above is far less understood.
    In a breakthrough article, Beresnevich showed in~\cite{Beresnevich-BadCurves} that the Hausdorff dimension of finite intersections of weighted badly approximable vectors on non-degenerate analytic curves in $M_{1,n}$ is full.
	By means of Dani's correspondence, this implies that bounded orbits of diagonal elements similar to $g_t$ in~\eqref{Dani's thrms} with more general weights than $(1,1/n,\dots,1/n)$ starting from points on curves on the group $\set{u_Y:Y \in M_{1,n}}$ is equal to $1$.
    We refer the reader to~\cite{Beresnevich-BadCurves} for more on the history of this problem and to~\cite{AnBeresnevichVelani-WinningPlanarCurves} where these bounded orbits were shown to be in fact winning in the sense of Schmidt for planar curves.
    The dimension of bounded orbits starting from curves on other homogeneous spaces was studied in~\cite{Aravinda} in rank $1$ homogeneous spaces and in~\cite{Einsiedler-CurvesNumfields} in quotients of $\mrm{SL}(2,\R)^r\times \mrm{SL}(2,\C)^s$ by irreducible lattices.

%%%%%%%%%%%%%%%%%%%%%%%%%%%%%%%%%%%%%%%%%%%%%%%%%%%%%%%%%%%%%%%%%%%%%%%%%%%%%%%%%%%%%%%%%%%%%%%%%%%%%%%%%%%%%%%%%%

    %%%%%%%%%%%%%%%%%%%%%%%%%%%%%%%%%%%%%%%%%%%%%%%%%%%%%%%%%%%%%%%%%%%%%%%%%%%%%%%

\section{Main Results}

\subsection{Preliminary Notions}
Before stating our main results, we need to introduce necessary definitions and notation.
	Given a real Lie group $G$, we denote by $\mf{g}$ its Lie algebra.
    For a $1$-parameter subgroup $g_t$ of $G$, we say $g_t$ is $\mrm{Ad}$-diagonalizable over $\R$ if $\mf{g}$ decomposes over $\R$ under the Adjoint action of $g_t$ into eigenspaces.
    \[ \mf{g} = \bigoplus_{\a \in \R} \mf{g}_\a, \qquad \mf{g}_\a = \set{Z\in \mf{g}: \mrm{Ad}(g_t)(Z) = e^{\a t}Z }. \]
	We remark that the decomposition above is only an eigenspace decomposition with respect to $\mrm{Ad}(g_t)$, not a decomposition into root spaces.
    Suppose that $G$ acts on a metric space $X$.
	Our goal is to study the Hausdorff dimension of certain orbits of $g_t$ on $X$ with prescribed recurrence properties.
    For that purpose, let us make precise the recurrence notions we shall be interested in.

\begin{definition-nono}
For a flow $g_t :X\r X$ on a metric space $X$ and $y\in X$, we say the (forward) orbit $g_t y$ is \textbf{divergent on average}, if for any compact set $Q \subset X$, one has
    \begin{equation} \label{defn: divergence on average}
    	\lim_{T\r\infty} \frac{1}{T} \int_0^T \chi_Q(g_ty)\;dt =0
    \end{equation}
    where $\chi_Q$ denotes the indicator function of $Q$. We say the orbit $g_t y$ is \textbf{bounded} if $\overline{\set{g_ty :t>0}}$ is compact.
    The orbit $g_t y$ is said to have \textbf{linear growth} if for some base point $y_0$, we have
    \begin{equation}
    	\limsup_{t\r\infty}  \frac{d(g_ty, y_0)}{t}  >0,
    \end{equation}
    where $d(\cdot,\cdot)$ is the metric on $X$.
\end{definition-nono}

	Finally, recall that a subset $A$ of a metric space is \textbf{thick} if the intersection of $A$ with every non-empty open set has full Hausdorff dimension.

%%%%%%%%%%%%%%%%%%%%%%%%%%%%%%%%%%%%%%%%%%%%%%%%%%%%%%%%%%%%%%%%%%%%%%%%%%%%%%%%%%%%%%%%%%%%%%%%%%%%%%%%

\subsection{Homogeneous Spaces of Products of Rank One Lie Groups}
	Our first result is in the setting of homogeneous spaces of Lie groups of the form $G = G_1 \times \cdots \times G_k$, where each $G_i$ is a real rank one Lie group.
    To state the result, we need some preparation.
    
    Suppose $\G$ is any lattice in $G$.
    Then, we can write $\G = \G_1 \times \cdots \times \G_l$ (up to finite index), where each $\G_j$ is an irreducible lattice in a sub-product of $G$, which we denote by $H_j$.    
   By Margulis' arithmeticity theorem, if for some $1\leq j\leq l$, $H_j$ is a product of more than $1$ factor (i.e. $\mrm{rank}_\R (H_j)>1$), then there exists a rational structure on $H_j$ in which $\G_j$ is arithmetic, i.e. $\G_j$ is commensurable with $H_j(\Z)$.

    We say that a $1$-parameter subgroup $g_t$ of $G$ is \textbf{split} if the projection of $g_t$ onto each higher rank factor $H_j$ is $\mrm{Ad}$-diagonalizable over $\Q$ with respect to the $\Q$-structure in which $\G_j$ is arithmetic.
    The following maps into $\mf{g}$ are the main object of study in this setting.
    \begin{definition-nono} 
    For a compact interval $B\subset \R$ and an $\mrm{Ad}$-diagonalizable subgroup $g_t$, we say a differentiable map $\vp: B \r \mf{g}$ is $\mathit{g_t}$\textbf{-admissible} if the image of $\vp$ is contained in a single eigenspace $\mf{g}_\a$ for some $\a>0$ and $[\vp,\dot{\vp}] \equiv 0$ on $B$. For every $s$, we denote by $u(\vp(s))$ the image of $\vp(s)$ in $G$ under the exponential map.
    \end{definition-nono}

    Denote by $\mf{g}_i$ the Lie algebra of $G_i$. The following is the first main theorem of this article.

    \begin{theoremAlph} \label{thrm: rank 1 DOA}
    Suppose $G = G_1 \times \cdots \times G_k$ , where each $G_i$ is a simple Lie group of real rank $1$ and finite center and $\G$ is any lattice in $G$. For each $1\leq i\leq k$, let $g_t^{(i)}$ be a non-trivial $1$-parameter subgroup of $G_i$ which is $\mrm{Ad}$-diagonalizable over $\R$, and suppose $ \vp_i: B \r \mf{g}_i $ is a $g_t^{(i)}$-admissible $C^2$-map.
    Let $g_t = (g_t^{(i)})_{1\leq i\leq k}$ and $\vp = \oplus_{i=1}^k \vp_i$. Assume that $g_t$ is split and that $\vp$ is $g_t$-admissible.
    Define the following set.
    \[ Z = \set{ s\in B: \dot{\vp}_i(s) = 0 \text{ for some } 1\leq i\leq k }. \]
	Then, for every $x_0\in X=G/\G$, the following hold.
        \begin{enumerate}[(i)]
        \item The Hausdorff dimension of the set of points $s\in B\backslash Z $ for which the orbit $(g_tu(\vp(s))x_0)$ is divergent on average as $t\r\infty$ is at most $1/2$.        
        \item For any compact interval $V \subseteq B\backslash Z$, the set of points $s\in V$ for which the orbit $(g_tu(\vp(s))x_0)_{t\geqslant 0}$ is bounded in $X$ is winning for a Schmidt game on $V$ induced by $g_t$.
        In particular, this set is thick in $B\backslash Z$. 
        \item For almost every $s\in B \backslash Z $, any weak-$\ast$ limit of the measures $\frac{1}{T}\int_0^T \d_{g_t u(\vp(s))x_0}ds$ is a probability measure on $X$.
        \item The set of points $s\in B\backslash Z$ for which the forward orbit $(g_tu(\vp(s))x_0)_{t\geqslant 0}$ has linear growth has Lebesgue measure $0$.
        \end{enumerate}
    \end{theoremAlph}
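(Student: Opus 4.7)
The plan is to reduce all four parts to a single integral (Eskin--Margulis--Mozes type) inequality for an appropriate height function $f\colon X \to [1,\infty)$ on $X = G/\Gamma$. The admissibility hypothesis on $\varphi$ — that its image lies in a single eigenspace $\mathfrak{g}_\alpha$ with $\alpha>0$ and that $[\varphi,\dot\varphi]\equiv 0$ — is tailored precisely so that $\mathrm{Ad}(g_t)$ dilates the tangent direction $\dot\varphi(s)$ by $e^{\alpha t}$ while preserving the form of the curve: after rescaling the parameter by $e^{-\alpha t}$, the push-forward looks again like a $g_t$-admissible curve of bounded length. Excluding the set $Z$ guarantees this rescaling is non-degenerate, so that we really gain an expansion factor.

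First I would construct the height function. Decompose $\Gamma$ into its irreducible factors $\Gamma_j<H_j$. When $H_j$ has rank one, I take the Dani/Sullivan/Kleinbock--Margulis cusp height on $H_j/\Gamma_j$. When $H_j$ is higher rank, Margulis arithmeticity together with the splitting assumption on $g_t$ provide a $\mathbb{Q}$-structure in which $g_t|_{H_j}$ is $\mathrm{Ad}$-diagonalizable over $\mathbb{Q}$, and one uses the successive-minima/$\alpha$-function construction of Eskin--Margulis--Mozes (as extended by Benoist--Quint and by Kadyrov--Kleinbock--Lindenstrauss--Margulis) to obtain on $H_j/\Gamma_j$ a proper function satisfying a uniform contraction inequality under the averaged action of one-parameter unipotent subgroups expanded by $g_t$. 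A suitable power of the product $\max_j f_j\circ\pi_j$ gives a proper function $f$ on $X$.

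The main step is to prove the integral inequality
\begin{equation*}
\int_B f\bigl(g_t\,u(\varphi(s))\,x\bigr)\,ds \;\leq\; C_1\,e^{-\alpha t/2}\,f(x) \;+\; C_2,
\qquad x\in X,\ t\geq 0,
\end{equation*}
with $C_1,C_2$ depending only on $\varphi,g_t,\Gamma$. The square-root loss reflects the fact that $\varphi$ sweeps out only a one-dimensional slice of the expanding horosphere $U^+$, and is exactly the source of the $1/2$ appearing in (i). Its derivation uses admissibility to perform the change of variables $s\mapsto e^{-\alpha t}s$, which, because $\varphi$ lies in a single eigenspace and $[\varphi,\dot\varphi]=0$, converts the integral into a mean-value of $f$ over a unit-size segment of a unipotent orbit based at $g_t x$; the factor-wise inequalities produced in the previous paragraph then apply. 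The main obstacle is this step: producing a \emph{single} height function whose mean along admissible curves contracts at rate $e^{-\alpha t/2}$ uniformly across the rank-one factors and the higher-rank arithmetic factors simultaneously, since the two constructions contract for different reasons and one has to balance their exponents to obtain a common $\alpha/2$.

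Given the integral inequality, all four conclusions follow by now-standard arguments. For (i), the Kadyrov--Kleinbock--Lindenstrauss--Margulis covering scheme applies verbatim: the set of $s$ whose orbit spends a definite proportion of time $\leq T$ above height $R=R(T)$ is covered by $\lesssim e^{\alpha T/2}$ intervals of length $e^{-\alpha T}$, giving Hausdorff dimension at most $1/2$. Parts (iii) and (iv) are soft consequences of the same $L^1$ bound via Markov's inequality and Borel--Cantelli applied on a dyadic scale of times. For (ii), I would iterate the non-divergence estimate inside a Schmidt game on $V\subseteq B\setminus Z$ in the style of Kleinbock--Weiss: at each round one uses the contraction to find, inside the adversary's chosen interval rescaled by $g_t$, a definite-size sub-interval on which the next push-forward avoids the cusp, which after countably many rounds forces the limit point to have bounded orbit. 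Thickness in (ii) then follows from the general fact that Schmidt-winning sets have full Hausdorff dimension on every non-empty open subset.
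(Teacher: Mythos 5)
Your architecture coincides with the paper's: a height function built factor-by-factor (Garland--Raghunathan cusp functions in real rank one, reduction-theoretic functions $\tilde{\a}_k$ attached to $\bigwedge^{d_k}\mf{u}_k$ in the arithmetic case), an averaged contraction inequality, the KKLM covering scheme for (i), Chebyshev plus Borel--Cantelli for (iii)--(iv), and a Kleinbock--Weiss game iteration for (ii). However, two steps that you treat as routine are the technical core, and as written they have gaps. First, the sharp rate $e^{-\a t/2}$ does not come for free from the Eskin--Margulis--Mozes/Benoist--Quint constructions. After reducing the curve integral to an average over a unit unipotent arc $u(r\dot{\vp}(s))$ (the paper's Lemma~\ref{lemma: approximate curve with tangent}), one must show that for the relevant highest-weight vectors $v$ the function $r\mapsto\norm{\pi_+(u_r\, gv)}$ is a polynomial of degree $\d_\l=2\l(H_0)/\mu(H_0)$ whose supremum over $r\in[-1,1]$ is comparable to $\norm{gv}$ \emph{uniformly in} $g\in G$; otherwise the $(C,1/\d_\l)$-good estimate only yields a contraction rate $e^{-\b' t}$ with $\b'$ strictly smaller than $\b\a(1)/2$, and the bound in (i) degrades. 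The paper obtains this uniform lower bound from Proposition~\ref{propn: avoidance} (the $G$-orbit of a highest weight vector avoids the span of the non-extremal weight spaces, via the Bruhat decomposition), combined with the fact that in the rank-one and $\Q$-rank-one situations the Weyl orbit of $\l$ is $\set{\pm\l}$. Your proposal does not supply this ingredient, and it is precisely where the exponent $1/2$ is produced.

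Second, for (ii) a measure estimate alone does not let Alice move: after the contraction step she knows the bad set in Bob's interval $B_k$ has measure at most $|B_k|/10$, but its complement could a priori be a fat Cantor set containing no subinterval of length $e^{-\a(a)}|B_k|$. The paper needs the additional input of Assumption~\ref{assumption: bounded connected components} --- the super-level set $\set{|r|\leq T: f(u(r\dot{\vp})x)>M}$ has a uniformly bounded number of connected components --- verified because $r\mapsto \norm{u_r w}^2$ is a polynomial and the maximum defining $f$ near the cusp runs over boundedly many vectors (Lemmas~\ref{lemma: bounded conn. comp. rank 1} and~\ref{lemma: bounded conn. comp. higher rank}). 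Without this, the game argument as you describe it does not close (one only gets thickness, not winning). The remaining elements of your plan --- the reduction to irreducible $\G$, the observation that the arithmetic factors have $\Q$-rank one, and the deductions of (i), (iii), (iv) from the covering proposition --- match the paper.
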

    
    \begin{remark}
    In studying divergent orbits, it is necessary for our methods that the diagonalizable flow we consider in Theorem~\ref{thrm: rank 1 DOA} expands the curve by the same amount in every coordinate. In Theorems~\ref{thrm: products of sl2} and~\ref{thrm: products of so(n,1)} below, we relax this assumption, where we allow some of the coordinate flows $g_t^{i}$ to be trivial.
    It is an interesting question as to whether similar results hold for more general diagonal flows.
    \end{remark}
    
    We refer the reader to Section~\ref{section: schmidt games} for details on Schmidt games and a more precise form of part $(ii)$ of Theorem~\ref{thrm: rank 1 DOA}.
    Number theoretic corollaries of Theorem~\ref{thrm: rank 1 DOA} concerning intrinsic Diophantine approximation on spheres are discussed in Section~\ref{section: diophantine Sn}.
    
    We note that the assumption in Theorem~\ref{thrm: rank 1 DOA} that $\vp = \oplus_{i=1}^k \vp_i$ is $g_t$-admissible amounts to ensuring that the eigenspace of $\mrm{Ad}(g_1^{(i)})$ containing the image of $\vp_i$ corresponds to the same eigenvalue for each $i$.
    Moreover, the restriction to the points in $B\backslash Z$ is natural since it is possible for the map $\vp$ to map a sub-interval of $B$ onto a point whose orbit is divergent.

    \begin{remark}
    The proof of Theorem~\ref{thrm: rank 1 DOA} is reduced to the case when $\G$ is an irreducible lattice in $G$. When $\mrm{rank}_\R G >1$, $\G$ is an arithmetic lattice by Margulis' arithmeticity theorem.
    In that case, Theorem~\ref{thrm: rank 1 DOA} is a special case of a more general result we obtain for quotients of semisimple algebraic Lie groups by arithmetic lattices, Theorem~\ref{thrm: CH for maximal representations}.
    \end{remark}

     In~\cite{Aravinda}, in the setting of rank one locally symmetric spaces, it is shown that bounded orbits under the geodesic flow restricted to non-constant $C^1$-maps on the unit tangent sphere around a point is winning in the sense of Schmidt. The methods in~\cite{Aravinda} rely on the geometry of rank $1$ locally symmetric spaces. Our proof is completely different and remains valid in more generality. Theorems~\ref{thrm: products of sl2} and~\ref{thrm: sl2 actions} below are other instances where our methods also apply. We refer the reader to Theorems~\ref{thrm: Hdim and non-divergence} and~\ref{thrm: schmidt games} where we show an analogous statement to Theorem~\ref{thrm: rank 1 DOA} in the abstract setting of Lie group actions on metric spaces satisfying certain recurrence hypotheses.
     
     \begin{remark}
    If we assume the image of a coordinate function $\vp_i$ is contained in an abelian subspace of $\mf{g}_i$, we can weaken the regularity condition on $\vp_i$ to be $C^{1+\e}$ for some $\e>0$.
      In particular, Theorem~\ref{thrm: rank 1 DOA} holds for $C^{1+\e}$-maps when $G_i\cong \mrm{SO}(d_i,1)$ for each $1\leq i\leq k$.
    \end{remark}

    Using a result in~\cite{KadyrovPohl}, we deduce a lower bound on the dimension of the divergent on average orbits considered in Theorem~\ref{thrm: rank 1 DOA} in a special case which agrees with the upper bound we obtain.
    We further discuss the sharpness of this bound, as well as the bounds obtained in the results below, in Section~\ref{section: conclusion}.
    
    \begin{corollary} \label{cor: lower bound cor}
    In the notation of Theorem~\ref{thrm: rank 1 DOA}, suppose $G/\G = (\mrm{SL}(2,\R)/\G_1) \times (G'/\G')$, where $\G_1$ is a non-cocompact lattice in $\mrm{SL}(2,\R)$.
    Assume further that $\vp_1$ is non-constant.
	Then, for every $x_0\in G/\G$, the Hausdorff dimension of the set of points $s\in B\backslash Z$ such that the orbit $(g_t u(\vp(s)) x_0)_{t\geqslant 0}$ is divergent on average is exactly $1/2$.
    \end{corollary}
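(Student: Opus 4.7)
The upper bound $\leq 1/2$ is immediate from Theorem~\ref{thrm: rank 1 DOA}(i), so the task is to exhibit, for every base point $x_0$, a subset of $B\setminus Z$ of Hausdorff dimension at least $1/2$ on which the forward orbit is divergent on average. My first step would be to reduce this lower bound to the analogous assertion for the $\mathrm{SL}(2,\R)$-factor alone. Since $G/\G=(\mathrm{SL}(2,\R)/\G_1)\times(G'/\G')$ one may take $\G=\G_1\times\G'$, and the coordinate projection $\pi_1:G/\G\to\mathrm{SL}(2,\R)/\G_1$ is then continuous with $\pi_1(Q)$ compact for every compact $Q\subseteq G/\G$. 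From $\chi_Q\leq \chi_{\pi_1(Q)}\circ\pi_1$ one concludes that divergence on average of the first-coordinate orbit $(g_t^{(1)}u(\vp_1(s))x_0^{(1)})_{t\geq 0}$ in $\mathrm{SL}(2,\R)/\G_1$ already forces divergence on average of $(g_tu(\vp(s))x_0)_{t\geq 0}$ in $G/\G$.

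Next I would use that $\mathrm{SL}(2,\R)$ has real rank one, so the expanding horospherical eigenspace of $g_t^{(1)}$ is one-dimensional. The assumption that $\vp_1$ is non-constant, together with $\dot\vp_1(s)\neq 0$ for every $s\in B\setminus Z$, thus makes $\vp_1:B\setminus Z\to(\mf{g}_1)_\alpha\cong\R$ a $C^2$ local diffeomorphism onto its image. In particular $\vp_1|_{B\setminus Z}$ is locally bi-Lipschitz and therefore preserves Hausdorff dimension on small pieces.

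I would then invoke the main result of~\cite{KadyrovPohl}, which applies exactly in the present setting: for a non-cocompact lattice $\G_1<\mathrm{SL}(2,\R)$ and any base point $x_0^{(1)}$, the set of horocyclic coordinates $y$ for which $(g_t^{(1)}u(y)x_0^{(1)})_{t\geq 0}$ is divergent on average has Hausdorff dimension exactly $1/2$, and this lower bound holds on every open interval. Pulling such a set back through $\vp_1$ on any sub-interval of $B\setminus Z$ yields a set of $s$ of Hausdorff dimension $1/2$ each of whose first-coordinate orbits is divergent on average. By the first step, the full orbits are divergent on average, and combined with Theorem~\ref{thrm: rank 1 DOA}(i) the dimension equals $1/2$ exactly.

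The main obstacle I anticipate is verifying the \emph{localized} form of the Kadyrov--Pohl lower bound, i.e.\ that the dimension-$1/2$ set meets every open interval in a set of dimension $1/2$, rather than only on $\R$ globally. In the $\mathrm{SL}(2,\R)$ setting this can be deduced from the global statement by the horocyclic rescaling identity $g_t^{(1)} u(y) g_{-t}^{(1)}=u(e^{\alpha t}y)$: conjugation by $g_t^{(1)}$ identifies any small interval of horocyclic coordinates with an arbitrarily long one, and the computation $g_s u(e^{\alpha t}y)x_0^{(1)}=g_{s+t}u(y)(g_{-t}x_0^{(1)})$ shows that divergence on average is preserved under this identification once one allows the base point to move from $x_0^{(1)}$ to $g_{-t}x_0^{(1)}$, a move already permitted by the basepoint-uniformity in the hypothesis. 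With this localization in hand, the pullback by $\vp_1$ completes the proof.
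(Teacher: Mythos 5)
Your argument is correct and follows essentially the same route as the paper: reduce divergence on average to the $\mrm{SL}(2,\R)$ factor, invoke the Kadyrov--Pohl dimension result on the non-trivial compact interval $\vp_1(B)$, and transfer the dimension bound back through $\vp_1$. The paper gets by with the slightly weaker observation that $\vp_1$ is Lipschitz (and Lipschitz maps do not increase Hausdorff dimension), whereas you use the local bi-Lipschitz diffeomorphism property; your explicit localization of the Kadyrov--Pohl bound via horocyclic rescaling addresses a point the paper leaves implicit.
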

	
%%%%%%%%%%%%%%%%%%%%%%%%%%%%%%%%%%%%%%%%%%%%%%%%%%%%%%%%%%%%%%%%%%%%%%%%%%%%%%%%%%%%%%%%%%%%%%%%%%%%%%%%
\subsection{Non-maximal Curves and Restrictions of Scalars of SL(2)}
	In Theorem~\ref{thrm: rank 1 DOA}, every coordinate of the map $\vp$ is assumed to be non-constant.
    However, our methods apply in more general situations. This is the content of our next result in the setting where $G =  \mrm{SL}(2,\R)^r \times \mrm{SL}(2,\C)^s$ for some $r,s\in \N$.
    The motivation for studying these problems in this particular setting comes from questions in Diophantine approximation with number fields.

    For $g\in G$, we denote by $U^+(g)$ the expanding horospherical subgroup of $G$ associated with $g$ and by $\mrm{Lie}(U^+(g))$ its Lie algebra.
    We also use $u(z)$ to denote $\exp(z)$ for $z\in \mrm{Lie}(U^+(g))$.
    For $t\in \R$ and $ \mbf{x}=(\mbf{x}_i) \in \R^r\times \C^s$, let
          \[  a_t = \left( \begin{pmatrix} e^t & 0 \\ 0 & e^{-t} \end{pmatrix} \right)_{1\leqslant i \leqslant r+s},
    \qquad  
    u(\mathbf{x})= \left( \begin{pmatrix} 1& \mathbf{x}_i \\ 0 & 1 \end{pmatrix} \right)_{1\leqslant i \leqslant r+s}. \]
    Note that $U^+(a_1) = \set{ u(\mbf{x}):  \mbf{x} \in \R^r\times \C^s}$ and for all $g\in G$,
    $U^+(ga_1 g^{-1}) = g U^+(a_1) g^{-1}$.

   For each $k$, let us write $G_k = \mrm{SL}(2,\R)^{r_k} \times \mrm{SL}(2,\C)^{s_k}$.
   Thus, we can make the following identifications.
   \[  \mrm{Lie}(U)^+(a_1) \cong \R^r\times \C^s \cong 
   		\bigoplus_{k=1}^l \R^{r_k}\oplus \C^{s_k}. \]
   
   Given a map $\psi=(\psi_i):B \r \R^a\times \C^b$ such that $\psi \not\equiv 0$, where $B\subset\R$, the \textbf{characteristic} of $\psi$, denoted by $\mrm{char}(\psi)$ is defined to be
   \begin{align}\label{eqn: characteristic}
   \mrm{char}(\psi) =   \frac{ \# \set{1\leqslant i \leqslant a: \psi_i \equiv  0 } 
   			+ 2 \cdot \# \set{a< i \leqslant a+b: \psi_i \equiv  0 }}
            {\# \set{1\leqslant i \leqslant a: \psi_i \not\equiv  0 } 
   			+ 2 \cdot \# \set{a< i \leqslant a+b: \psi_i \not\equiv  0 }}.
   \end{align}
    We can now state our main result in this setting.
    
    \begin{theoremAlph} \label{thrm: products of sl2}
    	Suppose $G = G_1 \times \cdots \times G_l$ is as above, $\G = \G_1 \times \cdots \times \G_l$ such that $\G_k$ is an irreducible lattice in $G_k$, and
        $g_t$ is a split $1$-parameter subgroup which is conjugate to $a_t$.
        For $1\leqslant k \leqslant l$, let $\vp_k : B \r  \R^{r_k}\oplus \C^{s_k}$ be a $C^{1+\e}$-map for some $\e>0$ and let $\vp = \oplus_k \vp_k: B \r \mrm{Lie}(U^+(g_1)) \cong \bigoplus_{k=1}^l \R^{r_k}\oplus \C^{s_k}$.
        Denote by $(\vp_k)_i$ the $i^{th}$ coordinate of $\vp_k$ and let       
        \begin{equation*}
        Z = \set{s\in B:   (\dot{\vp}_k)_i(s) = 0, (\dot{\vp}_k)_i \not\equiv 0 \textrm{ for some }k,i}.
        \end{equation*}
        Assume that $\vp$ is not a constant map.
        Then, for every $x_0 \in X=G/\G$, the Hausdorff dimension of the set of points $s\in B\backslash Z$ for which the forward trajectory $ (g_tu(\vp(s))x_0)_{t\geqslant 0}$ is divergent on average is at most
        \begin{equation*}
        \frac{1}{2} + \frac{1}{2} \max_{1\leq k\leq l}
        \mrm{char}(\dot{\vp}_k).
        \end{equation*}
        Moreover, if the above quantity is strictly less than $1$, then parts $(ii)-(iv)$ of Theorem~\ref{thrm: rank 1 DOA} also hold in this setting.
	\end{theoremAlph}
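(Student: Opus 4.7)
The plan is to reduce Theorem~\ref{thrm: products of sl2} to the abstract dimension and Schmidt game framework that underlies Theorem~\ref{thrm: rank 1 DOA} (cf.\ Theorems~\ref{thrm: Hdim and non-divergence} and~\ref{thrm: schmidt games}) by producing a height function on $X$ whose averaged contraction rate along the curve $\vp$ realizes the exponent $\frac{1}{2} + \frac{1}{2}\max_k \mrm{char}(\dot\vp_k)$.

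First, I would use the product decomposition $X = X_1 \times \cdots \times X_l$ with $X_k = G_k/\G_k$ to localize the problem on each irreducible factor. A divergent on average orbit in $X$ forces the trajectory in at least one factor $X_k$ to escape every compact subset on a set of times of density one; hence the set of bad parameters is contained in the union over $k$ of the analogous sets for the single factors $X_k$, and Hausdorff dimension of a finite union is the maximum. Those $k$ for which $X_k$ is compact contribute nothing, so I may assume each $X_k$ in question is a non-compact irreducible arithmetic quotient. Up to commensurability, Margulis arithmeticity realizes $\G_k$ as $\mrm{SL}(2,\mc{O}_K)$ for a number field $K$ of signature $(r_k,s_k)$, and the cusps of $X_k$ correspond to ideal classes whose cross-sections are compact quotients of $U^+_k\cong \R^{r_k}\oplus \C^{s_k}$ by unit actions.

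Next, I would construct a height function $f_k$ on each relevant $X_k$ that blows up at the cusps and establish the key integral inequality
\[ \int_B f_k\bigl(g_T u(\vp_k(s)) x\bigr)\, ds \leq C e^{-\a_k T} f_k(x) + C', \]
for all sufficiently large $T$, where the exponent $\a_k$ is controlled by the ratio of ``active'' to ``inactive'' coordinates of $\dot\vp_k$. This is the heart of the argument: in the cusp associated to a rational parabolic subgroup, $f_k$ records the inverse of the shortest vector in a suitable lattice representation, and the $C^{1+\e}$ non-degeneracy of the active coordinates of $\dot\vp_k$ permits the Kleinbock--Margulis $(C,\a)$-good machinery to bound the measure of parameters $s$ for which $g_T u(\vp_k(s)) x$ lies deep in a cusp. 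Coordinates of $\vp_k$ that vanish identically contribute no transversality and only rescale $f_k$ by a bounded factor; quantifying this carefully produces an exponent $\a_k$ for which the resulting dimension bound equals $\tfrac{1}{2}(1 + \mrm{char}(\dot\vp_k))$. The hypotheses $[\vp,\dot\vp]\equiv 0$ and $g_t$-admissibility ensure that after conjugation by $g_t$ the orbit behaves like a genuine one-parameter renormalization, which is what allows the $(C,\a)$-good estimates to close up.

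Finally, plugging the integral inequality into the abstract Theorem~\ref{thrm: Hdim and non-divergence} yields the Hausdorff dimension bound; when the resulting exponent is strictly less than $1$, the hypotheses of Theorem~\ref{thrm: schmidt games} are likewise satisfied, giving parts (ii)--(iv). The main technical obstacle is the integral inequality itself: isolating the contributions of active versus inactive coordinates inside the $(C,\a)$-good estimates, while simultaneously tracking the arithmetic cusp structure of $\G_k$, so that the extracted contraction rate depends quantitatively on $\mrm{char}(\dot\vp_k)$ rather than on finer information about $\vp_k$. The irreducibility of $\G_k$ is essential here, as it rules out an artificial reduction to a single $\mrm{SL}(2)$ factor in which inactive coordinates would otherwise be harmless.
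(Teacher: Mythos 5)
Your overall architecture (build a height function whose averaged contraction rate encodes $\mrm{char}(\dot\vp_k)$, then feed it into Theorems~\ref{thrm: Hdim and non-divergence} and~\ref{thrm: schmidt games}) is the right one, but there are two genuine gaps. First, your opening reduction is false: divergence on average of the product orbit does \emph{not} force the orbit in some single factor $X_k$ to be divergent on average --- the factors can alternate their cusp excursions so that the union of escape times has density one while no individual factor's escape times do. (The paper itself flags exactly this: in the proof of Corollary~\ref{cor: lower bound cor} the implication is used only in the direction ``some factor divergent on average $\Rightarrow$ product divergent on average,'' with a footnote that the converse fails.) Consequently the set of bad parameters need not be contained in the union over $k$ of the factorwise bad sets, and you cannot take a maximum of dimensions over factors. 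The correct route, taken in the paper, is Lemma~\ref{lemma: stability of CH}: one establishes the $\b_k$-contraction hypothesis on each irreducible factor separately and then combines the height functions into one on the product satisfying the $(\min_k\b_k)$-contraction hypothesis, after which the covering argument runs on $X$ itself; the $\max_k\mrm{char}(\dot\vp_k)$ in the statement is the shadow of this $\min_k\b_k$.

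Second, your treatment of the identically vanishing coordinates is where the actual content lies, and the claim that they ``only rescale $f_k$ by a bounded factor'' is not correct --- if that were so the bound would be $1/2$, not $\tfrac12(1+\mrm{char})$. In the paper (Lemma~\ref{lemma: sl2 products non-maximal}) one writes $g_t=b_ta_t$, where $a_t=\exp(t\sum_{i:\mbf{x}_i\neq0}H_i)$ lies in the $\mf{sl}_2$-subgroup generated by the active coordinates of $\dot\vp_k$ and $b_t=\exp(t\sum_{i:\mbf{x}_i=0}H_i)$ collects the inactive ones. The $a_t$-part contracts the height at the full rate $e^{-\b t}$ via Proposition~\ref{propn: expansion in linear representations}, but only because Proposition~\ref{propn: avoidance} guarantees that the $G$-orbit of the highest weight vector $\mf{p}_{\mf{u}_1}$ has a component of definite proportion in the top $\mf{h}$-isotypic subspace --- an ingredient absent from your sketch and without which the $(C,\a)$-good estimates do not yield the sharp exponent. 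The $b_t$-part costs an exponential factor $e^{\b t\zeta_{\mbf{x}}/\d_{\mbf{x}}}$ (its smallest eigenvalue on $V_1$), and it is precisely this loss that produces $\b'=\b(1-\zeta_{\mbf{x}}/\d_{\mbf{x}})$ and hence the $\tfrac12\mrm{char}(\dot\vp_k)$ term in the dimension bound. As written, your argument identifies neither the mechanism by which the characteristic enters nor why the exponent cannot be improved to $1/2$.
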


    We remark that the upper bound in Theorem~\ref{thrm: products of sl2} is strictly less than $1$ if and only if
    \begin{equation} \label{intro max condition}
    	\# \set{1\leqslant i \leqslant r_k: (\dot{\vp}_k)_i \not\equiv  0 } 
   			+ 2 \cdot \# \set{r_k< i \leqslant r_k+s_k: (\dot{\vp}_k)_i \not\equiv  0 } > \frac{r_k+2s_k}{2},
    \end{equation}
    for all $1\leq k \leq l$.
    
    \begin{remark}
    An analogue of Theorem~\ref{thrm: products of sl2} holds for other products of real rank $1$ Lie groups.
    The upper bound formula for the dimension of divergent orbits will depend on the factors in the product, but the rest of the proof goes through verbatim.
    We refer the reader to Theorem~\ref{thrm: products of so(n,1)} for a result for products of copies of $\mrm{SO}(n,1)$.
    \end{remark}
    
    The bounded orbits in Theorem~\ref{thrm: products of sl2} were shown to be winning in the sense of Schmidt in~\cite{Einsiedler-CurvesNumfields} for $C^1$-curves $\vp$ satisfying~\eqref{intro max condition} and $\G$ an irreducible lattice. Our methods are rather different in flavor and apply to a wider class of examples.
    Moreover, equidistribution of translates by $g_t$ of submanifolds of $U^+(g_1)$ of small codimension and satisfying certain curvature conditions was established in~\cite{Ubis}.

    Applications of Theorem~\ref{thrm: products of sl2} to Diophantine approximation by number fields are discussed in Section~\ref{section: diophantine number fields}.

\subsection{SL(2,R) Actions on Homogeneous Spaces}
	
    The motivation for our next result comes from problems in Diophantine approximation of square systems of linear forms.
    In particular, Theorem~\ref{thrm: sl2 actions} below is used to study the Hausdorff dimension of singular and badly approximable square systems of linear forms belonging to a straight line with an invertible slope (Corollary~\ref{cor: dioph approx of linear forms}).

\begin{theoremAlph} \label{thrm: sl2 actions}
    Let $B \subset \R$ be an interval and suppose $L$ is a semisimple algebraic Lie group defined over $\Q$, $\G$ an arithmetic lattice in $L$, and $\rho: \mrm{SL}(2,\R) \r L $ a non-trivial representation. Let
          \[ g_t = \rho \left( \begin{pmatrix}e^t &0\\ 0 & e^{-t}    \end{pmatrix}  \right), \qquad u(\vp(s)) = \rho  \left( \begin{pmatrix}1 &s\\ 0 & 1    \end{pmatrix}  \right), s\in B. \]
	Then, for every $x_0 \in X= L/\G$, $(i)-(iv)$ of Theorem~\ref{thrm: rank 1 DOA} hold in this setting.
	\end{theoremAlph}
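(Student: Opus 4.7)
The plan is to deduce Theorem \ref{thrm: sl2 actions} from the abstract criteria in Theorems \ref{thrm: Hdim and non-divergence} and \ref{thrm: schmidt games}, together with the quantitative non-divergence of Proposition \ref{propn: non-divergence of shrinking curves}. Several hypotheses trivialise in this setting: since $\vp(s) = s\cdot E$ where $E \in \mrm{Lie}(U^+(g_1))$ is the image under $\rho_*$ of the standard upper-triangular nilpotent in $\mf{sl}_2(\R)$, the derivative $\dot\vp \equiv E$ is a nonzero constant vector contained in the $2$-eigenspace of $\mrm{Ad}(g_1)$, which makes $\vp$ automatically $g_t$-admissible and forces the excluded set $Z$ to be empty. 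More importantly, $(g_t, u(\vp(s))) = (\rho(a_t), \rho(u_s))$ is literally the image under $\rho$ of an $\mrm{SL}(2,\R)$-action on $X = L/\G$, so every translated segment $s \mapsto g_t u(\vp(s)) x_0$ is an honest $\mrm{SL}(2,\R)$-orbit piece, and matrix coefficients along it enjoy polynomial $(C,\a)$-good behaviour for free.

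The decisive analytic input demanded by the abstract framework is a proper height function $\mathbf{u} : X \to [1, \infty)$ satisfying a Margulis-type contraction along the $u(\vp(s))$-average pushed by $g_t$: there should exist $c \in (0,1)$, $b > 0$, $t_0 > 0$, and a bounded interval $B_0 \subset \R$ such that
\begin{equation*}
\frac{1}{|B_0|}\int_{B_0}\mathbf{u}\bigl(g_t u(\vp(s)) x\bigr)\,ds \;\leq\; c\,\mathbf{u}(x)+b, \qquad t \geq t_0,\ x \in X.
\end{equation*}
To build $\mathbf{u}$, I would draw on the Eskin-Margulis-Mozes construction on arithmetic quotients of semisimple $\Q$-groups: to every $\G$-conjugacy class of $\Q$-parabolic subgroup $P \subset L$ attach a function $\alpha_P(x)$ measuring distance to the $P$-cusp via minimum norms over the $\G$-orbit of a highest-weight vector in the $L$-representation on $\mrm{Lie}(\mrm{rad}_u P)$, and take $\mathbf{u}$ to be a suitable weighted combination of the $\alpha_P$'s.

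By linearisation, the contraction inequality reduces to the representation-theoretic estimate that, for every nonzero vector $v$ in each of the finitely many relevant $L$-representations and each sufficiently small $\d > 0$,
\begin{equation*}
\frac{1}{|B_0|}\int_{B_0}\|g_t \rho(u_s) v\|^{-\d}\,ds \;\leq\; c_\d\,\|v\|^{-\d}+b_\d,
\end{equation*}
which is the classical averaging-contraction estimate: the non-triviality of $\rho$ together with Jacobson-Morozov forces every $\mrm{SL}(2,\R)$-subrepresentation actually appearing in these linear spaces to carry at least one positive $g_t$-weight, on which the $\rho(u_s)$-average at scale $O(1)$ genuinely contracts after being pushed by $g_t$. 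The main obstacle is executing this estimate uniformly across the finitely many $\G$-conjugacy classes of $\Q$-parabolics of $L$ and packaging the bounds into a single inequality for $\mathbf{u}$; the non-triviality hypothesis on $\rho$ is indispensable here, since a trivial representation would provide no direction to contract. Granted $\mathbf{u}$, part (i) follows from the dimension upper bound in Theorem \ref{thrm: Hdim and non-divergence}, (ii) from Theorem \ref{thrm: schmidt games}, and (iii)-(iv) follow from the standard consequences of such a contraction inequality (an equidistribution-on-average argument for (iii) and a Borel-Cantelli-style argument for (iv)), exactly as in the proof of Theorem \ref{thrm: rank 1 DOA}.
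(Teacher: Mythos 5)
Your overall strategy (verify the $\b$-contraction hypothesis for a suitable height function and then quote Theorems~\ref{thrm: Hdim and non-divergence} and~\ref{thrm: schmidt games} and Proposition~\ref{propn: non-divergence of shrinking curves}) matches the paper, and the preliminary reductions (admissibility of $\vp$, $Z=\emptyset$) are fine. The gap is in the decisive linearized estimate. You claim that non-triviality of $\rho$ plus Jacobson--Morozov forces every $\mrm{SL}(2,\R)$-subrepresentation appearing in the relevant $L$-modules to carry a positive $g_t$-weight, so that $\frac{1}{|B_0|}\int_{B_0}\norm{g_t\rho(u_s)v}^{-\d}\,ds\leq c_\d\norm{v}^{-\d}+b_\d$ holds for all $v$. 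This is false whenever $\rho(\mrm{SL}(2,\R))$ lies in a proper parabolic of $L$ (i.e.\ outside the ``maximal'' situation of Section~\ref{section: CH higher rank}): the modules $\bigwedge^{d_k}\mrm{Lie}(\mrm{rad}_u P)$-type representations then contain non-zero $\rho(\mrm{SL}(2,\R))$-fixed vectors. Concretely, for $L=\mrm{SL}(4,\R)$ and $\rho$ the upper-left block embedding, $e_3\wedge e_4\in\bigwedge^2\R^4$ is fixed; taking $v=\e' e_3\wedge e_4$ the left side of your inequality is $(\e')^{-\d}$ while the right side is $c_\d(\e')^{-\d}+b_\d$ with $c_\d<1$, which fails for $\e'$ small. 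Since the integral vectors realizing the maximum in the height function do become arbitrarily short as $x$ approaches a cusp in such a direction, no choice of $b_\d$ rescues the contraction inequality for your proposed $\mathbf{u}$, and this is exactly the hard case the theorem must handle.

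This is why the paper does not use the Section~\ref{section: height fun higher rank} height function here. Instead it embeds $L/\G$ as a closed $G$-invariant subset of $\mrm{SL}(d,\R)/\mrm{SL}(d,\Z)$ and uses the Benoist--Quint functions $\vp_\e$ of~\eqref{defn: phi}: these are set to $0$ when the $G$-invariant component $\pi_0(v)$ is not small, and otherwise equal a minimum of $\e^{\d_i/\d_\l}\norm{\pi_\l(v)}^{-1/\d_\l}$ over the \emph{non-zero} highest weights $\l$, each of which genuinely contracts by Proposition~\ref{propn: expansion in linear representations}; the resulting $f_\e$ may take the value $\infty$ on a $G$-invariant set, which Definition~\ref{defn: height functions} is built to tolerate, and $\e$ must be chosen in terms of $x_0$. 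Your argument needs this device (or an equivalent one) to close. A secondary imprecision: to obtain the sharp exponent $1/2$ in part $(i)$ you must produce the contraction rate $\tilde{c}e^{-\b\a(t)}$ for all $\b$ up to $1/2$ (using the exact $(C,1/\d_\l)$-good exponents of Proposition~\ref{propn: polynomials are (C,alpha)-good}); a single fixed $c\in(0,1)$ as in your first display only gives a dimension bound strictly less than $1$.
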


    \begin{remark}
    An analogue of Theorem~\ref{thrm: sl2 actions} is known for the action of $\mrm{SL}(2,\R)$ on strata of abelian differentials. The $1/2$ upper bound on the dimension of divergent orbits was established by Masur in~\cite{Masur-NUE}.
    This was recently extended in~\cite{AAEKMU} to show that this upper bound in fact holds for divergent on average orbits.
    Moreover, Kleinbock and Weiss showed that bounded orbits in that setting have full Hausdorff dimension in~\cite{KleinbockWeiss-BoundedModuli}.
    The winning property of bounded orbits was later obtained in~\cite{ChaikaCheungMasur-WinningModuli}.
    The proof of Theorem~\ref{thrm: sl2 actions} uses the method of height functions and integral inequalities and is valid for $\mrm{SL}(2,\R)$ actions on general metric spaces satisfying the hypotheses of Theorem~\ref{thrm: schmidt games}.
    In particular, the work of Eskin and Masur in~\cite{EskinMasur-HeightStrata} establishes these hypotheses in the setting of $\mrm{SL}(2,\R)$ actions on strata of abelian differentials.
    \end{remark}

%%%%%%%%%%%%%%%%%%%%%%%%%%%%%%%%%%%%%%%%%%%%%%%%%%%%%%%%%%%%%%%%%%%%%%%%%%%%%%%%%%%%%%%%%%%%%%%%%%%%%%%%

%%%%%%%%%%%%%%%%%%%%%%%%%%%%%%%%%%%%%%%%%%%%%%%%%%%%%%%%%%%%%%%%%%%%%%%%%%%%%%%%%%%%%%%%%%%%%%%%%%%%%%%%    
\subsection{Paper Organization and Overview of Proofs}

% 	The proofs of the main results consist of two main steps.

	In Section~\ref{section: applications}, we discuss applications of our main results to problems in Diophantine approximation.
    In Section~\ref{section: abstract setup}, we prove a general result for Lie group actions on metric spaces which implies the upper bound on the dimension of divergent on average orbits as well as the almost sure non-divergence result of Theorems~\ref{thrm: rank 1 DOA} (parts $(i)$ and $(iii)$),~\ref{thrm: products of sl2} and~\ref{thrm: sl2 actions} as soon as the assumptions are verified. 
    
    The winning property of bounded trajectories is also obtained for general Lie group actions in Section~\ref{section: schmidt games}, where we discuss Schmidt's game in detail.
    Finally, part $(iv)$ of the above theorems concerning growth of orbits is established under these abstract hypotheses in Section~\ref{section: shrinking nondivergence} where the quantitative non-divergence of expanding translates of shrinking curve segments is established.
    
    These general results assume the existence of a certain ``height function" encoding recurrence of orbits in the form of an integral inequality (Eq.~\eqref{eqn: CH}) roughly asserting that the average height of the push-forward of a curve tends to decrease.
    This idea was introduced in~\cite{EskinMargulisMozes} and has been used in numerous other contexts since.
    Our restriction on the class of curves is to insure that such an inequality holds uniformly and - more importantly - in a form that we can iterate.
    
    The construction of these functions along with establishing their main properties is carried out in Sections \S~\ref{section: height function rank 1}, \S~\ref{section: height fun higher rank}-\ref{section: sl2 products} and \S~\ref{section: height linear forms}. The proofs of Theorems~\ref{thrm: rank 1 DOA},~\ref{thrm: products of sl2}, and~\ref{thrm: sl2 actions} are given in Sections~\ref{section: proof of rank 1 thrm},~\ref{section: proof of sl2 products}, and~\ref{section: proof of sl2 actions}.
    Corollary~\ref{cor: lower bound cor} is established in Section~\ref{section: conclusion}.

    %%%%%%%%%%%%%%%%%%%%%%%%%%%%%%%%%%%%%%%%%%%%%%%%%%%%%%%%%%%%%%%%%%%%%%%%%%%%%%%

    \section{Applications to Diophantine Approximation}
\label{section: applications}
	
    In this section, we state number theoretic consequences of our main results, particularly to Diophantine approximation problems.

\subsection{Diophantine Approximation on Spheres} \label{section: diophantine Sn}

     Intrinsic Diophantine approximation on $\mathbb{S}^n$ refers to approximating vectors in $\mathbb{S}^n$ using elements of the set $\mc{Q} = \Q^{n+1}\cap \mathbb{S}^n$, as opposed to approximation by elements of all of $\Q^{n+1}$.
     Given a function $\phi: \N \r (0,\infty)$, we say that $\mbf{x}\in \mathbb{S}^n$ is \textbf{intrinsically} $\phi$-approximable if there exist infinitely many $(\mathbf{p},q) \in \Z^{n+1}\times \N$ such that $\mbf{p}/q \in \mathbb{S}^n$ and
     \begin{equation} \label{eqn: phi approximable}
     	\norm{ \mbf{x} - \frac{\mathbf{p}}{q} } < \phi(q).
     \end{equation}
     Following~\cite{KleinbockMerrill}, we denote by $A(\phi,\mathbb{S}^n)$ the set of $\phi$-approximable points and for $\tau >0$, we let $    \phi_\tau(x) = x^{-\tau}$.
     An analogue of Dirichlet's classical theorem was obtained in~\cite[Theorem 1.1]{KleinbockMerrill}
     showing that $A(C_n \phi_1,\mathbb{S}^n)=\mathbb{S}^n$ for some constant $C_n>0$.    
      Moreover, it is shown that badly approximable points on $\mathbb{S}^n$ exist in this setting~\cite[Theorem 1.2]{KleinbockMerrill}.
   	  We say $\mbf{x} \in \mathbb{S}^n$ is \textbf{badly approximable} if there exists a constant $\epsilon(\mbf{x})>0$ such that 
      $\mbf{x} \notin A(\epsilon(\mbf{x}) \phi_1,\mathbb{S}^n)$.
      The analogue of Khinchin's theorem was established in~\cite[Theorem 1.3]{KleinbockMerrill}.
      
    We say that $\mbf{x} \in \mathbb{S}^n$ is \textbf{intrinsically singular on average} if for all $\epsilon>0$, the following holds.
    \begin{equation}\label{defn: singular on average on spheres}
    \lim_{N\r\infty} \frac{1}{N} \# \set{ 1\leqslant \ell\leqslant N:  \norm{ \mbf{x} - \frac{\mathbf{p}}{q} } < \epsilon 2^{-\ell}, 0<|q| \leqslant 2^\ell } = 1.
    \end{equation}
    
    In~\cite{KleinbockMerrill}, these Diophantine properties were connected to the dynamics of a diagonalizable flow $g_t$ on $\mrm{SO}(n+1,1)/\G$, where $\G$ is an arithmetic lattice.
    This is done by associating to each $\mbf{x} \in \mathbb{S}^n$, an element $u(Z_\mbf{x})$ in the expanding horospherical subgroup of $g_t$.
    Then, they show that $\mbf{x} \in \mathbb{S}^n$ is badly approximable if and only if the orbit $g_t u(Z_\mbf{x})\G$ is bounded in $G/\G$.
    In~\cite[Theorem 1.5]{KleinbockMerrill}, the property of being $\phi$-approximable was connected to excursions of the orbit $g_t u(Z_\mbf{x})\G$ into cusp neighborhoods parametrized by $\phi$.
    Using this correspondence with dynamics, one can show that $\mbf{x}$ is intrinsically singular on average if and only if the orbit $g_t u(Z_\mbf{x})\G$ is divergent on average in $G/\G$.    
    This correspondence when combined with Theorem~\ref{thrm: rank 1 DOA} imply the following corollary.

    \begin{corollary} \label{cor: dioph approx on Sn}
    Suppose $B\subset \R$ is a compact interval and $\vp: B \r \mathbb{S}^n$ is a $C^{1+\e}$-map for some $\e>0$ such that $\dot{\vp}$ does not vanish on $B$. Then, the following hold.
        \begin{enumerate}
        \item The Hausdorff dimension of the set of points $s\in B$ such that $\vp(s)$ is intrinsically singular on average  is at most $1/2$.
          \item The set of points $s\in B$ for which $\vp(s)$ is intrinsically badly approximable is winning for a Schmidt game on $B$. In particular, this set is thick in $B$.
          \item For every $\g >0$, the set of points $s\in B$ for which $\vp(s) \in  A(\phi_{1+\g},\mathbb{S}^n) $ has Lebesgue measure $0$. 
        \end{enumerate}
    \end{corollary}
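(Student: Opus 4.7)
The plan is to reduce Corollary~\ref{cor: dioph approx on Sn} to Theorem~\ref{thrm: rank 1 DOA} in the single-factor case $k=1$ with $G_1 = \mrm{SO}(n+1,1)$ and $\G$ the arithmetic lattice considered in~\cite{KleinbockMerrill}. From that paper, I would invoke the explicit smooth (in fact rational) correspondence $\mathbb{S}^n\setminus\{*\} \ni \mbf{x} \mapsto u(Z_\mbf{x})$, where $u(Z_\mbf{x})$ lies in the expanding horospherical subgroup $U^+$ of the chosen diagonalizable flow $g_t$ on $X=G/\G$. Under this correspondence, intrinsic bad approximability of $\mbf{x}$ is equivalent to boundedness of the forward orbit $(g_tu(Z_\mbf{x})\G)_{t\geqslant 0}$; $\mbf{x}$ being intrinsically singular on average is equivalent to divergence on average of the same orbit; and by~\cite[Theorem 1.5]{KleinbockMerrill}, the membership $\vp(s)\in A(\phi_{1+\g},\mathbb{S}^n)$ with $\g>0$ forces the orbit at times $t\sim\log q$ to penetrate a cusp neighborhood at depth $\gtrsim \g\log q$, which is precisely the linear growth condition from Theorem~\ref{thrm: rank 1 DOA}(iv).

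Next I would set $\psi(s) := Z_{\vp(s)}$ on $B$ and verify the hypotheses of Theorem~\ref{thrm: rank 1 DOA}. Since $\mrm{SO}(n+1,1)$ has real rank one with abelian expanding horospherical Lie algebra $\mf{g}_1 \cong \R^n$, the image of $\psi$ automatically lies in the single positive $\mrm{Ad}(g_t)$-eigenspace $\mf{g}_1$, the admissibility condition $[\psi,\dot\psi] \equiv 0$ is trivial, and the ``split'' hypothesis is vacuous. Partitioning $B$ into finitely many subintervals and composing each with an isometry of $\mathbb{S}^n$ if necessary, I may assume $\vp(B)\subset \mathbb{S}^n\setminus\{*\}$, where $\mbf{x} \mapsto Z_\mbf{x}$ is a smooth local diffeomorphism; the chain rule together with the hypothesis $\dot\vp\ne 0$ then forces $\dot\psi(s)\ne 0$ on $B$, so the exceptional set $Z$ of Theorem~\ref{thrm: rank 1 DOA} is empty on each piece. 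The $C^{1+\e}$ regularity of $\psi$ is enough by the remark following Theorem~\ref{thrm: rank 1 DOA}, since $\mf{g}_1$ is abelian (equivalently, by the explicit statement there for $\mrm{SO}(d,1)$ factors).

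With these verifications complete, parts $(i)$, $(ii)$, and $(iv)$ of Theorem~\ref{thrm: rank 1 DOA} applied to $\psi$ (with basepoint $x_0 = \G$, conjugated appropriately by the isometries introduced above) translate via the dictionary of the first paragraph into parts $(1)$, $(2)$, and $(3)$ of the corollary respectively. Note that part $(iii)$ of Theorem~\ref{thrm: rank 1 DOA} is not needed but gives, as a free byproduct, that for Lebesgue-a.e.\ $s\in B$ the time averages along $\vp(s)$ are non-escaping.

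I expect the only non-formal step to be the implication ``$\vp(s)\in A(\phi_{1+\g},\mathbb{S}^n)\Rightarrow$ orbit has linear growth'' for part $(3)$: this requires the quantitative form of~\cite[Theorem 1.5]{KleinbockMerrill}, converting each rational approximation with $\|\vp(s)-\mbf{p}/q\| < q^{-(1+\g)}$ into an excursion of $g_tu(Z_{\vp(s)})\G$ into the cusp of depth $\gtrsim \g\log q$ at time $t\sim\log q$, and then extracting a positive lower bound on $\limsup_{t\to\infty} d(g_tu(Z_{\vp(s)})\G,x_0)/t$. The secondary point, the chart partitioning, is routine but must be carried out to avoid the pole of the map $\mbf{x}\mapsto Z_\mbf{x}$; everything else is a direct invocation of Theorem~\ref{thrm: rank 1 DOA}.
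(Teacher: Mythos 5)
Your proposal is correct and follows essentially the same route as the paper: the paper derives the corollary exactly by invoking the Kleinbock--Merrill correspondence between intrinsic diophantine properties on $\mathbb{S}^n$ and recurrence of the orbits $g_tu(Z_{\mbf{x}})\G$ in $\mrm{SO}(n+1,1)/\G$, and then applying parts $(i)$, $(ii)$, and $(iv)$ of Theorem~\ref{thrm: rank 1 DOA} (with the $C^{1+\e}$ regularity justified by the remark on $\mrm{SO}(d,1)$ factors). Your additional care about the pole of $\mbf{x}\mapsto Z_{\mbf{x}}$ and the quantitative cusp-excursion step for part $(3)$ fills in details the paper leaves implicit, but does not change the argument.
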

    
%     We recall that a $S$ in a metric space $X$ is said to be thick, if for every non-empty open set $U\subseteq X$, the Hausdorff dimension of $S\cap U$ is equal to the Hausdorff dimension of $X$.

    %%%%%%%%%%%%%%%%%%%%%%%%%%%%%%%%%%%%%%%%%%%%%%%%%%%%%%%%%%%%%%%%%%%%%%%%%%%%%%%%%
    
    \subsection{Diophantine Approximation by Number Fields}
    \label{section: diophantine number fields}
    Our next application concerns a generalization of the classical notion of Diophantine approximation of a real number by rationals to approximation by elements in a number field.
    Suppose $K$ is a finite extension of $\Q$ of degree $d$ and let $\mc{O}_K$ denote its ring of integers.
    Denote by $\Sigma$ the set of Galois embeddings of $K$ into $\R$ and $\C$, where we choose one of the two complex conjugate embeddings.
    Let $r$ (resp. $s$) denote the number of real (resp. complex) embeddings in $\Sigma$ so that $d = r+2s$.
    Denote by $K_\Sigma = \R^r \times \C^s$ and let $\Delta: K \r K_\Sigma$ be the embedding defined by
    \[ \Delta(x) = (\s(x))_{\s\in \Sigma}. \]
    
    Let $G = \mrm{SL}(2,\R)^r \times \mrm{SL}(2,\C)^s$.
    The map $\Delta$ extends to an embedding of $\mrm{SL}(2,\mc{O}_K)$ into $G$ and we let $\G=\Delta(\mrm{SL}(2,\mc{O}_K))$.
    Then, $\G$ is a non-uniform irreducible lattice in $G$ and there exists a rational structure on $G$ so that $\G$ is an arithmetic lattice of $\Q$-rank $1$.
    Define the following elements of $G$.
    \begin{equation} \label{eqn: g_t and u(x)}
    g_t = \left( \begin{pmatrix} e^t & 0 \\ 0 & e^{-t} \end{pmatrix} \right)_{\s\in\Sigma},
    \qquad  
    u(\mathbf{x})= \left( \begin{pmatrix} 1& \mathbf{x}_\s \\ 0 & 1 \end{pmatrix} \right)_{\s\in\Sigma}.
    \end{equation}

    We say $\mbf{x} = (x_\s)_{\s\in\Sigma} \in K_\Sigma$ is $K$-\textbf{badly approximable} if there exists $\epsilon(\mbf{x}) >0$ so that for all $p,q\in \mc{O}_K$ with $q\neq 0$,
    \begin{equation*}
    	\max_{\s\in \Sigma} \set{ | \s(p) + x_\s \s(q)|  } \max_{\s\in \Sigma} \set{|\s(q)|} \geqslant \epsilon(\mbf{x}).
    \end{equation*}
    We say $\mbf{x}$ is $K$-\textbf{very well approximable} if for some $\g>0$, there exist infinitely many non-zero pairs $(p,q)\in \mc{O}_K^2$ such that
    \begin{equation*}
    	\max_{\s\in \Sigma} \set{ | \s(p) + x_\s \s(q)|  } \max_{\s\in \Sigma} \set{|\s(q)|^{1+\g}} <1.
    \end{equation*}
    Finally, say $\mbf{x}$ is $K$-\textbf{singular on average} if for all $\epsilon >0$, the following holds.
    \begin{equation}\label{defn: singular on average num fields}
    \lim_{N\r\infty} \frac{1}{N} \# \set{ 1\leqslant \ell\leqslant N:  \max_{\s\in \Sigma} \set{ | \s(p) + x_\s \s(q)|  }   < \epsilon 2^{-\ell}, 0<\max_{\s\in \Sigma} \set{|\s(q)|} \leqslant 2^\ell } = 1.
    \end{equation}
    
    Analogues of Dirichlet's theorem as well as the existence of badly approximable vectors have been established in this setting.
    Moreover, it is shown in~\cite{Einsiedler-CurvesNumfields} that $\mbf{x}$ is $K$-badly approximable if and only if the orbit $g_t u(\mbf{x})\G$ is bounded in $G/\G$.
    The same correspondence implies that $\mbf{x}$ is $K$-\textbf{singular on average} if and only if the orbit $g_t u(\mbf{x})\G$ is divergent on average in $G/\G$.
    Finally, we note that the group $g_t$ above is split in this case and, in particular, Theorem~\ref{thrm: products of sl2} applies and gives the following corollary.

    \begin{corollary}\label{cor: dioph approx in num fields}
    Suppose $B\subset \R$ is a compact interval and $\vp=(\vp_\s)_{\s\in\Sigma} : B \r \R^r \times \C^s$ is a $C^{1+\e}$-map for some $\e>0$ such that for each $\s$, either $\dot{\vp}_\s \equiv 0$ or $\dot{\vp}_\s$ has finitely many zeros.
    Assume further that
    \begin{equation} \label{cor max condition}
    	 \# \set{\s\in\Sigma: \dot{\vp}_\s \not\equiv  0 , \s \textrm{ is real}  } 
   			+ 2 \cdot \# \set{\s\in\Sigma: \dot{\vp}_\s \not\equiv  0 , \s \textrm{ is complex}} > \frac{r+2s}{2}.
    \end{equation}
    Then, the following hold.
    \begin{enumerate}
      \item The Hausdorff dimension of the set of points $s\in B$ for which $\vp(s)$ is $K$-singular on average is at most
      \begin{equation*}
      	\frac{1}{2}  + \frac{1}{2} \frac{ \# \set{1\leqslant i \leqslant r: \dot{\vp}_i \equiv  0 } 
   			+ 2 \cdot \# \set{r< i \leqslant r+s: \dot{\vp}_i \equiv  0 }}
            {\# \set{1\leqslant i \leqslant r: \dot{\vp}_i \not\equiv  0 } 
   			+ 2 \cdot \# \set{r< i \leqslant r+s: \dot{\vp}_i \not\equiv  0 }}.
      \end{equation*}
        \item The set of points $s\in B$ for which $\vp(s)$ is $K$-badly approximable is winning for a Schmidt game on $B$. In particular, this set is thick in $B$.
        \item The set of points $s\in B$ for which $\vp(s)$ is $K$-very well approximable has Lebesgue measure $0$.
    \end{enumerate}
    \end{corollary}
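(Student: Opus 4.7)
The plan is to deduce the corollary directly from Theorem~\ref{thrm: products of sl2} applied to the single-factor case $l=1$, $G_1=G$, combined with a Dani-type correspondence translating Diophantine conditions on $\mathbf{x}\in K_\Sigma$ into dynamical conditions on the orbit $(g_t u(\mathbf{x})\Gamma)_{t\geqslant 0}$. The arithmetic lattice $\Gamma = \Delta(\mathrm{SL}(2,\mc{O}_K))$ is irreducible in $G=\mathrm{SL}(2,\R)^r \times \mathrm{SL}(2,\C)^s$ and the one-parameter group $g_t$ in \eqref{eqn: g_t and u(x)} is split with respect to the $\Q$-structure in which $\Gamma$ is arithmetic, so Theorem~\ref{thrm: products of sl2} applies to the $C^{1+\varepsilon}$-map $\vp$. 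With $l=1$, the formula \eqref{eqn: characteristic} for $\mathrm{char}(\dot{\vp})$ exactly matches the quotient appearing in the upper bound of part $(1)$ of the corollary, so that
\[
\tfrac{1}{2}+\tfrac{1}{2}\,\mathrm{char}(\dot{\vp})
\]
is the bound furnished by Theorem~\ref{thrm: products of sl2}. Moreover, hypothesis \eqref{cor max condition} is precisely the statement that this bound is strictly less than $1$, which unlocks parts $(ii)$–$(iv)$ of Theorem~\ref{thrm: rank 1 DOA} in this setting.

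Next I would handle the exceptional set. The assumption on each $\dot{\vp}_\sigma$ — that it either vanishes identically or has only finitely many zeros — makes the set $Z$ from Theorem~\ref{thrm: products of sl2} a finite subset of $B$, hence of Hausdorff dimension $0$ and Lebesgue measure $0$. Consequently, removing $Z$ affects none of the three conclusions, and I may work on $B\setminus Z$ throughout.

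The third step is to invoke the Dani correspondence for each Diophantine property, as already summarized in the text preceding the corollary and made explicit in~\cite{Einsiedler-CurvesNumfields}: $\mathbf{x}$ is $K$-badly approximable iff $(g_t u(\mathbf{x})\Gamma)_{t\geqslant 0}$ is bounded, and $\mathbf{x}$ is $K$-singular on average iff the same orbit is divergent on average (the latter follows by comparing the counting condition \eqref{defn: singular on average num fields} with the times $t_\ell=\ell\log 2$ at which $g_t$ expands cusp neighborhoods by a factor of $2$, and using the geometry of Siegel sets for $\mrm{SL}(2,\mc{O}_K)$). For $K$-very well approximability, an analogous correspondence shows that if $\mathbf{x}$ is $K$-very well approximable with exponent $\gamma>0$, then there is a sequence $t_\ell\to\infty$ along which the orbit enters a cusp neighborhood of depth comparable to $\gamma t_\ell$, so that $d(g_{t_\ell}u(\mathbf{x})\Gamma,x_0)\gtrsim \gamma t_\ell$; in particular such orbits have linear growth. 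Applying parts $(i)$, $(ii)$, and $(iv)$ of Theorem~\ref{thrm: rank 1 DOA} (available via Theorem~\ref{thrm: products of sl2}) to $s\mapsto u(\vp(s))\Gamma$ then yields, respectively, the dimension bound in $(1)$, the winning property in $(2)$, and the zero Lebesgue measure statement in $(3)$.

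The main technical point is the VWA-to-linear-growth correspondence needed for $(3)$, since Dani's dictionary is usually phrased for singular or badly approximable vectors. I would handle this by a direct argument using the structure of cusps of $G/\Gamma$: writing the cusps in terms of $\Q$-parabolic subgroups of the $\Q$-rank-one group obtained from $\mrm{SL}_2/K$ by restriction of scalars, I would show that the depth function in these cusps is, up to an additive constant, the logarithm of $\min_{(p,q)\in \mc{O}_K^2\setminus 0}\max_\sigma\{|\sigma(p)+x_\sigma\sigma(q)|\}\cdot\max_\sigma\{|\sigma(q)|\}$ evaluated at the appropriate scale. The VWA condition with exponent $\gamma$ then yields infinitely many $\ell$ at which this depth is at least $\gamma\ell$, delivering the linear growth required to invoke part $(iv)$.
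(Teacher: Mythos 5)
Your proposal is correct and follows essentially the same route as the paper, which likewise deduces the corollary by applying Theorem~\ref{thrm: products of sl2} (in the single irreducible factor case, where the characteristic formula reduces to the stated quotient and \eqref{cor max condition} is exactly the condition that the bound is less than $1$) together with the Dani-type correspondence from~\cite{Einsiedler-CurvesNumfields}. If anything, you are more explicit than the paper about the VWA-to-linear-growth dictionary needed for part $(3)$, which the paper leaves implicit (it only cites the analogous correspondence for linear forms via~\cite{KleinbockMargulisWang}).
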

    
    As stated in the introduction, the winning property of badly approximable vectors in Corollary~\ref{cor: dioph approx in num fields} was obtained before in~\cite{Einsiedler-CurvesNumfields} by different methods.
    
    %%%%%%%%%%%%%%%%%%%%%%%%%%%%%%%%%%%%%%%%%%%%%%%%%%%%%%%%%%%%%%%%%%%%%%%%%%%%%%%%%
    \subsection{Square Systems of Linear Forms}

    Our next corollary is an application of Theorem~\ref{thrm: sl2 actions} to the study of the Diophantine properties of square matrices regarded as systems of linear forms.
    In particular, we are interested in the dimension of badly approximable and singular matrices and the measure of very well approximable matrices belonging to a straight line in $M_{n,n}(\R)$.
    We first recall the precise definitions of these notions.
    We say a matrix $Y \in M_{n,n}(\R)$ is \textbf{badly approximable} if there exists $\epsilon(Y)>0$ for all $(\mathbf{p}, \mathbf{q}) \in \Z^m\times \Z^{n}$ with $\mbf{q} \neq 0$:
    \[  \norm{ \mathbf{p} + Y \cdot \mathbf{q} } \norm{\mathbf{q}} \geqslant \epsilon(Y), \]
    where for $v =(v_1,\dots,v_n)\in \R^n$, $\norm{v} = \max |v_i|$.
    We say $Y$ is \textbf{singular} if for every $\e>0$, there exists $N_0 \in \N$ so that for all $N \geqslant N_0$, the following inequalities hold for some $(\mathbf{p},\mathbf{q}) \in \Z^n\times \Z^{n}$.
    \[
    	\begin{cases}
    		\norm{\mathbf{p} + Y \mathbf{q}  } \leqslant \e/N, \\
            0<\norm{\mathbf{q}} \leqslant  N.
    	\end{cases}
    \]
    Finally, $Y$ is \textbf{very well approximable} (VWA) if there exists $\e >0$ and infinitely many $\mbf{q} \in \Z^n$ such that
    \begin{equation*}
    \norm{  Y\mbf{q} - \mbf{p} } < \norm{\mbf{q}}^{-1 - \e} \text{ for some }
    \mbf{p} \in \Z^n.
    \end{equation*}

    These Diophantine properties can be studied through dynamics on the space of unimodular lattices in $\R^{2n}$ as follows. 
    Let $G= \mrm{SL}(2n,\R)$, $\G= \mrm{SL}(2n,\Z)$, and $X = G/\G$.
    For $t\in  \R$ and $Y \in M_{n,n}(\R)$, define the following elements of $G$.
    \begin{equation} \label{linear forms g_t}
    	g_t = \begin{pmatrix}
    		e^t\mathrm{I}_n & \mathbf{0} \\  \mathbf{0} & e^{-t} \mathrm{I}_n
    	\end{pmatrix}, \qquad
    	u_Y = \begin{pmatrix}
    		\mathrm{I}_n & Y \\ \mathbf{0} & \mathrm{I}_n
    	\end{pmatrix},        
    \end{equation}
    where $\mathrm{I}_n$ denotes the identity matrix.
    As discussed in the introduction, Dani showed that $Y$ is badly approximable if and only if the forward orbit $g_t u_Y \G$ is bounded in $X$.
    Similarly, $Y$ is singular if and only if the forward orbit $g_t u_Y \G$ is divergent.
    Finally, by~\cite[Proposition 3.1(a)]{KleinbockMargulisWang}, $Y$ is VWA if and only if 
    \[  \limsup_{t\r\infty} \frac{d_X(g_tu_Y\G, x_0)}{t} >0, \]
    where $d_X(\cdot,\cdot)$ is the Riemannian metric on $X$ induced by the right invariant metric on $G$ and $x_0$ is any base point in $X$.
    
    Using this correspondence with dynamics, Theorem~\ref{thrm: sl2 actions} has the following corollary.
    
    \begin{corollary}\label{cor: dioph approx of linear forms}
    Suppose $\vp: B \r M_{n,n}(\R)$ is defined by $\vp(s) = sY +Z$ for some $Y\in \mrm{GL}(n,\R)$ and $Z\in M_{n,n}(\R)$. Then, the following hold.
    \begin{enumerate}
      \item The Hausdorff dimension of the set of points $s\in B$ for which $\vp(s)$ is singular is at most $1/2$. 
      \item The set of points $s\in B$ for which $\vp(s)$ is badly approximable is winning for a Schmidt game on the real line. In particular, this set is thick.
         \item The Lebesgue measure of the set of points $s\in B$ for which $\vp(s)$ is very well approximable is $0$.
    \end{enumerate}
    \end{corollary}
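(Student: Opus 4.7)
The strategy is to realize the line $\varphi(s) = sY + Z$ as an $\mathrm{SL}(2,\R)$-orbit inside $\mathrm{SL}(2n,\R)/\mathrm{SL}(2n,\Z)$ so that Theorem~\ref{thrm: sl2 actions} applies, and then translate the three dynamical conclusions into diophantine statements via the standard correspondences recalled right before the corollary.

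The plan is to exploit the invertibility of $Y$ to build a representation $\rho:\mathrm{SL}(2,\R)\to L = \mathrm{SL}(2n,\R)$ defined by
\[
\rho\!\begin{pmatrix} a & b \\ c & d \end{pmatrix}
= \begin{pmatrix} a\,\mathrm{I}_n & b\,Y \\ c\,Y^{-1} & d\,\mathrm{I}_n \end{pmatrix}.
\]
A direct block multiplication shows this is a homomorphism, and since $\rho$ sends the standard diagonal and upper unipotent subgroups to matrices with determinant one, $\rho$ lands in $\mathrm{SL}(2n,\R)$. Under $\rho$, the diagonal $\mathrm{diag}(e^t,e^{-t})$ goes to the element $g_t$ of~\eqref{linear forms g_t}, and $\begin{pmatrix}1&s\\0&1\end{pmatrix}$ goes to $u_{sY}$. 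Thus the dynamical setup of Theorem~\ref{thrm: sl2 actions} is exactly realized by $(L,\Gamma,\rho)$ with $L=\mathrm{SL}(2n,\R)$ and $\Gamma=\mathrm{SL}(2n,\Z)$, which is a $\Q$-arithmetic lattice in a $\Q$-algebraic semisimple group.

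Next I would absorb the constant $Z$ into the basepoint. Using $u_{\varphi(s)} = u_Z \cdot u_{sY}$, one has
\[
g_t\, u_{\varphi(s)}\,\Gamma \;=\; g_t\, u_{sY}\, x_0, \qquad x_0 := u_Z\,\Gamma \in L/\Gamma.
\]
So for each fixed $Z$, the family $\{g_t u_{\varphi(s)}\Gamma\}_{s\in B}$ is precisely the family of $\rho$-orbits considered in Theorem~\ref{thrm: sl2 actions} emanating from the single basepoint $x_0$. Applying that theorem gives the three conclusions for this $x_0$: the set of $s\in B$ whose orbit is divergent on average has Hausdorff dimension $\leq 1/2$, the set of $s$ with bounded forward orbit is winning for the Schmidt game on $B$, and the set of $s$ with linear growth has Lebesgue measure zero.

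Finally, I would translate these back via the Dani-type correspondences stated in the paragraphs preceding the corollary. A singular matrix has a divergent forward orbit, which is in particular divergent on average, so part~(1) follows from part~(i) of Theorem~\ref{thrm: sl2 actions}. A matrix is badly approximable iff its orbit is bounded, yielding part~(2) from part~(ii). By the cited result of Kleinbock--Margulis--Wang, $\varphi(s)$ is very well approximable iff the orbit has linear growth, so part~(3) follows from part~(iv). The only conceptual step is the construction of $\rho$, which is the point where the assumption $Y\in\mathrm{GL}(n,\R)$ is essential; without invertibility of $Y$ the lower-left block $cY^{-1}$ is undefined, and one could not fit the full line $\varphi(B)$ into a single $\mathrm{SL}(2,\R)$-orbit. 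Everything else is either a direct block calculation or a citation of the dictionary between dynamics and diophantine properties already recorded above.
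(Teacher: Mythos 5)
Your proposal is correct and follows exactly the route the paper takes: the paper records the very same homomorphism $\rho$ (determined by sending the upper and lower unipotents to $u_{sY}$ and $\left(\begin{smallmatrix}\mathrm{I}_n & \mathbf{0}\\ sY^{-1} & \mathrm{I}_n\end{smallmatrix}\right)$ respectively, which forces $Y\in\mathrm{GL}(n,\R)$), applies Theorem~\ref{thrm: sl2 actions} at the basepoint $x_0=u_Z\G$, and invokes the same three correspondences (singular $\Rightarrow$ divergent $\Rightarrow$ divergent on average; badly approximable $\Leftrightarrow$ bounded; VWA $\Leftrightarrow$ linear growth via Kleinbock--Margulis--Wang). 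Your write-up just makes explicit the block computation verifying that $\rho$ is a homomorphism into $\mathrm{SL}(2n,\R)$, which the paper leaves implicit.
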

    
    In this setting, the homomorphism $\rho: \mrm{SL}(2,\R)\r G$ used to obtain Corollary~\ref{cor: dioph approx of linear forms} from Theorem~\ref{thrm: sl2 actions} is defined as follows.
    \[ \rho \left( \begin{pmatrix}e^t &0\\ 0 & e^{-t}    \end{pmatrix}  \right)=  g_t, \quad  \rho  \left( \begin{pmatrix}1 &s\\ 0 & 1    \end{pmatrix}  \right)=u_{sY}, \quad \rho  \left( \begin{pmatrix}1 &0 \\ s & 1    \end{pmatrix}  \right)= \begin{pmatrix}
    		\mathrm{I}_n & \mathbf{0} \\ sY^{-1} & \mathrm{I}_n\end{pmatrix}.          \]
    Finally, one applies Theorem~\ref{thrm: sl2 actions} to the base point $x_0=u_Z \G$.

	\section{The Contraction Hypothesis and Divergent Trajectories}
\label{section: abstract setup}
	In this section, we prove an abstract recurrence result for diagonalizable trajectories starting from admissible curves in actions of Lie groups on metric spaces.
    Theorem~\ref{thrm: Hdim and non-divergence} is the main result of this section establishing, in particular, a bound on the dimension of divergent orbits.
    In later sections, we verify the hypotheses of this theorem in the settings of the results stated in the introduction.

	\subsection{The Contraction Hypothesis for Lie Group Actions}

	Suppose $G$ is a connected real Lie group with Lie algebra $\mf{g}$.
    Consider a non-trivial $1$-parameter subgroup $A = \set{g_t:t\in \R}$ which is $\mrm{Ad}$-diagonalizable over $\R$.
    Then, $\mf{g}$ decomposes under the adjoint action of $g_t$ into eigenspaces
    \[ \mf{g} = \bigoplus_{\a\in A^\ast} \mf{g}_\a.  \]
    where $A^\ast$ denotes the group of additive homomorphisms $\a:A \r \R$.
    In particular, for every $t\in \R$, $\a\in A^\ast$ and $Y\in \mf{g}_\a$, we have
    \begin{equation} \label{defn: eigenvalue}
    \mrm{Ad}(g_t)(Y) = e^{\a(t)}Y.
    \end{equation}

    We are interested in studying $g_t$-admissible curves $\vp$ as defined in the introduction. Note that the vanishing set $Z$ in the statements of the main theorems is a closed set. Since all the results stated in the introduction concerning measure and Hausdorff dimension are local, we assume without loss of generality that the curves we study are defined on a compact interval where $Z = \emptyset$. We make a further simplification requiring that $\vp$ commutes with itself. The case $[\vp,\dot{\vp}] \equiv 0$ of Theorem~\ref{thrm: rank 1 DOA} requires very minor modifications to our proofs.
    The following definition makes these reductions more precise for purposes of reference in the later parts of the article.
    \begin{definition} \label{defn: admissible curves}
    A map $\vp: [-1,1] \r \mf{g}$ is $\mathbf{g_t}$\textbf{-admissible} if the following holds:
    \begin{enumerate}
    	\item $\vp$ is $C^{1+\g}$ for some $\g >0$, i.e. it is continuously differentiable and the H\"{o}lder exponent of its derivative $\dot{\vp}$ is $\g$.
        \item The image of $\vp$ is contained in a a subspace $V$ of a single eigenspace $\mf{g}_\a$ for some $\a$ such that $\a(t) >0 $ for $t>0$ and $[V,V] =0$.
        \item The derivative of $\vp$ does not vanish on $[-1,1]$.
    \end{enumerate}
    \end{definition}
    Note that we only require the span of the image of $\vp$ to be an abelian subalgebra. In particular, the ambient eigenspace $\mf{g}_\a$ need not be an abelian subspace.

    The following is the key recurrence property for the action which underlies the results stated in the introduction.
	\begin{definition} [The Contraction Hypothesis]
      \label{defn: height functions}
      Suppose $X$ is a metric space equipped with a $G$-action.
      A $g_t$-admissible curve $\vp:[-1,1] \r \mf{g}_\a \subset \mf{g}$ is said to satisfy the $\b$-\textbf{contraction hypothesis} on $X$ if there exists a proper function $f:X\r (0,\infty]$ satisfying the following properties:
      \begin{enumerate}
      \item The set $Z =  \set{f=\infty}$ is $G$-invariant and $f$ is bounded on compact subsets of $X\backslash Z$.
        \item $f $ is uniformly log Lipschitz with respect to the $G$ action. That is for every bounded neighborhood $\mc{O}$ of identity in $G$, there exists a constant $C_\mc{O}\geq 1$ such that for $g\in \mc{O}$ and all $x\in X$,
      \begin{equation}\label{defn: log lipschitz}
          C_\mc{O}^{-1} f(x) \leqslant f(gx) \leqslant C_\mc{O} f(x).
      \end{equation}
		\item There exists $\tilde{c} \geq 1$ such that the following holds:
        for all $t >0$, there exists $\tilde{b} = \tilde{b}(t) >0$ such that for all $x \in X$ and all $s\in [-1,1]$, 
        \begin{equation} \label{eqn: CH}
        	\frac{1}{2} \int_{-1}^1 f(g_t u(r\dot{\vp}(s)) x) \;dr 
            		\leqslant \tilde{c} e^{-\b \a(t)} f(x) + \tilde{b},
        \end{equation}
        where $u(Y) = \exp(Y)$ for $Y\in \mf{g}_\a$.
         \item For all $M\geq 1$, the sets $\overline{\set{x\in X: f(x) \leq M}}$, denoted by $X_{\leq M}$, are compact.
      \end{enumerate}
   The function $f$ will be referred to as a \textbf{height function}.
    \end{definition}
    
    The notion of height functions was introduced to homogeneous dynamics in~\cite{EskinMargulisMozes}.
    It was used in~\cite{KKLM-SingSystems} to obtain sharp upper bounds on the dimension of singular systems of linear forms.
    We note that allowing height functions to assume the value $\infty$ has proven useful in several important applications~\cite{BQ-RandomWalkRecurrence,EMM}.

    The following is the main result of this section.
    \begin{theorem} \label{thrm: Hdim and non-divergence}
    	Let $G$ be a real Lie group and $X$ be a metric space equipped with a $G$-action. Suppose $g_t$ is an $\mrm{Ad}$-diagonalizable one-parameter subgroup of $G$ and $\vp$ is a $g_t$-admissible curve satisfying the $\b$-contraction hypothesis on $X$.
        Then, for all $x\in X\backslash \set{f=\infty}$, the following hold.
        \begin{enumerate}
        \item The Hausdorff dimension of the set of $s\in [-1,1]$ for which the trajectory $ g_tu(\vp(s))x$ is divergent on average is at most $1-\b$.
        
        \item For Lebesgue almost every $s$, any weak-$\ast$ limit of the measures $\frac{1}{T}\int_0^T \d_{g_t u(\vp(s))x}\;dt$ is a probability measure on $X$.
        \end{enumerate}
    \end{theorem}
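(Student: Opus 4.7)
The strategy is the method of height functions and integral inequalities of Eskin--Margulis--Mozes and Kleinbock--Lindenstrauss--Margulis--Margulis, adapted to the one-parameter family $\vp$. Throughout, write $F_n(s) := f(g_{nt}u(\vp(s))x)$ for a fixed $t>0$ to be chosen large.

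\emph{Step 1: a pointwise contraction at the right scale.} My first task is to upgrade the one-step contraction hypothesis to the iterated pointwise estimate
\[
\frac{1}{2e^{-n\alpha(t)}}\int_{s-e^{-n\alpha(t)}}^{s+e^{-n\alpha(t)}} F_{n+1}(\sigma)\,d\sigma \;\le\; C\tilde c\, e^{-\beta\alpha(t)} F_n(s) + C\tilde b,
\]
valid for all $s\in[-1,1]$ once $n$ is large enough that the Taylor error is controlled. I obtain this by writing $u(\vp(s+r e^{-n\alpha(t)})) = u(\vp(s))\,u(re^{-n\alpha(t)}\dot\vp(s))\,u(R)$ with $R=O(r^{1+\gamma}e^{-n(1+\gamma)\alpha(t)})$ using Taylor's theorem and the abelian hypothesis $[V,V]=0$, conjugating $u(r e^{-n\alpha(t)}\dot\vp(s))$ through $g_{(n+1)t}$ to produce $u(re^{\alpha(t)}\dot\vp(s))$, absorbing the $u(R)$-term via the log Lipschitz property (the residual factor $e^{\alpha(t)(1-n\gamma)}$ is bounded for $n\ge1/\gamma$), and finally applying the contraction hypothesis~\eqref{eqn: CH} at the base point $g_{nt}u(\vp(s))x$.

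\emph{Step 2: part (ii).} Integrating the display from Step 1 in $s$ over $[-1,1]$ and applying Fubini gives the integrated recursion $\int F_{n+1}\le C\tilde c\,e^{-\beta\alpha(t)}\int F_n + 2C\tilde b$; boundary contributions are harmless since the results are local in $s$. Choosing $t$ large enough that $C\tilde c\,e^{-\beta\alpha(t)}<1$, iteration yields a uniform bound $\int F_n\le C_3$ for all $n\ge 0$. Fubini and Fatou then give $\limsup_N N^{-1}\sum_{n\le N}F_n(s)<\infty$ for Lebesgue almost every $s$, and the log Lipschitz property upgrades the Ces\`aro bound from discrete times $nt$ to continuous times, so $\limsup_T T^{-1}\int_0^T f(g_\tau u(\vp(s))x)\,d\tau<\infty$ a.s. For such $s$ and any weak-$\ast$ subsequential limit $\mu$ of $T_k^{-1}\int_0^{T_k}\delta_{g_\tau u(\vp(s))x}\,d\tau$, Chebyshev applied inside the time-average together with the compactness of $X_{\le M}$ (hypothesis (4)) gives $\mu(X_{\le M})\ge 1 - C/M$, and letting $M\to\infty$ shows $\mu(X)=1$.

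\emph{Step 3: part (i).} Let $\mathcal{P}_n$ be the partition of $[-1,1]$ into intervals of length $\delta_n=e^{-n\alpha(t)}$ and for $I\in\mathcal{P}_n$ let $s_I$ be its midpoint and $A_k(I)\in\mathcal{P}_k$ its level-$k$ ancestor. Call $I\in\mathcal{P}_n$ \emph{bad} if $F_n(s_I)>M$. A point $s$ whose forward orbit is divergent on average lies in $A_M:=\{s:\liminf_N N^{-1}\#\{n\le N:F_n(s)>M\}\ge 1/2\}$ for every $M$, and by the log Lipschitz property (applied on scales $\delta_n\le\delta_N$) the level-$N$ cell containing such an $s$ admits a subset $S\subset\{1,\dots,N\}$ with $|S|\ge N/2$ whose ancestors $A_n(\cdot)$ are all bad at scale $M/C$. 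The key quantitative step is to show that summing the pointwise inequality of Step~1 across the $e^{\alpha(t)}$ children of a cell produces $\sum_{J\subset I}F_{n+1}(s_J)\le C'e^{\alpha(t)}(C\tilde c\, e^{-\beta\alpha(t)} F_n(s_I)+C\tilde b)$, so that the weighted sum
\[
T_n^{(S)} := \sum_{I\in\mathcal{P}_n\,:\,A_k(I)\text{ bad }\forall k\in S\cap[1,n]} F_n(s_I)
\]
grows at most by a factor $C_4\,e^{(1-\beta)\alpha(t)}$ per level in $S$, with Chebyshev converting $T_n^{(S)}$ into a cell count $\le T_n^{(S)}/M$. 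Summing over $S$ with $|S|\ge N/2$ and weighting by $\delta_N^d=e^{-Nd\alpha(t)}$ gives $\mathcal{H}^d(A_M)=0$ whenever $d>1-\beta+\log C_4/\alpha(t)$; taking $t$ large completes the bound $\dim_H A_M\le 1-\beta$ for every $M$, hence the same bound for the divergent-on-average set.

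\emph{Main obstacle.} The technical core is Step~3, specifically the bookkeeping across ``gaps'' in $S$: on levels $n\notin S$ there is no pointwise lower bound $F_n>M$ to feed into Chebyshev, so the trivial factor $e^{\alpha(t)}$ threatens to replace the contraction factor $e^{(1-\beta)\alpha(t)}$. I would handle this by propagating Chebyshev from the most recent level in $S$ and using the density constraint $|S|\ge N/2$ to ensure the accumulated cost across gaps does not exceed $C_4^N e^{N(1-\beta)\alpha(t)}$.
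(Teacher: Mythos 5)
Your framework is essentially the paper's (tangent approximation, iterated contraction, covering argument, Chebyshev/Borel--Cantelli), but there are two genuine gaps.

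\textbf{Step 2 does not prove part (ii).} From the uniform bound $\sup_n \int_{[-1,1]} F_n \leqslant C_3$, Fatou controls $\liminf_N N^{-1}\sum_{n\leqslant N} F_n(s)$, not $\limsup$; there is no reverse Fatou available here. But part (ii) requires \emph{every} weak-$\ast$ subsequential limit to be a probability, i.e.\ tightness along \emph{all} time scales, which is a $\limsup$ statement: for a.e.\ $s$ and every $\varepsilon>0$ there should be $M$ with $\limsup_N N^{-1}\#\{n\leqslant N: F_n(s)>M\}<\varepsilon$. A uniform $L^1$-bound only gives $|\{s: N^{-1}\#\{\cdots\}>\varepsilon\}|\leqslant C_3/(M\varepsilon)$, which is bounded but not summable in $N$, so Borel--Cantelli does not apply. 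The paper instead feeds the covering estimate of its Proposition 4.5 into Borel--Cantelli: the measure of $Z_x(M,N,t,\delta)$ decays like $C_3^N e^{-\delta\beta\alpha(Nt)}$, and this \emph{exponential} decay in $N$ (summable once $t$ is large) is what makes the a.e.\ statement go through. You need the same strengthening; the integral recursion alone is insufficient.

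\textbf{Step 3 has a parameter error and the key induction is only gestured at.} Divergence on average means the density of bad times tends to $1$, so such an $s$ lies in $\{s: \liminf_N N^{-1}\#\{n\leqslant N: F_n(s)>M\}\geqslant\delta\}$ for \emph{every} $\delta<1$, not just $\delta=1/2$. Your bookkeeping with $|S|\geqslant N/2$ yields a cover of size $\sim e^{N(1-\beta/2)\alpha(t)}$ and a dimension bound of $1-\beta/2$; to reach $1-\beta$ you must let $\delta\to 1$ as well as $t\to\infty$, exactly as the paper does at the end of its proof. Separately, the ``main obstacle'' you flag (propagating Chebyshev across gaps in $S$) is resolved in the paper by a precise two-stage argument you do not carry out: first a measure estimate (its Proposition 4.3) showing that the set of $s$ that stay above height $M$ for $n$ consecutive time-steps, intersected with a scale-$m$ cell, has relative measure $\leqslant c_0^n e^{-\beta\alpha(nt)}$ --- proved by a backward induction which peels off one level of the refining partitions $\mathcal P_{m+n-1}\supset\cdots\supset\mathcal P_m$ at a time, using a log-Lipschitz lower bound on $f$ along the bad run to justify applying the contraction at each step --- and then a block decomposition of $\{1,\dots,N\}$ into maximal bad runs $B_i$ and gap runs $G_j$, where gaps are covered trivially (cost $e^{\alpha(|G_j|t)}$, no saving) and bad runs reuse the measure estimate via a covering corollary. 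Your outline only says ``propagate Chebyshev from the most recent level in $S$,'' which does not explain how to iterate across a bad run without the cover at level $m+n-1$ falling under the control of the scale-$m$ cell; the paper's inclusion argument (its eq.~\eqref{eqn: refining property}) is precisely what closes this loop, and it is not present in your sketch.
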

    Throughout this section, we fix a metric space $X$ equipped with a proper continuous $G$-action and we fix a $g_t$-admissible curve $\vp$ satisfying the $\b$-contraction hypothesis on $X$.
    
   We remark that if an orbit $\set{ g_t x: t\geq 0}$ is divergent on average for some $x$ with $f(x)<\infty$, then for all $M>0$,
   \[ \frac{1}{T} \int_0^T \chi_{M}(g_t x)\;dt \r 0, \]
    where $\chi_M$ is the indicator function of $X_{\leq M} =\set{y\in X: f(y) \leq M}$.

    The main applications of our results are to $G$ actions on homogeneous spaces of $G$ of the form $X=G/\G$ where $\G$ is a lattice in $G$.
    The following lemma shows that the $\b$-contraction hypothesis is a property of the commensurability class of $\G$ and will allow us to reduce the task of establishing the $\b$-contraction hypothesis to irreducible lattices in the case $G$ is semisimple.
    
    Recall that two topological spaces $X_1$ and $X_2$ are \emph{commensurable} if they have homeomorphic finite-sheeted covering spaces.
    
    \begin{lemma} \label{lemma: stability of CH}
    Suppose $\vp$ is a $g_t$-admissible curve satisfying the $\b$-contraction hypothesis for the $G$-action on a metric space $X$ and for some $\b>0$. Denote by $\mf{g}_\a$ the $\mrm{Ad}(g_t)$-eigenspace of $\mf{g}$ containing the image of $\vp$. Then, the following hold.
    \begin{enumerate}
    \item Suppose $X'$ is a metric space which is commensurable to $X$ with a common finite cover $Y$.
    Assume that $Y$ and $X'$ are equipped with an action of $G$ which is equivariant with respect to the covering maps $Y\r X$ and $Y\r X'$.
    Then, $\vp$ satisfies the $\b$-contraction hypothesis for the $G$-action on $X'$ and for the same $\b$.
    \item Suppose $G'$ is a Lie group with Lie algebra $\mf{g}'$ and $g_t'$ is a $1$-parameter $\R$-diagonalizable subgroup of $G'$.
    Suppose $\vp'$ is a $g_t'$-admissible curve satisfying the $\b'$-contraction hypothesis for the $G'$-action on a metric space $X'$.
    Let $\mf{g}_{\a'}'$ be the $\mrm{Ad}(g_t')$-eigenspace of $\mf{g}'$ containing the image of $\vp'$.
    Assume further that $\a(t) = \a'(t)$ for all $t\in \R$. 
    Then, $\vp \oplus \vp':[-1,1]\r \mf{g}\oplus \mf{g}'$ is $(g_t,g_t')$-admissible and satisfies the $(\min(\b,\b'))$-contraction hypothesis for the $G\times G'$-action on $X\times X'$.
    \end{enumerate}
    \end{lemma}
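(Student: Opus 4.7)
The plan is to construct an explicit candidate height function in each case from the given one(s), and then verify the four conditions of Definition~\ref{defn: height functions} in turn. Both parts are essentially bookkeeping, but part (1) requires first passing to a suitable common cover.

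For part (1), I will work with a common finite-sheeted cover $\tilde{X}$ together with $G$-equivariant continuous finite covering maps $p : \tilde{X} \r X$ and $p' : \tilde{X} \r X'$. This is immediate in the main homogeneous-space applications, where $X = G/\G$, $X' = G/\G'$ for commensurable lattices, and one takes $\tilde{X} = G/(\G\cap \G')$. Set $\tilde{f} = f \circ p$, which still satisfies the $\b$-contraction hypothesis because $p$ is proper and $G$-equivariant, and define $h : X' \r (0,\infty]$ by
\[ h(x') = \max_{y \in (p')^{-1}(x')} \tilde{f}(y). \]
The log-Lipschitz property and the $G$-invariance of $\set{h=\infty}$ are inherited from $\tilde{f}$; properness of $p'$ combined with the inclusion $(p')^{-1}(\set{h \leq M}) \subseteq \set{\tilde{f}\leq M}$ shows that $\set{h \leq M}$ has compact closure; and $h$ is finite on compact subsets of $X'\backslash \set{h=\infty}$ by the same reasoning. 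To obtain \eqref{eqn: CH} for $h$, I will bound the maximum over each fiber by the sum, use $G$-equivariance of $p'$ to commute $g_t u(r\dot\vp(s))$ past the fiber, and then apply \eqref{eqn: CH} for $\tilde{f}$ at each of the finitely many fiber points; this yields the $\b$-contraction hypothesis for $h$ on $X'$ with rescaled constants (the common fiber cardinality appears as a multiplicative factor) and with the same rate $\b$.

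For part (2), take $F(x,x') = f(x) + f'(x')$ on $X\times X'$, where $f$ and $f'$ are the height functions provided by the assumptions on $X$ and $X'$ respectively, and view $\vp \oplus \vp'$ as a curve into $\mf{g}\oplus \mf{g}' = \mrm{Lie}(G\times G')$. Admissibility of $\vp\oplus \vp'$ is direct: it is $C^{1+\g}$; its image lies in $\mf{g}_\a\oplus \mf{g}'_{\a'}$, which since $\a = \a'$ by hypothesis is a single eigenspace of $\mrm{Ad}(g_t, g'_t)$ with positive eigenvalue for $t>0$; the span of its image is abelian because $\mf{g}$ and $\mf{g}'$ commute in the direct sum; and its derivative is nonvanishing. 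The log-Lipschitz property for $F$, the $(G\times G')$-invariance of $\set{F=\infty}$, and compactness of sublevel sets (via $\set{F\leq M} \subseteq \set{f\leq M}\times \set{f'\leq M}$) are all immediate. For \eqref{eqn: CH}, since $u(v\oplus v') = (u(v), u(v'))$ and $(g_t, g'_t)$ acts componentwise, the integral defining \eqref{eqn: CH} for $F$ splits as a sum of the two corresponding integrals for $f$ and $f'$; applying the assumed bounds and using $e^{-\b\a(t)}, e^{-\b'\a(t)} \leq e^{-\min(\b,\b')\a(t)}$ combines them into the desired $\min(\b,\b')$-contraction statement for $F$, with new constants $\max(\tilde{c},\tilde{c}')$ and $\tilde{b}+\tilde{b}'$.

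The only mildly delicate point is in part (1): commensurability as stated is purely topological, so strictly speaking one must also know that the two $G$-actions admit compatible lifts to a common finite cover. I will address this by interpreting commensurability in the $G$-equivariant sense, which is automatic in the only setting where the lemma is used---namely quotients $G/\G$ vs.\ $G/\G'$ by commensurable lattices, where $G/(\G\cap\G')$ carries manifest equivariant projections to both. Granted that, the verification in both parts reduces to direct bookkeeping of the four defining properties.
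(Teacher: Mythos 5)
The paper states this lemma without proof, treating it as routine bookkeeping, so there is nothing internal to compare against; the question is whether your verification is actually complete. It is. For part (2), taking $F = f + f'$ and splitting the integral is the natural move, and all four conditions of Definition~\ref{defn: height functions} go through exactly as you check: the product of sublevel sets handles compactness, the Whitney-sum structure $\exp(v,v') = (\exp v, \exp v')$ reduces~\eqref{eqn: CH} to the two factor inequalities, and the hypothesis $\a = \a'$ is precisely what lets you pull out a common rate $e^{-\min(\b,\b')\a(t)}$. For part (1), pulling $f$ back to a common finite $G$-cover and then pushing forward via a fiberwise maximum is correct; the crucial computation — bounding the integral of a max over a $k$-point fiber by the sum of $k$ integrals, applying~\eqref{eqn: CH} at each fiber point, and absorbing the multiplicity $k$ into $\tilde c$ and $\tilde b$ — preserves $\b$. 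You were right to flag that the paper's definition of commensurability is purely topological: as literally stated the lemma would require a hypothesis that the two $G$-actions admit compatible lifts to a common cover. Your reading (that the intended meaning is $G$-equivariant commensurability, as in the homogeneous case $G/(\G\cap\G')$ which is where the lemma is actually invoked) is the only sensible interpretation and matches how the paper uses it in Section~\ref{section: proof of rank 1 thrm} and following. No gaps.
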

    
    \begin{proof}
    (1) 
    Denote by $p:Y\r X$ and $p':Y\r X'$ the covering maps.
    Let $f$ the height function on $X$.
    Define a height function $f'$ on $X'$ by
    \begin{equation*}
        f'(x') = \sum_{y: p'(y)=x'} f(p(y)).
    \end{equation*}
    Since the above sum runs over finitely many points, whose cardinality is equal to the sheetedness of the cover $Y\r X'$, then one verifies that $f'$ satisfies all the properties in Definition~\ref{defn: height functions}.
    
    (2) Denote by $f$ and $f'$ the height functions on $X$ and $X'$ respectively.
    Then, one defines a function $f+f'$ on $X\times X'$ by $(f+f')(x,x')= f(x)+f'(x')$.
    Then, $f+f'$ provides the desired height function on $X\times X'$.
    \end{proof}

	\subsection{Approximation by horocycles and the Markov property}	
    The following elementary lemma allows us to obtain an integral estimate over curves via integral estimates over tangents while simultaneously providing us with a mechanism for iterating such integral estimates.
    This iteration mechanism will play the same role as the Markov property in the context of random walks.
    
    Recall that $\g >0$ denotes the H\"{o}lder exponent of the derivative of $\vp$.
  \begin{lemma} \label{lemma: approximate curve with tangent}
  	There exists a constant $C_1 >1$, such that for all $x\in X$,
    natural numbers $n $ with $n \geq 1/\g$, $t>0$ and all subintervals $J \subset [-1,1]$ of radius at least $e^{-\a(nt)} $, one has    
    \begin{equation} 
    	\int_J f (g_{(n+1)t} u(\vp(s))x)\;ds \leqslant 
        	C_1 \int_J \int_{-1}^1 f (g_t u(r\dot{\vp}(s)) g_{nt} u(\vp(s))x) \;dr\;ds.
    \end{equation}
  \end{lemma}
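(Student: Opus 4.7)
The strategy is to approximate $\vp$ by its tangent line on sub-intervals of $J$ at scale $e^{-\a(nt)}$, using the H\"older Taylor expansion $\vp(s_i+h) = \vp(s_i) + h\dot{\vp}(s_i) + E(h)$ with $|E(h)| \leq K|h|^{1+\g}$ (where $K$ is the $\g$-H\"older constant of $\dot{\vp}$), and to absorb the resulting error into the log-Lipschitz constant of $f$. First, I would partition $J$ into sub-intervals $J_i$ centered at $s_i$, each of length $2e^{-\a(nt)}$, which is possible since the radius of $J$ is at least $e^{-\a(nt)}$.

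For each $J_i$, writing $s = s_i + e^{-\a(nt)} r$ with $r \in [-1,1]$ and using that the image of $\vp$ (and hence $\dot{\vp}(s_i)$ and $E(h)$) lies in an abelian subspace $V \subset \mf{g}_\a$, all the $u(\cdot)$ factors commute. Combined with the identity $g_t u(Y) g_t^{-1} = u(e^{\a(t)} Y)$ for $Y \in \mf{g}_\a$, this yields the decomposition
\[ g_{(n+1)t} u(\vp(s))\, x = u(e^{\a(t)} \epsilon)\cdot g_t u(r\dot{\vp}(s_i))\cdot g_{nt} u(\vp(s_i))\, x, \qquad \epsilon = e^{\a(nt)} E(h), \]
with $|e^{\a(t)} \epsilon| \leq K e^{\a(t)(1-n\g)}$, which is bounded by $K$ under the hypothesis $\a(n) \geq 1/\g$. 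The log-Lipschitz estimate~\eqref{defn: log lipschitz} and the substitution $ds = e^{-\a(nt)}dr$ then yield the local upper bound
\[ \int_{J_i} f(g_{(n+1)t}u(\vp(s))x)\,ds \leq C_0\, e^{-\a(nt)} \int_{-1}^1 f\big(g_t u(r\dot{\vp}(s_i))\, g_{nt} u(\vp(s_i))\, x\big)\,dr. \]
A parallel commutation, this time also invoking H\"older continuity of $\dot{\vp}$ to handle $r[\dot{\vp}(s)-\dot{\vp}(s_i)]$, produces, for $s = s_i + e^{-\a(nt)} r_0 \in J_i$ and $r \in [-1,1]$, the pointwise lower bound
\[ f\big(g_t u(r\dot{\vp}(s))\, g_{nt} u(\vp(s))x\big) \geq C_0^{-1} f\big(g_t u((r+r_0)\dot{\vp}(s_i))\, g_{nt} u(\vp(s_i))x\big). \]

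Integrating this lower bound over $(s,r) \in J_i \times [-1,1]$, substituting $r_0 = e^{\a(nt)}(s-s_i)$ and then $r' = r+r_0$, and applying Fubini, one computes that the resulting marginal of $r_0$ is the triangular weight $2-|r'|$ on $r' \in [-2,2]$; restricting to $|r'| \leq 1$, where this weight is at least $1$, gives the matching lower bound
\[ \int_{J_i} \int_{-1}^1 f\big(g_t u(r\dot{\vp}(s))\, g_{nt} u(\vp(s))x\big)\,dr\,ds \geq C_0^{-1}\, e^{-\a(nt)} \int_{-1}^1 f\big(g_t u(r\dot{\vp}(s_i))\, g_{nt} u(\vp(s_i))x\big)\,dr. \]
Chaining this with the upper bound on each $J_i$ and summing over $i$ yields the lemma with $C_1 = C_0^2$. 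The main technical point is the interplay between the H\"older contraction $e^{-\g\a(nt)}$ of the error $E(h)$ and its inflation by $e^{\a(t)}$ under conjugation by $g_t$: the hypothesis $\a(n) \geq 1/\g$ is precisely what keeps the combined factor bounded. The abelian admissibility $[V,V]=0$ is equally essential, since without it the rearrangement of $u(\vp(s_i))$, $u(h\dot{\vp}(s_i))$, and $u(E(h))$ would introduce Baker--Campbell--Hausdorff corrections of uncontrolled size.
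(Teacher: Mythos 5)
Your proposal is correct in substance, but it organizes the argument differently from the paper. Both proofs rest on the same three ingredients — the first-order Taylor expansion of $\vp$ with H\"older error $O(e^{-(1+\g)\a(nt)})$, the commutativity $[V,V]=0$ to rearrange the $u(\cdot)$ factors, and the conjugation identity $g_tu(Y)g_{-t}=u(e^{\a(t)}Y)$ so that the hypothesis $\a(n)\geq 1/\g$ keeps the inflated error in a fixed bounded neighborhood absorbed by the log-Lipschitz property of $f$. Where you differ is in the bookkeeping that matches the single integral on the left to the double integral on the right. The paper uses a one-sided smearing trick: since $J$ has radius at least $e^{-\a(nt)}$, one has $J\subseteq (J+re^{-\a(nt)})\cup(J-re^{-\a(nt)})$ for every $|r|\leq 1$, so by positivity of $f$ the left side is bounded by the average over $r\in[-1,1]$ of $\int_J f(g_{(n+1)t}u(\vp(s+re^{-\a(nt)}))x)\,ds$, and each such term is compared \emph{pointwise in $s$} to the integrand on the right with tangent based at $s$ itself; no lower bound is ever needed and no partition of $J$ is introduced. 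You instead localize to subintervals $J_i$ of the critical length $2e^{-\a(nt)}$, introduce the intermediate quantity $\int_{-1}^1 f(g_tu(r\dot\vp(s_i))g_{nt}u(\vp(s_i))x)\,dr$ based at the \emph{center} $s_i$, and prove a two-sided comparison (an upper bound for the left side and a lower bound for the right side over $J_i$), the latter requiring the triangular-weight computation for $r+r_0$. This works and yields $C_1=C_\mc{O}^2$ rather than the paper's $C_\mc{O}$, at the cost of one genuine (though easily repaired) technicality: $|J|$ need not be an integer multiple of $2e^{-\a(nt)}$, so you cannot literally partition $J$ into such pieces; you should instead cover $J$ by intervals of that exact length with bounded overlap (each point in at most two), which costs only another factor of $2$ in chaining the sums. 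With that adjustment the argument is complete.
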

  
  \begin{proof}
  	First, we note that for all $r \in [-1,1]$, we have
    \begin{equation} \label{eqn: cover J by translates}
    	J \subseteq J \pm r e^{-\a(nt)} := (J + re^{-\a(nt)}) \cup (J-re^{-\a(nt)}).
    \end{equation}
    Using positivity of $f$,~\eqref{eqn: cover J by translates} and a change of variable, we get
    \begin{align*}
    	&\int_J f (g_{(n+1)t} u(\vp(s))x)\;ds \\
        	&=\int_{0}^1 \int_J f (g_{(n+1)t} u(\vp(s))x)\;ds \;dr 
            \leqslant \int_{0}^1 \int_{J \pm r e^{-\a(nt)}} f (g_{(n+1)t} u(\vp(s))x)\;ds \;dr \\
            &= \int_{-1}^1 \int_{J + r e^{-\a(nt)}} f (g_{(n+1)t} u(\vp(s))x)\;ds \;dr 
            = \int_{-1}^1 \int_{J } f (g_{(n+1)t} u(\vp(s+ r e^{-\a(nt)}))x)\;ds \;dr. 
    \end{align*}
    Then, Fubini's theorem and the fact that $\vp$ is $C^{1+\g}$ imply the following.
    \begin{align*}
    	\int_J f (g_{(n+1)t} u(\vp(s))x)\;ds &\leqslant
            \int_{J } \int_{-1}^1 f ( g_{(n+1)t} u(\vp(s) + re^{-\a(nt)}\dot{\vp}(s) 
            +O(e^{-(1+\g)\a(nt)} ) )x) \;dr\;ds.  
	\end{align*}
    Moreover, by definition of $g_t$ and $u(Y)$, we have
    \begin{equation*}
    	g_t u(Y) g_{-t} = u(e^{\a(t)}Y).
    \end{equation*}
    Thus, by our assumption that $n \geqslant 1/\g$, we get
    \begin{align*}
    	\int_J f (g_{(n+1)t} u(\vp(s))x)\;ds &\leqslant
            \int_{J } \int_{-1}^1 f (u(O(1)) g_{(n+1)t} u(\vp(s) + re^{-\a(nt)}\dot{\vp}(s) )x) \;dr\;ds \\
            &= \int_{J } \int_{-1}^1 f (u(O(1 )) g_{t} u(r\dot{\vp}(s))g_{nt} u(\vp(s))x) \;dr\;ds.
	\end{align*}
    Note that $u(O(1 ))$ belongs to a bounded neighborhood of identity independently of $t$ and $n$.
    Hence, by the log Lipschitz property of $f$, there exists a constant $C_1 > 1$ such that 
    for all $y\in X$,
    \[ f(u(O(1)) y) \leqslant C_1 f(y). \]
    This concludes the proof.
  \end{proof}

\subsection{Integral estimates and long excursions}
The goal of this section is to prove an upper bound on the measure of the set of trajectories with long excursions outside of fixed compact sets. We show that such a measure decays exponentially in the length of the excursion.
We remark that our proof of this fact is different from the proof of a similar step in~\cite[Proposition 5.1]{KKLM-SingSystems}. Our method allows us to handle curves which are in general not subgroups that are normalized by $g_t$.
The proof of~\cite{KKLM-SingSystems}, however, uses this point crucially.

	For $x\in X$, $M, t>0$ and natural numbers $m,n \in \N$, we define the following sets
	\begin{equation*} \label{defn: B_x(M,t; m+n)}
		B_x(M,t, m;n) = \set{ s\in [-1,1]: f(g_{mt}u(\vp(s)x) < M,
        	f(g_{(m+l)t}u(\vp(s))x) \geqslant M, \text{ for } 1 \leq l \leq n}.
	\end{equation*}

    For every $N\in \N$, let $\mc{P}_{N}$ denote the partition of the interval $[-1,1]$ into $N$ intervals of equal length.

    \begin{proposition} \label{propn: measure bound}
    There exists a constant $c_0 \geq 1$ such that for every $t>0$ with $e^{\a(t)} \in \N$, there exists $M_0 = M_0(t) >0$, so that for all $M>M_0$ the following holds.
     For all natural numbers $m \geq 1/\g$ and $n\geq 1$ and all $x\in X\backslash\set{f=\infty}$, one has that
     \begin{equation*}
       	\lvert B_x(M,t, m;n) \cap J_0 \rvert \leqslant c_0^n e^{-\b \a(nt)} |J_0|,
     \end{equation*}
     for every interval $J_0\in \mc{P}_{e^{\a(mt)} }$, where $\lvert\cdot\rvert$ denotes the Lebesgue measure on $[-1,1]$.
    \end{proposition}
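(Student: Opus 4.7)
The plan is to work with the height function iterated along the trajectory, $\phi_k(s) := f(g_{(m+k)t}u(\vp(s))x)$, so that $B_x(M,t,m;n) = \{s:\phi_0(s)<M,\ \phi_l(s)\geq M\text{ for }1\leq l\leq n\}$. Since $e^{\alpha(t)}\in\N$, the partitions $\mc{P}_{m+k}$ of $[-1,1]$ are nested in $k$, and the interval $J_0$ has length $2e^{-\alpha(mt)}$. I would introduce the family of ``good'' intervals $\mc{B}_k\subseteq\mc{P}_{m+k}$ consisting of those $J\subseteq J_0$ that meet $B_x(M,t,m;k)$, together with $E_k:=\int_{\bigcup \mc{B}_k}\phi_k\,ds$ and $L_k:=|\bigcup\mc{B}_k|$. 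Nesting of the partitions and monotonicity of $B_x(M,t,m;k)$ in $k$ give $\bigcup \mc{B}_{k+1}\subseteq \bigcup \mc{B}_k$, and in particular $L_n\geq |B_x(M,t,m;n)\cap J_0|$.

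The first technical ingredient is a log-Lipschitz comparison for $\phi_k$ on each $J\in \mc{P}_{m+k}$. Exploiting admissibility ($[V,V]=0$) and $C^{1+\gamma}$ regularity, for $s,s'\in J$,
\[
g_{(m+k)t}u(\vp(s))x = u\bigl(e^{\alpha((m+k)t)}(\vp(s)-\vp(s'))\bigr)\cdot g_{(m+k)t}u(\vp(s'))x,
\]
and the factor $u(\cdots)$ lies in a bounded neighborhood of identity because $|s-s'|\leq 2e^{-\alpha((m+k)t)}$ together with $\vp\in C^{1+\gamma}$ makes $e^{\alpha((m+k)t)}(\vp(s)-\vp(s'))=e^{\alpha((m+k)t)}\dot\vp(s')(s-s')+O(e^{-\gamma\alpha((m+k)t)})$ bounded. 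The log-Lipschitz property of $f$ then yields an absolute constant $C_2\geq 1$ with $C_2^{-1}\phi_k(s')\leq \phi_k(s)\leq C_2\phi_k(s')$. This will supply two opposite estimates, to be used at opposite levels: for $k\geq 1$, each $J\in \mc{B}_k$ contains a point where $\phi_k\geq M$, so $\phi_k\geq M/C_2$ on $J$ and hence $L_k\leq C_2 E_k/M$; for $k=0$, each $J\in \mc{B}_0$ contains a point where $\phi_0<M$, so $\phi_0\leq C_2 M$ on $J$ and hence $E_0\leq C_2 M|J_0|$.

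The second ingredient is the iteration. Lemma~\ref{lemma: approximate curve with tangent} (whose hypotheses hold on each $J\in \mc{B}_k$ since $m\geq 1/\gamma$ and the radius is exactly $e^{-\alpha((m+k)t)}$) combined with the contraction hypothesis~\eqref{eqn: CH} gives
\[
\int_J\phi_{k+1}\,ds\leq \lambda\int_J\phi_k\,ds + C|J|,\qquad \lambda:=2C_1\tilde c\,e^{-\beta\alpha(t)},\quad C:=2C_1\tilde b(t).
\]
Summing over $\mc{B}_k$ and using the nesting delivers $E_{k+1}\leq \lambda E_k + CL_k$. For $k\geq 1$ I would substitute $L_k\leq C_2 E_k/M$ to obtain the clean recurrence $E_{k+1}\leq (\lambda + CC_2/M)E_k$, and choose $M_0(t):=CC_2/\lambda$ so that the factor is $\leq 2\lambda$ once $M>M_0$. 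Combined with $E_1\leq \lambda E_0 + C|J_0|\leq 2\lambda C_2 M|J_0|$ (using $E_0\leq C_2 M|J_0|$ and $C\leq \lambda C_2 M$ when $M>M_0$), iteration gives $E_n\leq C_2 M(2\lambda)^n|J_0|$, so
\[
|B_x(M,t,m;n)\cap J_0|\leq L_n\leq \frac{C_2 E_n}{M}\leq C_2^2(2\lambda)^n|J_0| = C_2^2(4C_1\tilde c)^n e^{-\beta n\alpha(t)}|J_0|,
\]
and setting $c_0:=4C_1\tilde c\,C_2^2$ absorbs the $C_2^2$ prefactor into $c_0^n$ for every $n\geq 1$ (with $\mc{B}_0=\emptyset$ being trivial). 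The main obstacle I anticipate is the following pitfall: a naive iteration of $\int_{J_0}\phi_k\,ds$ produces an additive error that does not decay with $n$, and one is then forced to make $M_0$ depend on $n$. The device above sidesteps this by leveraging the opposite comparisons at levels $0$ and $\geq 1$ to convert the additive error $CL_k$ into a multiplicative loss that, for $M$ large, is absorbed into a slightly enlarged $\lambda$.
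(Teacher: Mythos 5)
Your proposal is correct and follows essentially the same route as the paper: iterate over the nested partitions $\mc{P}_{m+k}$ using Lemma~\ref{lemma: approximate curve with tangent} together with~\eqref{eqn: CH}, compare $f$ across each partition cell via the log-Lipschitz property, bound the base integral over $J_0$ by $O(M)|J_0|$, and finish with Chebyshev. The only (cosmetic) difference is how the additive constant $\tilde b$ is absorbed — the paper does it pointwise by introducing the threshold $T=\tilde b e^{\b\a(t)}/\tilde c$ above which the contraction is purely multiplicative, while you keep the additive term $CL_k$ in the integrated recurrence and absorb it via $L_k\leqslant C_2E_k/M$ — and both choices lead to essentially the same $M_0(t)$ and $c_0$.
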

    
    \begin{proof}
    	Let $t>0$ be fixed. Let $\tilde{c}$ and $\tilde{b} = \tilde{b}(t) >0$ be as in $(3)$ of Definition~\ref{defn: height functions}.
        Let $T = \tilde{b} e^{\b \a(t)}/\tilde{c}$.
        Then, for all $x\in X$ with $f(x) > T$, using~\eqref{eqn: CH}, we get
        \begin{equation*}
        	\frac{1}{2} \int_{-1}^1 f(g_t u(r\dot{\vp}(s)) x) \;dr 
            		\leqslant 2\tilde{c} e^{-\b \a(t)} f(x).
        \end{equation*}
    	Using $(2)$ of Definition~\ref{defn: height functions}, we can find $\tilde{C}_1 \geq 1$ such that for all $x\in X$ and all $s\in [-1,1]$, we have
        \begin{equation} \label{eqn: defn of tilde C_1}
        	\tilde{C}_1^{-1} f(x) \leqslant  f(u(\dot{\vp}(s)) x) \leqslant \tilde{C}_1 f(x).
        \end{equation}
        We define $c_0$ and $M_0$ as follows
        \begin{equation*}
        	c_0 = 4C_1\tilde{C}_1\tilde{c}, \qquad M_0 = \tilde{C}_1 T,
        \end{equation*}
        where $C_1$ denotes the constant in Lemma~\ref{lemma: approximate curve with tangent}.
        Suppose $M>M_0$.
    	 To simplify notation, for each $k \in \N$, we let 
        \begin{equation*}
        	B(M,k) := B_x(M,t, m;k).
        \end{equation*}   
        For purposes of induction, we also define $B(M,0)$ as follows
        \begin{equation*}
        	B(M,0) := \set{s\in [-1,1] : f(g_{mt}u(\vp(s))x) > T }.
        \end{equation*}
        Let us also write $\mc{P}_k$ to denote $\mc{P}_{e^{\a(kt)}}$ for simplicity.
        
        Suppose $J \in \mc{P}_{m+n-1}$ is such that $ J \cap B(M,n-1) \neq \emptyset$
        and let $s_0 \in J \cap B(M,n-1)$.
        Then, we have $f(g_{(m+n-1)t}u(\vp(s_0))x) > M$.
        Now, consider any $s\in J$. Writing $\vp(s) = \vp(s_0) + O_{\dot{\vp}}(|J|) $, we see that
        \[ f(g_{(m+n-1)t}u(\vp(s))x) > T. \]
        Indeed, this follows from~\eqref{eqn: defn of tilde C_1} and the fact that $M>\tilde{C}_1 T$.
        Therefore, by Lemma~\ref{lemma: approximate curve with tangent} and the choice of $T$, it follows that
        \begin{align} \label{eqn: first step removed}
        	\int_J f(g_{(m+n)t} u(\vp(s)) x) \;ds & \leqslant
            C_1 \int_J \int_{-1}^1 f( g_t u(r \dot{\vp}(s) )   g_{(m+n-1)t} u(\vp(s)) x) \;dr ds \nonumber \\
            &\leqslant 4C_1 \tilde{c} e^{-\b \a(t)} \int_J f( g_{(m+n-1)t} u(\vp(s)) x) \; ds.
        \end{align}
        
        Now, consider an interval $J_0 \in \mc{P}_m$ satisfying $J_0 \cap B(M,n) \neq \emptyset$.
        Then, since $B(M,n)$ is contained in  $B(M,n-1)$, we have that $J_0 \cap B(M,n-1) \neq \emptyset$.        
        Next, note that the following inclusion holds.
        \[ B(M,n-1) \cap J_0 \subseteq  \bigcup_{\substack{J \in \mc{P}_{m+n-1}\\
        		 J \cap B(M,n-1) \cap J_0 \neq \emptyset}} J.  \]
        In particular, by~\eqref{eqn: first step removed}, we get
        \begin{align} \label{eqn: estimate using cover}
        \int_{B(M,n-1)\cap J_0} f(g_{(m+n)t} u(\vp(s)) x) \;ds &\leqslant
        	\sum_{ \substack{J \in \mc{P}_{m+n-1} \\  J \cap B(M,n-1) \cap J_0 \neq \emptyset} } 
            \int_J f(g_{(m+n)t} u(\vp(s)) x) \;ds \nonumber \\
            &\leqslant 4C_1\tilde{c} e^{-\b \a(t)} 
            \sum_{ \substack{J \in \mc{P}_{m+n-1} \\  J \cap B(M,n-1) \cap J_0\neq \emptyset} } 
            \int_J f( g_{(m+n-1)t} u(\vp(s)) x) \; ds. 
        \end{align}
        Since $e^{\a(t)} \in \N$, for each $1\leq j\leq k$, the partition $\mc{P}_k$ is a refinement of $\mc{P}_{j}$.
        This implies the following inclusion.
        \begin{equation} \label{eqn: refining property}
        \bigcup_{\substack{J \in \mc{P}_{m+n-1} \\ J \cap B(M,n-1) \cap J_0\neq \emptyset}} J
         \subseteq
         \bigcup_{\substack{J \in \mc{P}_{m+n-2} \\  J \cap B(M,n-1) \cap J_0\neq \emptyset} } J.        
        \end{equation}
        Hence, the following inequality follows from~\eqref{eqn: estimate using cover},~\eqref{eqn: refining property}, and the fact that $f$ is non-negative:
        \begin{equation}\label{eqn: refinement trick}
        \int_{B(M,n-1)\cap J_0} f(g_{(m+n)t} u(\vp(s)) x) \;ds \leqslant 
             4C_1\tilde{c} e^{-\b \a(t)} 
            \sum_{ \substack{J \in \mc{P}_{m+n-2} \\  J \cap B(M,n-1) \cap J_0\neq \emptyset} } 
            \int_J f( g_{(m+n-1)t} u(\vp(s)) x) \; ds.
        \end{equation}

        Iterating~\eqref{eqn: refinement trick}, by induction, we obtain the following exponential decay integral estimate.
        \begin{align} \label{eqn: exp decay estimate}
        	\int_{B(M,n-1)\cap J_0} f(g_{(m+n)t} u(\vp(s)) x) \;ds &\leqslant
            (4C_1\tilde{c})^n e^{-\b \a(nt)} \sum_{ \substack{J \in \mc{P}_{m} \\  J \cap B(M,n-1) \cap J_0 \neq \emptyset} } 
            \int_J f( g_{mt} u(\vp(s)) x) \; ds \nonumber \\
            &= (4C_1\tilde{c})^n e^{-\b \a(nt)}\int_{J_0} f( g_{mt} u(\vp(s)) x) \; ds,
         \end{align}
         where on the second line, we used the following consequence of $\mc{P}_m$ being a partition.
         \begin{equation*}
         J \in \mc{P}_m, J\cap J_0 \neq \emptyset \Longrightarrow J = J_0.
         \end{equation*}
        
        Suppose  $s_0 \in J_0 \cap B(M,n-1)$.
        Then, by definition of the set $B(M,n-1)$, we have $f(g_{mt}u(\vp(s_0))x)$ is at most $M$.
        Thus, arguing as before, using~\eqref{eqn: defn of tilde C_1}, we obtain the following inequality for all $s\in J_0$,
        \begin{equation}\label{eq: first step in cpt}
            f(g_{mt}u(\vp(s))x) \leqslant \tilde{C}_1 M.
        \end{equation}  
        Combining this observation with~\eqref{eqn: exp decay estimate}, it follows that
        \begin{align}
        \int_{B(M,n-1)\cap J_0} f(g_{(m+n)t} u(\vp(s)) x) \;ds
            &\leqslant (4C_1\tilde{c})^n e^{-\b \a(nt)}  \tilde{C}_1 M |J_0|.
        \end{align}
        Hence, by Chebyshev's inequality, we obtain
        \begin{equation*}
        	\lvert B(M,n) \cap J_0 \rvert \leqslant c_0^n e^{-\b \a(nt)} |J_0|.
        \end{equation*}
        This completes the proof.
    \end{proof}
    
    The following corollary allows us to convert measure estimates into an estimate on covers.
    \begin{corollary} \label{cor: covering lemm}
    There exists a constant $C_2 \geqslant 1$, depending only on the height function $f$ and the curve $\vp$, such that the following holds.
    Suppose $M_0$ and $c_0$ are as in Proposition~\ref{propn: measure bound}.
    Then, for all $M>C_2 M_0$, $t>0$, $m,n\in \N$ with $m \geq 1/\g$ and $x\in X\backslash\set{f=\infty}$, the number of elements of the partition $\mc{P}_{e^{\a((m+n)t)}}$ needed to cover the set $B_x(M,t, m;n)\cap J_0$, for any $J_0 \in \mc{P}_{e^{\a(mt)}}$, is at most $   c_1^n e^{(1-\b)\a(nt)}$, where $c_1 = C_2 c_0$.
    \end{corollary}
    
    \begin{proof}
    	Using $(2)$ of Definition~\ref{defn: height functions}, one can find a constant $C_2 \geqslant 1$ so that the following holds.
    	Let $J \in \mc{P}_{e^{\a((m+n)t)}}$ be such that $J \cap B_x(M,t,m;n) \cap J_0 \neq \emptyset$. Then, for all $s\in J$ and all $1\leq l \leq n$,
        \begin{equation*}
            f(g_{mt}u(\vp(s))x) < C_2 M, \qquad 
            f(g_{(m+l)t}u(\vp(s))x) \geqslant C_2^{-1}M.
        \end{equation*}
        In particular, $J$ is contained in the set:
        \begin{equation*}
            B_x^{C_2}(M,t, m;n) = \set{ s: f(g_{mt}u(\vp(s)x) < C_2M,
        	f(g_{(m+l)t}u(\vp(s))x) \geqslant C_2^{-1}M, \text{ for } 1 \leq l \leq n}.
        \end{equation*}
        Moreover, since $\mc{P}_{e^{\a((m+n)t)}}$ is a refinement of $\mc{P}_{e^{\a(mt)}}$, it follows that
        \begin{equation}\label{eq: 0-1 for cor}
            J \subseteq B_x^{C_2}(M,t,m;n) \cap J_0.
        \end{equation} 
        
        The measure of the set $B_x^{C_2}(M,t, m;n)$ can be estimated as in the proof of Proposition~\ref{propn: measure bound}, where in the last step of the proof, we use the estimate $f(g_{mt}u(\vp(s))x) < C_2 M$ in place of that in~\eqref{eq: first step in cpt}.
        We, thus, obtain that
        \begin{equation}\label{eq: measure estimate for cor}
            |B_x^{C_2}(M,t,m;n) \cap J_0| \leqslant (C_2c_0)^n e^{-\b \a(nt)}|J_0|,
        \end{equation}
        where $c_0$ is the constant provided by Proposition~\ref{propn: measure bound}.
        The corollary thus follows upon combining~\eqref{eq: 0-1 for cor} and~\eqref{eq: measure estimate for cor}.
    \end{proof}

    \subsection{Integral estimates and coverings}

	For $x\in X$, $Q\subseteq X$, $ t,\d >0$ and $N \in \N$, we define the following sets
	\begin{equation} \label{defn: Z_x(M, N,t, delta)}
	Z_x(Q, N,t, \d) = \set{ s\in [-1,1]: \#\set{ 1\leq l \leq N: g_{lt}u(\vp(s))x \notin Q} > \d N}.
	\end{equation}
	To simplify notation, we denote the sets $Z_x(X_{\leq M}, N,t, \d)$ by $Z_x( M, N,t, \d)$ for all $M >0$.
    The following is the main covering result that will imply Theorem~\ref{thrm: Hdim and non-divergence}.
	\begin{proposition} \label{propn: main covering propn}
	There exists a constant $C_3 \geqslant 1$ such that the following holds.
    For all $t >0$ with $e^{\a(t)} \in \N$ and $x\in X\backslash \set{f=\infty}$, there exists $M_1 = M_1(t,x)>0$ so that for all $M>M_1$, $\d >0$ and $N\in \N$, the set $Z_x(M, N,t, \d)$ can be covered by at most 
    $ C_3^{N} e^{(1-\d\b) \a(Nt) } $ intervals of radius $e^{-\a(Nt)}$.
	\end{proposition}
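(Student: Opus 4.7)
The plan is to encode each $s \in Z_x(M,N,t,\d)$ by the pattern of excursions of its orbit outside $X_{\leq M}$, apply Corollary~\ref{cor: covering lemma} once per excursion, and use the trivial refinement bound in between; summing over admissible patterns will give the claimed count.

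Concretely, first use the log-Lipschitz property of $f$ to choose $M_1 = M_1(t,x)$ large enough that $f(u(\vp(s))x) < M$ for every $s \in [-1,1]$ whenever $M > M_1$, and also so that $M_1 > C_2 M_0(t)$, where $C_2, c_0, M_0(t)$ are the constants from Corollary~\ref{cor: covering lemma}. This is possible since $f(x)<\infty$ and $\set{u(\vp(s)) : s \in [-1,1]}$ lies in a bounded neighborhood of identity. Now fix $s \in Z_x(M,N,t,\d)$ and let
\[
E(s) := \set{1 \leq l \leq N : f(g_{lt}u(\vp(s))x) \geq M},
\]
so $|E(s)| > \d N$ by definition. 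Decompose $E(s)$ into its maximal runs of consecutive integers, producing a tuple $\mbf{p} = ((m_i,n_i))_{i=1}^k$ with $0 \leq m_1$, $m_i + n_i \leq m_{i+1}$, $m_k + n_k \leq N$, and $\sum_i n_i = |E(s)|$. Then $s$ lies in $\bigcap_{i=1}^k B_x(M,t,m_i;n_i)$ (for $m_1=0$ the condition $f(g_{m_1 t}u(\vp(s))x)<M$ is granted by our choice of $M_1$).

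For a fixed pattern $\mbf{p}$, cover $\bigcap_i B_x(M,t,m_i;n_i)$ iteratively, refining the dyadic scale as one moves along the orbit. Start with $\mc{P}_0 = \set{[-1,1]}$; refine to $\mc{P}_{m_1}$ contributing a factor $e^{\a(m_1 t)}$; on each $J_0 \in \mc{P}_{m_1}$, invoke Corollary~\ref{cor: covering lemma} to cover $B_x(M,t,m_1;n_1) \cap J_0$ by at most $c_0^{n_1} e^{(1-\b)\a(n_1 t)}$ intervals of $\mc{P}_{m_1+n_1}$; refine each of these to $\mc{P}_{m_2}$ at cost $e^{\a((m_2-m_1-n_1)t)}$; and so on, finally refining to $\mc{P}_N$ at cost $e^{\a((N-m_k-n_k)t)}$. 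Since $\a$ is additive, the refinement exponents and the $(1-\b)\a(n_i t)$ contributions telescope, giving a cover of $\bigcap_i B_x(M,t,m_i;n_i)$ by at most
\[
c_0^{\sum_i n_i} \cdot e^{\a(Nt) \;-\; \b \a((\sum_i n_i)t)} \;\leq\; c_0^N e^{(1-\b\d)\a(Nt)}
\]
intervals of $\mc{P}_N$. Summing over patterns $\mbf{p}$, whose number is bounded by the number of subsets $E \subseteq \set{1,\dots,N}$, i.e., $2^N$, covers $Z_x(M,N,t,\d)$ by at most $(2c_0)^N e^{(1-\b\d)\a(Nt)}$ intervals, giving the proposition with $C_3 = 2c_0$.

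The main technical nuisance I expect is the requirement $m \geq 1/\g$ in Corollary~\ref{cor: covering lemma}, which forces a separate treatment of the at most $\lceil 1/\g \rceil$ initial time-steps: for excursions with $m_1 < \lceil 1/\g\rceil$ one applies the log-Lipschitz bound on $f$ directly along the short initial segment, producing a multiplicative constant depending only on $t$ and $\g$ that is harmlessly absorbed into an enlargement of $C_3$. A secondary care point is ensuring $M_1$ depends on $x$ only through an upper bound on $f(x)$, which is harmless since $f(x)<\infty$, and that $e^{\a(t)} \in \N$ so that the partitions $\mc{P}_{m+n}$ refine $\mc{P}_m$ exactly, which is already in the hypothesis.
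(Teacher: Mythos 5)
Your proposal is correct and is essentially the paper's own argument: the same decomposition of the excursion set into maximal runs, the same alternation of trivial refinement with Corollary~\ref{cor: covering lemma} on each run, the same telescoping to $c_0^N e^{\a(Nt)-\b\a(|E|t)}$, and the same $2^N$ count over patterns giving $C_3=2c_0$. The one technical nuisance you flag ($m\geq 1/\g$) is resolved in the paper exactly along the lines you sketch, by folding $\sup_{s,\,l\in[0,1/\g]} f(g_l u(\vp(s))x)$ into the definition of $M_1$ so that the first $\approx 1/\g$ indices are forced into the good set.
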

    
    \begin{proof}
    Using $(2)$ of Definition~\ref{defn: height functions}, we have that
    \[ \tilde{M}_1 := \sup\limits_{\substack{s\in [-1,1], l\in [0, 1/\g] }} f(g_{lt}u(\vp(s))x) < \infty. \]
    Let $C_2 \geqslant 1$ be the constant in Corollary~\ref{cor: covering lemm} and let $M_0>0$ be as in Proposition~\ref{propn: measure bound}. Define $M_1$ as follows
    \begin{equation*}
    	M_1 := \max\set{ C_2 M_0, \tilde{M}_1 }.
    \end{equation*}
    
    Consider a set $\Phi \subseteq \set{1,\dots,N}$ containing at least $\d N$ elements.
    Define the following set of trajectories whose behavior is determined by $\Phi$:
    \begin{equation*}
    	Z(\Phi) = \set{s \in Z_x(M, N,t, \d): f(g_{lt}u(\vp(s))x) > M \textrm{ iff } l \in \Phi}.
    \end{equation*}
    Following~\cite{KKLM-SingSystems}, we decompose the set $\Phi$ into maximal connected intervals
    as follows:
    \begin{equation*}
    	\Phi = \bigsqcup_{i=1}^q B_i. 
    \end{equation*}
    Thus, we may write the set $\set{1,\dots,N}$ as disjoint union of maximal connected intervals in the following manner:
    \begin{equation*}
    	\set{1,\dots,N} = \bigsqcup_{i=1}^q B_i \sqcup \bigsqcup_{j=1}^p G_j.
    \end{equation*}
    Let $c_1 \geq 1$ be the constant in Corollary~\ref{cor: covering lemm}.
    We claim that $Z(\Phi)$ can be covered by at most $c_1^{N} e^{\a(Nt)-\b\a(|\Phi|t)}$ intervals of radius $e^{-\a(Nt)}$, where $|\Phi|$ denotes the cardinality of $\Phi$.
    Since the set $Z_x(M, N,t, \d)$ is a union of at most $2^N$ subsets of the form $Z(\Phi)$, the claim of the proposition follows by taking $C_3 = 2c_1$.

    Order the intervals $B_i$ and $G_j$ in the way they appear in the sequence $1 \leq\cdots\leq N$.
    For $1\leq r \leq p+q$, let $R_r$ denote the cardinality of the union of the first $r$ intervals in this sequence.
    In particular, $R_{p+q} = N$.
    We construct a cover by induction on $r$.
    In each step, we will show that if we write
    \[ \set{1,\dots,R_r} =  \bigsqcup_{i=1}^{r_1} B_i \sqcup \bigsqcup_{j=1}^{r_2} G_j, \]
     then the set $Z(\Phi)$ can be covered by 
     \[c_1^{R_r} e^{\a(t)\left(R_r-\b \sum_{i=1}^{r_1} |B_i|\right) }\]
     intervals of radius $e^{-\a(R_r t)}$ coming from the partition $\mc{P}_{e^{\a(R_r t)}}$.
    Note that by definition of $M_1$, we have $ 1 \in G_1 $.
    Hence, $R_1 = |G_1|$ and the first step of our induction is verified by taking all $e^{\a(R_1 t)}$ intervals of radius $e^{-\a(R_1 t)}$ which are needed to cover $[-1,1]$.
    
    Now, assume the claim holds for some $r < p+q$.
    Suppose that the $(r+1)$-st interval in the sequence of ordered intervals is of the form $G_j$ for some $1< j \leq p$.
    Let $J_0 \in \mc{P}_{e^{\a(R_r t)}}$ be an interval of radius $e^{-\a(R_rt)}$ in the cover constructed by the inductive hypothesis.
    Then, since $e^{\a(t)} \in \N$, $J_0$ contains $e^{(\a(R_{r+1}) -\a(R_r))t} = e^{\a(|G_j|t)}$ intervals of radius $e^{-\a(R_{r+1}t)}$.
    Thus, by taking all such intervals contained in each such $J_0$, we get a new cover of the desired cardinality in step $r+1$.
    
    Now, assume the $r+1$ interval in the sequence of ordered intervals is of the form $B_i$ for some $1\leq i \leq q$.
    We wish to apply Corollary~\ref{cor: covering lemm}.    
    By definition of $M_1$, we have that $ M > C_2 M_0$.
    Moreover, since $[1,1/\g]\cap\N$ is contained in $G_1$, we have that $R_r \geq 1/\g$.
    Thus, by Corollary~\ref{cor: covering lemm}, we can cover the set $B_x(M,t, R_r;|B_i|) \cap J_0$ by 
    \begin{align*}
       	c_1^{|B_i|} e^{(1-\b) \a(|B_i|t) }
     \end{align*}
     intervals of radius $e^{-\a((R_r+|B_i|)t)}$.
     Moreover, we have that
     \[ Z(\Phi) \subseteq B_x(M,t, R_r;|B_i|). \]
     Thus, the inductive step holds in this case as well by the inductive hypothesis on the number of the intervals $J_0 \in \mc{P}_{e^{\a(R_r t)}}$ needed to cover $Z(\Phi)$.
    \end{proof}

    \subsection{Proof of Theorem~\ref{thrm: Hdim and non-divergence}}
	
    Having established Proposition~\ref{propn: main covering propn}, the proof of Theorem~\ref{thrm: Hdim and non-divergence} follows the same lines as in~\cite{KKLM-SingSystems}.
    Let $x\in X$ and let $Z_x \subseteq [-1,1]$ denote the set of points $s$ for which the trajectory $g_t u(\vp(s))x$ diverges on average.
	To prove part $(1)$ of the theorem, we first note that for all compact sets $Q\subset X$ and for all $0<\d< 1$,
        \begin{align} \label{Z contained in liminf set}
        	Z_x \subseteq \liminf_{N\r\infty} Z_x(Q,N,t,\d)
        		= \bigcup_{N_0\geq 1} \bigcap_{N\geq N_0} Z_x(Q,N,t,\d),
        \end{align}
        where the sets $Z_x(Q,N,t,\d)$ were defined in~\eqref{defn: Z_x(M, N,t, delta)}.
        We wish to apply Proposition~\ref{propn: main covering propn} by taking $Q = X_{\leq M}$ for an appropriate $M$.
        
        Fix some $t>0$ and let $M_1 = M_1(t,x) >0$ be as in Proposition~\ref{propn: main covering propn}. 
        Suppose $M>M_1$ and $\d \in (0,1)$. Then, Proposition~\ref{propn: main covering propn} says that we can cover $Z_x(X_{\leq M} ,N,t,\d)$ by at most $ C_3^{N} e^{(1-\d\b) \a(Nt) } $ intervals of radius $e^{-\a(Nt)}$, where $C_3 \geq 1$ is a constant which is independent of $x, t$ and $N$.
        
         Then, we have
        \[
        	\overline{\dim}_{box}\left(\bigcap_{N\geq N_0} Z_x(Q,N,t,\d) \right)
            \leq \lim_{N\r\infty} \frac{N\log(C_3)+(1-\d\b) \a(Nt)}{ \a(Nt)} 
            = \frac{\log(C_3)+(1-\d\b) \a(t)}{ \a(t)}, 
        \]
        where for a set $A\subseteq [-1,1]$, $\overline{\dim}_{box}(A)$ denotes its upper box dimension.
        
        Since $Z_x$ is contained in countably many such sets by~\eqref{Z contained in liminf set} and since the upper box dimension dominates the Hausdorff dimension (which is stable under countable unions), we get that 
        \[ dim_H(Z_x) \leqslant \frac{\log(C_3)}{ \a(t)} + 1-\d\b, \]
        where $dim_H$ denotes the Hausdorff dimension.
        Taking the limit as $t\r\infty$ and $\d \r 1$, we obtain the desired dimension bound.
        
        Part $(2)$ of Theorem~\ref{thrm: Hdim and non-divergence} follows from Proposition~\ref{propn: main covering propn} and the Borel-Cantelli Lemma.
        More precisely, it follows from the statement of the Proposition that the set $Z_x(M,N,t,\d)$ has measure at most $C_3^N e^{-\d\b\a(Nt)}$.
        Choosing $t >0$ (and hence $M$) to be large enough, depending on $\d$ and $C_3$, we see that the measures of these sets are summable in $N$.

    \section{Bounded Orbits and Schmidt Games} 
\label{section: schmidt games}

	We describe a version of Schmidt's games played on intervals of the real line.
	These games were introduced in~\cite{KleinbockWeiss-SchmidtExpanding,KleinbockWeiss-SchmidtSLn+m} in the general setting of connected Lie groups building on earlier ideas of Schmidt~\cite{Schmidt-Games}.
    
    Fix a compact interval $I_0 \subset \R$ and a positive constant $\s>0$.
    For each $t>0$, consider the following contraction of $\R$:
    \[ \Phi_t (x) = e^{-\s t} x. \]
    Denote by $\mf{F} = \set{\Phi_t: t>0}$ this one-parameter semigroup of contractions.
    
    Now pick two real numbers $a,b >0$ and, following~\cite{KleinbockWeiss-SchmidtSLn+m,KleinbockWeiss-SchmidtExpanding}, we define a game,
played by two players Alice and Bob.
	First, Bob picks $t_0 >0$ and $x_1 \in \R$ so that the set $B_1 = \Phi_{t_0}(I_0) + x_1$ is contained in $I_0$.
	Then, Alice picks a translate $A_1$ of $\Phi_a(B_1)$ which
is contained in $B_1$, Bob picks a translate $B_2$ of $\Phi_b(A_1)$ which is contained in $A_1$, after that Alice picks a translate $A_2$ of $\Phi_a(B_2)$ which is contained in $B_2$, and so on.
	In other words, for $k \in \N$, we set
    \begin{equation} \label{eqn: t_k and s_k}
    	t_k = t_0 + (k-1)(a+b), \quad \text{and} \quad
        s_k = t_k + a.
    \end{equation}
    Thus, at the $k^{th}$ step of the game, Alice picks a translate $A_k$ of $\Phi_{s_k}(I_0)$ which is contained inside $B_k$. Then, Bob picks a translate $B_{k+1}$ of $\Phi_{t_{k+1}}(I_0)$ which is contained inside $A_k$.
    From compactness of $I_0$ and the definition of the sets $A_k$ and $B_k$, we see that the following intersections
    \begin{equation} \label{eqn: intersection}
    \bigcap_{k\geqslant 1} A_k = \bigcap_{k\geqslant 1} B_k
    \end{equation}
    are non-empty and consist of a single point. Note also that
    \begin{equation} \label{eqn: diameter of A_k and B_k}
    	\mathrm{diam}(A_k) = e^{-\s s_k} \mathrm{diam}(I_0), \quad
        \mathrm{diam}(B_k) = e^{-\s t_k} \mathrm{diam}(I_0),
    \end{equation}
    where the diameter of sets is with respect to the standard metric on $\R$.
    This game is referred to as the $\mathbf{(a,b)-}$\textbf{modified Schmidt game} on $I_0$.
    
    A subset $S \subseteq \R$ is said to be $\mathbf{(a,b)-}$\textbf{winning} if Alice can always pick her translates $A_k$ so that the point in the intersection~\eqref{eqn: intersection} always belongs to $S$, no matter how Bob picks his translates $B_k$.
    We say $S$ is $\mathbf{a}$-\textbf{winning} if it is $(a,b)$-winning for all $b>0$ and \textbf{winning} if it is $a$-winning for some $a$.
    
    \subsection{Admissible Curves and Induced Games}

	Suppose $G$ is a connected Lie group with Lie algebra $\mf{g}$ and $g_t$ is a $1$-parameter $\mrm{Ad}$-diagonalizable subgroup of $G$.
    Consider a $g_t$-admissible curve $\vp:I_0 \r \mf{g}$ as defined in~\ref{defn: admissible curves}, where $I_0$ is a compact interval in $\R$.
    Then, $g_t$ induces a Schmidt game on $I_0$ in the sense described above as follows.
    
    Suppose $\mf{g}_\a \subset \mf{g}$ is the eigenspace for the Adjoint action of $g_t$ which contains the image of $\vp$.
    The $\mf{F}_\a$-induced game on $I_0$ is given by the action of the one parameter semigroup $\mf{F}_\a = \set{\Phi_t: t>0}$ where for every $x\in \R$,
    \[ \Phi_t(x) = e^{-\a(t)}x = e^{-\a(1)t}x, \]
	and $\a(t)$ is the eigenvalue of $g_t$ corresponding to the eigenspace $\mf{g}_\a$ as in~\eqref{defn: eigenvalue}.

    The main result of this section states that the contraction hypothesis in addition to the following continuity property of the height function $f$ along unipotent orbits imply the winning property of bounded orbits.
    \begin{assumption} \label{assumption: bounded connected components}
    There exists $N\in \N$ such that for every $T, R>0$, there exists $M_1>0$ such that for all $x\in X$, $Y\in \mf{g}_\a$, $\norm{Y}\leq R$ and $M>M_1$, the following holds.
    \begin{equation} \label{eqn: bounded connected components}
    	\text{The set } \set{|s|\leqslant T: f(u(sY) x) > M} \text{ has at most } N \text{ connected components}.
        \tag{$\ast$}
    \end{equation}
    \end{assumption}

	The following is the main result of this section.
    \begin{theorem} \label{thrm: schmidt games}
    Let $X$ be a metric space equipped with a proper $G$-action. Suppose $g_t$ is an $Ad$-diagonalizable $1$-parameter subgroup of $G$ and $\vp:I_0\r \mf{g}$ is a $g_t$-admissible curve (Def.~\ref{defn: admissible curves}) satisfying the $\b$-contraction hypothesis (Def.~\ref{defn: height functions}) on $X$ for some $\b>0$.
    Assume further that the height function $f$ satisfies Assumption~\ref{assumption: bounded connected components}.
        Then, there exists $a_\ast>0$ such that for all $x\in X$ with $f(x)<\infty$, the set
        \begin{equation} \label{eqn: bndd orbits}
        \set{s\in I_0: \overline{\set{g_tu(\vp(s))x :t>0}} \text{ is compact in } X  } 
        \end{equation} 
is $a$-winning for the $\mf{F}_\a$-induced modified Schmidt game on $I_0$ for all $a > a_\ast$.    
    \end{theorem}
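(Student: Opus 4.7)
The plan is to construct an explicit winning strategy for Alice in which the height function $f$ steers the limit point $s_\infty$ away from high excursions of the orbit. After affinely rescaling $I_0$ to $[-1,1]$, at the $k$-th step Bob has played an interval $B_k\subset I_0$ of diameter $\sim e^{-\a(t_k)}$, with $t_k$ as in~\eqref{eqn: t_k and s_k}. The orbit time $T_k:=t_k$ is precisely the scale at which the horocycle-partition machinery of Section~\ref{section: abstract setup} matches Bob's game scale. Declare the \emph{bad set at level $k$} to be
\[ E_k := \set{ s\in B_k : f\bigl(g_{T_k}u(\vp(s))x\bigr) > M } \]
for a threshold $M=M(a,b)$ to be chosen large, and require Alice to play $A_k\subset B_k\setminus E_k$.

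The first step is a uniform-in-$k$ measure bound on $E_k$. This is essentially the one-step case ($n=1$) of Proposition~\ref{propn: measure bound}, applied with time unit $a+b$ and anchored at time $T_{k-1}$: the inductive hypothesis that $f(g_{T_{k-1}}u(\vp(s))x) \leq M$ for every $s\in A_{k-1}$ (which follows from Alice's previous move together with~\eqref{defn: log lipschitz} to upgrade from a single point $s_\ast\in A_{k-1}\setminus E_{k-1}$ to all of $A_{k-1}$) allows one to run the single-slab estimate of Proposition~\ref{propn: measure bound} and obtain
\[ |E_k| \leq c_0\, e^{-\b\a(a+b)}\,|B_k| \]
with $c_0\geq 1$ an absolute constant, provided $M>M_0(a+b)$. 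The fact that Bob's $B_k$ is an arbitrary translate rather than a partition interval from $\mc{P}_{e^{\a(T_k)}}$ is absorbed into a universal constant by a standard covering argument, exactly as in the passage from $\mc{P}_k$ to $\mc{P}_{k-1}$ in the proof of Proposition~\ref{propn: measure bound}.

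Next, Assumption~\ref{assumption: bounded connected components} (applied after the horocycle approximation $g_{T_k}u(\vp(s))x\approx u(e^{\a(T_k)}(s-s_\ast)\dot\vp(s_\ast))\cdot g_{T_k}u(\vp(s_\ast))x$, which converts the curve into a unipotent orbit modulo an $O(1)$ group element absorbed via~\eqref{defn: log lipschitz}) forces $E_k$ to consist of at most $N$ connected components. Consequently $B_k\setminus E_k$ contains an interval of length at least $(1-c_0 e^{-\b\a(a+b)})|B_k|/(N+1)$. Alice places her translate $A_k$ of $\Phi_a(B_k)$, of length $e^{-\a(a)}|B_k|$, inside this subinterval. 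This is possible for every $b>0$ as soon as $a>a_\ast$, where $a_\ast$ is determined by the two uniform-in-$b$ conditions $e^{-\a(a)}\leq 1/(2(N+1))$ and $c_0\,e^{-\b\a(a)}\leq 1/2$.

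Iterating, the unique $s_\infty\in\bigcap_k A_k$ satisfies $f(g_{T_k}u(\vp(s_\infty))x)\leq M$ for every $k$. Passing from this discrete control to boundedness of the full orbit is immediate from~\eqref{defn: log lipschitz} applied over the fixed gap $T_{k+1}-T_k=a+b$: there exists $C=C(a+b)\geq 1$ with $f(g_Tu(\vp(s_\infty))x)\leq CM$ for all $T\geq T_0$, and property $(4)$ of Definition~\ref{defn: height functions} then places the forward orbit in the compact set $X_{\leq CM}$. The main technical obstacle is the bookkeeping in the second paragraph—verifying that one slab of Proposition~\ref{propn: measure bound} applies with base interval an arbitrary translate of $\Phi_{t_k}(I_0)$ and that the inductive bound $f\leq M$ propagates across the whole of $A_{k-1}$ rather than just at a single point; once this is in place, Assumption~\ref{assumption: bounded connected components} does the essential work of turning a small-measure bad set into an interval large enough for Alice to play inside.
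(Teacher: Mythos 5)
Your overall strategy matches the paper's: define a bad set via a threshold on $f$, bound its measure by a one-step contraction estimate plus Chebyshev, invoke Assumption~\ref{assumption: bounded connected components} to bound the number of components, have Alice play in a large complementary subinterval, and set $a_\ast$ so that such a subinterval always exists.

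However, the anchoring in your second paragraph is off by one game step, and as written that step fails. You define $E_k$ at orbit time $T_k=t_k$ inside $B_k$ and claim the measure bound $|E_k|\leq c_0\,e^{-\b\a(a+b)}|B_k|$ from a single slab of Proposition~\ref{propn: measure bound} anchored at time $T_{k-1}$. But the approximation mechanism (Lemma~\ref{lemma: approximate curve with tangent}) underlying that estimate requires the base interval to have radius at least $e^{-\a(\text{anchor time})}$: with anchor $T_{k-1}$, the required radius is $e^{-\a(t_{k-1})}$, whereas $B_k$ (and a fortiori $A_{k-1}$) has radius $\leq e^{-\a(t_k)} = e^{-\a(t_{k-1})-\a(a+b)}$, which is strictly too small, so the cover $J\subseteq (J+re^{-\a(t_{k-1})})\cup(J-re^{-\a(t_{k-1})})$ does not hold. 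The remedy is the shift the paper performs: Alice's move $A_k$ should guarantee $f(g_{t_{k+1}}u(\vp(s))x)\leq M$ on $A_k$; the bad set inside $B_k$ is then defined at time $t_{k+1}$; and the one-step contraction~\eqref{eqn: main game estimate} is anchored at $t_k$ with interval $B_k$ of exactly matching radius $e^{-\a(t_k)}$. With this reindexing the remainder of your proposal --- the component count from Assumption~\ref{assumption: bounded connected components}, the subinterval selection, the choice of $a_\ast$ in terms of $N$ and $\b$, and the log-Lipschitz upgrade to full-orbit boundedness --- goes through essentially as in the paper. One small further simplification: Lemma~\ref{lemma: approximate curve with tangent} applies directly to the arbitrary translate $B_k$, so the covering argument you invoke to pass from partition intervals to arbitrary translates is unnecessary.
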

    
    \begin{corollary}[Corollary 3.4,~\cite{KleinbockWeiss-SchmidtSLn+m}]
    \label{cor: thickness of bounded orbits}
    Under the same hypotheses of Theorem~\ref{thrm: schmidt games}, the set in~\eqref{eqn: bndd orbits} is thick in $I_0$.
    \end{corollary}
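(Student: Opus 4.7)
The plan is to derive thickness directly from the winning property of Theorem~\ref{thrm: schmidt games} by exploiting the location-freedom of Bob's initial move and the standard dimension bound for winning sets. Let $S \subseteq I_0$ denote the set of bounded orbits in~\eqref{eqn: bndd orbits}, and fix $a>a_\ast$ and $b>0$ arbitrary.

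First, observe that Bob's first move in the $\mf{F}_\a$-induced game consists of choosing $t_0>0$ and $x_1\in \R$ so that $B_1 = \Phi_{t_0}(I_0)+x_1 \subseteq I_0$, subject only to this containment. Given any non-empty open subset $U \subseteq I_0$, Bob may therefore choose $t_0$ large enough and $x_1$ appropriately so that $B_1 \subset U$. Once $B_1$ is chosen, the remaining play of the $(a,b)$-game takes place entirely inside $B_1$, and the sequence of admissible translates $A_k, B_{k+1}, \dots$ for $k\geq 1$ is determined by the same rules, just with all scales multiplied by $e^{-\s t_0}$. In particular, any winning strategy for Alice in the original $(a,b)$-game on $I_0$ induces a winning strategy against every choice of $B_1$, and the resulting limit point in~\eqref{eqn: intersection} lies in $S \cap B_1 \subseteq S \cap U$.

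Second, I would invoke the standard dimension estimate for winning sets in modified Schmidt games (established in~\cite{KleinbockWeiss-SchmidtSLn+m} in precisely this setting): if $S$ is $a$-winning for the $\mf{F}_\a$-induced game on a compact interval $J$, then
\begin{equation*}
\dim_H(S \cap J) \;\geq\; 1 - \frac{\log\kappa(a,b)}{\sigma(a+b)}
\end{equation*}
for an explicit constant $\kappa(a,b)$ coming from a Cantor-set construction along Alice's strategy, with $\kappa(a,b) \to 1$ as $b \to 0^+$. Since by Theorem~\ref{thrm: schmidt games} the set $S$ is $a$-winning for every $b>0$ (as long as $a>a_\ast$), letting $b \to 0^+$ yields $\dim_H(S \cap J) = 1$.

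Combining these two observations for $J = \overline{B_1} \subseteq U$ gives $\dim_H(S \cap U) \geq \dim_H(S \cap B_1) = 1$, which is the definition of thickness of $S$ in $I_0$. The only non-trivial ingredient is the dimension bound for $(a,b)$-winning sets, but this is a classical Cantor-construction argument adapted verbatim from~\cite[Corollary 3.4]{KleinbockWeiss-SchmidtSLn+m}, so in practice the proof reduces to checking that the $\mf{F}_\a$-induced game here fits into that framework, which it does by construction.
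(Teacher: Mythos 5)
Your argument is correct and is essentially the paper's: the paper proves this corollary simply by citing Corollary 3.4 of~\cite{KleinbockWeiss-SchmidtSLn+m}, and your two steps (localizing the game inside an arbitrary open set via Bob's free choice of $t_0$ and $x_1$, then letting $b\to 0^+$ in the standard Cantor-construction dimension bound for $(a,b)$-winning sets) are exactly the content of that cited result. Nothing further is needed.
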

    
    \begin{remark}
    The contraction hypothesis alone, without Assumption~\ref{assumption: bounded connected components}, can be used to show Corollary~\ref{cor: thickness of bounded orbits}.
    This can be done by a straightforward adaptation of the argument in~\cite{KleinbockWeiss-BoundedModuli}.
    \end{remark}
    
    \begin{proof}[Proof of Theorem~\ref{thrm: schmidt games}]
    Denote by $f$ the height function in the definition of the $\b$-contraction hypothesis.
    Suppose $\tilde{c}\geqslant 1$ is as in~\eqref{eqn: CH} and $C_1 \geqslant$ is the constant in the conclusion of Lemma~\ref{lemma: approximate curve with tangent}.
    
    Next, let $C_H$ denote the H\"{o}lder constant of $\dot{\vp}$.
    Let $\mc{O}$ denote a compact neighborhood of identity in $G$ containing the image under the exponential map of a ball of radius $C_H |I_0|$ around $0$ in $\mf{g}$.
    Denote by $C = C_\mc{O} \geq 1$ a constant so that~\eqref{defn: log lipschitz} holds.
    
    Let $N\in \N$ be as in Assumption~\ref{assumption: bounded connected components}.
    Choose $a_\ast$ to be sufficiently large so that
    \begin{equation} \label{defn: a star}
    	\a(a_\ast) \geqslant  \frac{ \log (40\tilde{c}C_1 C_\mc{O}^2 ) }{\b} + \log\left( 10 (N+1) \right).
    \end{equation}
    Fix some $a> a_\ast, b>0$ and $x\in X$.
    We show that there exists some $M\geqslant 1$ and a choice of subintervals $A_k$ for Alice so that for all $k\geqslant 1$ and all $s\in A_k$, we have
    \begin{equation} \label{Alice wins}
    	f( g_{t_{k+1}} u(\vp(s))x) \leqslant M,
    \end{equation}
    where $t_k$ is given by~\eqref{eqn: t_k and s_k}.
    Thus, by Definition~\ref{defn: height functions}, this shows that the point $s_0$ in the intersection~\eqref{eqn: intersection} will have that $g_t u(\vp(s_0))x$ is bounded in $X$ for all $t>0$.
    
    By Definition~\ref{defn: height functions}, there exists a constant $\tilde{b}>0$ depending on $a$ and $b$, so that the following holds for all $y\in X$ and all $s\in I_0$.
    \begin{equation} \label{eqn: a+b contraction}
    	\frac{1}{2} \int_{-1}^1 f(g_{a+b} u(r\dot{\vp}(s))y)\;dr \leqslant \tilde{c}e^{-\b\a(a+b)}  f(y) +\tilde{b}.
    \end{equation}
    
    Now, suppose that Bob chose some $t_0>0$ and a subinterval $B_1 \subset I_0$ to initialize the game.
    Let $\g >0$ be the H\"{o}lder exponent of the derivative of $\vp$ and define $M_0$ as follows:
    \begin{equation*}
    M_0 := \sup_{s\in I_0, 1\leqslant j \leqslant 1/\g +1} f(g_{j(a+b)+t_0} u(\vp(s))x).
    \end{equation*}
    By the properties of $f$ in Definition~\ref{defn: height functions} and the compactness of $I_0$, it follows that $M_0 $ is finite.
    
    Let $T = e^{\a(a+b)}|I_0|/2$ and $R = \sup_{s\in I_0} \norm{\dot{\vp}(s)}$.
    Let $M_1>0$ be as in Assumption~\ref{assumption: bounded connected components} applied with $T$ and $R$.
    Define $M$ by
    \begin{equation} \label{defn: M}
    	M = 40\tilde{b}C_1 C_\mc{O}^2  + M_0 + M_1 C_\mc{O},
    \end{equation}
    where $\a$ is such that $\mf{g}_\a$ is the eigenspace inside $\mf{g}$ containing the image of $\vp$.
    
    In the first $\lfloor 1/\g+1 \rfloor$ steps of the game, Alice may choose her intervals $A_k \subset B_k$ anyway she likes. By definition of $M_0$ and $M$,~\eqref{Alice wins} is satisfied for $1\leqslant k \leqslant 1/\g+1$.
    
    The rest of the proof consists of $2$ steps. First, we show that no matter how Bob chooses his sets $B_k$, the following integral estimate will always be satisfied for all $k\geqslant 1/\g + 1$:
    \begin{equation} \label{eqn: main game estimate}
    	\frac{1}{|B_k|}\int_{B_k} f(g_{t_{k+1}}u(\vp(s))x)\;ds \leqslant
        2\tilde{c}C_1 e^{-\b\a(a+b)} \frac{1}{|B_k|}\int_{B_k} f(g_{t_{k}}u(\vp(s))x)\;ds + 2\tilde{b}C_1.
    \end{equation}
    Then, we show that the estimate~\eqref{eqn: main game estimate} implies that Alice can choose her sets $A_k \subset B_k$ so that~\eqref{Alice wins} is satisfied, completing the proof.
    
    To show~\eqref{eqn: main game estimate}, let $k\geqslant 1/\g + 1$ and let $B_k \subset I_0$ be a subinterval of length $e^{- \a(t_k) } |I_0|$.
    By an argument identical to that of Lemma~\ref{lemma: approximate curve with tangent}, it follows that
    \begin{equation*}
    	\int_{B_k} f(g_{t_{k+1}}u(\vp(s))x)\;ds \leqslant 
    		C_1 \int_{B_k} \int_{-1}^1 f(g_{a+b} u(r \dot{\vp}(s))g_{t_k}u(\vp(s))x)\;dr ds.
    \end{equation*}
    Then, by~\eqref{eqn: a+b contraction}, we get
    \begin{equation*}
    	\int_{B_k} f(g_{t_{k+1}}u(\vp(s))x)\;ds \leqslant 
        2C_1 \tilde{c}e^{-\b\a(a+b)} \int_{B_k} f(g_{t_{k}}u(\vp(s))x)\;ds + 2C_1 \tilde{b} |B_k|.
    \end{equation*}
    This proves~\eqref{eqn: main game estimate}. We complete the proof by induction, noting that~\eqref{Alice wins} is satisfied for all $1\leqslant k \leqslant 1/\g +1$.
    Since $B_k \subset A_{k-1}$, by the induction hypothesis, we get that for all $s\in B_k$, $f(g_{t_k}u(\vp(s))x) \leqslant M$.
    Thus, the estimate in~\eqref{eqn: main game estimate} becomes
    \begin{equation*}
    \frac{1}{|B_k|}\int_{B_k} f(g_{t_{k+1}}u(\vp(s))x)\;ds \leqslant
        2\tilde{c}C_1 e^{-\b\a(a+b)} M + 2\tilde{b}C_1.
    \end{equation*}    
    By Chebyshev's inequality, the fact that $a>a_\ast$ chosen in~\eqref{defn: a star}, and the choice of $M$ in~\eqref{defn: M}, we obtain the following measure estimate:
    \begin{align} \label{eqn: measure}
    \left|\set{ s\in B_k:  f(g_{t_{k+1}} u(\vp(s))x) > M/C^2_\mc{O}} \right|
    &\leqslant
    \left[ 2\tilde{c}C_1 C^2_\mc{O} e^{-\b\a(a+b)} + \frac{ 2\tilde{b}C_1 C^2_\mc{O}}{M} \right] |B_k| \nonumber\\
    &\leqslant  |B_k|/10.
    \end{align}
    
	Let $s_0$ be the center of the interval $B_k$ and let $s\in B_k$ be any other point.
    Then, we have that
    \[ g_{t_{k+1}} u(\vp(s)) = u(O\left( e^{\a(t_{k+1}-(1+\g)t_k)} \right)) u(r \dot{\vp}(s_0)) 
    g_{t_{k+1}} u(\vp(s_0)), \]
	where $r = (s-s_0)e^{\a(t_{k+1})}$ and $\g$ is the H\"{o}lder exponent of $\dot{\vp}$.
    Since $k \geq 1/\g+1$, the element $u(O\left( e^{\a(t_{k+1}-(1+\g)t_k)} \right))$ belongs to our chosen bounded neighborhood $\mc{O}$ of identity which is independent of all the parameters.
    Hence, by the log Lipschitz property~\eqref{defn: log lipschitz} of $f$, we obtain
    \begin{equation}\label{eqn: affine transform}
     f(  u(r \dot{\vp}(s_0)) g_{t_{k+1}} u(\vp(s_0)) x) > M/C_\mc{O} 
     \Longrightarrow f(g_{t_{k+1}} u(\vp(s)) x) > M/C^2_\mc{O} ,\quad r = (s-s_0)e^{\a(t_{k+1})}.
    \end{equation}
    Moreover, since $|B_k| = e^{-\a(t_k)}|I_0|$, we have $|r| \leq e^{\a(a+b)}|I_0|/2 =T$.
    
    Thus, since $M/C_\mc{O}  > M_1$, by Assumption~\ref{assumption: bounded connected components}, the set
    \begin{equation*}
    \set{ |r|\leq T:  f(  u(r \dot{\vp}(s_0))  g_{t_{k+1}} u(\vp(s_0)) x) > M/C_\mc{O} }
    \end{equation*}
    has at most $N$ connected components.
    In particular, the complement of this set has at most $N+1$ connected components (intervals).
    
    Moreover, the measure estimate in~\eqref{eqn: measure}, combined with~\eqref{eqn: affine transform}, imply that 
    \begin{align} \label{eqn: horocycle measure}
      \left |\set{ |r|\leq T:  f(  u(r \dot{\vp}(s_0))  g_{t_{k+1}} u(\vp(s_0)) x) > M /C_\mc{O} } \right|
      &\leqslant   2T/10.
    \end{align}
    Denote by $Q$ the set on the left-hand side of~\eqref{eqn: horocycle measure}.
     Suppose that each connected component of $[-T,T]\backslash Q$ has length at most $2e^{-\a(a)}T$. 
     Then, since $[-T,T]\backslash Q$ has at most $N+1$ components, we get that 
     \[|[-T,T]\backslash Q| \leqslant 2(N+1)e^{-\a(a)}T < 2T/10,\]
     by the choice of $a$.
     This contradicts the measure estimate in~\eqref{eqn: horocycle measure}.
     
     It follows that we can find a subinterval $\tilde{A}_k$ of $[-T,T]$ of length $2e^{-\a(a)}T$ which is disjoint from the set in~\eqref{eqn: horocycle measure}.
     Let $A_k$ be defined as follows:
     \begin{equation*}
     	A_k = e^{-\a(t_{k+1})}\tilde{A}_k + s_0.
     \end{equation*}
     Then, $A_k$ is a subinterval of $B_k$ of length $e^{-\a(a)}|B_k|$.
     Moreover, applying the the log Lipschitz property of $f$ once more, we see that for all $s\in A_k$,
     \begin{equation*}
     f(  u(r \dot{\vp}(s_0)) g_{t_{k+1}} u(\vp(s_0)) x) \leqslant M/C_\mc{O} 
     \Longrightarrow f(g_{t_{k+1}} u(\vp(s)) x) \leqslant M ,\quad r = (s-s_0)e^{\a(t_{k+1})}.
     \end{equation*}
    This proves~\eqref{Alice wins} and concludes the proof.

    \end{proof}

    \section{The Contraction Hypothesis and Shrinking Curves}
\label{section: shrinking nondivergence}

	The purpose of this section is to demonstrate the link between the contraction hypothesis and the growth of orbits.
    In all the situations we consider, the height function $f$ which satisfies the contraction hypothesis
    also has the property that the ratio of $1+\log f(\cdot)$ and $1+d(\cdot,x_0)$ is uniformly bounded from above and below for any fixed base point $x_0 \in G/\G$, where $d(\cdot,\cdot)$ is the Riemannian metric  on $G/\G$.

    In fact, we establish the much stronger statement on the quantitative non-divergence of expanding translates of shrinking segments of admissible curves.
    In particular, Proposition~\ref{propn: non-divergence of shrinking curves} below implies that orbits with linear growth have measure $0$ using the Borel-Cantelli lemma along with Chebyshev's inequality.
    Throughout this section, we retain the same notation as in Section~\ref{section: abstract setup}.

	\begin{proposition} \label{propn: non-divergence of shrinking curves}
    	Let $G$ be a real Lie group and $X$ be a metric space equipped with a proper $G$-action. Suppose $g_t$ is an $\mrm{Ad}$-diagonalizable one-parameter subgroup of $G$ and $\vp$ is a $g_t$-admissible curve satisfying the $\b$-contraction hypothesis on $X$. Suppose $\d \in [0,\b)$ is fixed.
        Then, for all $x_0\in X$ with $f(x_0)<\infty$,
    	\begin{equation*}
        	\sup\limits_{\substack{t\geqslant 0,s_0\in[-1,1]\\ J_t + s_0 \subseteq [-1,1] }}
            \frac{1}{|J_t|} \int_{J_t+s_0} f(g_t u(\vp(s)) x_0) \;ds < \infty,
    	\end{equation*}
        where $J_t := [-e^{-\d \a(t)}, e^{-\d \a(t)}]$. Moreover, the supremum can be taken to be uniform over base points $x_0 \in \set{f \leqslant M}$ for any $M>0$.
	\end{proposition}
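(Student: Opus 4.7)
The plan is to iterate the local estimate of Lemma~\ref{lemma: approximate curve with tangent} directly on the shrinking window $J_t+s_0$, pushing backward to a fixed initial scale at which the integrand can be controlled by compactness. First, fix $t_0>0$ large enough that $A := 2\tilde{c}C_1 e^{-\beta\alpha(t_0)} < 1$, where $\tilde{c}$ appears in the contraction hypothesis~\eqref{eqn: CH} and $C_1$ is the constant from Lemma~\ref{lemma: approximate curve with tangent}. By the log Lipschitz property of $f$ applied to the bounded set $\{g_\epsilon : \epsilon\in[0,t_0]\}\subset G$, it suffices to prove the bound for $t = Nt_0$ with $N\in\mathbb{N}$.

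The technical core is an iterated contraction estimate: for every interval $J\subseteq[-1,1]$ satisfying $|J|\geq e^{-\alpha((n+k-1)t_0)}$ and every $n \geq \lceil 1/(\gamma\alpha(t_0))\rceil$, $k\geq 0$,
\[
\int_J f\bigl(g_{(n+k)t_0}u(\varphi(s))x_0\bigr)\,ds \;\leq\; A^k\int_J f\bigl(g_{nt_0}u(\varphi(s))x_0\bigr)\,ds \;+\; \frac{2\tilde{b}(t_0)C_1}{1-A}\,|J|.
\]
This is proved by induction on $k$: each step combines Lemma~\ref{lemma: approximate curve with tangent} with~\eqref{eqn: CH}, in the same way as the derivation of~\eqref{eqn: first step removed} in the proof of Proposition~\ref{propn: measure bound}, but applied to the fixed interval $J$ rather than to partition elements. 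Because the size constraint $|J|\geq e^{-\alpha(mt_0)}$ is monotone decreasing in $m$, satisfying it at the final step $m=n+k-1$ automatically guarantees it at every earlier step, so no refinement is required.

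I then apply this with $J = J_t+s_0$, $n_0 := \lceil 1/(\gamma\alpha(t_0))\rceil$, $t = Nt_0$ and $k = N-n_0$. The required scale constraint $2e^{-\delta\alpha(Nt_0)}\geq e^{-\alpha((N-1)t_0)}$ reduces to $(1-\delta)N\alpha(t_0)\geq \alpha(t_0)-\log 2$, which holds for $N \geq N_0(\delta,t_0)$ thanks to $\delta<1$ (implied by $\delta<\beta\leq 1$); the range $t\leq N_0 t_0$ is handled directly by continuity of $f$ and compactness of $\varphi([-1,1])$. The initial integral is bounded by $\int_J f(g_{n_0 t_0}u(\varphi(s))x_0)\,ds \leq |J_t|\cdot M_0(x_0)$, where
\[
M_0(x_0) := \sup_{s\in[-1,1]} f\bigl(g_{n_0 t_0}u(\varphi(s))x_0\bigr);
\]
this is finite because $n_0$ and $t_0$ are fixed constants and $\varphi([-1,1])$ is compact, so the relevant orbit lies in a compact subset of $X\setminus\{f=\infty\}$. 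Dividing through by $|J_t|$ yields the uniform estimate $M_0(x_0) + 2\tilde{b}(t_0)C_1/(1-A)$, independent of both $N$ and $s_0$. Uniformity over $x_0\in\{f\leq M\}$ follows from one final application of the log Lipschitz property to the compact set $\{g_{n_0 t_0}u(\varphi(s)):s\in[-1,1]\}\subset G$, giving $M_0(x_0)\leq C(t_0,\varphi)\cdot f(x_0)\leq C(t_0,\varphi)\cdot M$. The only nontrivial point to verify is that the iteration runs on the single interval $J_t+s_0$ rather than on refinements, which is precisely where the monotonicity of the size constraint (and hence the assumption $\delta<1$) is used.
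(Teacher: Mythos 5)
There is a genuine gap, and it sits exactly at the step you flag as ``the only nontrivial point to verify.'' The size constraint in Lemma~\ref{lemma: approximate curve with tangent} for passing from time $mt_0$ to $(m+1)t_0$ is that $J$ have radius at least $e^{-\a(mt_0)}$; this threshold \emph{decreases} as $m$ grows, so the constraint is most restrictive at the \emph{earliest} step of the iteration, not the latest. Verifying it at $m=N-1$ does not imply it at $m=n_0$ — the implication goes the other way. Concretely, with $J=J_{Nt_0}+s_0$ of radius $e^{-\d\a(Nt_0)}$, the first application of the lemma at the fixed time $n_0t_0$ requires $e^{-\d\a(Nt_0)}\geqslant e^{-\a(n_0t_0)}$, i.e.\ $\d N\leqslant n_0$, which fails for every $N>n_0/\d$ whenever $\d>0$. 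You cannot repair this by starting the iteration at the first admissible time $m\approx \d N$, because then the initial integral $\int_J f(g_{\lceil\d N\rceil t_0}u(\vp(s))x_0)\,ds$ involves an unbounded time and is no longer controlled by compactness. A further warning sign is that your argument only ever uses $\d<1$, whereas the proposition assumes $\d<\b$; a correct proof must use the stronger hypothesis.

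The paper's proof avoids this by running the induction on a \emph{nested family} of windows rather than a single fixed interval: at step $n$ one integrates over $J_{nt}+s_0$ (radius $e^{-\d\a(nt)}$, which always meets the lemma's requirement of radius $e^{-\a(nt)}$ since $\d<1$), applies the tangent lemma and~\eqref{eqn: CH} to gain a factor $e^{-\b\a(t)}$, and then enlarges the domain of integration from $J_{nt}$ to $J_{(n-1)t}$ using positivity of $f$. Renormalizing by $|J_{nt}|$ costs a factor $|J_{(n-1)t}|/|J_{nt}|=e^{\d\a(t)}$ per step, so the net contraction rate is $e^{-(\b-\d)\a(t)}$ — this is precisely where $\d<\b$ enters. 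Your proposal needs to be restructured along these lines; the fixed-interval iteration cannot be made to work.
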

    
    \begin{proof}
    Let a choice of $\d \in [0,\b)$ be fixed.
    Suppose $s_0 \in [-1,1]$ and $n \geq 0$ is an integer.
    Fix $t>0$ so that~\eqref{eqn: CH} holds with constants $\tilde{c}$ and $\tilde{b}$.
   	  By Lemma~\ref{lemma: approximate curve with tangent}, we have    
      \begin{equation} \label{eqn: first step in induction}
          \int_{J_{nt}+s_0} f(g_{(n+1)t} u(\vp(s))x_0)\;ds \leqslant 
              C_1 \int_{J_{nt}+s_0} \int_{-1}^1 f(g_t u(r\dot{\vp}(s)) g_{nt} u(\vp(s))x_0) \;dr\;ds.
      \end{equation}
      Since $C_1$ and $\tilde{c}$ are independent of $t$, we may assume that $t>0$ is sufficiently large so that
      \begin{equation*}
      2C_1 \tilde{c} e^{-(\b-\d) \a(t)} <1.
      \end{equation*}
      Therefore, by~\eqref{eqn: first step in induction} and~\eqref{eqn: CH}, we get
      \begin{align*}
      	\int_{J_{nt}+s_0} f(g_{(n+1)t} u(\vp(s))x_0)\;ds 
            &\leqslant 2C_1 \tilde{c} e^{-\b \a(t)} \int_{J_{nt}+s_0}  f(g_{nt} u(\vp(s))x_0) \;ds + 2C_1 \tilde{b} |J_{nt}|.
      \end{align*}
      Next, for all $n\geq 1$, since $J_{nt} \subseteq J_{(n-1)t}$ and $f\geq 0$, we get 
      \begin{align*}
      	\int_{J_{nt}+s_0} f(g_{(n+1)t} u(\vp(s))x_0)\;ds 
            &\leqslant 2C_1 \tilde{c} e^{-\b \a(t)} \int_{J_{(n-1)t}+s_0}  f(g_{nt} u(\vp(s))x_0) \;ds + 2C_1 \tilde{b} |J_{nt}|.
      \end{align*}
      Moreover, since $|J_{(n-1)t}|/|J_{nt}| = e^{\d\a(t)}$, the above inequality implies
      \begin{align} \label{eqn: inductive step}
      	\frac{1}{|J_{nt}|} \int_{J_{nt}+s_0} f(g_{(n+1)t} &u(\vp(s))x_0)\;ds \nonumber\\
            &\leqslant 2C_1 \tilde{c} e^{-(\b-\d) \a(t)} 
            \frac{1}{|J_{(n-1)t}|}\int_{J_{(n-1)t}+s_0}  f(g_{nt} u(\vp(s))x_0) \;ds + 2C_1 \tilde{b}.
      \end{align}
      
      Define $M_0>0$ and $M$ by
      \begin{align*}
      	M_0 &=  \frac{1}{2} \int_{-1}^1 f(u(\vp(s))x_0) \;ds ,\\
        M &= \max\set{M_0,2C_1 \tilde{c} e^{-(\b-\d) \a(t)} M_0 + 2C_1 \tilde{b}, 
        	\frac{2C_1 \tilde{b}}{\left(1-2C_1 \tilde{c} e^{-(\b-\d) \a(t)}\right)}}.
      \end{align*}
      
      We claim that
      \begin{equation} \label{eqn: M as the sup}
      	\sup_{n\geqslant 0,s_0} \frac{1}{|J_{nt|}} \int_{J_{nt}+s_0} f(g_{(n+1)t}u(\vp(s)x_0)\;ds \leqslant M.
      \end{equation}
      We proceed by induction on $n$.
      When $n=0$, inequality~\eqref{eqn: inductive step}, the definition of $M_0$ and the fact that $M_0\leqslant M$ show that the integrand in~\eqref{eqn: M as the sup} is bounded above by $M$.
      Inequality~\eqref{eqn: inductive step} and the definition of $M$ finish the proof of the claim by induction.
      
      The conclusion of the proposition follows from the log-smoothness of $f$.
      Furthermore, we note that $M$ can be chosen to be uniform over the base point $x_0$ as it varies in sublevel sets of $f$ as evident from the definition of $M_0$.

    \end{proof}

    \section{Dynamics in Linear Representations}
\label{section: linear expansion}
	This section is dedicated to proving estimates on the average rate of expansion of vectors in linear representations of $\mrm{SL}(2,\R)$.
    The main result is Proposition~\ref{propn: expansion in linear representations}.
    In subsection~\ref{section: avoidance}, we prove an important fact regarding the orbit of a highest weight vector which will allow us to obtain precise average expansion rates in the sequel.
    
	\subsection{(C, $\mathbf{\a}$)-good functions}
	We recall the notion of $(C,\a)$-good functions introduced by Kleinbock and Margulis in~\cite{KleinbockMargulis} and used, in different form, in prior works of Dani, Margulis and Shah.
    \begin{definition} \label{defn: C alpha good functions}
    	A function $f: \R^m \r \R$ is $(C,\a)$-good on some subset $B\subset \R^m$ of finite Lebesgue measure
        if there exist constants $C, \a >0$ such that for any $\e>0$, one has
        \begin{align*}
        	\left|\set{x\in B: |f(x)| < \e } \right| 
            \leq C \left( \frac{\e}{ \sup_{x\in B} |f(x)| }  \right)^\a |B|,
        \end{align*}
        where, for a Borel set $A\subseteq \R^m$,  $|A|$ denotes its Lebesgue measure.
    \end{definition}
    
    The following lemma summarizes some basic properties of $(C,\a)$-good functions which will be useful for us. The proof follows directly from the definition.
    \begin{lemma} \label{lemma: properties of (C,alpha)-good functions}
     Let $C,\a >0$. Then,
     	\begin{enumerate}
     		\item If $f$ is a $(C,\a)$-good function on $B$, then so is $|f|$.
            \item If $f_1,\dots, f_n$ is a collection of $(C,\a)$-good function on $B$, then so is $\max_{k} |f_k|$.
     	\end{enumerate}
    \end{lemma}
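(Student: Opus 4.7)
The plan is to check both parts directly from Definition~\ref{defn: C alpha good functions}, which is straightforward since only the sublevel sets $\{|f|<\e\}$ and the sup-norm of $f$ appear in the definition.

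For part $(1)$, I would simply observe that the sublevel sets and the suprema used in Definition~\ref{defn: C alpha good functions} for $f$ and for $|f|$ literally coincide: for every $\e>0$,
\[
\set{x\in B : \lvert \lvert f(x)\rvert \rvert < \e} = \set{x\in B : |f(x)|<\e}, \qquad \sup_{x\in B}\lvert\lvert f(x)\rvert\rvert = \sup_{x\in B}|f(x)|.
\]
Hence the $(C,\a)$-good inequality for $f$ is identical to that for $|f|$.

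For part $(2)$, let $F = \max_{1\leqslant k\leqslant n}|f_k|$. The key observation is that
\[
\set{x\in B : |F(x)| < \e} = \bigcap_{k=1}^n \set{x\in B : |f_k(x)| < \e},
\]
so this set is contained in $\{x\in B : |f_{k_0}(x)|<\e\}$ for any chosen index $k_0$. Choose $k_0$ to be an index attaining $\max_k \sup_{x\in B}|f_k(x)| = \sup_{x\in B} F(x)$. Then by monotonicity of Lebesgue measure and the $(C,\a)$-good property of $f_{k_0}$,
\[
\left|\set{x\in B : |F(x)| < \e}\right| \leqslant \left|\set{x\in B : |f_{k_0}(x)| < \e}\right| \leqslant C\left(\frac{\e}{\sup_{x\in B}|f_{k_0}(x)|}\right)^{\a} |B| = C\left(\frac{\e}{\sup_{x\in B}|F(x)|}\right)^{\a} |B|,
\]
which is the required inequality for $F$.

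There is no genuine obstacle here; the only mild subtlety is noticing that the sublevel set of a maximum is the \emph{intersection} of the individual sublevel sets, so one is free to bound the measure using whichever index gives the largest supremum, and that this choice is precisely what matches the supremum of $F$ itself.
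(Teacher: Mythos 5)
Your proof is correct, and it matches the paper's (omitted) argument: the paper simply states that the lemma follows directly from Definition~\ref{defn: C alpha good functions}, which is exactly the direct verification you carry out. In particular, you correctly identify the one point requiring care in part $(2)$, namely that one must compare against the index attaining $\max_k\sup_B|f_k|=\sup_B\max_k|f_k|$ so that the resulting bound is the one required for the maximum.
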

    
    An important class of $(C,\a)$-good functions is polynomials. The exact exponent will be of importance to us and so we recall the following fact.
    \begin{proposition}[Proposition 3.2,~\cite{KleinbockMargulis}]
    \label{propn: polynomials are (C,alpha)-good}	
        For any $k \in \N$, any polynomial in $\R[x]$ of degree at most $k$ is $(2k(k+1)^{1/k},1/k)$-good on any interval in $\R$.
    \end{proposition}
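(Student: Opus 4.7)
The plan is to reduce to a factorization of $p$ over $\C$ and apply a pigeonhole estimate, following the original Kleinbock--Margulis argument. Since the $(C,\a)$-good property is invariant under affine reparametrization of $B$ and under rescaling $p$ by a nonzero constant, I may assume $\sup_B|p|=1$ and that $p$ has degree exactly $d$ with $1\leq d\leq k$ (constant polynomials satisfy the inequality trivially, since then $\e/\sup_B|p|>1$ whenever the set in question is nonempty).

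Next I would factor $p(x)=c\prod_{i=1}^{d}(x-\a_i)$ over $\C$ and fix $x_0\in B$ with $|p(x_0)|=1$, so that $|c|=\prod_i|x_0-\a_i|^{-1}$. The inequality $|p(x)|<\e$ then becomes
\[
\prod_{i=1}^{d}\frac{|x-\a_i|}{|x_0-\a_i|}<\e,
\]
and by the pigeonhole principle at least one factor is smaller than $\e^{1/d}$. Hence
\[
\{x\in B:|p(x)|<\e\}\ \subseteq\ \bigcup_{i=1}^{d}E_i,\qquad E_i:=\{x\in B:|x-\a_i|<\e^{1/d}|x_0-\a_i|\}.
\]

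For each nonempty $E_i$, the triangle inequality applied to any $x\in E_i$ gives $|x_0-\a_i|\leq|x-x_0|+|x-\a_i|\leq|B|+\e^{1/d}|x_0-\a_i|$, so $|x_0-\a_i|\leq|B|/(1-\e^{1/d})$ whenever $\e^{1/d}<1$. Since $E_i$ is the intersection of $B\subset\R$ with a complex disc of radius $\e^{1/d}|x_0-\a_i|$ centered at $\a_i$, its Lebesgue measure is at most $2\e^{1/d}|x_0-\a_i|\leq 2\e^{1/d}|B|/(1-\e^{1/d})$. Summing over $i$ yields
\[
\bigl|\{x\in B:|p(x)|<\e\}\bigr|\ \leq\ \frac{2d\,\e^{1/d}}{1-\e^{1/d}}\,|B|.
\]

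The final step converts this to a $(2k(k+1)^{1/k},1/k)$-good estimate. Since $\e\leq 1$ and $d\leq k$ imply $\e^{1/d}\leq \e^{1/k}$, I split into two regimes: if $\e\leq (k+1)^{-k}$ then $1-\e^{1/d}\geq k/(k+1)$ and the bound reads at most $2(k+1)\e^{1/k}|B|$; if $\e>(k+1)^{-k}$ the trivial bound $|B|$ already suffices because $(k+1)\e^{1/k}>1$. Combining these with the elementary inequality $(k+1)/k\leq (k+1)^{1/k}$ (valid for all $k\geq 1$) produces the stated constant. I expect the two-regime optimization needed to extract the sharp constant $2k(k+1)^{1/k}$ to be the only mildly delicate bookkeeping; the conceptual content of the proof is entirely contained in the pigeonhole step.
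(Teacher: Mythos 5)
Your proof is correct, but I should flag one point of orientation: the paper does not prove this proposition itself — it simply cites Kleinbock--Margulis — so the comparison should be with that source, and in fact your argument takes a genuinely different route from theirs. Kleinbock and Margulis prove Proposition 3.2 via Lagrange interpolation: one selects $k+1$ points of the sublevel set $\{|p|<\e\}$ with consecutive gaps at least $|\{|p|<\e\}|/(2k)$ (a measure-theoretic pigeonhole), writes $p$ as its Lagrange interpolation polynomial through those points, and bounds $\sup_B|p|$ by $(k+1)\e\,(|B|\cdot 2k/|\{|p|<\e\}|)^k$; the constant $2k(k+1)^{1/k}$ drops out immediately on taking $k$th roots. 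You instead factor $p$ over $\C$, pigeonhole over the $d$ linear factors, bound the intersection of each complex disc with $B$ by the triangle inequality, and then massage $\tfrac{2d\e^{1/d}}{1-\e^{1/d}}$ into $2k(k+1)^{1/k}\e^{1/k}$ via a two-regime split at $\e=(k+1)^{-k}$ together with the observation $(k+1)/k\leq (k+1)^{1/k}$ (equivalently $(k+1)^{k-1}\leq k^k$, which holds for all $k\geq 1$). Both obtain the same constant; the Lagrange route produces it more directly and has the structural advantage of extending, essentially unchanged, to $C^k$ functions and to polynomial maps on $\R^n$, which is how Kleinbock--Margulis eventually use it. The root-factoring route you use is more elementary, entirely self-contained, and is the proof given in several later expositions, but it is genuinely polynomial- and one-dimensional-specific. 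One cosmetic remark: you should take $x_0$ in the closure of $B$ rather than in $B$ itself when $B$ is open, so that the supremum is attained; this changes nothing else, since $|x-x_0|\leq|\overline{B}|=|B|$ still holds.
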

    
    The following elementary lemma concerning polynomials will be useful for us.
    \begin{lemma} \label{lemma: lower sup estimate for polynomials}
    	For each $k\in \N$, there exists some $\rho >0$, such that any polynomial $p\in\R[x]$ of degree at most $k$ of the form $p(x) = \sum_{i=0}^k c_ix^i$ satisfies
        \begin{align*}
        	\sup_{x\in [-1,1]} |p(x)| \geq \rho \max_{0\leq i \leq k} |c_i|.
        \end{align*}
    \end{lemma}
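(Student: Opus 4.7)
The plan is to exploit that the space $V_k$ of real polynomials of degree at most $k$ is finite-dimensional (of dimension $k+1$), and that both expressions appearing in the inequality are norms on $V_k$. The conclusion is then an instance of the equivalence of norms on a finite-dimensional normed space.

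Concretely, first I would define $\|p\|_\infty := \sup_{x\in[-1,1]} |p(x)|$ and $\|p\|_c := \max_{0\leq i\leq k}|c_i|$ for $p(x) = \sum_{i=0}^k c_ix^i$. It is straightforward to check that both are norms: subadditivity and homogeneity are immediate, and positive definiteness follows because a polynomial of degree $\leq k$ which vanishes on the infinite set $[-1,1]$ is identically zero, while a polynomial with all coefficients zero is also identically zero. Since $V_k$ is finite-dimensional, any two norms are equivalent; in particular there exists $\rho > 0$ such that $\|p\|_\infty \geq \rho \|p\|_c$ for all $p \in V_k$, which is exactly the claim.

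If one prefers an explicit argument avoiding the abstract equivalence-of-norms theorem, the same conclusion can be reached by a compactness argument. By homogeneity of both sides in the coefficients $(c_0,\ldots,c_k)$, it suffices to establish the inequality on the compact unit sphere $S = \{p \in V_k : \|p\|_c = 1\}$, viewed as a compact subset of $\R^{k+1}$ via the coefficients. The map $p \mapsto \|p\|_\infty$ is continuous on $V_k$ (indeed on $S$, it is a continuous function of the coefficients), and it is strictly positive on $S$ by the argument above. Hence it attains a positive minimum $\rho > 0$ on $S$, and by homogeneity $\|p\|_\infty \geq \rho\|p\|_c$ for every $p \in V_k$.

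There is no real obstacle here; the only point requiring any care is the positive definiteness of $\|\cdot\|_\infty$ on $V_k$, which boils down to the fact that a nonzero polynomial of degree at most $k$ has at most $k$ roots and therefore cannot vanish on the interval $[-1,1]$.
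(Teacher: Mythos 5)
Your proof is correct and is essentially the paper's argument: the paper also works on the unit sphere $\{\|(c_0,\dots,c_k)\|_\infty=1\}$ in coefficient space and uses its compactness together with the fact that a polynomial of degree at most $k$ vanishing on $[-1,1]$ must be the zero polynomial (the paper phrases this as a contradiction with a sequence $v_n$ whose sup-norms tend to $0$, rather than as attainment of a positive minimum). The equivalence-of-norms packaging in your first paragraph is just the standard abstract form of the same compactness fact.
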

    \begin{proof}
    	Let $k \in \N$ and suppose the lemma does not hold.
        Then, there exists a sequence of vectors $v_n \in \R^{k+1}$ with
        $\norm{v_n}_\infty = 1$ such that
        \begin{align} \label{eqn:sup tending to 0}
        	\sup_{x\in [-1,1]} \left| p_n(x) \right| < \frac{1}{n},
        \end{align}
        where for each $n$,
        \begin{equation*}
        	p_n(x) =  \sum_{0\leq i \leq k} v_n^{(i)}x^i.
        \end{equation*}
        By passing to a subsequence, we may assume that $v_n $ converges to a vector $v_0 \neq 0$.
        Thus, $p_n$ converges to $p_0$ on $[-1,1]$ in the uniform norm.
        But, then, by~\eqref{eqn:sup tending to 0}, we have $p_0 \equiv 0$ on $[-1,1]$.
        This necessarily implies that $v_0 = 0$ which is a contradiction.
    \end{proof}

 %%%%%%%%%%%%%%%%%%%%%%%%%%%%%%%%   
    
    \subsection{Expansion in $\mrm{\mathbf{SL(2,R)}}$ Representations}

  Throughout this section, we fix a one-parameter $\mrm{Ad}$-diagonalizable subgroup of $G=\mrm{SL}(2,\R)$ which we denote by $g_t$.
 	Then, $\mf{g} = \mrm{Lie}(G)$ decomposes as a direct sum of eigenspaces of $\mrm{Ad}(g_t)$ as follows:
    \begin{equation}\label{eqn: SL_2 roots}
    	\mf{g} = \mf{g}_{-\a} \oplus \mf{g}_0 \oplus \mf{g}_\a,
    \end{equation}
    where $\a$ is a non-trivial character of the group $A=\set{g_t:t\in \R}$ such that $\a(g_t)>0$ for all $t>0$ and $\mf{g}_0$ consists of fixed vectors of $\mrm{Ad}(g_t)$.
    Let $H_0 \in \mf{g}_0$ be such that $g_t = \exp(t H_0)$. Let $X \in \mf{g}_\a \backslash\set{0} $ and let $u_s$ denote the following one-parameter horocyclic subgroup
    \begin{equation*}
    	u_s = \exp(s X).
    \end{equation*}

    Let $P$ denote the set of all characters of $A$. Then, $\a$ induces a partial order $\leqslant$ on $P$ as follows: $\l \leqslant \mu$ if and only if $\mu- \l$ is a positive multiple of $\a$.
    Given any irreducible representation $V$ of $G$, we can decompose $V$ into weight spaces for the $A$ action. The set of restricted weights of $V$ contains a unique maximal element for the partial order, called the highest weight.
    Denote the set of all the highest weights of $G$ by $P^+$, i.e. $P^+$ consists of characters of $A$ which occur as highest weights in some irreducible representation of $G$.
    From the representation theory of $\mrm{SL}(2,\R)$, we can identify $P^+$ with $\N\cup \set{0}$.
  
  The following is the main result of this section.
  \begin{proposition} \label{propn: expansion in linear representations}
	Suppose $V$ is a non-trivial representation of $G=\mrm{SL}(2,\R)$ and let $P^+(V)$ denote the set of highest weights appearing in the decomposition of $V$ into irreducible representations.
    Define
    \begin{equation*}
        \l := \max  P^+(V), \qquad \d_\l := 2\l(H_0)/\a(H_0),
    \end{equation*}
    where $\a$ is as in~\eqref{eqn: SL_2 roots}.
    Then, for all $\beta \in \left(0,1\right)$, there exists a constant $D= D(\b) \geq 1 $
    such that for all $t>0$ and all $w \in V$,
    \begin{equation}\label{eqn: CH in V}
    	\frac{1}{2} \int_{-1}^1 \norm{g_t u_s w}^{-\b /\d_\l  } \;ds \leqslant 
        		D e^{-\beta \a(H_0) t/2}\norm{\pi_\l(w)}^{-\beta/\d_\l},
    \end{equation}
    where $\pi_\l:V\r V$ denotes the $\mrm{SL}(2,\R)$-equivariant projection onto the direct sum of irreducible sub-representations of $V$ with highest weight $\l$.
  \end{proposition}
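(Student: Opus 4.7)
The plan is to reduce~\eqref{eqn: CH in V} to an elementary polynomial estimate by exploiting the representation theory of $\mrm{SL}(2,\R)$. First I would decompose $V$ into irreducible summands and fix, in each irreducible $V_i$ of highest weight $\mu_i$, an orthogonal weight basis $w_0^{(i)},\ldots,w_{\d_{\mu_i}}^{(i)}$ with $w_0^{(i)}$ the highest weight vector and $X\cdot w_j^{(i)}$ a nonzero scalar multiple of $w_{j-1}^{(i)}$. A direct computation with $u_s=\exp(sX)$ then shows that for $w=\sum_{i,j}a_j^{(i)}w_j^{(i)}$, the coefficient of $w_0^{(i)}$ in $u_s w$ is a polynomial
\[
p_0^{(i)}(s) \;=\; \sum_{j=0}^{\d_{\mu_i}}\frac{c_j^{(i)}}{j!}\,a_j^{(i)}\,s^j
\]
of degree at most $\d_{\mu_i}$ whose coefficients are nonzero scalar multiples of the weight coordinates of $w$. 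Combined with the eigenvalue action $g_t\cdot w_j^{(i)}=e^{(\mu_i-j\a)(H_0)t}w_j^{(i)}$ (and the identity $g_t u_s g_{-t}=\exp(e^{\a(H_0)t}sX)$), this gives the key lower bound
\[
\|g_t u_s w\|^2 \;\geq\; e^{2\l(H_0)t}\!\!\sum_{i:\mu_i=\l}\!|p_0^{(i)}(s)|^2,
\]
obtained by retaining only the contribution of the lowest-index weight $j=0$ in the irreducibles of maximal highest weight $\l$.

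Next I would split $w=\pi_\l(w)+w'$ using the $\mrm{SL}(2,\R)$-invariant orthogonal decomposition of $V$ into the sum of irreducibles of highest weight $\l$ and its complement. Because this decomposition is preserved by $g_t u_s$, one has $\|g_t u_s w\|\geq\|g_t u_s\pi_\l(w)\|$, so it suffices to prove the inequality with $w$ replaced by $\pi_\l(w)$. Setting
\[
f(s)\;:=\;\max_{i:\mu_i=\l}|p_0^{(i)}(s)|,\qquad s\in[-1,1],
\]
Proposition~\ref{propn: polynomials are (C,alpha)-good} gives that each $p_0^{(i)}$ is $(C_0,1/\d_\l)$-good on $[-1,1]$, and Lemma~\ref{lemma: properties of (C,alpha)-good functions} upgrades this to $f$. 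Lemma~\ref{lemma: lower sup estimate for polynomials}, applied in each irreducible summand and combined with orthogonality of the weight bases across isotypic components, yields $\sup_{[-1,1]}f\geq c_V\|\pi_\l(w)\|$ with $c_V>0$ depending only on $V$. Substituting back gives the pointwise lower bound $\|g_t u_s w\|\geq e^{\l(H_0)t}f(s)$.

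The final step is to integrate $f^{-\b/\d_\l}$. Using the $(C_0,1/\d_\l)$-goodness of $f$, the measure estimate $|\{s\in[-1,1]:f(s)<\e\}|\leq C_0(\e/\sup f)^{1/\d_\l}$ together with the trivial bound $\leq 2$, substituted into the layer-cake formula and split at the threshold where the two estimates coincide, produces
\[
\int_{-1}^1 f(s)^{-\b/\d_\l}\,ds \;\leq\; D_0\,(\sup_{[-1,1]} f)^{-\b/\d_\l}
\]
for a constant $D_0$ depending only on $\b$ and $\d_\l$. The hypothesis $\b\in(0,1)$ is essential at exactly this point: it is precisely what guarantees convergence of the tail of the layer-cake integral. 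Combining with the identity $\l(H_0)/\d_\l=\a(H_0)/2$, which produces the exponential factor $e^{-\b\l(H_0)t/\d_\l}=e^{-\b\a(H_0)t/2}$, yields~\eqref{eqn: CH in V} with $D=D_0\, c_V^{-\b/\d_\l}/2$.

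The main obstacle I foresee is the bookkeeping behind the lower bound $\sup_{[-1,1]}f\geq c_V\|\pi_\l(w)\|$: the weight bases across the (possibly many) isotypic summands of highest weight $\l$ must be chosen compatibly with a single ambient inner product so that the passage from coordinate norms to polynomial sup norms through Lemma~\ref{lemma: lower sup estimate for polynomials} goes through with a uniform constant that depends only on $V$, and critically does \emph{not} involve $\b$ (otherwise the exponential factor $e^{-\b\a(H_0)t/2}$ would not capture all the $\b$-dependent decay). Once this is in place, the remainder of the proof is a clean combination of polynomial $(C,\a)$-goodness with the $\mrm{SL}(2,\R)$ eigenvalue structure.
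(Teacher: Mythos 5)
Your proposal is correct and follows essentially the same route as the paper's proof: reduce to $\pi_\l(w)$, extract the highest-weight coefficient of $u_s w$ in each isotypic summand as a polynomial of degree at most $\d_\l$ whose coefficients recover the coordinates of $w$, apply Proposition~\ref{propn: polynomials are (C,alpha)-good} and Lemma~\ref{lemma: lower sup estimate for polynomials} to get $(C,1/\d_\l)$-goodness with $\sup\geq c_V\norm{\pi_\l(w)}$, and integrate by a dyadic layer-cake argument whose convergence uses exactly $\b<1$. The "obstacle" you flag at the end is already handled by Lemma~\ref{lemma: lower sup estimate for polynomials}, whose constant depends only on the degree (hence on $V$) and not on $\b$.
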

  
  \begin{proof}
    Suppose $w\in V$ and write $v= \pi_\l(w)$.
    Then, we have that $\norm{g_tu_s w} \geqslant \norm{g_tu_sv}$, for all $t$ and $s$.
    In particular, it suffices to prove~\eqref{eqn: CH in V} with $v$ in place of $w$ and we may assume that $\l$ is the only highest weight appearing in $V$.
    
    Since $\mrm{SL}(2,\R)$ is semisimple, $V$ decomposes into irreducible representations as follows:
    \[ V = V_1 \oplus \cdots \oplus V_r. \]
    For $1\leq i \leq r$, let $\pi_i: V \r V_i $ denote the associated projections and note that $u_s$ commutes with $\pi_i$ for all $i$.
    Note that all the $V_i$ have the same dimension since they have the same highest weight.
    Let $n \in \N$ be such that
    \[ \mathrm{dim}(V_i) = n + 1, \]
    for all $1\leq i\leq r$.
    From the the description of $\mrm{SL}(2,\R)$ representations, we get that
    \begin{equation} \label{n=delta_lamda }
    	n = \d_\l.
    \end{equation}

    Let $1\leq i\leq r$ be fixed.
    By the standard description of irreducible $\mrm{SL}(2,\R)$ representations, $V_i$ decomposes into $1$ dimensional eigenspaces for the action of $g_t$ as follows:
    \[ V_i = W_0^{(i)} \oplus W_1^{(i)} \oplus \cdots \oplus W_{n}^{(i)},  \]
    where we assume that $W_0^{(i)}$ denotes the highest weight subspace of $V_i$.
    In particular, for each $w\in W_0^{(i)}$,
    \begin{equation*}
    	g_t w = e^{\l(H_0)t} w.
    \end{equation*}

    Let $q_l: V_i \r W_l^{(i)}$ denote the associated projections.
    Let $\set{w_l^{(i)}:0\leq l\leq n}$ denote a basis of $V_i$ consisting of eigenvectors of $g_t$ and write
    \[ \pi_i(v) =  \sum_{l=0}^{n} c_{l}^{(i)} w_l^{(i)}. \]
    Note that for each $l$, we have that
    \begin{equation*}
    	u_s w_l^{(i)} =\sum_{k=0}^l \binom{l}{k} s^{l-k}w_k^{(i)}.
    \end{equation*}
    In particular, we get the following
    \begin{equation} \label{eqn:highest weight coefficient}
     q_0 ( \pi_i(u_sv)) = q_0 ( u_s\pi_i(v) ) = 
     	 \sum_{k=0}^{n}	c_k^{(i)}  s^{k} w_{0}^{(i)}.
    \end{equation}

    Denote by $\norm{\cdot}_\infty$ an $\ell^\infty$ norm on $V$ with respect to the basis chosen above for each irreducible representation.
    Note that all coordinates of $\pi_i(v)$ appear in the polynomial in~\eqref{eqn:highest weight coefficient}.
    In particular, this implies
    \begin{equation} \label{eqn:expansion of highest weight}
    	\norm{g_t u_s \pi_i(v) }_\infty \geqslant \norm{g_t q_0 ( \pi_i(u_sv))}_\infty 
        = e^{\l(H_0) t} \norm{q_0 ( \pi_i(u_sv))}_\infty.
    \end{equation}
    
    Denote by $V_\l$ the direct sum of the highest weight subspaces of $ V$. More precisely, let
    \begin{equation*}
    	V_\l = \bigoplus_{1\leq i \leq r} W_0^{(i)},
    \end{equation*}
    and let $\pi_+ : V \r V^+$ denote the associated projection.
    Hence, for all $w\in V$, by~\eqref{eqn:expansion of highest weight}, we have that
    \begin{equation} \label{eqn: expansion by g_t}
    	\norm{g_t w}_\infty \geqslant \norm{g_t \pi_+(w) }_\infty \geqslant e^{\l(H_0) t} \norm{\pi_+(w)}_\infty.
    \end{equation}
    
    The polynomials in~\eqref{eqn:highest weight coefficient} have degree at most $n=\d_\l$.
    Hence, by Lemma~\ref{lemma: properties of (C,alpha)-good functions} and Proposition~\ref{propn: polynomials are (C,alpha)-good},
    we see that $\norm{\pi_+(u_s v)}_\infty$ is $(C,\d_\l)$-good on $[-1,1]$ for 
    $C$ as in Proposition~\ref{propn: polynomials are (C,alpha)-good}.    
    Now, by~\eqref{eqn:highest weight coefficient} and Lemma~\ref{lemma: lower sup estimate for polynomials}, there exists some $\rho >0$ such that 
    \begin{equation*}
    \sup_{s\in [-1,1]} \norm{\pi_+(u_s v)}_\infty \geqslant \rho \norm{v}_\infty.
    \end{equation*}    
    Thus, by definition of $(C,\a)$-good functions, for any $\e>0$, we have
    \begin{equation} \label{eqn: (C,alpha)-good measure estimate}
    	\left|\set{ s\in [-1,1]: \norm{\pi_+(u_s v)}_\infty < \e \norm{v}_\infty } \right|
        \leqslant 2 C \left( \frac{\e}{\rho} \right)^{1/\d_\l}.
    \end{equation}
    Denote by $E(v,\e)$ the set on the left-hand side of inequality~\eqref{eqn: (C,alpha)-good measure estimate}.
    Let $\b \in (0,1)$.

    Without loss of generality, we may assume $\norm{v}_\infty = 1$.
    Then, for $n\in \N$,
    by~\eqref{eqn: expansion by g_t} and~\eqref{eqn: (C,alpha)-good measure estimate},
    we get
    \begin{align*}
    	\int_{E(v,2^{-n}\rho) \setminus E(v,2^{-(n+1)}\rho) } \norm{g_t u_sv}_\infty^{-\b/\d_\l}  \;ds 
        &\leqslant e^{-\b \l(H_0) t/\d_\l} \int_{E(v,2^{-n}\rho) \setminus E(v,2^{-(n+1)}\rho) }
        \norm{\pi_+( u_sv)}_\infty^{-\b/\d_\l}\;ds \\
        &\leqslant e^{-\b \a(H_0) t/2} 2^{\b(n+1)/\d_l} \rho^{-\b/\d_\l} 2 C 2^{- n/\d_\l} \\
        &= \rho^{-\b/\d_\l} 2^{1+\b/\d_l} C 2^{-(1-\b)n/\d_\l} e^{-\b \a(H_0) t/2}.
    \end{align*}
    Now, note that~\eqref{eqn: (C,alpha)-good measure estimate} implies that $|E(v,0)| = 0$.
    Hence, since 
    \[ [-1,1] = E(v,0) \sqcup \left( \bigsqcup_{n\geqslant 0} E(v,2^{-n}\rho) \setminus E(v,2^{-(n+1)}\rho) \right), \]
    we get that
    \[  \frac{1}{2} \int_{-1}^1 \norm{g_t u_s v}_\infty^{-\b/\d_\l} \;ds \leqslant 
    \frac{\rho^{-\b/\d_\l} 2^{\b/\d_l} C }{ 1 - 2^{(1-\b)/\d_\l}}e^{-\b \a(H_0) t/2}. \]
    Thus, the claim of the Proposition follows since all norms are equivalent.
  \end{proof}

    \subsection{Avoidance of Non-Extremal Subspaces}
\label{section: avoidance}

	The purpose of this section is to prove a useful property of the orbit of a highest weight vector under a semisimple group.
    This property will allow us to obtain precise expansion rates in the situations we are interested in.

	Suppose $G$ is a semisimple Lie group with Lie algebra $\mf{g}$, and $\mbf{S}$ is a maximal split torus in $G$ which we also identify with its Lie algebra.
    Denote by $\Delta \subset \mbf{S}^\ast$ the set of roots on which we fix an order and denote by $\Delta^+$ the subset of positive roots.
    Define the following subalgebras of $\mf{g}$
    \[ \mf{n}^+ = \bigoplus_{\a \in \Delta^+} \mf{g}_\a ,\qquad \mf{b} = \mf{g}_0 \oplus \mf{n}^+, \]
    where $\mf{g}_\a$ denotes the root space corresponding to $\a$.
    Denote by $N^+$ and $B$ the subgroups of $G$ whose Lie algebras are $\mf{n}^+$ and $\mf{b}$ respectively.

    We let $\mc{W}$ denote the Weyl group of $(G,\mbf{S},\Delta)$ and recall that $\mc{W}$ acts naturally on $\mbf{S}^\ast$.
    The Bruhat decomposition of $G$~\cite[Section 3, Theorem 1]{Bourbaki-4-6} implies
    \begin{equation} \label{eqn: general Bruhat}
    G = \bigcup_{\mrm{w} \in \mc{W}} B \mrm{w} B.
    \end{equation}

    Given a representation $V$ of $G$ and a linear functional $\mu\in \mbf{S}^\ast$, we denote by $V^\mu$ the weight subspace of $V$ with weight $\mu$.
    
    \begin{proposition} \label{propn: avoidance}
    Suppose $V$ is an irreducible representation of $G$ with highest weight $\l$. 
    Then, for all $0\neq v \in  V^{ \l} $,
    \begin{equation*}
    G\cdot v \quad \bigcap \bigoplus_{ \mu \in \mbf{S}^\ast\backslash \mc{W}\cdot\l  } V^\mu = \emptyset.
    \end{equation*}
    \end{proposition}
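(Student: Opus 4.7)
The plan is to exploit the Bruhat decomposition~\eqref{eqn: general Bruhat} to reduce any element of $G\cdot v$ to a controlled form, and then track the $V^{\mathrm{w}\lambda}$-component. The two structural facts I would lean on are: (a) since $\lambda$ is highest, $\mathfrak{n}^+ \cdot v = 0$, so $N^+$ fixes $v$ pointwise under the exponential; and (b) the maximal split torus $\mathbf{S}$ acts on $v$ by the character $\lambda$. Together these show that $B$ stabilizes the line $\R v$, acting on it by a nonzero scalar.

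Given $g \in G$, I would invoke~\eqref{eqn: general Bruhat} to write $g = b_1 \mathrm{w} b_2$ with $b_1, b_2 \in B$ and $\mathrm{w}$ (abusing notation) a lift to $G$ of a Weyl element $\mathrm{w} \in \mathcal{W}$. Step one: by (a) and (b) above, $b_2 \cdot v = c\, v$ for some nonzero scalar $c$. Step two: $\mathrm{w} \cdot v$ is a nonzero vector of weight $\mathrm{w}\cdot\lambda$, so $\mathrm{w}\cdot v \in V^{\mathrm{w}\cdot\lambda}\setminus\{0\}$. Step three: decompose $b_1 = n_1 t_1$ with $n_1 \in N^+$ and $t_1 \in \mathbf{S}$. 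Then $t_1 \cdot (\mathrm{w}\cdot v) = (\mathrm{w}\cdot\lambda)(t_1)\,\mathrm{w}\cdot v$ is a nonzero scalar multiple of $\mathrm{w}\cdot v$, still in $V^{\mathrm{w}\cdot\lambda}$. Finally, since $\mathfrak{n}^+$ raises weights strictly (by positive roots), the action of $n_1 = \exp(\text{something in }\mathfrak{n}^+)$ on any vector of weight $\mathrm{w}\cdot\lambda$ leaves its $V^{\mathrm{w}\cdot\lambda}$-component unchanged and only adds components in weight spaces $V^{\mathrm{w}\cdot\lambda + \sum \alpha_i}$ with $\alpha_i \in \Delta^+$. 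In particular the projection of $n_1 \cdot (t_1 \mathrm{w} v)$ onto $V^{\mathrm{w}\cdot\lambda}$ is nonzero.

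Combining the three steps, $g\cdot v$ has a nonzero component in $V^{\mathrm{w}\cdot\lambda}$, and $\mathrm{w}\cdot\lambda \in \mathcal{W}\cdot\lambda$ by definition. This immediately excludes $g \cdot v$ from lying in $\bigoplus_{\mu \notin \mathcal{W}\cdot\lambda} V^\mu$, proving the proposition.

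The only point that requires any care, and which I expect to be the one subtlety to nail down rigorously, is the verification in step three that the unipotent factor $n_1 \in N^+$ preserves the $V^{\mathrm{w}\cdot\lambda}$-component of a vector originally in $V^{\mathrm{w}\cdot\lambda}$. This follows from expanding $n_1 = \exp(X)$ for $X \in \mathfrak{n}^+$ as a power series: writing $n_1 = \mathrm{Id} + X + \tfrac{1}{2}X^2 + \cdots$, every term except $\mathrm{Id}$ strictly raises weights, so each such term maps $V^{\mathrm{w}\cdot\lambda}$ into $\bigoplus_{\nu > \mathrm{w}\cdot\lambda} V^\nu$, a subspace intersecting $V^{\mathrm{w}\cdot\lambda}$ trivially. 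Note that some of those higher weights $\nu$ may themselves lie in $\mathcal{W}\cdot\lambda$, but this is irrelevant for the argument: what matters is only that the $V^{\mathrm{w}\cdot\lambda}$-component survives.
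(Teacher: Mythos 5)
Your proof is correct and follows essentially the same route as the paper's: Bruhat decomposition $g=b_1\mathrm{w}b_2$, the observation that $B$ stabilizes the line $\R\cdot v$, and the expansion of the unipotent part of $b_1$ as $\mathrm{Id}+(\text{weight-raising terms})$, so that the $V^{\mathrm{w}\cdot\lambda}$-component of $g\cdot v$ survives. The only cosmetic difference is that the paper factors $b_1 = n^+ m$ with $m \in C_G(\mathbf{S})$ rather than $m \in \mathbf{S}$ (these can differ for real groups); such an $m$ need not act by a scalar but still preserves $V^{\mathrm{w}\cdot\lambda}$ invertibly, so your argument is unaffected.
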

    
	\begin{proof}
	Let $0\neq v \in  V^{ \l} $ and $g\in G$.
    Denote by $\pi :V \r \bigoplus_{\mrm{w} \in \mc{W} } V^{\mrm{w} \cdot \l}$ the projection parallel to the weight spaces of $\mbf{S}$.
    It suffices to show that $\pi(gv) \neq 0$.

    Using the Bruhat decomposition~\eqref{eqn: general Bruhat}, we can write
    \[ g = b_1 \mrm{w} b_2,  \]
    for some $b_1,b_2 \in B$ and $\mrm{w}\in \mc{W}$.
    The group $B$ stabilizes the line $\R\cdot v$.
    In particular, we have that $gv \in b_1 \mrm{w} V^{\l} \subseteq b_1 V^{\mrm{w}\cdot \l} $.
    
    We can further decompose $b_1$ as follows.
    \[ b_1 = n^+m, \]
    where $n^+\in N^+$ and $m \in C_G(\mbf{S})$ commutes with $\mbf{S}$.
    In particular, $m$ preserves the eigenspaces of $\mbf{S}$ and thus we have 
    \begin{equation}\label{eqn: orbit hits extremal weight}
     gv \in b_1 V^{\mrm{w}\cdot \l} = n^+ V^{\mrm{w}\cdot \l} .
    \end{equation}

    Let $Y \in \mf{n}^+$ be such that $n^+ = \exp(Y)$.
    Denote by $\rho : G \r \mrm{GL}(V)$ the representation of $G$ on $V$ and let $d\rho: \mf{g} \r \mf{gl}(V)$ denote its derivative.
    Then, since $Y$ is nilpotent, so is $d\rho(Y)$.
    In particular, $\rho(n^+) = \exp(d\rho(Y))$ is a polynomial in $d\rho(Y)$ of the form
        \begin{equation} \label{eqn: n+ is nilpotent}
        	\rho(n^+) = \mrm{I} + d\rho(Y) + \cdots + \frac{d\rho(Y)^k}{k!},
        \end{equation}
        for some $k\in \N$, where $\mrm{I}$ is the identity map. From the standard representation theory of semisimple Lie groups, we have
        \begin{equation*}
        	d\rho(\mf{g}_\a) V^\mu \subseteq V^{\a + \mu},
        \end{equation*}
        for any root $\a \in \Delta$ and any weight $\mu \in \mbf{S}^\ast$.
        Thus, for each $1\leq j \leq k$, we see that
        \[ d\rho(Y)^j V^{\mrm{w}\cdot \l} \subseteq V^{\k}, \qquad \k = \mrm{w}\cdot \l + \sum_{\a \in \Delta^+} k_\a \a, \]
        for some non-negative integers $k_\a$, at least one of which is non-zero, and, in particular, $V^\k \cap V^{\mrm{w}\cdot \l} = \set{0}$.
        Hence, in view of~\eqref{eqn: n+ is nilpotent}, for all $w\in V^{\mrm{w}\cdot \l}$,  we have
        \[\pi^{\mrm{w}\cdot \l}( \rho(n^+)w) = w,\]
        where $\pi^{\mrm{w}\cdot \l}:V \r V^{\mrm{w}\cdot \l} $ denotes the projection parallel to the eigenspaces of $\mbf{S}$.
        Combined with~\eqref{eqn: orbit hits extremal weight}, this shows that $\pi(gv) \neq 0$ as desired.
	\end{proof}

    \section{The Contraction Hypothesis in Homogeneous Spaces of Rank One}
	\label{section: height function rank 1}
	
    Throughout this section, $G$ is a  simple Lie group of real rank $1$ and $\G$ is a lattice in $G$. We let $X = G/\G$.
    The goal of this section is to construct a height function on $X$ and show that it satisfies the strong $\b$-contraction hypothesis for admissible curves.
    The main result of this section, Theorem~\ref{thrm: CH in rank 1}, combined with those of Sections~\ref{section: abstract setup},~\ref{section: schmidt games} and~\ref{section: shrinking nondivergence} complete the proof of Theorem~\ref{thrm: rank 1 DOA}.

    \subsection{Construction of a Height Function}
	Following~\cite{EskinMargulis-RandomWalks} and~\cite{BQ-RandomWalkRecurrence},
    we construct a proper function $\tilde{\a} : G/\G \r \R_+$ which will allow us to control recurrence of trajectories to compact sets.
    
    By the work of Garland and Raghunathan in~\cite{GarlandRaghunathan}, there exist finitely many $\G$-conjugacy classes of maximal unipotent subgroups $\set{U_i:1\leq i \leq p}$ of $G$ such that $U_i \cap \G$ is a lattice in $U_i$.
    Moreover, for any sequence $g_n \in G$ such that $g_n \G$ tends to infinity in $G/\G$, after passing to a subsequence, for each $n$, there exists $\g_n \in \G$ and $i$ such that 
    \[ g_n \g_n u (g_n\g_n)^{-1} \xrightarrow{n\r\infty} e, \]
    for all $u\in U_i$.
    In addition, $\g_n$ and $i$ are determined uniquely for all $n$ sufficiently large.
    
    Given any faithful irreducible normed representation $V$ of $G$, for each $i$, we fix a non-zero vector $v_i$ which is fixed by $U_i$.
    By the Iwasawa decomposition, for any $i$ and any sequence $g_n$ in $G$, one has that $g_n v_i \r 0$ if and only if $g_n u g_n^{-1} \r e$ for all $u \in U_i$.
    Moreover, the $\G$ orbit of the identity coset in $G/U_i$ is discrete.
    In particular, the orbit $\G\cdot v_i$ is discrete (and hence closed) for each $i$.

    Thus, the function $\tilde{\a}: G/\G \r \R_+$ defined by
    \begin{equation} \label{defn: height function}
    \tilde{\a}(g\G) := \max_{w \in \bigcup_{i=1}^p g\G\cdot v_i } \norm{w}^{-1}
    \end{equation}
    is proper.
    The following Lemma provides us with other properties of the function $\tilde{\a}$.
    \begin{lemma}
    \label{lemma: height function properties}
    Suppose $\tilde{\a}$ is as in~\eqref{defn: height function}. Then,
    	\begin{enumerate}
    	\item Given a bounded neighborhood $\mc{O}$ of identity in $G$, there exists a constant $C_\mc{O}>1$, such that for all $g\in \mc{O}$ and all $x\in X$,
        	\[ C_\mc{O}^{-1} \tilde{\a}(x) \leq \tilde{\a}(gx) \leq C_\mc{O} \tilde{\a}(x). \]
        \item For all $M>0$, the set $\overline{ \tilde{\a}^{-1}([0,M])}$ is compact.
        \item{(cf.~\cite{GarlandRaghunathan})} There exists a constant $\e_1 >0$ such that for all $x = g\G \in X$, there exists at most one vector $v \in \bigcup_i g\G \cdot v_i$ satisfying $\norm{v} \leq \e_1$.
    	\end{enumerate}
    \end{lemma}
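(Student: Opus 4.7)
For part (1), the plan is to reduce the log-Lipschitz estimate for $\tilde{\a}$ to a corresponding estimate for the norm on $V$. Since $\mc{O}$ is a bounded neighborhood and the representation $V$ is continuous, the operator norm $\norm{\rho(g)}$ is bounded above on $\overline{\mc{O}}$, and bounded below away from zero by applying the same argument to $\mc{O}^{-1}$. Thus there is a constant $C_\mc{O} > 1$ with $C_\mc{O}^{-1}\norm{w} \leq \norm{\rho(g)w} \leq C_\mc{O}\norm{w}$ for all $g\in\mc{O}$ and $w\in V$. Writing $gx = g(h\G)$ for any representative $h$ of $x$, the orbit $gh\G\cdot v_i$ equals $\rho(g)(h\G\cdot v_i)$, so taking the maximum of $\norm{\cdot}^{-1}$ over the union $\bigcup_i gh\G\cdot v_i$ only perturbs each reciprocal norm by the factor $C_\mc{O}^{\pm 1}$, yielding the claim.

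For part (2), I would argue by contradiction using the Garland--Raghunathan structure recalled just before the definition of $\tilde{\a}$. Suppose $g_n\G$ is a sequence with $\tilde{\a}(g_n\G) \leq M$ for all $n$, but $g_n\G$ has no convergent subsequence in $X$. Then $g_n\G \to \infty$, so after passing to a subsequence there exist $\g_n \in \G$ and an index $i$ such that $g_n\g_n u (g_n\g_n)^{-1} \to e$ for all $u \in U_i$. By the Iwasawa decomposition characterization cited in the construction, this forces $\rho(g_n\g_n)v_i \to 0$; but $\rho(g_n\g_n)v_i \in g_n\G\cdot v_i$, so $\tilde{\a}(g_n\G) \geq \norm{\rho(g_n\g_n)v_i}^{-1} \to \infty$, contradicting the bound. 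Hence every sequence in $\tilde{\a}^{-1}([0,M])$ admits a convergent subsequence, and the closure is compact.

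For part (3), the hard step, the plan is to appeal to the classical thick--thin decomposition of rank one homogeneous spaces due to Garland and Raghunathan. The strategy is to show that if $v, w \in \bigcup_i g\G\cdot v_i$ both had norm below a sufficiently small threshold $\e_1$, then the stabilizers $U_v$ and $U_w$ (conjugates of appropriate $U_i$'s fixing $v$ and $w$ respectively) would both be "almost trivially conjugated" by $g$, forcing their associated parabolic subgroups to coincide. Concretely, using the Iwasawa characterization once more, $\norm{v}\leq\e_1$ implies the elements of a neighborhood of identity in $U_v$ become arbitrarily close to the identity when conjugated by $g$; a standard argument in rank one (using that two distinct maximal parabolic subgroups have trivial unipotent intersection and generate $G$ after small perturbation) then shows $U_v$ and $U_w$ must correspond to the same cusp, which by uniqueness of $\g_n$ and $i$ in the Garland--Raghunathan statement forces $v = w$. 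The main obstacle will be quantifying $\e_1$ to ensure the perturbed parabolics collapse to the same one; I would do this by invoking the Zassenhaus lemma to conclude that the group generated by the two nearly-unipotent sets lies in a nilpotent subgroup, which in rank one must then be contained in a single maximal unipotent, thereby pinning down the cusp uniquely.
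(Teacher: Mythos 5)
Your parts (1) and (2) are correct and complete, and they follow exactly the route the paper intends: the paper itself gives no proof of this lemma, deriving properness of $\tilde{\a}$ in the paragraph preceding the definition and deferring part (3) to the cited work of Garland--Raghunathan, so there is no competing argument in the text to compare against. For (1), bounding the operator norms of $\mc{O}$ and $\mc{O}^{-1}$ on $V$ and noting $\tilde{\a}$ is a supremum of reciprocal norms over a $g$-equivariant family of orbits is the whole proof. For (2), running the Garland--Raghunathan dichotomy against a sequence leaving every compact set and using the stated Iwasawa characterization to force $\norm{g_n\g_n v_i}\to 0$ is exactly the properness argument the paper sketches.

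For part (3) your plan is the standard thick--thin/Zassenhaus argument and is the right one; it is a sketch rather than a proof, and two steps deserve attention. First, the implication ``$\norm{g\g v_i}\le\e_1$ forces a generating set of $\g(U_i\cap\G)\g^{-1}$ into a fixed Zassenhaus neighborhood after conjugation by $g$'' must be made quantitative and uniform in $g$; the paper only records the sequential version. This uniformity does hold, but it uses that $v_i$ can be taken to be a highest-weight vector, so that writing $g\g=kan$ via the Iwasawa--Langlands decomposition of $P_i$ identifies $\norm{g\g v_i}$ with the contraction rate of $\mathrm{Ad}(g\g)$ on $U_i$; without that normalization the step is not automatic. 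Second, at the end of your argument you only conclude that the two short vectors lie in orbits of the \emph{same} parabolic, i.e.\ $\g_2^{-1}\g_1$ normalizes $U_{i_1}$, which gives $v=cw$ for a scalar $c$; discreteness of $\G\cdot v_i$ pins down $|c|=1$, but $c=-1$ cannot be excluded in general, so strictly one obtains ``at most one vector up to sign.'' This is the same normalization the paper adopts explicitly in its higher-rank analogue (Corollary~\ref{cor: shortest are collinear}), and it is harmless for every later use of the lemma, but you should state it.
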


     \subsection{Rank One and Linear Expansion}

	We retain the same notation as in the previous section.
    Suppose $g_t$ is a one-parameter subgroup of $G$ which is $\mrm{Ad}$-diagonalizable over $\R$.
    Since $G$ has real rank equal to $1$, we can decompose the Lie algebra $\mf{g}$ of $G$ into eigenspaces for the adjoint action of $g_t$ as follows
    \begin{equation} \label{eqn: decomposition of Lie algebra}
    	\mf{g} = \mf{g}_{-2\a} \oplus \mf{g}_{-\a} \oplus \mf{g}_0 \oplus \mf{g}_\a \oplus \mf{g}_{2\a}.
    \end{equation}
	Then, we can find $H_0 \in \mf{g}_0$ so that
    \begin{equation}
    	g_t = \exp(tH_0),
    \end{equation}
    for all $t>0$.

    The following lemma is the form in which we use Proposition~\ref{propn: expansion in linear representations}.
    The key point of the lemma is that vectors expand at a maximal rate.
    \begin{lemma} \label{lemma: expansion of vectors in rank 1}
    Suppose $V$ is an irreducible real representation of $G$ with highest weight $\l$ and $\mu \in \set{\a,2\a}$ is such that $\mf{g}_\mu \neq 0$. Let $\d_\l = 2\l(H_0)/\mu(H_0)$ and suppose $0\neq v \in V$ is a highest weight vector.
    Then, for all $\b \in (0,1)$, there exists $\tilde{c} >0$ such that for all $g\in G$, $Z\in \mf{g}_\mu\backslash\set{0}$, and all $t>0$, the following holds
    	\begin{equation*}
    		\frac{1}{2} \int_{-1}^1 \norm{g_t u_s gv }^{-\b/\d_\l} \;ds \leqslant \tilde{c} e^{-\b\mu(H_0)/2} \norm{gv}^{-\b/\d_\l},
     	\end{equation*}
        where $u_s = \exp(sZ)$.
    \end{lemma}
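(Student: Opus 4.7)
The plan is to deduce the lemma from Propositions~\ref{propn: expansion in linear representations} and~\ref{propn: avoidance} by embedding an $\mathrm{SL}(2,\mathbb{R})$-subgroup $L \subseteq G$ containing both $g_t$ and $u_s = \exp(sZ)$, applying the $\mathrm{SL}_2$-expansion estimate, and then using avoidance to control the projection.

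First, since $Z \in \mathfrak{g}_\mu \setminus \{0\}$ is ad-nilpotent, the standard $\mathfrak{sl}_2$-triple construction (Jacobson--Morozov together with the nondegenerate Killing pairing between $\mathfrak{g}_\mu$ and $\mathfrak{g}_{-\mu}$) furnishes $F \in \mathfrak{g}_{-\mu}$ such that $(Z, H_\mu, F)$ is an $\mathfrak{sl}_2$-triple with $H_\mu = 2H_0/\mu(H_0)$. Let $L \subseteq G$ be the corresponding $\mathrm{SL}(2,\mathbb{R})$-subgroup; then $g_t$ is the diagonal of $L$ and $u_s$ is its expanding unipotent one-parameter subgroup. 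Applying Proposition~\ref{propn: expansion in linear representations} to $V$ viewed as an $L$-representation yields a constant $D(\beta) \geq 1$, depending only on $\beta$ and $\delta_\lambda$, such that
\begin{equation*}
\frac{1}{2}\int_{-1}^1 \norm{g_t u_s w}^{-\beta/\delta_\lambda}\,ds \;\leq\; D(\beta)\,e^{-\beta\mu(H_0)t/2}\,\norm{\pi_\lambda(w)}^{-\beta/\delta_\lambda}
\end{equation*}
for all $w \in V$ and $t > 0$, where $\pi_\lambda$ is the $L$-equivariant projection onto the sum of $L$-isotypic components of maximal $L$-highest weight. A routine check matches the $L$-root to $\mu$ via $\alpha^L(H_0) = \mu(H_0)$ and identifies the maximum $L$-highest weight with $\delta_\lambda = 2\lambda(H_0)/\mu(H_0)$, realized by the $G$-highest weight vector $v$.

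Next, to replace $\norm{\pi_\lambda(w)}^{-\beta/\delta_\lambda}$ by $\norm{gv}^{-\beta/\delta_\lambda}$ when $w = gv$, it suffices to establish the uniform comparison $\norm{\pi_\lambda(gv)} \geq c\,\norm{gv}$ for some constant $c = c(V) > 0$ independent of $g \in G$. Write $g = kan$ via the Iwasawa decomposition with $A = \{g_t\}$ and $N = \exp(\mathfrak{g}_\alpha \oplus \mathfrak{g}_{2\alpha})$. Since $v$ is annihilated by $\mathfrak{n}^+$ by weight considerations and is an $A$-eigenvector of weight $\lambda(H_0)$, one has $gv = kanv = e^{\lambda(H_0)t_a}\, kv$ with $a = g_{t_a}$. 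Linearity of $\pi_\lambda$ then gives $\pi_\lambda(gv) = e^{\lambda(H_0)t_a}\, \pi_\lambda(kv)$, so
\begin{equation*}
\frac{\norm{\pi_\lambda(gv)}}{\norm{gv}} \;=\; \frac{\norm{\pi_\lambda(kv)}}{\norm{kv}},
\end{equation*}
and by compactness of $K$ and continuity it suffices to show that $\pi_\lambda(kv) \neq 0$ for every $k \in K$. I will verify this nonvanishing via Proposition~\ref{propn: avoidance}. The key structural observation is that both extremal $H_0$-weight spaces $V^\lambda$ and $V^{-\lambda}$ lie inside the image of $\pi_\lambda$: any vector in $V^\lambda$ has the maximum $H_0$-weight in $V$, is therefore killed by $\mathfrak{g}_\mu$, and so is an $L$-highest weight vector of $L$-weight $\delta_\lambda$; any vector in $V^{-\lambda}$ is, by symmetry of finite-dimensional $\mathfrak{sl}_2$-strings, an $L$-lowest weight vector inside an $L$-irreducible of $L$-highest weight again $\delta_\lambda$. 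The Weyl group of $G$ in real rank one has order two and acts by $\lambda \mapsto -\lambda$, so Proposition~\ref{propn: avoidance} applied to the $G$-highest weight vector $v$ asserts precisely that $kv$ has a nonzero component in $V^\lambda \oplus V^{-\lambda}$, forcing $\pi_\lambda(kv) \neq 0$. Setting $c := \inf_{k \in K} \norm{\pi_\lambda(kv)}/\norm{kv} > 0$ and $\tilde{c} := D(\beta)\, c^{-\beta/\delta_\lambda}$ then completes the argument.

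The main obstacle is the reconciliation between Proposition~\ref{propn: avoidance} (phrased in terms of $G$-weight spaces for $\mathbf{S}$) and the $L$-isotypic projection $\pi_\lambda$. This reconciliation works precisely because in real rank one the diagonal of $L$ lies along $\mathbb{R} H_0$, so extremal $H_0$-weight vectors are automatically extremal in their $\mathfrak{sl}_2$-strings; the rank-one hypothesis is thus essential for the bridge between the two propositions. A secondary bookkeeping item, uniformity of $\tilde{c}$ in the direction of $Z \in \mathfrak{g}_\mu \setminus \{0\}$, follows by continuous dependence of $\pi_\lambda$ on $Z$ and compactness of the unit sphere of $\mathfrak{g}_\mu$.
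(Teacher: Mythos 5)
Your proposal is correct and follows essentially the same route as the paper: Jacobson--Morozov to build the $\mathfrak{sl}_2$-subgroup through $Z$ and $H_0$, Proposition~\ref{propn: expansion in linear representations} for the integral estimate, and Proposition~\ref{propn: avoidance} together with the rank-one Weyl group and the inclusion $V^{\lambda}\oplus V^{-\lambda}\subseteq \mathrm{im}\,\pi_\lambda$ to get the uniform lower bound $\norm{\pi_\lambda(gv)}\gg\norm{gv}$. Your use of the Iwasawa decomposition $g=kan$ is just a slightly more explicit version of the paper's compactness argument via $G=KP$ with $P$ the stabilizer of the line $\mathbb{R}v$.
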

    
    \begin{proof}
    	Let $v\in V$ be a highest weight vector.
    	Suppose $\mu$ and $0\neq Z\in \mf{g}_\mu$ are given and let $u_s = \exp(sZ)$.
        Since $u_s$ is normalized by $g_t$ and $G$ has rank $1$, the Jacobson-Morozov theorem implies that we can find $Z^{-}\in \mf{g}_{-\mu}$ so that $[Z,Z^-] = H_0$.
        In particular, the sub-algebra $\mf{h}$ generated by $Z$ and $Z^{-}$ is isomorphic to $\mf{sl}_2(\R)$.
        Denote by $H$ the corresponding subgroup of $G$.
        
        Note that since $H_0\in \mf{h}$, $\l$ can be regarded as a weight for $H$ in its induced representation on $V$.
        In particular, $V$ decomposes as a direct sum
        \[  V = V_\l \oplus V_0, \]
        where $V_\l$ is a direct sum of irreducible representations of $H$ with highest weight $\l$ and $V_0$ is an $H$-invariant complement.
        Hence, $v\in V_\l$.
        Denote by $\pi_\l: V \r V_\l$ the $H$-equivariant projection.
        
        Note that $\norm{g_t u_s gv} \geqslant \norm{g_tu_s\pi_\l(gv)}$ for all $t$ and $s$.
        Hence, by Proposition~\ref{propn: expansion in linear representations}, we get
        \begin{equation}\label{eqn: bound using projection}
        \frac{1}{2} \int_{-1}^1 \norm{g_t u_s gv }^{-\b/\d_\l} \;ds 
        \leqslant \frac{1}{2} \int_{-1}^1 \norm{g_t u_s \pi_\l(gv) }^{-\b/\d_\l} \;ds
        \leqslant c e^{-\b\mu(H_0)/2} \norm{\pi_\l(gv)}^{-\b/\d_\l},
        \end{equation}
        for some constant $c\geqslant 1$.
        
        For a weight $\mu$, denote by $V^\mu$ the corresponding weight space.
        Since $G$ has rank $1$, its Weyl group contains one non-trivial element sending $\l$ to $-\l$.
        Thus, by Proposition~\ref{propn: avoidance}, since $V^{-\l}\oplus V^{\l} \subseteq V_\l$, we get that
        \begin{equation} \label{eqn: G orbit avoids lower order space}
        	G\cdot v \cap V_0 = \emptyset.
        \end{equation}
        
        Since the stabilizer of the line $\R\cdot v$ is a parabolic subgroup $P$ and $G = KP$ for a compact group $K$, it follows from~\eqref{eqn: G orbit avoids lower order space} that $G\cdot v$ projects to a compact subset of the projective space $P(V)$ which is disjoint from the closed image of $V_0$ in $P(V)$.
        In particular, there exists $\e'>0$ such that for all $g\in G$,
        \begin{equation*}
        	\norm{\pi_\l(gv)} \geqslant \e' \norm{gv}.
        \end{equation*}
        Combining this estimate with~\eqref{eqn: bound using projection}, we obtain the desired conclusion with $\tilde{c} = c \e_1^{-\b/\d_\l}$.

    \end{proof}

    %%%%%%%%%%%%%%%%%%%
    \subsection{The Main Integral Estimate}
    
	The height function $\tilde{\a}$ constructed in the previous sections satisfies the following integral estimate. 
	\begin{proposition} \label{propn: main integral estimate}
      Suppose $\l$ is the highest weight for $G$ in $V$ and $\mu \in \set{\a,2\a}$ is such that $\mf{g}_\mu \neq 0$.
      Define the following exponent
      \begin{equation*}
          \d_\l = 2\l(H_0)/\mu(H_0).
      \end{equation*}
		Then, for every $\b \in (0,1)$, there exists $\tilde{c} \geq 1$ such that the following holds:
        for all $t >0$, there exists $b = b(t) >0$ such that for all $x \in X$ and all $ Z \in \mf{g}_\mu$ with $\norm{Z}=1$,
        \begin{equation*}
        	\frac{1}{2} \int_{-1}^1 \tilde{\a}^{\b/\d_\l}(g_t \exp(rZ) x) \;dr 
            		\leqslant \tilde{c} e^{-\b \mu(H_0) t/2} \tilde{\a}^{\b/\d_\l}(x) + b.
        \end{equation*}
	\end{proposition}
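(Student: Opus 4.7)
The plan is to expand $\tilde{\a}$ as a maximum of reciprocal norms over a discrete set in $V$ and then reduce this maximum to a pointwise-single-term sum via the uniqueness statement of Lemma~\ref{lemma: height function properties}(3), so that Lemma~\ref{lemma: expansion of vectors in rank 1} can be applied vector by vector. Fix $x=g\G\in X$ and write $\Omega_x:=\bigcup_{i=1}^{p}g\G\cdot v_i\subset V$. A preparatory observation is that the $v_i$ may be chosen so that each lies in $G\cdot v_0$ for a fixed highest weight vector $v_0\in V^{\l}$: in the real rank one group $G$ all maximal unipotent subgroups are $G$-conjugate, so writing $U_i=g_i U_0 g_i^{-1}$ with $U_0$ the expanding horospherical of $g_t$ allows one to take $v_i=g_i v_0$, which ensures each $w\in\Omega_x$ can be written as $g'v_0$ with $g'\in G$.

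With this in place, for $u_r=\exp(rZ)$ with $Z\in\mf{g}_\mu$, $\norm{Z}=1$, applying Lemma~\ref{lemma: height function properties}(3) at the point $g_tu_r x$ gives
\[
\tilde{\a}^{\b/\d_\l}(g_tu_r x)=\max_{w\in\Omega_x}\norm{g_tu_rw}^{-\b/\d_\l}\leq \e_1^{-\b/\d_\l}+\sum_{w\in\Omega_x}\chi_{\{\norm{g_tu_rw}\leq\e_1\}}\norm{g_tu_rw}^{-\b/\d_\l},
\]
where, pointwise in $r$, the sum carries at most one nonzero term. Integrating over $r\in[-1,1]$, interchanging sum and integral (valid since only finitely many $w$ contribute, as shown in the next step), writing each such $w$ as $(g\g g_i)v_0$, and applying Lemma~\ref{lemma: expansion of vectors in rank 1} with $v=v_0$ produces
\[
\tfrac{1}{2}\int_{-1}^{1}\tilde{\a}^{\b/\d_\l}(g_tu_r x)\,dr\leq \e_1^{-\b/\d_\l}+\tilde{c}\,e^{-\b\mu(H_0)t/2}\sum_{w\in N_t}\norm{w}^{-\b/\d_\l},
\]
where $N_t:=\{w\in\Omega_x:\inf_{r\in[-1,1]}\norm{g_tu_rw}\leq\e_1\}$ and $\tilde{c}$ is the constant supplied by Lemma~\ref{lemma: expansion of vectors in rank 1}.

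The key remaining step is to control the sum $\sum_{w\in N_t}\norm{w}^{-\b/\d_\l}$ by $p\,\tilde{\a}^{\b/\d_\l}(x)+O_t(1)$. I would partition $N_t=N_t^s\sqcup N_t^\ell$ with $N_t^s:=\{w\in N_t:\norm{w}\leq\e_1\}$. By Lemma~\ref{lemma: height function properties}(3) applied at $x$, each orbit $g\G\cdot v_i$ contributes at most one vector to $N_t^s$, so $|N_t^s|\leq p$ and each term is bounded above by $\tilde{\a}^{\b/\d_\l}(x)$. For $w\in N_t^\ell$, the inequality $\norm{g_tu_rw}\leq\e_1$ for some $r\in[-1,1]$ forces $\e_1<\norm{w}\leq C(t)\e_1$ via the operator norm bound on $(g_tu_r)^{-1}$ over the compact parameter range; the discreteness of the $\G$-orbits $g\G\cdot v_i$ in $V$ then makes $N_t^\ell$ finite, with cardinality depending on $t$, and each term is bounded by $\e_1^{-\b/\d_\l}$. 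Assembling these bounds gives the proposition with $\tilde{c}_{\text{prop}}:=p\tilde{c}$ independent of $t$ and $b(t):=\e_1^{-\b/\d_\l}\bigl(1+\tilde{c}|N_t^\ell|e^{-\b\mu(H_0)t/2}\bigr)$.

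The main obstacle is keeping the constant multiplying $\tilde{\a}^{\b/\d_\l}(x)$ independent of $t$: the identification of the $v_i$ as vectors in a single $G$-orbit of a highest weight vector is what supplies a uniform constant from Lemma~\ref{lemma: expansion of vectors in rank 1}, while the uniqueness property (3) of $\tilde{\a}$ is what converts an otherwise uncontrolled maximum over a countable discrete orbit into a pointwise single-term sum whose integral can be bounded term-by-term.
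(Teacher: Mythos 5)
Your overall strategy is close to the paper's --- the two essential inputs (that the $v_i$ all lie in a single $G$-orbit of a highest weight vector, so Lemma~\ref{lemma: expansion of vectors in rank 1} applies term by term, and that Lemma~\ref{lemma: height function properties}(3) converts the maximum into a pointwise single-term sum) are exactly the right ones. But there is a genuine gap in the final step: your additive constant is $b(t)=\e_1^{-\b/\d_\l}\bigl(1+\tilde{c}|N_t^\ell|e^{-\b\mu(H_0)t/2}\bigr)$, and the proposition requires $b$ to depend only on $t$, uniformly over $x\in X$. Discreteness of the closed orbits $\G\cdot v_i$ gives finiteness of $N_t^\ell=\{w\in\Omega_x:\e_1<\norm{w}\leq C(t)\e_1,\ \inf_r\norm{g_tu_rw}\leq\e_1\}$ for each \emph{fixed} $x=g\G$, but the number of points of $g\G\cdot v_i$ in the annulus $\e_1<\norm{w}\leq C(t)\e_1$ depends a priori on $g$; Lemma~\ref{lemma: height function properties}(3) controls only the vectors of norm $\leq\e_1$, not those in an annulus above $\e_1$. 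A uniform-in-$x$ count of such vectors is a nontrivial reduction-theoretic fact that your argument does not supply.

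The paper avoids this entirely by a dichotomy on $\tilde{\a}(x)$ rather than a pointwise decomposition of the maximum. If $\tilde{\a}(x)\leq\omega/\e_1$ (with $\omega$ the operator-norm bound for $\{g_t\exp(rZ):|r|\leq1\}$), the log-Lipschitz property bounds the whole integral by the constant $(\omega^2\e_1^{-1})^{\b/\d_\l}$, which becomes $b(t)$. If $\tilde{\a}(x)>\omega/\e_1$, the unique short vector $v_0$ at $x$ satisfies $\norm{g_t\exp(rZ)v_0}\leq\e_1$ and $\tilde{\a}(g_t\exp(rZ)x)>1/\e_1$ for \emph{every} $r\in[-1,1]$, so by uniqueness $\tilde{\a}(g_t\exp(rZ)x)=\norm{g_t\exp(rZ)v_0}^{-1}$ identically in $r$, and the expansion lemma applied to the single vector $v_0$ finishes the proof with no annulus count and no factor of $p$. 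Alternatively, your route can be repaired without the dichotomy by \emph{not} discarding the indicator for $w\in N_t^\ell$: on $I_w=\{r:\norm{g_tu_rw}\leq\e_1\}$ one has $\norm{g_tu_rw}\geq\norm{w}/\omega>\e_1/\omega$, and the sets $I_w$ are pairwise disjoint by Lemma~\ref{lemma: height function properties}(3), so $\sum_{w\in N_t^\ell}\int_{I_w}\norm{g_tu_rw}^{-\b/\d_\l}dr\leq 2(\omega/\e_1)^{\b/\d_\l}$, which is uniform in $x$. Either repair is needed; as written the proof does not establish the stated uniformity.
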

    
    \begin{proof}
    	Let $t>0$ be fixed and define
        \[ \omega := \sup_{\substack{ r\in [-1,1]\\ Z\in\mf{g}_\l, \norm{Z}=1  }} \max\set{\norm{g_t \exp(rZ)}, \norm{(g_t \exp(rZ))^{-1}} }. \]
    	Now, fix some $Z \in \mf{g}_\l $ with $\norm{Z} =1$.
        For simplicity, we use the following notation
        \[ u_r := \exp(rZ). \]
        Then, for all $r \in [-1,1]$ and all $x\in X$, we have
        \begin{equation} \label{eqn: Lipschitz property}
        	\omega^{-1} \tilde{\a}(x)\leqslant \tilde{\a}(g_t u_r x) 
            	\leqslant \omega \tilde{\a}(x),
        \end{equation}
        where $\norm{\cdot}$ denotes the operator norm of the action of $G$ on $V$.
        Let $\e_1$ be as in $(3)$ of Lemma~\ref{lemma: height function properties}.
        Suppose $x\in X$ is such that $\tilde{\a}(x) \leq \omega/\e_1$.
        Then, by~\eqref{eqn: Lipschitz property}, for any $\b>0$, we have that
        \begin{equation} \label{eqn: bounded case}
        \frac{1}{2} \int_{-1}^1 \tilde{\a}^{\b/\d_\l}(g_t u_r x) \;dr \leqslant 
        	(\omega^2 \e_1^{-1})^{\b/\d_\l}.
        \end{equation}
        
        Now, suppose $x\in X$ is such that $\tilde{\a}(x) \geq \omega/\e_1$ and write $x=g\G$ for some $g\in G$.
        Then, by $(3)$ of Lemma~\ref{lemma: height function properties}, there exists a unique vector $v_0 \in \bigcup_i g\G \cdot v_i$ satisfying $\tilde{\a}(x) = \norm{v_0}^{-1}$.
        Moreover, by~\eqref{eqn: Lipschitz property}, we have that $\tilde{\a}(g_t u_r x) \geq 1/\e_1$ for all $r \in [-1,1]$.
        And, by definition of $\omega$, for all $r\in [-1,1]$, $\norm{g_tu_r v_0} \leq \e_1$.
        Thus, applying $(3)$ of Lemma~\ref{lemma: height function properties} once more, we see that $g_t u_r v_0$ is the unique vector in $\bigcup_i g_tu_r g\G \cdot v_i $ satisfying 
        \begin{equation*}
        	\tilde{\a}(g_tu_r x) = \norm{g_tu_rv_0}^{-1},
        \end{equation*}
        for all $r\in [-1,1]$.
        Moreover, since all the (minimal) parabolic subgroups of $G$ are conjugate, we see that the vectors $v_i$ all belong to the $G$-orbit of a highest weight vector $\tilde{v}$.
        
        Thus, we may apply Lemma~\ref{lemma: expansion of vectors in rank 1} as follows.
        Fix some $\b\in (0,1)$ and let $\tilde{c} \geqslant 1$ be the constant in the conclusion of the lemma.
        \begin{equation*}
        	\frac{1}{2} \int_{-1}^{1} \tilde{\a}^{\b/\d_\l}(g_t u_r x) \;dr
            	= \frac{1}{2} \int_{-1}^{1} \norm{g_tu_rv_0}^{-\b/\d_\l} \;dr
                \leqslant \tilde{c} e^{\frac{-\b \mu(H_0) t}{2}} \norm{v_0}^{-\b/\d_\l} 
                = \tilde{c} e^{\frac{-\b \mu(H_0) t}{2}} \tilde{\a}^{\b/\d_\l}( x).
        \end{equation*}
        Combining this estimate with~\eqref{eqn: bounded case}, we obtain the desired estimate.

    \end{proof}
    
    In order to obtain the winning property for bounded orbits, we need to show that the height function $\tilde{\a}$ satisfies Assumption~\ref{assumption: bounded connected components}.
    This is the content of the following lemma.
    Its proof is a combination of $(3)$ of Lemma~\ref{lemma: height function properties} and the fact that polynomial maps have finitely many zeros.
    \begin{lemma}\label{lemma: bounded conn. comp. rank 1}
    There exists $N\in \N$, depending only on the dimension of $G$, such that for every $T, R>0$, there exists $M_0>0$ such that for all $x\in G/\G$, $Y\in \mf{g}_\a\oplus \mf{g}_{2\a}$ with $\norm{Y}\leq R$ and $M\geq M_0$, the following holds.
    \begin{equation}\label{eqn: cusp set rank 1}
    	\text{The set } \set{|s|\leqslant T: \tilde{\a}(u(sY) x) > M} \text{ has at most } N \text{ connected components}.
    \end{equation}
    \end{lemma}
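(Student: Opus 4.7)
The plan is to combine two observations. First, since $\mf{g}_\a \oplus \mf{g}_{2\a}$ consists of nilpotent elements (they are the positive root spaces of the rank one group $G$), the operator $d\rho(Y)$ acts nilpotently on the representation $V$ defining $\tilde{\a}$, so the exponential series truncates and $P_w(s) := \norm{u(sY)w}^2$ is a polynomial in $s$ whose degree $d$ depends only on $\dim V$, hence only on $\dim G$. Consequently, for any $c>0$ the set $\set{s\in [-T,T]: P_w(s)<c}$ has at most $d+1$ connected components, obtained by cutting $[-T,T]$ at the (at most $d$) real zeros of $P_w(s)-c$. Second, part $(3)$ of Lemma~\ref{lemma: height function properties} ensures that the maximum defining $\tilde{\a}(u(sY)x)$ is realized by a single small vector whenever $\tilde{\a}(u(sY)x) > 1/\e_1$. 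The main step will be to show that, for $M$ sufficiently large, this vector does not depend on $s$ over the set $\Omega := \set{|s|\leq T : \tilde{\a}(u(sY)x)>M}$; the remaining count of components will then follow immediately from the polynomial bound.

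To carry this out, I would fix a bounded neighborhood $\mc{O}$ of identity in $G$ containing $\set{u(sY):|s|\leq 2T,\ \norm{Y}\leq R}$, set $\omega := \sup_{g\in \mc{O}} \norm{\rho(g)}$ where $\rho$ is the $G$-action on $V$, and take $M_0:=\omega/\e_1$. For any $M\geq M_0$ and $x=g\G$, part $(3)$ supplies, for each $s\in \Omega$, a unique $w_s \in g\G\cdot \bigcup_i v_i$ with $\norm{u(sY)w_s}<\e_1$, and then $\tilde{\a}(u(sY)x)=\norm{u(sY)w_s}^{-1}$. The key step, which is the main obstacle, is to verify that $w_s$ is independent of $s$: for any $s_1,s_2\in \Omega$, a direct operator norm bound yields
\[
\norm{u(s_1Y)w_{s_2}} \leq \norm{\rho(u((s_1-s_2)Y))}\,\norm{u(s_2Y)w_{s_2}} \leq \omega/M \leq \e_1,
\]
so uniqueness at $u(s_1Y)x$ forces $w_{s_2}=w_{s_1}$. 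Denoting this common vector by $w$, the identity $\Omega = \set{s\in [-T,T] : P_w(s)<1/M^2}$ together with the first paragraph yields at most $d+1$ components, so taking $N=d+1$ completes the argument.
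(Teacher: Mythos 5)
Your proof is correct and follows the same route as the paper's: identify a single small vector $w$ via uniqueness from Lemma~\ref{lemma: height function properties}(3), then count connected components of the polynomial sublevel set $\set{|s|\leq T : \norm{u(sY)w}^2 < 1/M^2}$. You are in fact slightly more careful in choosing $M_0$, requiring the neighborhood $\mc{O}$ to contain $u(rY)$ for $|r|\leq 2T$ so that the comparison between $u(s_1Y)$ and $u(s_2Y)$ goes through; the paper's supremum is taken only over $|s|\leq T$, which should be enlarged to make the claim "$\tilde{\a}(u_sx) > \e_1^{-1}$ for all $|s|\leq T$" fully justified.
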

    
    \begin{proof}
    Let $T,R >0$, $Y\in \mf{g}_\a\oplus \mf{g}_{2\a}$ with $\norm{Y}\leq R$ and let $u_s = u(sY)$.
    Fix some $x =g\G \in X$. Let $\e_1 >0$ be the constant in $(3)$ of Lemma~\ref{lemma: height function properties}.
    Define $M_0$ as follows.
    \begin{equation*}
    	M_0 = \e_1^{-1} \sup \set{ \norm{ u(s Z)}:  Z\in \mf{g}_\a\oplus \mf{g}_{2\a}, \norm{Z}\leq R , |s| \leq T  }.
    \end{equation*}
    Let $M\geq M_0$.
    If $\tilde{\a}(u_s x) \leqslant M$ for all $|s|\leq T$, then the set in~\eqref{eqn: cusp set rank 1} is empty and the claim follows.
    On the other hand, if $\tilde{\a}(u_{s_0} x) > M$ for some $|s_0|\leq T$, then, by definition of $M$, we see that $\tilde{\a}(u_s x) > \e_1^{-1}$ for all $|s| \leq T$.
    In particular, by $(3)$ of Lemma~\ref{lemma: height function properties}, there exists a unique vector $w \in \bigcup_i g\G \cdot v_i$ such that
    \[ \tilde{\a}(u_s x) = \norm{u_s w}^{-1}, \text{ for all } |s|\leq T. \]
    Note that for any vector $w\in V$, since $u_s$ is a unipotent transformation, the map $s\mapsto \norm{u_s w}^2$ is a polynomial of degree at most $N$, where $N$ depends only on the dimension of $V$.
    Thus, since polynomials have finitely many zeros, for any $\epsilon >0$, the set $\set{|s|\leq T: \norm{u_s w} <\epsilon}$ has a number of connected components uniformly bounded above only in terms of $N$.
    This concludes the proof.  
    
    \end{proof}
    
    Given a $g_t$-admissible curve $\vp$ (Def.~\ref{defn: admissible curves}), applying Proposition~\ref{propn: main integral estimate} to the derivative $\dot{\vp}$ yields the following.
    \begin{theorem} \label{thrm: CH in rank 1}
    Suppose $\vp$ is a non-constant $g_t$-admissible curve. Then, $\vp$ satisfies the $\b$-contraction hypothesis (Def.~\ref{defn: height functions}) for all $\b \in (0,1/2)$ with a height function satisfying Assumption~\ref{assumption: bounded connected components}.
    \end{theorem}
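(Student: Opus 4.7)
The plan is to take as height function $f = \tilde{\alpha}^{2\beta/\delta_\lambda}$, where $\tilde{\alpha}$ is the function from~\eqref{defn: height function}, $\mu \in \{\alpha,2\alpha\}$ denotes the $\mathrm{Ad}(g_t)$-eigenvalue corresponding to the eigenspace $\mf{g}_\mu$ containing the image of $\vp$, $\lambda$ is the highest weight of $G$ on the chosen faithful irreducible representation $V$, and $\delta_\lambda = 2\lambda(H_0)/\mu(H_0)$ as in Proposition~\ref{propn: main integral estimate}. Properties (1), (2), and (4) of Definition~\ref{defn: height functions} follow at once from Lemma~\ref{lemma: height function properties}: the orbits $\G\cdot v_i$ are closed and do not contain $0$, so $f$ is finite everywhere and $Z=\emptyset$; the log-Lipschitz property is inherited by raising $\tilde{\alpha}$ to a positive power; and compactness of sublevel sets transfers directly.

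The core step is the integral inequality~\eqref{eqn: CH}. Because $\vp$ is admissible and $[-1,1]$ is compact, the continuous map $\eta(s):=\norm{\dot{\vp}(s)}$ is bounded above by some $R<\infty$ and, crucially, below by some $\delta>0$ (using that $\dot{\vp}$ does not vanish, Def.~\ref{defn: admissible curves}(3)). Writing $Z(s)=\dot{\vp}(s)/\eta(s)$ and changing variables $r' = r\eta(s)$, one has
\begin{equation*}
\tfrac{1}{2}\int_{-1}^1 f(g_t u(r\dot{\vp}(s))x)\,dr \;=\; \tfrac{1}{2\eta(s)}\int_{-\eta(s)}^{\eta(s)} f(g_t u(r' Z(s))x)\,dr' \;\leq\; \tfrac{1}{2\delta}\int_{-R}^R f(g_t u(r' Z(s))x)\,dr',
\end{equation*}
by positivity of $f$. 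I would then cover $[-R,R]$ by $O(R)$ unit-radius intervals centered at points $c_j$, and on each piece apply Proposition~\ref{propn: main integral estimate} (with parameter $2\beta\in(0,1)$) to the translated base point $u(c_j Z(s))x$, obtaining a contraction rate of $e^{-\beta\mu(H_0)t}=e^{-\beta\mu(t)}$. The log-Lipschitz property of $f$ applied to the bounded family $\set{u(c_j Z(s)):|c_j|\leq R}$ absorbs the translations into a uniform multiplicative constant, yielding~\eqref{eqn: CH} with the required rate in terms of the eigenvalue $\mu$ governing the image of $\vp$.

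Verification of Assumption~\ref{assumption: bounded connected components} is immediate: the set $\set{|s|\leq T : f(u(sY)x)>M}$ coincides with $\set{|s|\leq T : \tilde{\alpha}(u(sY)x)>M^{\delta_\lambda/(2\beta)}}$, so Lemma~\ref{lemma: bounded conn. comp. rank 1} applies after replacing the threshold $M$ by $M^{\delta_\lambda/(2\beta)}$ in its statement. The main obstacle to handle carefully is the mismatch between the statement of Proposition~\ref{propn: main integral estimate}, which is phrased for a unit vector $Z$ and the fixed interval $[-1,1]$, and the quantity in~\eqref{eqn: CH}, which integrates against a tangent $\dot{\vp}(s)$ of variable magnitude. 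The rescale-and-cover argument above resolves this, and the lower bound $\eta(s)\geq\delta>0$ (which fails if one allowed $\dot{\vp}$ to vanish) is exactly what keeps the constant $\tilde{c}$ finite.
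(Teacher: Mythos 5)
Your proposal is correct and takes essentially the same route as the paper, whose proof of this theorem consists of applying Proposition~\ref{propn: main integral estimate} (with exponent $2\beta$) to the derivative $\dot{\vp}$ and invoking Lemmas~\ref{lemma: height function properties} and~\ref{lemma: bounded conn. comp. rank 1} for the remaining properties. Your rescale-and-cover step merely supplies, correctly, the uniformity over the non-unit tangent vectors $\dot{\vp}(s)$ that the paper leaves implicit.
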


    \section{Height Functions and Reduction Theory}
\label{section: height fun higher rank}

	The purpose of this section is to construct a height function on arithmetic homogeneous spaces and establish its main properties.
    This construction will be used in Section~\ref{section: CH higher rank} to verify the $\b$-contraction hypothesis in the setting of Theorem~\ref{thrm: products of sl2}.
    The height function we use here was introduced in~\cite{EskinMargulis-RandomWalks}.
    It generalizes the construction for $\mrm{SL}(n,\R)/\mrm{SL}(n,\Z)$ introduced in~\cite{EskinMargulisMozes} and builds on ideas which were used for the problem of quantitative recurrence of unipotent flows in~\cite{DaniMargulis}.
    However, we follow the approach of~\cite{KleinbockWeiss-SchmidtExpanding} which replaces the method of systems of integral inequalities with the notion of $W$-active Lie algebras.

    Throughout this section, we assume $G$ is a semisimple algebraic Lie group defined over $\Q$ with Lie algebra $\mf{g}$ such that the real rank of $G$ is at least $2$.
    We fix a lattice $\G\subset G(\Q)$.
    In particular, the rational structure on $\mf{g}$ is $\mrm{Ad}(\G)$-invariant.
    We let $\mf{g}_\Z$ denote an integer lattice of $\mf{g}$ with respect to this $\Q$-structure.

    Suppose $\mbf{S}$ is a maximal $\Q$-split torus in $G$. We identify $\mbf{S}$ with its Lie algebra and denote by $\mbf{S}^\ast$ its linear dual.
    Let $\mc{C} \subseteq \mbf{S}$ be a closed Weyl chamber and fix an order on the roots of $\mbf{S}$ making $\mc{C}$ positive.
    Denote by $\Pi =\set{ \a_1,\dots, \a_r} \in \mbf{S}^\ast$ a set of simple positive roots.
    We assume that $G/\G$ is not compact. In particular, $r = \mrm{rank}_\Q G \geq 1$.
    Let $\Delta^+$ denote the set of positive $\Q$-roots.
    For each root $\b$, denote by $\mf{g}_\b$ the corresponding root space.
    The reader is referred to~\cite[Section 14]{Borel-LinearAlgebraicGroups} for standard facts regarding root systems over $\Q$.
    
    For each $1\leqslant k \leqslant r$, let $\mrm{P}_k$ be the maximal standard parabolic $\Q$-subgroup obtained from $\Pi \backslash\set{\a_k}$.
    Then, each $\mrm{P}_k$ is defined over $\Q$.
    We note that every maximal parabolic $\Q$-subgroup of $G$ is conjugate over $G(\Q)$ to $\mrm{P}_k$ for some $k$.
    
    We fix a maximal compact subgroup $K$ of $G$ which is fixed by a Cartan involution leaving $\mbf{S}$ invariant.
    In particular, $G=K\mrm{P}_k$ for all $k$.

%%%%%%%%%%%%%%%%%%%%%%%%%%%%%%%%%%%%%%%%%%%%%%%%%%%%%%%%%%%%%%%%%%%%%%%%%%%%%%%%%%%%%%%%%%%%%%
    \subsection{Siegel Sets and Reduction Theory}
    A subset $\Omega \subset G$ is said to be a \emph{fundamental set} for $\G$ if the following hold.
    \begin{enumerate}
    	\item $G = \Omega\G$, and
        \item The set of elements $\g \in \G$ such that $\Omega\g \cap \Omega \neq \emptyset$ is finite.
    \end{enumerate}

    Let $\mrm{P}_0 =\cap_k \mrm{P}_k $ be the standard minimal parabolic subgroup associated with $\mbf{S}$ and $\Pi$ and let $\mrm{U}_0$ be the unipotent radical of $\mrm{P}_0$.    
    Denote by $\mrm{M}_0 \subset \mrm{P}_0$ the identity component of the maximal $\Q$-anisotropic subgroup of $Z_G(\mbf{S})$.
    
    A Siegel set $\mf{S}$ (relative to $K, \mrm{P}_0$ and $\mbf{S}$)  is a set of the form $\mf{S} = K \mbf{S}_t  W$, where $W$ is a compact subset of $\mrm{M}_0\mrm{U}_0$, $t\geq 0$, and
    \begin{equation} \label{Siegel set}
    	\mbf{S}_t = \set{s \in \mbf{S}:  \a_k(s) \leq t, k=1,\dots,r  }.
    \end{equation}
    The following classical result, due to Borel and Harish-Chandra, shows that Siegel sets give rise to fundamental sets for $\G$.
    \begin{proposition}[Theorem 15.5, Proposition 15.6,\cite{Borel-FrenchBook}]
    \label{propn: siegel sets are fundamental}
    The space $\G\backslash G(\Q) / \mrm{P}_0(\Q)$ has finitely many double cosets.
    Given a finite set of representatives $F \subset G(\Q)$, there exists a Siegel set $\mf{S}$ such that $\Omega = \mf{S}F$ is a fundamental set for $\G$.
    \end{proposition}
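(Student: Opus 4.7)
The plan is to reproduce the classical reduction-theoretic argument of Borel and Harish-Chandra that underlies the cited theorems in Borel's book. The two assertions are proved together, and the proof proceeds by induction on the $\Q$-rank $r = \mrm{rank}_\Q G$, the base case $r = 0$ being trivial (then $G/\G$ is compact and $\mrm{P}_0 = G$). For $r \geq 1$, the strategy has two ingredients: realizing the double coset space $\G \backslash G(\Q)/\mrm{P}_0(\Q)$ as a set of $\G$-orbits on a projective $\Q$-variety, and using the Iwasawa decomposition to analyze when a translate of $g \in G$ by an element of $\G \cdot F$ falls into a Siegel set.

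For finiteness of the double coset space, the idea is to identify $G(\Q)/\mrm{P}_0(\Q)$ with the set of $\Q$-points of the flag variety $G/\mrm{P}_0$, which is projective over $\Q$. Under a suitable $G$-equivariant embedding into a projective space, $\G$-orbits on these rational points correspond, after clearing denominators, to $\G$-orbits of primitive integer vectors representing rational flags, and Mahler's compactness criterion together with the existence of $\G$-invariant integral structures on $\mf{g}$ yields finiteness. Equivalently, one may invoke the finiteness of class numbers of algebraic groups defined over $\Q$.

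For the Siegel set statement, given $g \in G$, write $g = k a m u$ using the Iwasawa decomposition $G = K \mbf{S} \mrm{M}_0 \mrm{U}_0$. The goal is to find $\g \in \G$ and $f \in F$ so that the $\mbf{S}$-component of $g f^{-1} \g^{-1}$ lies in $\mbf{S}_t$ for some $t$ depending only on $\G$, and the $(\mrm{M}_0 \mrm{U}_0)$-component lies in a fixed compact set $W$. This is done in two steps. First, using finiteness of double cosets, choose $f \in F$ so that $g f^{-1}$ has rational Langlands components modulo $\G$. Then exploit the $\G \cap \mrm{P}_0$-action on $\mrm{M}_0 \mrm{U}_0$: the intersection $\G \cap \mrm{U}_0$ is a cocompact lattice in $\mrm{U}_0$ (arithmetic subgroups of unipotent $\Q$-groups are always cocompact), and $\G \cap \mrm{M}_0$ is cocompact in $\mrm{M}_0$ by applying the inductive hypothesis to $\mrm{M}_0$ (which has strictly smaller $\Q$-rank), so one can bring the $(\mrm{M}_0 \mrm{U}_0)$-component into a fixed compact set $W$. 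The $\mbf{S}$-component is then controlled by a classical reduction lemma: whenever some simple root value $\a_k(a)$ exceeds a suitable threshold, one can replace $a$ by a $\G$-translate for which $\sum_i \a_i$ decreases strictly, and iterating finitely many times forces $a \in \mbf{S}_t$.

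The main obstacle is precisely this last step---ensuring that after finitely many such replacements one genuinely lands in $\mbf{S}_t$ with uniform $t$. This requires a positivity argument exploiting the structure of the $\Q$-root system together with careful bookkeeping of how $\G$-integral unipotent translates shift the torus component across simple root hyperplanes; this is the technical core of reduction theory, handled in Borel's book through the ``chambre de Siegel'' arguments. Once both ingredients are assembled, one chooses $t$ and $W$ uniformly across the finite set $F$, and concludes that $\mf{S}F$ meets every $\G$-orbit and has the required finite self-intersection property, hence is a fundamental set.
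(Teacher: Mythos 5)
The paper does not prove this statement; it cites it directly as Theorem 15.5 and Proposition 15.6 of Borel's \emph{Introduction aux groupes arithm\'etiques}, which contains the full Borel--Harish-Chandra reduction-theoretic argument. There is therefore no in-paper proof to compare yours against, and a blind attempt here is essentially an attempt to resummarize several chapters of Borel's book.

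That said, your sketch has a structural issue worth flagging. You propose to establish finiteness of $\G\backslash G(\Q)/\mrm{P}_0(\Q)$ \emph{first} (via $\G$-orbits on rational points of the flag variety, or by ``invoking finiteness of class numbers'') and then use that finiteness to construct the Siegel fundamental set. But finiteness of class numbers of $\Q$-groups is itself a consequence of reduction theory, not a prerequisite; and the claim that $\G$-orbits on primitive integral vectors representing rational flags are finite is, when unwound, again reduction theory for $\mrm{GL}_n$. The classical logical order is the reverse: one first builds Siegel sets explicitly for $\mrm{GL}_n$ (Minkowski--Hermite reduction), transfers to general $G$ through a faithful $\Q$-representation, and only then deduces finiteness of the double coset space from the covering and finite self-intersection properties of $\mf{S}F$. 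A further quibble: you invoke an inductive hypothesis on $\mrm{M}_0$ to get cocompactness of $\G\cap\mrm{M}_0$, but in the paper $\mrm{M}_0$ is by definition $\Q$-anisotropic (rank zero), so the ``induction'' bottoms out immediately --- and cocompactness of arithmetic lattices in anisotropic $\Q$-groups is exactly the Godement criterion, which is itself part of the package being proved, not something you may freely assume. Finally, as you yourself note, the ``classical reduction lemma'' guaranteeing that finitely many $\G$-translates drive the torus component into $\mbf{S}_t$ with uniform $t$ is the entire technical content; a proof that defers precisely this step is an outline, not a proof. Given that the paper treats this as a black box citation, so should you.
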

    Through the remainder of this section, we fix a Siegel set $\mf{S}$ and a finite set $F\subset G(\Q)$ as in Proposition~\ref{propn: siegel sets are fundamental}.
    We denote by $F^{-1}$ the set of inverses of the elements of $F$.

%%%%%%%%%%%%%%%%%%%%%%%%%%%%%%%%%%%%%%%%%%%%%%%%%%%%%%%%%%%%%%%%%%%%%%%%%%%%%%%%%%%%%%%%%%%%%%
\subsection{The functions $\tilde{\a}_k$}
    Denote by $\mrm{U}_k$ the unipotent radical of $\mrm{P}_k$ and let $d_k = \mathrm{dim} \mrm{U}_k$.
    Then, each $\mrm{U}_k$ is defined over $\Q$.
    In particular, $\mrm{U}_k \G$ is closed in $G$ and $\mrm{U}_k/\mrm{U}_k\cap \G$ is compact.
    
    Let $\mf{u}_k$ be the Lie algebra of $\mrm{U}_k$ and let $u_{k_1},\dots , u_{k_{d_k}} \in \mf{u}_k\cap \mf{g}_\Z$ be an integral basis for $\mf{u}_k$.
    Define $\mathbf{p}_{\mf{u}_k}$ as follows
    \begin{equation} \label{p_u}
    	\mathbf{p}_{\mf{u}_k} = u_{i_1}\wedge \cdots \wedge u_{i_{d_k}}
        \in \bigwedge^{d_k} \mf{g}.
    \end{equation}
    Note that the stabilizer of the line $\mathrm{span}(\mf{p}_{\mf{u}_k})$ is $\mrm{P}_k$.
    For each $1\leq k\leq r$, consider the following vector space
    \begin{equation} \label{defn: representations V_k}
    V_k = \mathrm{span}\left( \bigwedge^{d_k}(\mrm{Ad}(G)) \mathbf{p}_{\mf{u}_k} \right).
    \end{equation}
    Then, the representation of $G$ on each $V_k$ is irreducible.
    Indeed, the vector $\mf{p}_{\mf{u}_k}$ is fixed by $\oplus_{\b \in \Delta^+} \mf{g}_\b$ and is thus a highest weight vector and so $V_k = \mrm{span}(\bigwedge^{d_k} \mrm{ad}(\mf{g})\cdot \mf{p}_{\mf{u}_k})$ is irreducible.
    Moreover, since $\mf{p}_{\mf{u}_k} \in \bigwedge^{d_k} \mf{g}_\Z$, $F$ is a finite subset of $G(\Q)$, and $\mrm{Ad}(\G)(\mf{g}_\Z) \subseteq \mf{g}_\Z $, we see that $\G F^{-1} \cdot \mf{p}_{\mf{u}_k}$ is discrete since it is contained in $\bigwedge^{d_k} \mf{g}_{\frac{1}{N}\Z}$, for some $N\in \N$ depending on $F$.

    We use $\norm{\cdot}$ to denote a $K$-invariant norm on $V_k$, where $K$ is our fixed maximal compact subgroup.
    Define $\tilde{\a}_k: G \r \R_+$ as follows
    \begin{equation} \label{defn: b_k}
    \tilde{\a}_k(g) = \max \set{ \norm{g \g f^{-1}  \cdot \mf{p}_{\mf{u}_k} }^{-1} : \g\in \G, f\in F}.
    \end{equation}
    Note that the functions $\tilde{\a}_k$ are $\G$-invariant and can be regarded as functions on $G/\G$.
    In particular, we define a function $f:G/\G \r \R^+$ as follows
    \begin{equation} \label{defn: height higher rank}
    	f(x) = \max_{1\leq k\leq r}  \tilde{\a}_k^{1/d_k}(g),
    \end{equation}
    for $x = g\G \in G/\G$.    
    The following proposition shows that $f$ encodes divergence in $G/\G$.
    \begin{proposition} \label{alpha_k is proper}
    A subset $L \subseteq G/\G$ is bounded if and only if
    \[ \max_{1\leq k\leq r} \sup_{l\in L} \tilde{\a}_k(l) <\infty. \]
    \end{proposition}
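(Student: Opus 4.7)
The plan is to exploit the Siegel-set reduction theory already set up (Proposition~\ref{propn: siegel sets are fundamental}) to convert boundedness in $G/\G$ into a condition on the $\mbf{S}_t$-component, and then to read off the value of $\tilde{\a}_k$ directly from the $\mbf{S}$-weight of $\mf{p}_{\mf{u}_k}$. The easy direction (``bounded implies the $\sup$ is finite'') I would handle first by verifying that each $\tilde{\a}_k$ is continuous on $G/\G$: for any fixed $g$, the orbit $g\cdot \G F^{-1}\cdot \mf{p}_{\mf{u}_k}$ is a linear image of the discrete set $\G F^{-1}\cdot \mf{p}_{\mf{u}_k}\subset \bigwedge^{d_k}\mf{g}_{\frac{1}{N}\Z}$, hence discrete in $V_k$, so only finitely many $(\g,f)$ can approach the infimum. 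This gives a continuous, finite-valued $\tilde{\a}_k$, which is therefore bounded on compact sets.

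For the nontrivial direction, suppose $L$ is unbounded and pick $l_n\in L$ with $l_n \to \infty$ in $G/\G$. By Proposition~\ref{propn: siegel sets are fundamental}, write $l_n = k_n a_n w_n f_n \G$ with $k_n\in K$, $a_n\in \mbf{S}_t$, $w_n\in W$ (compact in $\mrm{M}_0\mrm{U}_0$), and $f_n\in F$. After passing to a subsequence, I may assume $k_n\to k_0$, $w_n\to w_0$, and $f_n = f_0$ is constant (since $F$ is finite). Divergence in $G/\G$ then forces $a_n\to\infty$ inside $\mbf{S}_t$; since the simple roots $\{\a_j\}_{j=1}^r$ furnish a global coordinate system on $\mbf{S}$ (via logarithms) and $\a_j(a_n)\leqslant t$ for all $j$, unboundedness means that for some index $k$, the quantity $\a_k(a_n)$ escapes every positive lower bound, i.e.\ $\a_k(a_n)\to 0$ (multiplicatively), while the remaining $\a_j(a_n)$ stay bounded above.

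To convert this into a bound on $\tilde{\a}_k(l_n)$, I would specialize the max in the definition~\eqref{defn: b_k} to $\g = e$ and $f = f_0$ to get
\[
 \tilde{\a}_k(l_n)\;\geqslant\; \norm{k_n a_n w_n f_n f_0^{-1}\cdot \mf{p}_{\mf{u}_k}}^{-1}
 \;=\; \norm{a_n w_n\cdot \mf{p}_{\mf{u}_k}}^{-1},
\]
using $K$-invariance of $\norm{\cdot}$ and $f_n = f_0$. Now $\mf{p}_{\mf{u}_k}$ is a highest weight vector whose $\mbf{S}$-weight $\chi_k$ equals the sum of the roots of $\mrm{U}_k$ (with multiplicity); writing $\chi_k = \sum_j c_j \a_j$ in simple roots, every root of $\mrm{U}_k$ has a strictly positive coefficient on $\a_k$ and a non-negative coefficient on the remaining simple roots, so $c_k>0$ and $c_j\geqslant 0$. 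Since $\mrm{M}_0 \mrm{U}_0\subset \mrm{P}_k$ stabilizes the line $\mrm{span}(\mf{p}_{\mf{u}_k})$, each $w_n$ acts on $\mf{p}_{\mf{u}_k}$ by a character whose values over the compact set $W$ are bounded between two positive constants. Hence $\norm{a_n w_n\cdot \mf{p}_{\mf{u}_k}}$ is comparable to $\chi_k(a_n) = \prod_j \a_j(a_n)^{c_j}$, which tends to $0$ because $\a_k(a_n)\to 0$ and the other $\a_j(a_n)$ are bounded above by $t$. Therefore $\tilde{\a}_k(l_n)\to\infty$, contradicting finiteness of $\sup_{l\in L}\tilde{\a}_k(l)$.

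The main obstacle I expect is bookkeeping: matching the sign conventions used in the Siegel set~\eqref{Siegel set} with the direction in which $\chi_k$ decays, and confirming that the compact $W\subset \mrm{M}_0\mrm{U}_0$ really sits inside each $\mrm{P}_k$ so that it acts on $\mf{p}_{\mf{u}_k}$ by a bounded character. Both are standard features of reduction theory, but they must be stated carefully since the argument is otherwise a direct appeal to the weight structure combined with the defining inequalities $\a_j(a_n)\leqslant t$.
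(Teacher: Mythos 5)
Your proposal is correct and follows essentially the same route as the paper: the easy direction via discreteness of $\G F^{-1}\cdot\mf{p}_{\mf{u}_k}$, and the converse via the Siegel-set decomposition, extracting a simple root $\a_k$ with $\a_k(a_n)\to-\infty$ (additively), and using that the $\mbf{S}$-weight $\chi_k$ of $\mf{p}_{\mf{u}_k}$ is a non-negative combination of simple roots with strictly positive coefficient on $\a_k$, together with the Siegel bound $\a_j\leqslant t$ on the remaining roots. The only differences are cosmetic (multiplicative versus additive notation for roots, and expanding $\chi_k$ directly in simple roots rather than summing over the positive roots of $\mf{u}_k$).
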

    \begin{proof}
    This result is well-known and is present in several places in the literature. See for example Steps $1$ and $2$ in the proof of Proposition 4.1 in~\cite{KleinbockWeiss-SchmidtExpanding}. We include a proof for completeness.
    The direction $``\Rightarrow"$ follows from the discreteness of the sets $\G F^{-1} \cdot \mf{p}_{\mf{u}_k}$.
    Conversely, suppose $x_n $ is an unbounded sequence in $G/\G$ and let $g_n \in \mf{S} F$ be a representative of $x_n$ in the fundamental set for $\G$.
    Hence, we can write
    \[g_n = k_n w_n s_n  f_n, \]
    with $k_n\in K$, $w_n\in W$, $s_n\in \mbf{S}_t$ and $f_n\in F$ such that, possibly after passing to a subsequence, there is some $1\leq j \leq r$ satisfying
    \[ \a_j(s_n) \r -\infty. \]
    By the $K$-invariance of $\norm{\cdot}$ and compactness of $W$, we get that
    \[ \tilde{\a}_j(x_n) \geqslant \norm{k_nw_n s_nf_n f_n^{-1} \cdot\mf{p}_{\mf{u}_j}  }^{-1} \gg \norm{s_n \cdot\mf{p}_{\mf{u}_j}  }^{-1}.\]
    Now, observe that $\mf{p}_{\mf{u}_j} $ is a weight vector for $\mbf{S}$ with weight $\chi_j$of the form
    \[  \chi_j = \sum n_\b \b, \]
    where the sum is taken over all positive roots $\b$ which have $\a_j$ in their expansion in terms of simple roots and $n_\b$ denotes the dimension of the root space corresponding to $\b$.
    Finally, note that since $s_n\in \mbf{S}_t$, the values $\b(s_n)$ are bounded above for all positive roots $\b$.
    In particular, $s_n \cdot\mf{p}_{\mf{u}_j} = e^{\chi_j(s_n)}\mf{p}_{\mf{u}_j}$ and 
    $\chi_j(s_n) \r -\infty$
    which concludes the proof.
    \end{proof}

 %%%%%%%%%%%%%%%%%%%%%%%%%%%%%%%%%%%%%%%%%%%%%%%%%%%%%%%%%%%%%%%%%%%%%%%%%%%%%%%%%%%%%%%%%%%%%%   
    \subsection{W-Active Lie Algebras and The Contraction Hypothesis in $G/\G$}

    We recall here several facts concerning unipotent radicals of parabolic subgroups which will be useful for us.
    The first is the following observation due to Tomanov and Weiss~\cite{TomanovWeiss}.
    \begin{lemma}[Proposition 3.3 in~\cite{TomanovWeiss}]
    \label{lemma: short integral vectors are unipotent}
    There exists a compact neighborhood $W$ of $0$ in $\mf{g}$ such that for all $g\in G$, the Lie algebra generated by $\mrm{Ad}(g)(\mf{g}_\Z)\cap W$ is unipotent.
    \end{lemma}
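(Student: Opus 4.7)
The strategy is the classical integer-characteristic-polynomial argument, adapted to the conjugate lattice $\Lambda := \mrm{Ad}(g)(\mf{g}_\Z)$. Assume first, without loss of generality, that $\mf{g}_\Z$ is closed under the Lie bracket (arrange this by taking a Chevalley-type integral basis). Since $\mrm{Ad}(g)$ is a Lie algebra automorphism, $\Lambda$ is also a Lie $\Z$-subring of $\mf{g}$ for every $g \in G$, and the matrix of $\mrm{ad}(X): \Lambda \to \Lambda$ in any $\Z$-basis of $\Lambda$ has integer entries whenever $X \in \Lambda$.

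The first step is to show that small vectors in $\Lambda$ are automatically ad-nilpotent on $\mf{g}$. Writing the characteristic polynomial of $\mrm{ad}(X)$ as
\[
\chi_X(t) = t^n + \sum_{i=1}^n c_i(X)\, t^{n-i},
\]
each $c_i:\mf{g}\to\R$ is a basis-independent homogeneous polynomial of degree $i$ with $c_i(0)=0$, and the previous paragraph shows $c_i(X)\in\Z$ for $X\in\Lambda$. By continuity, there is a neighborhood $W_0$ of $0$ in $\mf{g}$, \emph{independent of $g$}, on which $|c_i(X)|<1$ for every $i\geq 1$; combined with integrality, this forces $c_i(X)=0$ for $X\in\Lambda\cap W_0$, so $\chi_X(t)=t^n$ and $X$ is ad-nilpotent. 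Next, I would shrink $W\subseteq W_0$ to a ball of radius $\delta$ with $C\delta<1$, where $C$ bounds the operator norm of the Lie bracket. A simple induction then shows that iterated brackets of any length of elements of $W$ have norm at most $\delta$ and therefore lie in $W$. Since $\Lambda$ is closed under brackets, every iterated bracket of elements of $\Lambda\cap W$ lies in $\Lambda\cap W\subseteq\Lambda\cap W_0$, and is thus ad-nilpotent.

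The final task is to upgrade ad-nilpotence of generators and iterated brackets to ad-nilpotence of every element of the generated Lie subalgebra $\mf{h}$. Note $\mf{h}$ carries a natural $\Q$-structure $\mf{h}_\Q := \mf{h}\cap(\Lambda\otimes\Q)$ and is algebraic, being generated by ad-nilpotent (i.e., ``unipotent'') elements of $\mf{g}$. I would argue by contradiction: if $\mf{h}$ were not unipotent, the reductive Levi component of the corresponding $\Q$-algebraic group would be nontrivial and would contribute a nonzero ad-semisimple element $Y_s\in\mf{h}_\Q$. The goal is to show that, after rescaling by an appropriate denominator, a suitable integer multiple of $Y_s$ would fall inside $\Lambda\cap W_0$, contradicting step one via the fact that $Y_s$ is semisimple.

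I expect this last step to be the main obstacle. Ad-nilpotence of generators alone is insufficient for the conclusion (consider $E,F\in\mathfrak{sl}_2$, which generate a non-unipotent Lie algebra), so the argument must genuinely exploit the combination of (a) discreteness of $\Lambda$, (b) the fact that every iterated bracket of generators lies in $\Lambda\cap W$ (and is hence itself ad-nilpotent, ruling out the ``rank one'' obstruction), and (c) the $\Q$-algebraicity of $\mf{h}$. The key technical point is controlling the norm of any hypothetical semisimple $Y_s\in\mf{h}_\Q$ in terms of the generators of $\mf{h}$, so as to force $Y_s$ itself to be small enough for the integrality dichotomy of step one to apply.
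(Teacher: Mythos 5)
Your Steps 1 and 2 are correct and are exactly the standard ingredients (integrality of the coefficients of the characteristic polynomial of $\mrm{ad}(X)$ for $X$ in the bracket-closed lattice $\Lambda=\mrm{Ad}(g)(\mf{g}_\Z)$, plus a Zassenhaus-type ball $W$ that is stable under iterated brackets). Note that the paper itself offers no proof of this lemma --- it is imported verbatim from Tomanov--Weiss --- so the comparison is only with the standard argument. The genuine gap is your Step 3, and your proposed route through the Levi decomposition cannot work as described: a hypothetical nonzero ad-semisimple $Y_s\in\mf{h}\cap(\Lambda\otimes\Q)$ only becomes an element of $\Lambda$ after multiplying by a positive integer, which \emph{increases} its norm, and the integrality dichotomy of Step 1 says nothing about lattice elements outside $W_0$. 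There is no mechanism to force $Y_s$ (or any multiple of it) into $W_0$, so the contradiction never materializes.

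The correct completion is an Engel-type argument rather than a Levi-type one. Let $S''$ be the set of \emph{all} iterated brackets (in every bracketing order) of elements of $\Lambda\cap W$. By your Step 2 and the fact that $\Lambda$ is a Lie ring, $S''\subseteq\Lambda\cap W\subseteq\Lambda\cap W_0$, so by Step 1 every element of $S''$ is ad-nilpotent on $\mf{g}$; moreover $S''$ is closed under brackets by construction and spans $\mf{h}$. Now apply Engel's theorem in Jacobson's ``weakly closed set'' form: a bracket-closed set of nilpotent operators in $\mrm{End}(\mf{g})$ generates a nilpotent associative algebra (one finds a common kernel vector exactly as in the proof of Engel's theorem and inducts on dimension). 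Hence every element of $\mrm{span}(\mrm{ad}(S''))=\mrm{ad}(\mf{h})$ is nilpotent, i.e.\ $\mf{h}$ consists of nilpotent elements of $\mf{g}$ and is therefore unipotent. This directly rules out any nonzero semisimple element of $\mf{h}$, so no control on norms of semisimple elements, and no discreteness of $\Lambda$ beyond what you already used, is needed. Your worry about $E,F\in\mf{sl}_2$ is resolved precisely here: in that example $[E,F]$ is \emph{not} nilpotent, whereas in your situation every iterated bracket lands back in $\Lambda\cap W_0$ and is therefore nilpotent, which is exactly the hypothesis the Engel/Jacobson mechanism converts into nilpotence of the whole span.
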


    Next, we record the following classical facts regarding intersections of parabolic groups.
    
    \begin{lemma} \label{lem: intersections of parabolics}
    Suppose that $P_0$ is a minimal parabolic subgroup and $P$ is a parabolic subgroup of $G$. Then, the following hold.
    \begin{enumerate} [(i)]
      \item If $P_0$ contains the unipotent radical of $P$, then $P_0\subseteq P$.
      \item{\cite[Proposition 14.22(iii)]{Borel-LinearAlgebraicGroups}} If $Q$ is conjugate to $P$ and $Q$ contains the unipotent radical of $P$, then $Q=P$.
    \end{enumerate}
    \end{lemma}
    \begin{proof}
    	Item $(i)$ in fact follows from $(ii)$. Let $P_0' \subset P$ be a minimal parabolic subgroup (over $\R$) containing the unipotent radical of $P$. Then, since all minimal parabolic subgroups are conjugate~\cite[Proposition 21.12]{Borel-LinearAlgebraicGroups}, there exists $g\in G$ such that $gP_0'g^{-1} = P_0 \subseteq gPg^{-1}$. In particular, we can apply $(ii)$ with $Q=gPg^{-1}$ to get that $P = Q$ and hence the claim follows.
    \end{proof}

    Following~\cite{KleinbockWeiss-SchmidtExpanding}, we make the following key definition.
    \begin{definition} 
    Given a neighborhood $W \subset \mf{g}$ of $0$ and $g\in G$, we say a Lie sub-algebra $\mf{u}$ is $\mbf{W}$-\textbf{active} for $g$ if 
    \begin{equation}\label{defn: W active}
    	\mrm{Ad}(g)(\mf{u}) \subseteq \mrm{span} \left( \mrm{Ad}(g)(\mf{g}_\Z) \cap W \right) .
    \end{equation} 
    
    \end{definition}

    The following is a key result obtained in~\cite{KleinbockWeiss-SchmidtExpanding}.
    \begin{proposition} [Proposition 4.1 in~\cite{KleinbockWeiss-SchmidtExpanding}]
    \label{propn: shortest vectors are active}
    For every compact neighborhood $W$ of $0$ in $\mf{g}$ and every $\w >0$, there exists $M>0$ such that for all $x=g\G\in G/\G$ with $f(x) > M$ and all $k$, the set
    \[ \Psi_k(g) = \set{ v \in \G F^{-1} \cdot \mf{p}_{\mf{u}_k} : \norm{g\cdot v}^{-1/d_k} \geqslant f(x)/\w } \]
    consists of $W$-active elements for $g$.
    \end{proposition}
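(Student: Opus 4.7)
The plan is to translate the claim into a statement about short integer vectors in the Lie algebra $\mf{u}_v := \mrm{Ad}(\g f^{-1})\mf{u}_k$ associated with $v = \g f^{-1}\cdot\mf{p}_{\mf{u}_k}$. Since $\g\in\G$ and $f\in F\subset G(\Q)$, $\mf{u}_v$ is a rational Lie subalgebra of dimension $d_k$, and $L_v := \mf{u}_v\cap\tfrac{1}{N}\mf{g}_\Z$ is a full lattice in $\mf{u}_v$ for some $N\in\N$ depending only on $F$. The vector $v$ coincides, up to a uniformly bounded nonzero scalar, with the $\wedge$-product of any $\Z$-basis of $L_v$, so $\norm{g\cdot v}$ is (up to bounded constants depending only on $F$) the covolume of $\mrm{Ad}(g)L_v$ inside $\mrm{Ad}(g)\mf{u}_v$. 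The hypothesis $v\in\Psi_k(g)$ then yields
\begin{equation*}
\mathrm{covol}(\mrm{Ad}(g)L_v) \ll (\w/f(x))^{d_k}.
\end{equation*}
To establish $W$-activeness of $\mf{u}_v$ for $g$, it suffices to find a $\Z$-basis of $L_v$ whose image under $\mrm{Ad}(g)$ lies in $W$.

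Next I would invoke Minkowski's theorem on successive minima for $\mrm{Ad}(g)L_v$: letting $\l_1\leq\cdots\leq\l_{d_k}$ denote the successive minima with respect to a fixed norm on $\mf{g}$, one obtains a basis $y_1,\dots,y_{d_k}$ of $\mrm{Ad}(g)L_v$ with $\norm{y_i}\asymp \l_i$ and $\prod_i \l_i \asymp \mathrm{covol}(\mrm{Ad}(g)L_v)$. Choosing $M$ large enough makes $\l_1$ arbitrarily small, so the task reduces to bounding the \emph{largest} minimum $\l_{d_k}$ above by a constant depending only on $W$, uniformly in $g$, $k$, and $v\in\Psi_k(g)$.

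The main obstacle is to rule out a lopsided scenario in which $\l_{d_k}$ is large while $\l_1,\dots,\l_{d_k-1}$ are very small. The strategy is by contradiction: if $\l_{d_k}>R_W$ for an appropriate radius $R_W$ of $W$, then the $y_i$ with $i<d_k$ lie in a compact neighborhood $W_0\subseteq W$ of $0$ provided by Lemma~\ref{lemma: short integral vectors are unipotent}, and together with all other elements of $\mrm{Ad}(g)(\mf{g}_\Z)\cap W_0$ they generate a unipotent Lie algebra $\mf{n}_g$ whose intersection with $\mrm{Ad}(g)\mf{u}_v$ has dimension at least $d_k-1$. Let $\mrm{Q}$ be a $\Q$-parabolic whose unipotent radical has Lie algebra containing $\mf{n}_g$. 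Comparing $\mrm{Q}$ with $\mrm{P}_v := (\g f^{-1})\mrm{P}_k(\g f^{-1})^{-1}$ via Lemma~\ref{lemma: intersections of parabolics}, we either conclude $\mrm{Q}=\mrm{P}_v$ (which, once we check $\mf{n}_g\neq \mrm{Ad}(g)\mf{u}_v$, supplies additional short integer vectors in $\mrm{Ad}(g)\mf{u}_v$ and contradicts $\l_{d_k}>R_W$), or $\mrm{Q}$ corresponds to a different maximal $\Q$-parabolic, in which case the $\wedge$-product of a $\Z$-basis of $\mrm{Rad}_u(\mrm{Q})\cap\mf{g}_\Z$ produces a vector $v'\in\G F^{-1}\cdot \mf{p}_{\mf{u}_j}$ for some $j$ whose $\mrm{Ad}(g)$-image has norm strictly smaller than what is permitted by the definition of $f(x)$, a contradiction.

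The delicate point is the final comparison: one must carefully track the universal constants (depending only on $F$ and the discreteness of each orbit $\G F^{-1}\cdot\mf{p}_{\mf{u}_j}$) that relate the Pl\"ucker norm of $v'$ to the sizes of the short basis vectors generating $\mf{n}_g$, and ensure this relation remains sharp enough to contradict $f(x) = \max_j \tilde{\a}_j^{1/d_j}(g)$ once $M$ is chosen sufficiently large in terms of $W$ and $\w$. This constant-tracking, together with the case analysis in the Bruhat step, is where the heart of the argument lies.
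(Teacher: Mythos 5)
First, note that the paper does not prove this statement at all: it is quoted verbatim as Proposition 4.1 of Kleinbock--Weiss, so there is no in-paper argument to compare yours against. Your overall mechanism is the right one --- $\norm{g\cdot v}$ is (up to constants depending on $F$) the covolume of $\mrm{Ad}(g)L_v$, and the only way $W$-activeness can fail despite a tiny covolume is a lopsided distribution of successive minima, which must be excluded by playing the short vectors off against the definition of $f$ as a maximum over \emph{all} parabolic types $j$. That is indeed what happens (e.g.\ in $\mrm{SL}(3,\R)$, if $\mrm{Ad}(g)$ shrinks $E_{12}$ drastically while expanding $E_{13}$, the covolume of $\mrm{Ad}(g)(\mf{u}_1\cap\mf{g}_\Z)$ can still be small, but then $\tilde{\a}_2(g)^{1/d_2}$ is forced to be much larger than $\norm{g\cdot\mf{p}_{\mf{u}_1}}^{-1/d_1}$, so the offending $v$ is simply not in $\Psi_1(g)$).

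However, the step where you exclude the lopsided case --- which you yourself flag as ``where the heart of the argument lies'' --- is not actually carried out, and as written it would fail. (i) Lemma~\ref{lemma: intersections of parabolics} compares parabolics only when one contains the \emph{full} unipotent radical of the other; you only know that a codimension-one subspace of $\mrm{Ad}(g)\mf{u}_v$ lies in $\mf{n}_g$, so the dichotomy ``$\mrm{Q}=\mrm{P}_v$ or $\mrm{Q}$ is a different maximal parabolic'' does not follow, and $\mrm{Q}$ need not be maximal or comparable to $\mrm{P}_v$ at all. (ii) The first branch gives no contradiction: $\mrm{Q}=\mrm{P}_v$ only says the short integral vectors all lie inside $\mrm{Ad}(g)\mf{u}_v$; it manufactures no $d_k$-th short vector there, so $\l_{d_k}>R_W$ is not contradicted. (iii) In the branch that actually matters, the inequality you need --- that some $\tilde{\a}_j(g)^{1/d_j}$ exceeds $\w\,\norm{g\cdot v}^{-1/d_k}$ --- is not derived: shortness of the generators of $\mf{n}_g$ does not bound the covolume of $\mrm{Ad}(g)(\mrm{Rad}_u(\mrm{Q}')\cap\mf{g}_\Z)$ for a maximal $\Q$-parabolic $\mrm{Q}'\supseteq\mrm{Q}$, since $\mrm{Rad}_u(\mrm{Q}')$ is in general a proper subalgebra of $\mrm{Rad}_u(\mrm{Q})$ and need not be spanned by the short vectors you found. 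Supplying this quantitative comparison is precisely the reduction-theoretic content of the Kleinbock--Weiss proof (which works on a Siegel set, where $\tilde{\a}_k(g)$ is comparable to $\norm{g\cdot\mf{p}_{\mf{u}_k}}^{-1}$ and each root vector of $\mf{u}_k$ is individually contracted), so the essential idea is missing rather than merely under-documented.
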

    
    The above facts will be used in the form of the following corollary.
    \begin{corollary} \label{cor: shortest are collinear}
    Suppose $W$ is a compact neighborhood of $0$ in $\mf{g}$ as in the conclusion of Lemma~\ref{lemma: short integral vectors are unipotent}. Then, for every $\w >0$, there exists $M>0$ such that for all $x=g\G\in G/\G$ with $f(x) > M$ and all $k$, the span of the set
    \[ \Psi_k(g) = \set{ v \in \G F^{-1} \cdot \mf{p}_{\mf{u}_k} : \norm{g\cdot v}^{-1/d_k} \geqslant f(x)/\w } \]
    has dimension at most $1$.
    \end{corollary}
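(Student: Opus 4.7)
My plan is to show that any two nonzero vectors $v_1, v_2 \in \Psi_k(g)$ are collinear, which immediately yields $\dim\mrm{span}\,\Psi_k(g) \leq 1$. Write each $v_j = \g_j f_j^{-1}\cdot \mf{p}_{\mf{u}_k}$ with $\g_j\in\G$ and $f_j\in F$; since $\mrm{P}_k$ is the stabilizer of the line $\R\cdot\mf{p}_{\mf{u}_k}$, the stabilizer of the line $\R\cdot v_j$ is the $G(\Q)$-conjugate parabolic $Q_j := \g_j f_j^{-1}\mrm{P}_k f_j\g_j^{-1}$, whose unipotent radical $U_j$ has Lie algebra $\mrm{Ad}(\g_j f_j^{-1})(\mf{u}_k)$. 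Thus the collinearity of $v_1$ and $v_2$ will follow once I know $Q_1 = Q_2$.

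By Proposition~\ref{propn: shortest vectors are active}, applied to our compact neighborhood $W$ and the given $\w$, there exists $M = M(W,\w)$ such that whenever $f(x) > M$ every element of $\Psi_k(g)$ is $W$-active for $g$, meaning
\[ \mrm{Ad}(g)(\mrm{Lie}(U_j)) \subseteq \mrm{span}\bigl(\mrm{Ad}(g)(\mf{g}_\Z) \cap W\bigr) \quad (j=1,2). \]
By our choice of $W$ via Lemma~\ref{lemma: short integral vectors are unipotent}, the span on the right is contained in the Lie algebra generated by $\mrm{Ad}(g)(\mf{g}_\Z)\cap W$, which is unipotent. Pulling back by $\mrm{Ad}(g^{-1})$ preserves nilpotency, so I obtain a single unipotent Lie subalgebra $\mf{m}\subseteq \mf{g}$ containing both $\mrm{Lie}(U_1)$ and $\mrm{Lie}(U_2)$.

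The connected unipotent subgroup integrating $\mf{m}$ contains both $U_1$ and $U_2$ and lies inside some Borel subgroup $B$ of $G$. Lemma~\ref{lemma: intersections of parabolics}(i) applied twice gives $B \subseteq Q_1\cap Q_2$, so $Q_2$ contains the unipotent radical of $Q_1$; since $Q_1$ and $Q_2$ are $G$-conjugate (both conjugate to $\mrm{P}_k$), Lemma~\ref{lemma: intersections of parabolics}(ii) forces $Q_1 = Q_2$. Writing this common parabolic as $h\mrm{P}_k h^{-1}$ with $h = \g_1 f_1^{-1}$, the element $h^{-1}\g_2 f_2^{-1}$ normalizes $\mrm{P}_k$ and hence lies in $\mrm{P}_k$ by the self-normalizing property of parabolic subgroups; since $\mrm{P}_k$ fixes the line $\R\cdot\mf{p}_{\mf{u}_k}$, it follows that $v_2$ is proportional to $v_1$.

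The bulk of the technical work is packaged into the two cited lemmas and the proposition on active vectors; the conceptual step is the passage from ``two unipotent radicals sitting in a common unipotent algebra'' to ``identical parabolics'', which is precisely what the two parts of Lemma~\ref{lemma: intersections of parabolics} are designed to deliver in sequence. Once they are applied, the collinearity of $v_1$ and $v_2$ reduces to self-normalizingness of parabolics, so I do not expect any serious obstacle beyond carefully chaining the cited statements.
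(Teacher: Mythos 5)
Your proof is correct and follows essentially the same route as the paper's: invoke Proposition~\ref{propn: shortest vectors are active} to make all elements of $\Psi_k(g)$ $W$-active, use Lemma~\ref{lemma: short integral vectors are unipotent} to place both unipotent radicals inside a common unipotent subalgebra and hence a common Borel, then chain parts (i) and (ii) of Lemma~\ref{lemma: intersections of parabolics} and conclude by self-normalization of $\mrm{P}_k$. The only (cosmetic) difference is that you conjugate back by $g^{-1}$ and work with the parabolics $Q_j$ rather than their $\mrm{Ad}(g)$-translates as the paper does.
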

    
    \begin{proof}
    
    By Proposition~\ref{propn: shortest vectors are active}, let $M$ be chosen so that for each $k$, the set $\Psi_k(g)$ consists of $W$-active elements.
    For $i=1,2$, let $v_i = \g_i f_i^{-1}\cdot\mf{p}_{\mf{u}_k} \in \Psi_k(g)$.
    
    By Lemma~\ref{lemma: short integral vectors are unipotent}, the Lie algebra generated by $\mrm{Ad}(g \g_1 f_1^{-1}) \mf{u}_k$ and $\mrm{Ad}(g \g_2 f_2^{-1}) \mf{u}_k$ is unipotent.
    In particular, both $\mrm{Ad}(g \g_1 f_1^{-1}) (\mrm{U}_k)$ and $\mrm{Ad}(g \g_2 f_2^{-1}) (\mrm{U}_k)$ are contained in the same minimal parabolic subgroup $P_0$.
    By $(i)$ of Lemma~\ref{lem: intersections of parabolics}, for $i=1,2$, since $\mrm{Ad}(g \g_i f_i^{-1}) (\mrm{U}_k)$ is the unipotent radical of $\mrm{Ad}(g \g_i f_i^{-1})(\mrm{P}_k)$, it follows that $P_0 \subseteq \mrm{Ad}(g \g_i f_i^{-1})(\mrm{P}_k)$.
    
    In particular, $\mrm{Ad}(g \g_2 f_2^{-1})(\mrm{P}_k)$ contains the unipotent radical of $\mrm{Ad}(g \g_1 f_1^{-1})(\mrm{P}_k)$.
    By $(ii)$ of Lemma~\ref{lem: intersections of parabolics}, $\mrm{Ad}(g \g_2 f_2^{-1})(\mrm{P}_k) = \mrm{Ad}(g \g_1 f_1^{-1})(\mrm{P}_k)$. Hence, since $\mrm{P}_k$ is its own normalizer, we get 
    \[ f_2 \g_2^{-1}   \g_1 f_1^{-1}\in \mrm{P}_k. \]
    In particular, since $\mrm{P}_k$ normalizes $\mf{u}_k$,
    \[  v_1 = c v_2, \]
    for some $c \neq 0$ (in fact, $c\in \frac{1}{N}\Z$ for some $N\in \N$ depending on $F$).
    \end{proof}

%%%%%%%%%%%%%%%%%%%%%%%%%%%%%%%%%%%%%%%%%%%%%%%%%%%%%%%%%%%%%%%%%%%%%%%%%%%%%%%%%%%%%%%%%%%%%%
%%%%%%%%%%%%%%%%%%%%%%%%%%%%%%%%%%%%%%%%%%%%%%%%%%%%%%%%%%%%%%%%%%%%%%%%%%%%%%%%%%%%%%%%%%%%%%
    
    \section{The Contraction Hypothesis in Arithmetic Homogeneous Spaces}
\label{section: CH higher rank}

	In this section, we establish the contraction hypothesis for certain curves on arithmetic homogeneous spaces using the height function constructed in the previous section.
    The main result is Theorem~\ref{thrm: CH for maximal representations}.
    Combined with the results in Sections~\ref{section: abstract setup},~\ref{section: schmidt games}, and~\ref{section: shrinking nondivergence}, we obtain, for a wide class of curves on arithmetic homogeneous spaces, an explicit bound on the dimension of divergent on average orbits, thickness of the set of bounded orbits, and quantitative non-divergence of translates of shrinking curve segments.
    We retain the same notation as in the previous section.

\subsection{Deformations of Maximal Representations and Linear Expansion}

    We introduce the notion of deformations of a maximal representation of an $\mf{sl}_2$-triple to abstract the exact properties we require from the derivative of our curves which imply that they satisfy the $\b$-contraction hypothesis.

    \begin{definition} \label{defn: deformations}
    Given a bounded interval $B\subset \R$ and an $\mf{sl}_2$-triple $(Y,h,X)$, we say a map $\rho: \mf{sl}(2,\R) \times B \r \mf{g}$ is a \textbf{deformation of a maximal representation} if the following conditions hold.
    \begin{enumerate}
    	\item $\rho$ is continuous and for each $s\in B$, $\rho_s := \rho |_{\mf{sl}(2,\R) \times \set{s}}$ is a faithful Lie algebra homomorphism. In particular, $\rho_s(X)\neq 0$ for all $s\in B$.
        \item $H_\rho := \rho_s(h) $ belongs to (closure of) the positive Weyl chamber $\mc{C}\subset \mbf{S}$ and is independent of $s$.
        \item For each $s\in B$, $\rho_s(X) \in \bigoplus_{\b\in \Delta^+}\mf{g}_\b$ and $\rho_s(\mf{sl}(2,\R))$ is not contained in any conjugate of a proper parabolic $\Q$-subalgebra of $\mf{g}$.
    \end{enumerate}
    
    \end{definition}

    In the examples we study, the curves $\vp$ satisfy $\dot{\vp}(s) = \rho_s(X)$ for some such $\rho$.
    In the remainder of this section, we fix $\rho: \mf{sl}(2,\R) \times B \r \mf{g}$ a deformation of a maximal representation.

    The simple roots $\Pi$ induce a partial order on $\mbf{S}^\ast$ in the following natural way.
    \begin{equation*}
    	\mu \leqslant \nu \Leftrightarrow \nu -\mu = \sum_{\Delta^+}k_\a \a \text{ for some }  k_\a \in \N\cup\set{0}.
    \end{equation*}
    In particular, given any irreducible representation $V$ of $G$, defined over $\Q$, the set of $\Q$-weights of $\mbf{S}$ admits a maximal element which we call the highest $\Q$-weight.

\subsection{Maximal Representations and Linear Expansion}
    The following lemma is a direct analogue of Lemma~\ref{lemma: expansion of vectors in rank 1} in the setting of Lie groups of real rank equal to $1$.

    \begin{lemma} \label{lemma: expansion of vectors in higher rank}
    Suppose $V$ is an irreducible representation of $G$ defined over $\Q$ with highest $\Q$-weight $\l$.
    Let $\d_\l = \l(H_\rho)$ and suppose $0\neq v \in V(\Q)$ is a highest weight vector.
    Then, for all $\b \in (0,1)$, there exists $\tilde{c} >0$ and $0<\b'\leqslant \b$ such that for all $g\in G$, $s\in B$, and all $t>0$, 
    	\begin{equation*}
    		\frac{1}{2} \int_{-1}^1 \norm{g_t u(r\rho_s(X)) gv }^{-\b/\d_\l} \;dr \leqslant \tilde{c} e^{-\b' t} \norm{gv}^{-\b/\d_\l},
     	\end{equation*}
        where $g_t = \exp(tH_\rho)$ and $u(r\rho_s(X)) = \exp(r\rho_s(X)) $. Moreover, if $\mrm{rank}_\Q G = 1$, we can take $\b'=\b$. 
    \end{lemma}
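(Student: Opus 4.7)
The plan is to mirror the proof of Lemma~\ref{lemma: expansion of vectors in rank 1}, with the ambient rank-one group there replaced by the $\mrm{SL}(2,\R)$-subgroup $H_s \subset G$ obtained by exponentiating $\rho_s(\mf{sl}(2,\R))$. Since $H_\rho$ lies in the closure of the positive Weyl chamber and $\lambda$ is dominant, every weight $\mu$ of $V$ satisfies $\mu(H_\rho) \leq \lambda(H_\rho) = \delta_\lambda$, with equality at $\mu = \lambda$; condition~(3) of Definition~\ref{defn: deformations} ensures $\rho_s(X)\cdot V^\lambda = 0$, so $v$ is an $H_s$-highest weight vector of weight $\delta_\lambda$ and $\delta_\lambda$ is the maximum $H_s$-highest weight occurring in the decomposition of $V$ into irreducible $H_s$-representations. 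Applying Proposition~\ref{propn: expansion in linear representations} to the $H_s$-action, with the $\mf{sl}_2$-normalization $[\rho_s(h),\rho_s(X)] = 2\rho_s(X)$ so that ``$\alpha(H_0)=2$'' in the notation of that proposition, immediately yields
\[
\tfrac{1}{2}\int_{-1}^{1}\norm{g_t u(r\rho_s(X))gv}^{-\beta/\delta_\lambda}\,dr \;\leq\; D(\beta)\,e^{-\beta t}\,\norm{\pi_{\delta_\lambda,s}(gv)}^{-\beta/\delta_\lambda},
\]
where $\pi_{\delta_\lambda,s}\colon V \to V$ is the $H_s$-equivariant projection onto the sum of irreducible $H_s$-sub-representations of highest weight $\delta_\lambda$.

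The next step is to produce a uniform lower bound $\norm{\pi_{\delta_\lambda,s}(gv)} \geq \varepsilon\norm{gv}$ with $\varepsilon > 0$ independent of $g \in G$ and $s \in B$; substituting into the displayed bound then finishes the proof with $\tilde{c} = D(\beta)\varepsilon^{-\beta/\delta_\lambda}$. The stabilizer of $\R v$ is a parabolic $\Q$-subgroup $P_\lambda$, and the Iwasawa decomposition gives $G = KP_\lambda$, so the image of $G\cdot v$ in $\mathbb{P}(V)$ is the compact $K$-orbit of $[v]$. Since $s\mapsto \pi_{\delta_\lambda,s}$ depends continuously on $s$ by Definition~\ref{defn: deformations}(1) and $B$ is compact, a standard projective compactness argument reduces the task to establishing the pointwise non-vanishing $\pi_{\delta_\lambda,s}(kv) \neq 0$ for every $k \in K$ and every $s \in B$.

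This non-vanishing is the main obstacle, and is where the maximality condition~(3) of Definition~\ref{defn: deformations} enters. Suppose for contradiction that $\pi_{\delta_\lambda,s}(kv) = 0$ for some $k,s$; writing $\tilde\rho = \mrm{Ad}(k^{-1})\circ\rho_s$, this places $v$ in the sum of $\tilde\rho(\mrm{SL}(2,\R))$-sub-representations of $V$ with highest weight strictly less than $\delta_\lambda$. Decomposing $v$ in the eigenbasis of $\tilde\rho(h)$ and using $[\tilde\rho(h),\tilde\rho(X)]=2\tilde\rho(X)$ forces $\tilde\rho(X)^j v_{\delta_\lambda - 2j} = 0$ for all $j\geq 0$, where $v_\nu$ denotes the $\nu$-eigenvector component of $v$; an $\mf{sl}_2$-theoretic analysis will then identify the smallest $\tilde\rho(\mf{sl}(2,\R))$-invariant subspace of $V$ containing $v$ with a proper subspace normalized by a proper parabolic $\Q$-subalgebra $\mf{q}\subset \mf{g}$, and irreducibility of $V$ combined with Proposition~\ref{propn: avoidance} applied to $\mf{q}$ will force $\mrm{Ad}(k^{-1})\rho_s(\mf{sl}(2,\R)) \subseteq \mf{q}$, contradicting~(3). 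In the $\mrm{rank}_\Q G = 1$ case the argument simplifies considerably: the non-trivial element $\mrm{w}_0$ of the $\Q$-Weyl group acts by $-1$ on $\mbf{S}$, so $\mc{W}\cdot\lambda \subset V^{\delta_\lambda}\oplus V^{-\delta_\lambda}$, and both of these $H_\rho$-eigenspaces belong to $V_{\delta_\lambda,s}$ by standard $\mf{sl}_2$-theory; Proposition~\ref{propn: avoidance} then applies directly to give $\pi_{\delta_\lambda,s}(gv)\neq 0$ for every $g\in G$, yielding $\beta' = \beta$. In general higher $\Q$-rank one must settle for a coarser projection, producing the weaker exponent $0 < \beta' \leq \beta$ stated in the lemma.
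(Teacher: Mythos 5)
Your rank-one argument is essentially the paper's, but in the higher $\Q$-rank case your proof has a genuine gap at its central step: the claimed uniform lower bound $\norm{\pi_{\d_\l,s}(gv)} \geq \e \norm{gv}$, equivalently the non-vanishing $\pi_{\d_\l,s}(kv)\neq 0$ for all $k\in K$ and $s\in B$. This amounts to showing that $G\cdot v$ avoids the sum of the $\mf{h}$-isotypic components of $V$ with non-maximal highest weight, which is precisely the statement the paper singles out, in the remark following the lemma, as something it does \emph{not} know how to establish when $\mrm{rank}_\Q G>1$. Your justification is only a sketch (``an $\mf{sl}_2$-theoretic analysis will then identify\dots''), and it does not go through as described: there is no reason the $\mf{h}$-invariant subspace $\bigoplus_{\mu<\d_\l}V_\mu$ should be normalized by a proper parabolic $\Q$-subalgebra, and Proposition~\ref{propn: avoidance} only controls the projection onto $\bigoplus_{\mrm{w}\in\mc{W}}V^{\mrm{w}\cdot\l}$. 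In higher rank one typically has Weyl elements with $|(\mrm{w}\cdot\l)(H_\rho)|<\d_\l$, so the corresponding $\mbf{S}$-weight spaces need not lie in the top $\mf{h}$-isotypic component, and non-vanishing of the Weyl-orbit projection does not imply non-vanishing of $\pi_{\d_\l,s}$. (Note that if your step were correct you would obtain $\b'=\b$ in all ranks, strictly improving the lemma; this should have been a warning sign.)

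The paper circumvents this by settling for less. Maximality of $\rho_s$ gives $G\cdot v\cap V^{\mf{h}}=\emptyset$, where $V^{\mf{h}}$ is the \emph{trivial} isotypic component: if $gv$ were $\mf{h}$-fixed, then $\mf{h}$ would lie in the conjugate of the parabolic $\Q$-subalgebra stabilizing the line $\R\cdot gv$, contradicting Definition~\ref{defn: deformations}(3). A projective compactness argument (uniform in $s\in B$) then gives $\norm{\pi_1(gv)}\geq\e\norm{gv}$, where $\pi_1$ projects onto the sum of all non-trivial isotypic components; hence for each $g$ some non-zero $\mf{h}$-highest weight $\mu$ satisfies $\norm{\pi_\mu(gv)}\gg\norm{gv}$, and Proposition~\ref{propn: expansion in linear representations} applied to that component, with exponent normalized to $\b/\d_\l$, yields decay $e^{-\b\d_\mu t/\d_\l}$. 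Taking the minimum over $\mu\in P^+(V)\setminus\set{0}$ produces the stated $0<\b'\leq\b$. To repair your proof, replace the top-component non-vanishing claim by this avoidance of the trivial component and accept the correspondingly weaker exponent.
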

    
    \begin{proof}

    The proof of Lemma~\ref{lemma: expansion of vectors in higher rank} in the case $\mrm{rank}_\Q G = 1$ is identical to that of Lemma~\ref{lemma: expansion of vectors in rank 1}.
    Indeed, the key ingredients in the proof of Lemma~\ref{lemma: expansion of vectors in rank 1} are Proposition~\ref{propn: avoidance} and the fact that the only non-trivial Weyl group element sends the highest weight $\l$ to $-\l$.

    In the higher rank case, fix some $s\in B$ and let $\mf{h} =\left({\mf{sl}(2,\R) \times \set{s}}\right)$.
    Then, we can decompose $V = V_1 \oplus V^\mf{h}$, where $\mf{h}$ acts trivially on $V^\mf{h}$ and $V_1$ is the $\mf{h}$-invariant complement of $V^\mf{h}$ and contains no trivial sub-representations.
    Let $\pi_1$ denote the $\mf{h}$-equivariant projection onto $V_1$.
    
    Note that the stabilizer of $\R\cdot v$ is a parabolic $\Q$-subgroup of $G$.
    Thus, since $\rho$ is a maximal representation, we have that
    \[  G\cdot v \cap V^\mf{h} = \emptyset. \]
    In particular, arguing as in the proof of Lemma~\ref{lemma: expansion of vectors in rank 1}, this implies that there exists some $\e>0$ such that for all $g\in G$,
    \begin{equation}\label{eqn: projectively isolated}
    \norm{\pi_1(gv)} \geqslant \e \norm{gv}. 
    \end{equation}  
    Since $\rho$ is continuous in $s$ and $B$ is compact, we note further that $\e$ may be chosen uniformly over $s\in B$ since the spaces $V^{\mf{h}}$ vary continuously.

    Denote by $P^+(V)$ the set of highest weights for $\mf{h}$ appearing in the decomposition of $V$ into irreducible representations.
    For each $\mu\in P^+(V)$, we let $\d_\mu = \mu(H_\rho)$ and $V_\mu$ be the direct sum of irreducible sub-representations of $V$ with highest weight $\mu$.
    Let $\pi_\mu: V \r V_\mu$ denote the associated projection.
    Inequality~\eqref{eqn: projectively isolated} implies that there exists $\mu \in P^+(V)\backslash \set{0}$ such that
    \begin{equation} \label{eqn: one weight matters}
    \norm{\pi_\mu(gv) } \asymp  \norm{\pi_1(gv)} \geqslant \e \norm{gv}.   
    \end{equation}

    Define $\b'$ as follows
    \begin{equation} \label{eqn: exponent in higher rank}
    	\b' = \frac{\b}{\d_\l} \min\set{ \d_\mu : \mu \in P^+(V)\backslash \set{0} }.
    \end{equation}    
    The Lemma now follows immediately from Proposition~\ref{propn: expansion in linear representations} applied to the projection of $gv$ onto $V_\mu$ with $0\neq \mu \in P^+(V)$ and satisfying~\eqref{eqn: one weight matters}.

    \end{proof}

    \begin{remark}
    It is natural to ask whether the constant of proportionality between $\b'$ and $\b$ in~\eqref{eqn: exponent in higher rank} is optimal.
    When $\mrm{rank}_\Q(G) >1$, the Weyl group typically contains more than one non-trivial element.
    This fact played a key role in the (real and rational) rank $1$ cases in showing that $\b'=\b$, allowing us to obtain the fastest possible contraction rate.
    In particular, it is not clear whether it is possible to modify the argument in the proof of Lemma~\ref{lemma: expansion of vectors in higher rank} to show that in the case $\mrm{rank}_\Q(G) >1$, the $G$-orbit of a highest weight vector avoids sub-representations of $V$ with non-maximal, non-zero highest weights for $\mf{h}$.
    More precisely, it is not clear whether the analogue of equation~\eqref{eqn: G orbit avoids lower order space} holds in the setting of Lemma~\ref{lemma: expansion of vectors in higher rank} when $\mrm{rank}_\Q(G) >1$.
    
    \end{remark}

 \subsection{The Main Integral Estimate}
 
  For each $1\leqslant k \leqslant r$, let $\chi_k \in \mbf{S}^\ast$ denote the highest $\Q$-weight with respect to $\Pi$ in the representation $V_k$ defined in~\eqref{defn: representations V_k}.
    Then, since $\rho_s$ is a maximal representation, for each $1\leq k\leq r$,
    \[ \d_k := \chi_k(H_\rho) \neq 0.\]
 	Indeed, otherwise, if $\d_k = 0$, this implies that $\mrm{Ad}\left(\rho\left(\mf{sl}(2,\R)\times\set{s}\right)\right)$ annihilates $\mf{p}_{\mf{u}_k}$. But, since $\mrm{P}_k$ is the normalizer of $\mrm{U}_k$, this implies that $\rho\left(\mf{sl}(2,\R)\times\set{s}\right)$ is contained in the Lie algebra of $\mrm{P}_k$, contradicting the fact that $\rho_s$ is maximal.
    
 The following proposition is the main result of this section.
 \begin{proposition} \label{propn: main integral estimate in higher rank}
    For all $0 < \b< \min_k  d_k/\d_k$, there exists $c_0 \geqslant 1$ and $0<\b'\leqslant \min_k \b \d_k/d_k$, depending on $\b$, so that the following holds.
    For every $t>0$, there exists a positive constant $b$ such that for all $x\in G/\G$ and all $s\in B$,
    \begin{equation*}
    	\frac{1}{2} \int_{-1}^1 f^\b (g_t u(r\rho_s(X)) x)\;dr
        \leqslant c_0 e^{-\b' t} f^\b(x) + b,
    \end{equation*}
    where $g_t = \exp(tH_\rho)$ and $u(r\rho_s(X)) = \exp(r\rho_s(X)) $. Moreover, if $\mrm{rank}_\Q G = 1$, we can take $\b'=\b\d_1/d_1$. 
    \end{proposition}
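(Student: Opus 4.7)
The approach mimics the rank-one proof of Proposition~\ref{propn: main integral estimate}, with Lemma~\ref{lemma: expansion of vectors in higher rank} replacing Lemma~\ref{lemma: expansion of vectors in rank 1} and Corollary~\ref{cor: shortest are collinear} substituting for the uniqueness of short vectors supplied in rank one by Lemma~\ref{lemma: height function properties}. Fix $t > 0$. Since $\{g_t u(r\rho_s(X)) : r \in [-1,1],\ s \in B\}$ is bounded in $G$, continuity of the $G$-action on each $V_k$ yields a constant $C = C(t)$ with $f(g_t u(r\rho_s(X))x) \leq C f(x)$ uniformly. Consequently, on the set $\{f \leq M\}$ for any preassigned threshold $M$, the integrand is bounded by $(CM)^\b$ and can be absorbed into the additive term $b$.

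On $\{f > M\}$, write $x = g\G$ and bound $f^\b$ by $\sum_k \tilde\a_k^{\b/d_k}$, reducing the estimate to one for each $k$. Introduce a parameter $\w = \w(t) \geq 1$ (to be chosen) and partition $S_k := \G F^{-1}\cdot \mf{p}_{\mf{u}_k}$ into the short set $\Psi_k(g) = \{v \in S_k : \|gv\|^{-1/d_k} \geq f(x)/\w\}$ and its complement. Bounding the max defining $\tilde\a_k$ by the corresponding sum and integrating, the integral of $\tilde\a_k^{\b/d_k}$ splits into two pieces: the sum over $\Psi_k(g)$ of integrals $\int_{-1}^1 \|g_t u(r\rho_s(X))gv\|^{-\b/d_k}\, dr$, and a tail contribution bounded by $2\,C(t)^{\b/d_k}\w^{-\b} f^\b(x)$ via the crude estimate $\|g_t u(r\rho_s(X))gv\|^{-1} \leq C(t)\|gv\|^{-1}$.

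For the sum over $\Psi_k(g)$, observe that each $v \in S_k$ lies in the $G$-orbit of the highest-weight vector $\mf{p}_{\mf{u}_k} \in V_k$, so Lemma~\ref{lemma: expansion of vectors in higher rank}, applied with exponent $\b\d_k/d_k \in (0,1)$ (this is where the hypothesis $\b < d_k/\d_k$ is used), produces a decay factor $e^{-\b'_k t}$ with $\b'_k > 0$ and, after using $\|gv\|^{-1/d_k} \leq f(x)$, gives a bound of order $|\Psi_k(g)|\,e^{-\b'_k t} f^\b(x)$. The cardinality $|\Psi_k(g)|$ is controlled uniformly in $g$ by combining Corollary~\ref{cor: shortest are collinear} (elements of $\Psi_k(g)$ are collinear) with the scalar-lattice structure extracted from its proof (any two such elements differ by a scalar in $\tfrac{1}{N}\Z$ for $N = N(F)$) and the constraint $\|gv\| \leq (\w/f(x))^{d_k}$ measured against the minimal length $\|gv_*\| \geq f(x)^{-d_k}$; this yields $|\Psi_k(g)| \leq 2N\w^{d_k}$ uniformly.

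It remains to balance $\w^{d_k}e^{-\b'_k t}$ against $\w^{-\b}$: the choice $\w^{d_k+\b} = e^{\b'_k t}$ makes both contributions comparable to $e^{-\b'_k \b t/(d_k+\b)} f^\b(x)$, and setting $\b' := \min_k \b'_k \b/(d_k+\b) > 0$ together with $M$ large enough (depending on $t$ through $\w$) so that Corollary~\ref{cor: shortest are collinear} applies finishes the argument. When $\mrm{rank}_\Q G = 1$ there is only one $k$; the ``moreover'' clause of Lemma~\ref{lemma: expansion of vectors in higher rank} directly yields $\b'_1 = \b\d_1/d_1$, and because the shortest vector in $S_1$ is then unique up to sign the $\w$-balance is unnecessary, recovering the sharp exponent $\b' = \b\d_1/d_1$. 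The principal technical obstacle is the uniform-in-$g$ cardinality bound on $\Psi_k(g)$; without it the $\w$-balance collapses, which is precisely why the scalar-lattice structure embedded in the proof of Corollary~\ref{cor: shortest are collinear} is indispensable to the higher-rank argument.
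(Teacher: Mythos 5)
Your structure parallels the paper's—log-Lipschitz reduction to the set $\{f>M\}$, the set $\Psi_k(g)$, Corollary~\ref{cor: shortest are collinear} for collinearity, and Lemma~\ref{lemma: expansion of vectors in higher rank} on each short vector—but you depart at a step that turns out to matter, and the departure introduces a genuine gap.

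You bound the maximum defining $\tilde\a_k(g_t u(r\rho_s(X))x)$ by a sum over $\Psi_k(g)$ plus a tail over $S_k\setminus\Psi_k(g)$, with $\w$ a free parameter. The tail estimate $\|g_t u(r\rho_s(X))gv\|^{-1}\leq C(t)\|gv\|^{-1}$ forces $C(t)$ to be the full operator norm, which for large $t$ grows like $e^{\kappa t}$ with $\kappa>0$ determined by the largest eigenvalue of $H_\rho$ on the relevant exterior power. After the $\w$-balance $\w^{d_k+\b}\asymp C(t)^{\b/d_k}e^{\b'_k t}$, the balanced bound is of order $C(t)^{\b/(d_k+\b)}e^{-\b'_k\b t/(d_k+\b)}$, i.e.\ $e^{(\kappa-\b'_k)\b t/(d_k+\b)}$. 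Nothing forces $\kappa<\b'_k$: from~\eqref{eqn: exponent in higher rank}, $\b'_k$ scales linearly in $\b$ while $\kappa$ is fixed by the representation, so for small $\b$ the exponent is positive and the right-hand side blows up rather than decays. So the $\w$-balance does not close.

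The paper avoids the tail entirely. With $\w$ set to the actual operator norm and the threshold in $\Psi_k(g)$ taken as $\w^{2/d_0}$ (where $d_0=\min_k d_k$), the log-Lipschitz estimate in both directions shows that any $v\in S_k$ achieving the maximum in $\tilde\a_k(g_t u(r\rho_s(X))x)$ must already lie in $\Psi_k(g)$; the quantity $f(g_t u(r\rho_s(X))x)$ is therefore \emph{equal} to the maximum over $\Psi_k(g)$, and there is no remainder to control. Moreover, since all elements of $\Psi_k(g)$ are collinear and a nonprimitive $v=cw$ with $|c|>1$ gives a strictly smaller $\|g_tu(\cdot)gv\|^{-1/d_k}$ than its primitive companion $w$, the maximum over $\Psi_k(g)$ is achieved at a primitive vector, and bounding the maximum by the sum over the at most $2$ primitive vectors in each $\Psi_k(g)$ gives at most $2r$ terms with $t$-independent constants. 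Your cardinality estimate $|\Psi_k(g)|\leq 2N\w^{d_k}$ is both unnecessary and the source of the loss. For the same reason, the $\Q$-rank one ``moreover'' in your write-up is not justified: ``unique shortest vector'' does not by itself remove the tail, and the balancing penalty you derive in general would persist. The fix is to substitute the equality $f(g_t u(r\rho_s(X))x)=\max\{\|g_tu(r\rho_s(X))gv\|^{-1/d_k}:v\in\Psi_k(g),\ k\}$ (valid once $f(x)$ exceeds a $t$-dependent threshold) for your sum-plus-tail decomposition, and then sum only over primitive vectors.
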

	
 	\begin{proof}

    Let $W$ be a compact neighborhood of $0$ for which Lemma~\ref{lemma: short integral vectors are unipotent} holds.
    Fix some $t>0$ and define $\w$ as follows.
    \begin{equation*}
    	\w = \sup_{|r| \leqslant 1, s\in B}  \max\set{ \norm{g_t u(r\rho_s(X))}, \norm{(g_t u(r\rho_s(X)))^{-1}} }.
    \end{equation*}
    Here $\norm{\cdot}$ refers to the operator norm for the $G$ action on $\bigoplus_k \bigwedge^k \mf{g}$.    
    Then, for all $s\in B$ and $r \in [-1,1]$ and all $x\in G/\G$, we have
    \begin{equation}\label{eqn: f is log lip}
    	\w^{-1/d_0} f(x) \leqslant f(g_t u(r\rho_s(X)) x) \leqslant \w^{1/d_0} f(x),
    \end{equation}
    where 
    \begin{equation} \label{defn: d_0}
    d_0 = \min_{1\leq k \leq r} d_k.  
    \end{equation} 
    Let $M>0$ be as in Corollary~\ref{cor: shortest are collinear} applied to our chosen $W$ and with $\w^{2/d_0}$ in place of $\w$.
    Suppose that $x_0 \in G/\G$ is such that $f(x) \leqslant M$.
    Fix $\b\in (0,1)$.
    Then, we have
    \begin{equation} \label{eqn: estimate for bounded x}
    	\frac{1}{2} \int_{-1}^1 f^\b(g_t u(r\rho_s(X)) x_0) \;dr \leqslant b,
    \end{equation}
    for $b = \w^{\b/d_0} M^\b$.
    
    Now, suppose $f(x_0) >M$ and write $x_0 = g\G$ for some $g\in G$. For each $1\leq k\leq r$, consider the sets
    \begin{equation*}
    \Psi_k(g) = \set{v \in  \G F^{-1} \cdot \mf{p}_{\mf{u}_k} : \norm{g\cdot v}^{-1/d_k} \geqslant f(x_0)/\w^{2/d_0} }. 
    \end{equation*}
    We claim that for all $s\in B$ and $r \in [-1,1]$, one has
    \begin{equation*}
    	f\left(g_t u(r\rho_s(X))x_0\right) =\max
        \set{ \norm{g_t u(r\rho_s(X))g\cdot v}^{-1/d_k} : v\in \Psi_k(g), 1\leq k\leq r}.
    \end{equation*}
    
    Indeed, suppose $v\in  \G F^{-1} \cdot \mf{p}_{\mf{u}_k}$ satisfies $f\left(g_t u(r\rho_s(X))x_0\right) = \norm{g_t u(r\rho_s(X))g\cdot v}^{-1/d_k}$ for some $k$ and some $(s,r) \in B\times [-1,1]$.
    Then, by definition of $\w$ and~\eqref{eqn: f is log lip}, we obtain
    \[
    	\w^{1/d_k} \norm{g\cdot v}^{-1/d_k}\geqslant \norm{g_t u(r\rho_s(X))g\cdot v}^{-1/d_k} = 
        f\left(g_t u(r\rho_s(X))x_0\right) \geqslant f(x_0) / \w^{1/d_0}.
    \]
    Hence, $v\in \Psi_k(g)$ as desired.
    Say a vector $v\in \G F^{-1} \cdot \mf{p}_{\mf{u}_k}$ is primitive if $v$ has minimal norm in $\R\cdot v \cap \G F^{-1} \cdot \mf{p}_{\mf{u}_k}$.
    Next, we note that Corollary~\ref{cor: shortest are collinear} implies that for each $k$, the set $\Psi_k(g)$ contains at most one primitive vector up to a sign.
    Denote by $\Psi_k^0(g)$ the following set.
    \[ \Psi_k^0(g) =  \set{ v\in \Psi_k(g): v \text{ is primitive}  }. \]

    In order to apply Lemma~\ref{lemma: expansion of vectors in higher rank}, let $\d_k = \chi_k(H_\rho)$ and $\g_k = \b \d_k/d_k$.
    The choice of $\b$ implies that $0<\g_k<1$ for all $1\leq k\leq r$.
    Thus, by Lemma~\ref{lemma: expansion of vectors in higher rank} applied with $\g_k$ in place of $\b$, there exists $0<\b' \leqslant \min_k \g_k$ such that the following inequalities hold:
    \begin{align} \label{eqn: estimate for unbounded x}
    \frac{1}{2} \int_{-1}^1 f^\b(g_t u(r\rho_s(X)) x_0) \;dr &\leqslant
    \sum_{ \substack{ v\in \Psi_k^0(g) \\ 1\leq k\leq r} }
    \frac{1}{2} \int_{-1}^1  \norm{g_t u(r\rho_s(X))g\cdot v}^{-\b/d_k} \nonumber \\
    &\leqslant \tilde{c} e^{-\b' t} \sum_{ \substack{ v\in \Psi_k^0(g) \\ 1\leq k\leq \mrm{rank}_\Q G} }
    \norm{g\cdot v}^{-\b/d_k} \leqslant 2r \tilde{c} e^{-\b' t} f^\b(x_0),
    \end{align}
    where $\tilde{c}$ is as in the conclusion of Lemma~\ref{lemma: expansion of vectors in higher rank}.
    Combining~\eqref{eqn: estimate for bounded x} and~\eqref{eqn: estimate for unbounded x} completes the proof.
    \end{proof}

    \begin{remark}
    An analogue of Proposition~\ref{propn: main integral estimate in higher rank} was obtained in~\cite[Section 3.2]{EskinMargulis-RandomWalks} in the context of random walks on homogeneous spaces.
    It was assumed in~\cite{EskinMargulis-RandomWalks} that the Zariski closure of the group generated by the support of the measure generating the random walk is a semisimple group which is not contained in any proper parabolic $\Q$-subgroup of $G$.
    This assumption is replaced here with the notion of a deformation of a maximal representation.
    Lemma~\ref{lemma: expansion of vectors in higher rank} acts as a substitute for the positivity of the top Lyapunov exponent in the context of random walks.
    In the case when the rational rank of $G$ is equal to $1$, we also observe that we can obtain a precise contraction rate which allows us to obtain a sharp dimension upper bound for divergent on average orbits.
    \end{remark}

    In the following lemma, we show that the height function $f$ satisfies Assumption~\ref{assumption: bounded connected components}.
    Its proof is very similar to the analogous Lemma~\ref{lemma: bounded conn. comp. rank 1} in rank $1$.
    \begin{lemma}\label{lemma: bounded conn. comp. higher rank}
    There exists $N\in \N$, depending only on $G$, such that for every $T, R>0$, there exists $M_0>0$ such that for all $x\in G/\G$, $Y\in \bigoplus_{\b\in \Delta^+}\mf{g}_\b$ with $\norm{Y}\leq R$ and $M_1\geq M_0$, the following holds.
    \begin{equation}\label{eqn: cusp set higher rank}
    	\text{The set } \set{|s|\leqslant T: f(u(sY) x) > M_1} \text{ has at most } N \text{ connected components}.
    \end{equation}
    \end{lemma}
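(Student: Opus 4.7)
The plan is to mirror the argument for Lemma~\ref{lemma: bounded conn. comp. rank 1}, replacing the uniqueness of the shortest vector supplied by Lemma~\ref{lemma: height function properties}(3) with the collinearity of short vectors supplied by Corollary~\ref{cor: shortest are collinear}. First, I would set
\[ \omega := \sup\set{ \norm{u(sY)|_{V_k}}^{\pm 1} : |s|\leq 2T,\ \norm{Y}\leq R,\ 1\leq k\leq r }, \]
which is finite by compactness, and observe that $f$ is log-Lipschitz with constant $\omega^{1/d_0}$ on the relevant range, where $d_0 := \min_k d_k$. I would then fix a parameter $\omega_0 \geq \omega^{2/d_0}$, obtain a threshold $\tilde M$ from Corollary~\ref{cor: shortest are collinear} applied with this $\omega_0$, and set $M_0 := \omega^{1/d_0}\tilde M$. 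The cases in which $E := \set{|s|\leq T : f(u(sY)x) > M_1}$ is empty or coincides with $[-T, T]$ contribute at most one component, so the case that remains to treat is the one in which the log-Lipschitz property forces the two-sided estimate $\tilde M < f(x) \leq \omega^{1/d_0} M_1$.

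Next, for each $k$ and each primitive vector $v$ on the line $\R v \cap \G F^{-1}\cdot \mf{p}_{\mf{u}_k}$, I would define $E_v^k := \set{|s|\leq T : \norm{u(sY)g\cdot v}^{-1/d_k} > M_1}$. Since $\mrm{ad}(Y)$ is nilpotent (as $Y$ lies in a sum of positive root spaces), the map $s\mapsto \norm{u(sY)g\cdot v}^2$ is a polynomial in $s$ of degree bounded in terms of $\dim\mf{g}$ and $d_k$, so each $E_v^k$ has at most $N_0 = N_0(G)$ connected components. Moreover, replacing a primitive $v$ by a non-unit integer multiple of itself only enlarges $\norm{u(sY)g\cdot v}$, so only primitive representatives need to be considered when writing each $\set{|s|\leq T:\tilde{\a}_k^{1/d_k}(u(sY)x)>M_1}$ as a union of such sublevel sets.

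The central technical step, and the one I expect to be the main obstacle, is to show that for each $k$ at most one line's worth of primitive vectors contributes a nonempty $E_v^k$. If $E_v^k$ is nonempty, then the log-Lipschitz bound on $V_k$ yields $\norm{g\cdot v}^{-1/d_k} > M_1/\omega^{1/d_k}$, and the upper bound $f(x)\leq \omega^{1/d_0}M_1$ together with the choice $\omega_0\geq \omega^{2/d_0}$ forces $v\in \Psi_k(g)$ in the sense of Corollary~\ref{cor: shortest are collinear}. That Corollary then confines $\Psi_k(g)$ to a single line in $V_k$, whose primitive vectors in $\G F^{-1}\cdot \mf{p}_{\mf{u}_k}$ form at most one sign-pair $\pm v_0^{(k)}$; since $E_{v_0^{(k)}}^k = E_{-v_0^{(k)}}^k$, the union $\bigcup_{v\ \text{primitive}} E_v^k$ collapses to a single polynomial sublevel set with at most $N_0$ components. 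Taking the union over $1\leq k\leq r$ and using that a union of $r$ open subsets of $\R$ with $a_k$ components has at most $\sum a_k$ components will yield the desired $N := \max(1,rN_0)$, depending only on $G$. The subtle point in carrying all this out is coordinating the scale factors $\omega$, $\omega_0$, $\tilde M$, and $M_0$ so that the hypothesis of Corollary~\ref{cor: shortest are collinear} is satisfied uniformly in $x$ and in $s$, and this is precisely what the simultaneous upper and lower control on $f(x)$ established in the first step is designed to make possible.
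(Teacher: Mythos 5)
Your proposal is correct and follows essentially the same route as the paper's proof: both reduce, via the log-Lipschitz control of $f$ over $|s|\leq T$ and Corollary~\ref{cor: shortest are collinear}, to at most one primitive vector (up to sign) per $k$, and then invoke the polynomial behavior of $s\mapsto\norm{u(sY)w}^2$ to bound the number of connected components by a constant of the form $r$ times a degree bound. The only differences are cosmetic (your normalization $M_0=\omega^{1/d_0}\tilde M$ versus the paper's $\omega^{2/d_0}M$, and working with superlevel sets $E_v^k$ rather than identifying $f(u_sx)$ exactly as a maximum over $\Psi_k^0(g)$).
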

    
    \begin{proof}
    Let $T,R >0$, $Y\in \bigoplus_{\b\in \Delta^+}\mf{g}_\b$ with $\norm{Y}\leq R$ and let $u_s = u(sY)$.
    Fix some $x =g\G \in X$ and define $\w$ as follows:
    \begin{equation*}
    	\w = \sup \set{ \norm{ u(s Z)}:  Z\in \bigoplus_{\b\in \Delta^+}\mf{g}_\b, \norm{Z}\leq R , |s| \leq T  },
    \end{equation*}
    where $\norm{\cdot}$ refers to the operator norm on $\bigoplus_k \bigwedge^k \mf{g}$.
    Arguing as in the proof of Proposition~\ref{propn: main integral estimate in higher rank}, let $W$ be a compact neighborhood of $0$ for which Lemma~\ref{lemma: short integral vectors are unipotent} holds.
    Let $M>0$ be as in Corollary~\ref{cor: shortest are collinear} applied to $W$ and $\w^{2/d_0}$, where $d_0$ is defined in~\eqref{defn: d_0}.
    Now, define $M_0$ as follows.
    \begin{equation*}
    	M_0 = \w^{2/d_0} M.
    \end{equation*}

    Let $M_1 \geq M_0$.
    If $f(u_s x) \leqslant M_1$ for all $|s|\leq T$, then the set in~\eqref{eqn: cusp set higher rank} is empty and the claim follows.
    On the other hand, if $f(u_{s_0} x) > M_1$ for some $s_0$ with $|s_0|\leq T$, then, by definition of $M_1$ and $\w$, we see that $f(u_s x) > M$ for all $|s| \leq T$.
    For each $1\leq k \leq r = \mrm{rank}_\Q(G)$, define the following sets.
    \begin{equation*}
    \Psi_k^0(g) = \set{v \in  \G F^{-1} \cdot \mf{p}_{\mf{u}_k} : \norm{g\cdot v}^{-1/d_k} \geqslant f(x)/\w^{2/d_0}, v \text{ is primitive} }. 
    \end{equation*}
    By an argument identical to that in the proof of Proposition~\ref{propn: main integral estimate in higher rank}, it follows that
    \begin{equation*}
    	f\left(u_s x\right) =\max
        \set{ \norm{ u_sg\cdot v}^{-1/d_k} : v\in \Psi_k^0(g), 1\leq k\leq r},
    \end{equation*}
    for all $|s| \leq T$ and the sets $\Psi_k^0(g)$ contain at most one vector up to a sign for each $k$.
    In particular, for each $|s|\leq T$, $f(u_sx)$ is a maximum over functions of the form $\norm{u_s w}^{-1/d_k}$ for at most $2r$ vectors $w$.

    Finally, for any vector $w\in V=\bigoplus_k \bigwedge^k \mf{g}$, the map $s\mapsto \norm{u_s w}^2$ is a polynomial of degree at most $d$, where $d$ depends only on the dimension of $V$.
    Thus, since polynomials have finitely many zeros, for any $\epsilon >0$, the set $\set{|s|\leq T: \norm{u_s w} <\epsilon}$ has a number of connected components uniformly bounded above only in terms of $d$.
    Moreover, each connected component of the set $\set{s: f(u_sx)>M_1}$ is a union of connected components of sets of the form $\set{s: \norm{u_sg\cdot v} <\epsilon }$ for an appropriate $\epsilon >0$.
    The claim now follows by taking $N = 2r d$.  
    
    \end{proof}
    
    Proposition~\ref{propn: main integral estimate in higher rank} establishes the main contraction property of the function $f$ while the other properties in Definition~\ref{defn: height functions} follow easily from the definition and Proposition~\ref{alpha_k is proper}. Thus, we have established the following.
    \begin{theorem}\label{thrm: CH for maximal representations}
    Suppose $G$ is a semisimple algebraic real Lie group defined over $\Q$ with Lie algebra $\mf{g}$ and $\G$ is a lattice in $G$.
    Let $\rho: \mf{sl}(2,\R) \times B \r \mf{g}$ be a deformation of a maximal representation (Def.~\ref{defn: deformations}) and let $g_t = \exp(t H_\rho)$.
    Suppose $\vp: B \r \mf{g}$ is a differentiable curve satisfying $\dot{\vp}(s) = \rho_s(X)$ for each $s\in B$.
    Then, there exists $0<\b_0<1$ such that $\vp$ satisfies the $\b$-contraction hypothesis for all $\b \in (0,\b_0)$ with a height function satisfying Assumption~\ref{assumption: bounded connected components}.
    Moreover, if $\mrm{rank}_\Q(G) = 1$, then $\b_0 = 1/2$.
    \end{theorem}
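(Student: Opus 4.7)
The plan is to assemble the results of this section into a verification of the four conditions of Definition~\ref{defn: height functions} together with Assumption~\ref{assumption: bounded connected components}, using as height function an appropriate power $h := f^\g$ of the function $f$ from~\eqref{defn: height higher rank}. First, I would record the eigenspace datum: because $[H_\rho, \rho_s(X)] = \rho_s([h,X]) = 2\rho_s(X)$, the derivative $\dot{\vp}(s) = \rho_s(X)$ lies in the $\mrm{Ad}(g_t)$-eigenspace $\mf{g}_\a$ with $\a(t) = 2t$; admissibility of $\vp$ in the sense of Definition~\ref{defn: admissible curves} is built into the hypotheses of the theorem and need not be re-verified here.

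Next, I would fix $\g$ with $0 < \g < \min_k d_k/\d_k$, which is a non-empty interval since $\d_k > 0$ by maximality of $\rho$, as observed just before Proposition~\ref{propn: main integral estimate in higher rank}. Properties $(1)$ and $(4)$ of Definition~\ref{defn: height functions}---namely that $Z := \set{h = \infty}$ is empty (hence trivially $G$-invariant) and that sublevel sets of $h$ are compact---follow from finiteness of $f$ and Proposition~\ref{alpha_k is proper}. Property $(2)$ (log-Lipschitz) is exactly~\eqref{eqn: f is log lip} raised to the power $\g$. Property $(3)$, the contraction inequality, is provided directly by Proposition~\ref{propn: main integral estimate in higher rank} applied with exponent $\g$: it supplies $c_0 \geq 1$ and $0 < \b' \leq \min_k \g\d_k/d_k$ such that for each $t > 0$ there is $b(t) > 0$ with
\[ \frac{1}{2}\int_{-1}^1 h(g_t u(r\dot\vp(s))x)\,dr \leq c_0 e^{-\b' t} h(x) + b(t). \]
Matching $e^{-\b' t}$ against the form $e^{-\b\a(t)} = e^{-2\b t}$ required in the Definition yields $\b = \b'/2$. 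Since $(\min_k d_k/\d_k)(\min_k \d_k/d_k) \leq 1$, we have $\b' < 1$ for any admissible $\g$, so that $\b_0 \leq 1/2 < 1$ in general. In the case $\mrm{rank}_\Q(G) = 1$, Proposition~\ref{propn: main integral estimate in higher rank} gives the optimal rate $\b' = \g\d_1/d_1$; letting $\g \to d_1/\d_1$ from below, $\b'$ approaches $1$ and so $\b = \b'/2$ can be chosen arbitrarily close to $1/2$, yielding $\b_0 = 1/2$ as claimed.

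Finally, Assumption~\ref{assumption: bounded connected components} is supplied by Lemma~\ref{lemma: bounded conn. comp. higher rank} applied with $Y = \dot\vp(s_0) = \rho_{s_0}(X) \in \bigoplus_{\b \in \Delta^+}\mf{g}_\b$, whose conclusion is uniform in $Y$ over the bounded set $\set{\rho_s(X) : s \in B}$. The proof is thus essentially a bookkeeping exercise combining Proposition~\ref{propn: main integral estimate in higher rank} and Lemma~\ref{lemma: bounded conn. comp. higher rank}; the main (minor) point requiring care is the correct translation between the exponent $\g$ placed on $f$ in the integral estimate and the contraction rate $\b$ of Definition~\ref{defn: height functions}, mediated by the relation $\a(t) = 2t$.
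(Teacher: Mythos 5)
Your proposal is correct and follows the paper's own route exactly: the theorem is proved by combining Proposition~\ref{propn: main integral estimate in higher rank} (for the contraction inequality, with the height function a suitable power of $f$), Proposition~\ref{alpha_k is proper} (for properness and the remaining properties of Definition~\ref{defn: height functions}), and Lemma~\ref{lemma: bounded conn. comp. higher rank} (for Assumption~\ref{assumption: bounded connected components}). Your explicit bookkeeping of the factor of $2$ coming from $\a(t)=2t$, which converts the rate $e^{-\b' t}$ with $\b'\to 1$ into the contraction exponent $\b_0=1/2$ in the $\Q$-rank one case, is the right (and only nontrivial) translation step, and it matches the normalization used in the rank-one case (Theorem~\ref{thrm: CH in rank 1}).
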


	\begin{remark}
	An explicit estimate for $\b_0$ is given in~\eqref{eqn: exponent in higher rank} when $\mrm{rank}_\Q(G)>1$.
	\end{remark}
    
    \subsection{Proof of Theorem~\ref{thrm: rank 1 DOA}} \label{section: proof of rank 1 thrm}
    
    In light of Lemma~\ref{lemma: stability of CH}, it suffices to prove the result when $\G$ is an irreducible, non-uniform lattice in $G$.
    If $\mrm{rank}_\R G = 1$, i.e. $G$ is a simple real Lie group of real rank $1$ and finite center, then Theorem~\ref{thrm: rank 1 DOA} follows from Theorem~\ref{thrm: CH in rank 1} which establishes the $\b$-contraction hypothesis for all $\b \in (0,1/2)$ with a height function satisfying Assumption~\ref{assumption: bounded connected components}.
    One can thus apply Theorems~\ref{thrm: Hdim and non-divergence} and~\ref{thrm: schmidt games}, and Proposition~\ref{propn: non-divergence of shrinking curves} to conclude.
    
    When $\mrm{rank}_\R G > 1$, we wish to apply Theorem~\ref{thrm: CH for maximal representations} in place of Theorem~\ref{thrm: CH in rank 1}. 
    Thanks to Margulis' arithmeticity theorem, $\G$ is arithmetic, i.e. $\G$ is commensurable with $G_\Z$ in some $\Q$-structure on $G$. It follows from~\cite[5.5.12]{WitteMorris} that $\G$ arises via a restriction of scalars construction\footnote{In fact, the complexifications of each simple factor of $G$ must be isogenous, but we do not need this fact.}. The reader is referred to~\cite[Section 5.5]{WitteMorris} for more details.
    In particular, since $G$ is a product of simple Lie groups of real rank $1$, we necessarily have that $\mrm{rank}_\Q \G \leq 1$.
    Since we are assuming that $G/\G$ is not compact, we thus have that $\mrm{rank}_\Q \G = 1$.
    
    It remains to show that the curves considered in Theorem~\ref{thrm: rank 1 DOA} arise as deformations of a maximal representation.
    To this end, we only need to show that $g_t$ and $u(\dot{\vp}(s))$ are part of a maximal $\mrm{SL}(2)$-triple for every $s\in B$.
    
    For each $1\leq i \leq k$, write $g_t^{(i)} = \exp (t H_i)$ for $H_i \in \mf{g}_i $.    
    Using the fact that each simple factor of $G$ is a rank $1$ group, it follows from the Jacobson-Morozov Lemma that for each $1\leq i\leq k$, $H_i$ and $\dot{\vp}_i(s)$ can be completed to an $\mf{sl}(2)$-triple $\mf{h}_i = \langle Y_i(s), H_i, \dot{\vp}_i(s)\rangle$.
    One can then check that $\mf{h} = \langle \oplus_{i=1}^k Y_i(s), \oplus_{i=1}^k H_i, \dot{\vp}(s)\rangle$ is the desired $\mf{sl}(2)$-triple.
    
    The maximality of $\mf{h}$ follows from the fact that the only proper parabolic $\Q$-subgroups in $G$ are minimal and have an abelian Levi part in this case. In particular, $\mf{h}$ cannot be contained in any proper parabolic $\Q$-subalgebra of $\mf{g}$ as desired.

%%%%%%%%%%%%%%%%%%%%%%%%%%%%%%%%%%%%%%%%%%%%%%%%%%%%%%%%%%%%%%%%%%%%%%%%%%%%%%%%%%%%%%%%%%%%%%%%%%%%

	\subsection{Examples of Maximal Representations}
    The goal of this subsection is to produce more examples of deformations of maximal representations.
    In Section~\ref{section: sl2 products}, we discuss the case $G$ is a product of $\mrm{SL}(2)$'s.

	Observe that if a reductive subgroup $H <G$ is contained in some proper parabolic $\R$-subgroup $P<G$, then $H$ must be contained inside a Levi subgroup $L<P$.
    The centralizer $Z_P(L)$ of $L$ in $P$ is a non-trivial $\R$-split torus and is, thus, non-compact.
    This proves the following simple criterion for checking whether an $\mf{sl}_2$-triple is maximal in the sense of Definition~\ref{defn: deformations}.
    
    \begin{lemma} \label{lemma: criterion for maximality}
    If the centralizer $Z_G(H)$ of a reductive real Lie subgroup $H<G$ is compact, then $H$ is not contained in any proper parabolic $\R$-subgroup of $G$.
    \end{lemma}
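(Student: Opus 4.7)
The plan is to argue by contradiction. Suppose $H$ is contained in some proper parabolic $\R$-subgroup $P < G$; the goal is to produce a non-compact subgroup of $Z_G(H)$, contradicting the hypothesis.

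First, I would reduce to the case where $H$ sits inside a Levi factor of $P$. Any parabolic $\R$-subgroup admits a Levi decomposition $P = L \ltimes N$, where $N$ is the unipotent radical; moreover, any reductive subgroup of $P$ is conjugate by an element of $N$ to a subgroup of $L$ (this is a standard consequence of Mostow's theorem on maximal reductive subgroups, or alternatively of the conjugacy of Levi factors in $P$). Replacing $P$ by a conjugate (which does not affect the argument since $Z_G(\cdot)$ is equivariant under conjugation), we may assume $H \subseteq L$.

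Next, I would exhibit a non-compact torus inside $Z_G(H)$. The Langlands decomposition refines the Levi as $L = M \cdot A$, where $A$ is the maximal $\R$-split torus in the center of $L$. Because $P$ is a \emph{proper} parabolic $\R$-subgroup of $G$, the split component $A$ is a non-trivial $\R$-split torus, hence a non-compact subgroup of $G$. Since $A$ lies in the center of $L$, every element of $A$ commutes with every element of $H \subseteq L$, so $A \subseteq Z_G(H)$. This contradicts the compactness of $Z_G(H)$, completing the proof.

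The only non-trivial ingredient is the fact cited in the first step, that a reductive subgroup of $P$ may be conjugated into a Levi factor. I would either reference a standard source (e.g.\ Mostow, or~\cite{Borel-LinearAlgebraicGroups}) or, to keep the argument self-contained, simply note that since $H$ is reductive and $N$ is the unipotent radical of $P$, the image of $H$ under the projection $P \to P/N \cong L$ is isomorphic to $H$ and lifts to a Levi subgroup; any two such lifts are $N$-conjugate, yielding the desired inclusion $H \subseteq L$ after conjugation.
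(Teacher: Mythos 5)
Your proposal is correct and follows essentially the same line as the paper: reduce to $H$ lying in a Levi factor $L$ of $P$, then observe that the non-trivial $\R$-split torus in the centralizer of $L$ (your $A$ from the Langlands decomposition, the paper's $Z_P(L)$) gives a non-compact subgroup of $Z_G(H)$. You spell out the Levi-conjugacy step and the Langlands decomposition a bit more explicitly, but the core argument is identical.
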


	Note that if $Z_G(H)$ is compact, then $Z_{G\times G}(\Delta(H))$ is also compact, where $\Delta(H)$ denotes the diagonal embedding of $H$ inside $G\times G$.	We can use Lemma~\ref{lemma: criterion for maximality} to construct other examples as follows.
    
    \begin{example}
      Let $G= \mrm{SO}(p,2)$ with $p\geq 1$. Let $H $ be a $\Q$-subgroup isomorphic to $\mrm{SO}(1,2)$.
      Then, $Z_G(H) \cong \mrm{SO}(p-1)$ is compact.
      Let $A$ denote a $\Q$-split torus inside $H$.
      Suppose $B\subset \R$ is an interval and let
      \[ z :B \r Z_G(A) \]
      be an arbitrary continuous map. Then, one can check that the map $\rho: \mf{sl}(2,\R) \times B \r \mrm{Lie}(G)$
      defined by setting
      \[ \rho \left({\mf{sl}(2,\R) \times \set{s}} \right)= \mrm{Ad}(z(s))\left( \mrm{Lie}(H) \right) \]
      is indeed a deformation of a maximal representation.
    \end{example}

    \section{Specializing to Products of SL(2)}
\label{section: sl2 products}

	In this section, we specialize the results of the previous sections to the case $G = \mrm{SL}(2,\R)^r\times \mrm{SL}(2,\C)^s$, in order to complete the proof of Theorem~\ref{thrm: products of sl2}.
    Moreover, we consider curves in this setting which do not fit within the notion of maximal representations as defined in~\ref{defn: deformations}.
    The main result of this section is Theorem~\ref{thrm: CH for sl2 products}.
    
    Suppose $\G$ is a lattice in $G$.
    Then, up to finite index, and thanks to Lemma~\ref{lemma: stability of CH}, we may assume $\G = \G_1 \times \cdots \times \G_l$, where each $\G_i$ is irreducible in a sub-product of $G$.
    In light of Lemma~\ref{lemma: stability of CH}, it suffices to establish the contraction hypothesis in each irreducible factor and thus we may assume $\G$ is irreducible.
    If $r+s = 1$, then $G$ has real rank $1$ and this result was established in Section~\ref{section: height function rank 1}.
    Thus, we may also assume that $r+s > 1$ and in particular that $\mrm{rank}_\R(G) >1$.
    Define the following elements of $G$:
    
    \begin{equation} \label{eqn: standard flow}
    g_t = \left( \begin{pmatrix} e^t & 0 \\ 0 & e^{-t} \end{pmatrix} \right)_{1\leqslant i \leqslant r+s},
    \qquad  
    u(\mathbf{x})= \left( \begin{pmatrix} 1& \mathbf{x}_i \\ 0 & 1 \end{pmatrix} \right)_{1\leqslant i \leqslant r+s}.
    \end{equation}  
    
    By Margulis' arithmeticity theorem, there exists a rational structure on $G$ so that $\G$ is commensurable with $G(\Z)$.
    In this section, we assume that the $\Q$-rank of $G$ is equal to $1$ so that $G/\G$ is not compact.
    Without loss of generality and to simplify notation, we will assume that $g_t$ is $\Q$-split.
    Hence, the group $\mrm{U}=\set{u(\mbf{x}): \mbf{x} \in \R^r\times \C^s}$ is the unipotent radical of the minimal parabolic group $\mrm{P}_0$ associated with the $\Q$-torus $\mbf{S} = \set{g_t: t\in \R}$.
    The group $\mrm{P}_0$ has the following form.
    \[ \mrm{P}_0 = \set{ \left(\begin{pmatrix} * & * \\ 0 & *   \end{pmatrix} \right)  }. \]

    For each $i$, let $G_i$ denote the $i^{th}$ factor of $G$.
    Let $\mf{g} = \oplus_{i=1}^{r+s} \mf{g}_i$ denote the Lie algebra of $G$, where $\mf{g}_i$ is the Lie algebra of $G_i$.
    For $1\leq i\leq r+s$, we let $H_i \in \mf{g}_i$ denote the following element.
    \[ H_i = \begin{pmatrix} 1 & 0 \\ 0 & -1 \end{pmatrix}. \]

     Recall that any irreducible representation $V$ of $G$ is isomorphic to a tensor product $\bigotimes_i W_i$, where each $W_i$ is an irreducible representation of $G_i$.
    In particular, if $\l \in \mbf{S}^\ast$ is a highest weight for $G$ in $V$, then 
    \begin{equation} \label{eqn: weight of G and weights of G_i}
     \l = \sum_i \l_i,
    \end{equation}
    where each $\l_i \in (\R\cdot H_i)^\ast$ is a highest weight for $G_i$ in $W_i$.
    Given any such representation $V$ with highest weight $\l$ and $0\neq \mbf{x} = (\mbf{x}_i) \in  \R^r\times \C^s$, we define the following exponents:
    \begin{equation} \label{eqn: sl2 prods exponents}
    	\d_i = \l_i(H_i), \qquad \d_\mbf{x} = \sum_{i:\mbf{x}_i\neq 0} \d_i, \qquad \zeta_\mbf{x} =  \sum_{i:\mbf{x}_i= 0} \d_i.
    \end{equation}

    The following Lemma acts as a substitute for Lemma~\ref{lemma: expansion of vectors in higher rank} in this setting.

    \begin{lemma} \label{lemma: sl2 products non-maximal}     
    Suppose $V $ is a non-trivial irreducible representation for $G$ and $0\neq v\in V$ is a highest weight vector.
    Then, for all $0<\b<1$ and $0\neq \mbf{x}\in \R^r\times \C^s$, there exists $c\geq 1$ such that for all $t>0$ and all $g\in G$, the following holds:
    \begin{equation*}
    	\frac{1}{2} \int_{-1}^1 \norm{g_t u(r \mbf{x}) \cdot gv}^{-\b/\d_\mbf{x}} \;dr \leqslant c  e^{-\b' t} \norm{gv}^{-\b/\d_\mbf{x}},
    \end{equation*}
    where $\b'$ is given by
    \begin{equation} \label{eqn: exponent for non-maximal curves}
    	\b' = \b \left[ 1 -  \frac{\zeta_\mbf{x}}{\d_\mbf{x}}  \right].
    \end{equation}
    Moreover, the constant $c$ can be chosen uniformly as $\mbf{x}$ varies in a fixed compact set.
    \end{lemma}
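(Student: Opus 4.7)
The plan is to exploit the tensor product decomposition of $V$ as a representation of $G=\prod_i G_i$. Write $V = \bigotimes_i W_i$ with each $W_i$ an irreducible $G_i$-representation of highest weight $\l_i$, so that $v = \bigotimes_i v_i$ with $v_i$ a highest weight vector. Fix the tensor product norm, so that $\|\bigotimes w_i\| = \prod_i \|w_i\|$. Partition $\{1,\dots,r+s\} = I_+ \sqcup I_0$ according to whether $\mathbf{x}_i \neq 0$, and split $g_t = g_t^+ g_t^0$, $g = g^+ g^0$ accordingly. Since $u(r\mathbf{x})$ acts trivially on the factors in $I_0$ and everything commutes across the $I_+/I_0$ split, one obtains
\[
\|g_t u(r\mathbf{x}) g v\| = \|g_t^+ u(r\mathbf{x}) g^+ v^+\| \cdot \|g_t^0 g^0 v^0\|,
\]
where $v^\pm := \bigotimes_{i\in I_\pm} v_i$. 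The integral thus factors into an $r$-independent piece and an integral over the $I_+$ part.

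For the $I_0$ factor I would apply Proposition~\ref{propn: avoidance} to each $G_i$, $i\in I_0$, and use compactness of $K_i \cdot v_i$ in $\mathbb{P}(W_i)$ to conclude $\|q^{\pm\d_i}(g_i v_i)\| \geq c_i \|g_i v_i\|$, where $q^{\pm \d_i}$ denotes projection onto the extremal weight spaces $W_i^{\d_i} \oplus W_i^{-\d_i}$. Since $g_t^{(i)}$ acts on $W_i^{-\d_i}$ with eigenvalue $e^{-\d_i t}$, this gives $\|g_t^{(i)} g_i v_i\| \geq c_i e^{-\d_i t} \|g_i v_i\|$, and taking the product yields
\[
\|g_t^0 g^0 v^0\|^{-\b/\d_\mathbf{x}} \leq C e^{\z_\mathbf{x} \b t/\d_\mathbf{x}}\,\|g^0 v^0\|^{-\b/\d_\mathbf{x}}.
\]

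For the $I_+$ factor, the pair $\{g_t^+\}$ and $\{u(r\mathbf{x})\}$ generates a copy of $\mathrm{SL}(2,\R)$ embedded diagonally in $G^+ = \prod_{i\in I_+} G_i$, whose Cartan element is $H^+ = \sum_{i\in I_+} H_i$ and satisfies $\a(H^+) = 2$. As a representation of this diagonal $\mathrm{SL}(2)$, the maximal highest weight appearing in $V^+ = \bigotimes_{i\in I_+} W_i$ is $\d_\mathbf{x} = \sum_{i\in I_+} \d_i$, attained on the top Clebsch--Gordan summand $V^+_\lambda$. Proposition~\ref{propn: expansion in linear representations} then gives
\[
\tfrac{1}{2}\int_{-1}^{1} \|g_t^+ u(r\mathbf{x}) g^+ v^+\|^{-\b/\d_\mathbf{x}}\,dr
\leq D\, e^{-\b t}\, \|\pi_{V^+_\lambda}(g^+ v^+)\|^{-\b/\d_\mathbf{x}}.
\]
The crux is to bound $\|\pi_{V^+_\lambda}(g^+ v^+)\|$ from below by $c'\|g^+ v^+\|$. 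For this I would identify $W_i$ with $\mathrm{Sym}^{\d_i}(\R^2)$ (or $\C^2$ for the complex factors) and $v_i$ with $x^{\d_i}$, so that the $\mathrm{SL}(2)$-equivariant Cartan projection $\pi_{V^+_\lambda}: \bigotimes_i W_i \to \mathrm{Sym}^{\d_\mathbf{x}}$ becomes polynomial multiplication $\bigotimes P_i \mapsto \prod_i P_i$. The orbit element $g^+ v^+ = \bigotimes_i g_i v_i$ is a decomposable tensor with each factor $g_i v_i \neq 0$, and because the polynomial ring is an integral domain the product is nonzero. Compactness of the $K^+$-orbit of $[v^+]$ in $\mathbb{P}(V^+)$ then upgrades this to a uniform lower bound $\|\pi_{V^+_\lambda}(g^+ v^+)\| \geq c' \|g^+ v^+\|$, which is uniform as $\mathbf{x}$ varies in a compact set with the same support pattern since the diagonal embedding and the projection depend continuously on $\mathbf{x}$.

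Combining the two estimates collapses the exponent: $e^{-\b t} \cdot e^{\z_\mathbf{x}\b t/\d_\mathbf{x}} = e^{-\b(1-\z_\mathbf{x}/\d_\mathbf{x})t}$, which is exactly the claimed $e^{-\b' t}$. The principal obstacle is the lower bound on the Cartan projection, i.e., showing the orbit $G^+ \cdot v^+$ avoids the lower Clebsch--Gordan summands; the integral-domain observation is the crucial algebraic input and is what lets us recover the full rate $\b$ (rather than the smaller $\b'$ produced by the higher-rank Lemma~\ref{lemma: expansion of vectors in higher rank}). Minor care is needed to treat the $\mathrm{SL}(2,\C)$-factors, where one either extends scalars or reinterprets $W_i$ as a complex irreducible representation, but the polynomial multiplication argument is unchanged.
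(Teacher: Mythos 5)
Your proof is correct and follows the same skeleton as the paper's argument: split $g_t$ into the part $a_t = g_t^+$ lying in the $\mf{sl}_2$-triple $\mf{h}(\mbf{x})$ determined by $\mbf{x}$ and the complementary part $b_t = g_t^0$, absorb the latter at the cost of the factor $e^{\b \zeta_{\mbf{x}} t/\d_{\mbf{x}}}$, apply Proposition~\ref{propn: expansion in linear representations} to the $\mf{h}$-action, and finally bound the projection onto the top $\mf{h}$-isotypic component $V_\chi$ from below by a multiple of the full norm. The genuine difference is in that last, crucial step. The paper invokes Proposition~\ref{propn: avoidance} together with the inclusion $V^{\l}\oplus V^{-\l}\subseteq V_\chi$; note, however, that Proposition~\ref{propn: avoidance} only guarantees a nonzero component of $gv$ along $\bigoplus_{\mrm{w}\in\mc{W}}V^{\mrm{w}\cdot\l}$, and for intermediate Weyl elements (sign flips in only some factors) the space $V^{\mrm{w}\cdot\l}$ need not lie in $V_\chi$, so the paper's one-line deduction is terse at best. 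Your replacement --- using that $gv=\bigotimes_i g_i v_i$ is a decomposable tensor of nonzero vectors and that the equivariant projection onto the top Clebsch--Gordan summand is polynomial multiplication, hence nonvanishing because the polynomial ring is an integral domain --- proves $\pi_\chi(gv)\neq 0$ directly, and the compactness of the product of flag varieties upgrades this to a uniform lower bound; this is, if anything, the more solid route. Two points deserve the care you already flag: the $\mrm{SL}(2,\C)$ factors, where $W_i$ restricted to the real $\mf{sl}_2$-triple has the form $\mrm{Sym}^a\otimes\overline{\mrm{Sym}^b}$ and the multiplication argument persists; and the uniformity in $\mbf{x}$, which is only meaningful on compact sets where the vanishing pattern of $\mbf{x}$ (hence $\d_{\mbf{x}}$, $\zeta_{\mbf{x}}$ and the projection) is constant --- exactly how the lemma is used in Theorem~\ref{thrm: CH for sl2 products}. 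Finally, for the $I_0$ factors the trivial operator-norm bound $\norm{g_t^{(i)}w}\geqslant e^{-\d_i t}\norm{w}$ already suffices (this is what the paper uses); your appeal to avoidance there is harmless but unnecessary.
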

    
    \begin{proof}
    Let $0\neq \mbf{x} = (\mbf{x}_i) \in  \R^r\times \C^s$ be given and define $\mbf{y} = (\mbf{y}_i) \in \R^r\times \C^s$ by 
    \begin{equation*}
    	\mbf{y}_i =\begin{cases}
    		1/\mbf{x}_i, & \mbf{x}_i \neq 0\\
            0, & \textrm{otherwise.}
    	\end{cases} 
    \end{equation*}
    Denote by $\mf{h} = \mf{h}(\mbf{x})$ the subalgebra of $\mf{g}$ generated by
    \[ \underline{\mbf{x}}= \left( \begin{pmatrix} 0& \mathbf{x}_i \\ 0 & 0 \end{pmatrix} \right)_{1\leqslant i \leqslant r+s}, 
    \qquad \underline{\mbf{y}}=  \left( \begin{pmatrix} 0& 0 \\ \mathbf{y}_i & 0 \end{pmatrix} \right)_{1\leqslant i \leqslant r+s}. \]
    Thus, in particular, $\mf{h} \cong \mf{sl}(2,\R)$ with the following distinguished positive diagonalizable element.
    \[ \underline{\mbf{h}} := [\underline{\mbf{x}},\underline{\mbf{y}}] = \sum_{i: \mbf{x}_i \neq 0 } H_i.\]
    Consider the following elements of $G$.
    \[ a_t = \exp\left( t \underline{\mbf{h}} \right), \qquad 
    b_t = g_t a_{-t} = \exp\left( t \sum_{i: \mbf{x}_i = 0}  H_i\right).  \]
    Since the smallest eigenvalue of $b_t$ in $V$ is $\exp \left(t \zeta_\mbf{x} \right)$, we get that
    \begin{equation}\label{eqn: remove b_t}
    \frac{1}{2} \int_{-1}^1 \norm{g_t u(r \mbf{x}) \cdot gv}^{-\b/\d_\mbf{x}} \;dr \leqslant 
    e^{\b t \zeta_\mbf{x}/\d_\mbf{x}}  \frac{1}{2} \int_{-1}^1 \norm{a_t u(r \mbf{x}) \cdot gv}^{-\b/\d_\mbf{x}} \;dr. 
    \end{equation}
    
        Denote by $P^+(V)$ the set of highest weights for $\mf{h}$ appearing in the decomposition of $V$ into irreducible representations and denote by $\chi$ the maximal element in $P^+(V)$.
    For each $\mu\in P^+(V)$, we let $V_\mu$ be the direct sum of irreducible sub-representations of $V$ with highest weight $\mu$.
    Let $\pi_\mu: V \r V_\mu$ denote the associated projection.
    
    Note that $\d_\mbf{x}$ is the largest eigenvalue of $\underline{\mbf{h}}$ in $V$.
    In particular, we can apply Proposition~\ref{propn: expansion in linear representations} to get the following estimate.
    \begin{equation} \label{eqn: estimate with a_t}
    \frac{1}{2} \int_{-1}^1 \norm{a_t u(r \mbf{x}) \cdot gv}^{-\b/\d_\mbf{x}} \;dr \leqslant
    c_1 e^{-\b t } \norm{ \pi_\chi(gv) }^{-\b/\d_\mbf{x}},
    \end{equation}
    for some constant $c_1 \geq 1$ depending only on $\b$.
    From~\eqref{eqn: remove b_t} and~\eqref{eqn: estimate with a_t}, to conclude the proof, it remains to show the existence of a constant $\e>0$ so that for all $g\in G$,
    \begin{equation} \label{eqn: non-trivial component in V_chi}
    \norm{\pi_\chi(gv)} \geqslant \e \norm{gv}.
    \end{equation}
    
    To do so, we wish to apply Proposition~\ref{propn: avoidance}.
    For a weight $\eta \in \mbf{S}^\ast$, we denote by $V^\eta$ the weight space for $\mbf{S}$ with weight $\eta$.
    Note that $V^{-\l}\oplus V^{\l} \subseteq V_\chi$, where $\l \in \mbf{S}^\ast$ denote the highest weight for $G$ in $V$.
    This follows from~\eqref{eqn: weight of G and weights of G_i} and the definition of $\underline{\mbf{h}}$.    
    In particular, by Proposition~\ref{propn: avoidance}, we get that
    \begin{equation} \label{eqn: G orbit avoids slow subspaces}
     G\cdot v \quad \bigcap \bigoplus_{\mu \in P^{+}(V)\backslash \set{ \chi}} V_\mu = \emptyset.
    \end{equation}

    Now, observe that no conjugate of $\mf{h}$ is contained in the Lie algebra of the group $\mrm{P}_0$ since the Levi part of $\mrm{P}_0$ is abelian while $\mf{h}$ is semisimple.
	Moreover, the group $\mrm{P}_0$ stabilizes the line $\R \cdot v$.
    Arguing as in the proof of Lemma~\ref{lemma: expansion of vectors in rank 1}, we see that the image of $G\cdot v$ is compact in projective space and disjoint from the closed image of $\bigoplus_{\mu \in P^{+}(V)\backslash \set{ \chi}} V_\mu$.
    Thus,~\eqref{eqn: non-trivial component in V_chi} follows.

    \end{proof}

    \begin{remark}
    Consider the case $V = V_1$ in Lemma~\ref{lemma: sl2 products non-maximal}, where $V_1$ is the representation defined in~\ref{defn: representations V_k} and used to define the height function on $G/\G$.
    Then, we have
    \begin{equation*}
    	\d_i = \begin{cases}
    	2, & \text{if } G_i =  \mrm{SL}(2,\R). \\ 4, & \text{if } G_i = \mrm{SL}(2,\C).
    	\end{cases}
    \end{equation*}
    and $\d_V = 2r+4s$.
    In particular, the exponent $\b'$ defined in~\eqref{eqn: exponent for non-maximal curves} is positive if     
    $\mbf{x} = (\mbf{x}_i) \in \R^r \times \C^s$ is such that
    \begin{equation*}
    	\# \set{1\leqslant i \leqslant r: \mbf{x}_i \neq  0 } 
   			+ 2 \cdot \# \set{r< i \leqslant r+s: \mbf{x}_i \neq  0 } >  \frac{r+2s}{2}.  
    \end{equation*}
    \end{remark}
    
    Given $0\neq \mbf{x}\in \R^r\times \C^s$, define a height function $f_\mbf{x}:G/\G \r \R_+$ by
    \begin{equation} \label{eqn: height sl2 products}
    f_\mbf{x} (x_0) = \tilde{\a}_1^{1/\d_\mbf{x}}(x_0),
    \end{equation}
    where $\tilde{\a}_1$ is defined in~\eqref{defn: b_k}.
    
    Thus, we can apply Lemma~\ref{lemma: sl2 products non-maximal} in place of Lemma~\ref{lemma: expansion of vectors in higher rank} and obtain the following direct analogue of Proposition~\ref{propn: main integral estimate in higher rank}.
    \begin{proposition} \label{propn: main integral estimate in sl2 products}
    For all $0 < \b< 1$, there exists $c_0 \geqslant 1$, depending on $\b$, so that the following holds.
    For every $t>0$, there exists a positive constant $b$ such that for all $x_0\in G/\G$ and all $0\neq \mbf{x}\in \R^r\times \C^s$,
    \begin{equation*}
    	\frac{1}{2} \int_{-1}^1 f_\mbf{x}^\b (g_t u(r\mbf{x}) x_0)\;dr
        \leqslant c_0 e^{-\b' t} f_{\mbf{x}}^\b(x_0) + b,
    \end{equation*}
    where $\b'$ is given by~\eqref{eqn: exponent for non-maximal curves}. 
    \end{proposition}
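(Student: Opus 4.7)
The plan is to follow the structure of the proof of Proposition~\ref{propn: main integral estimate in higher rank}, using Lemma~\ref{lemma: sl2 products non-maximal} in place of Lemma~\ref{lemma: expansion of vectors in higher rank}. The setup is simpler here since $\mrm{rank}_\Q(G) = 1$, so there is only one representation $V_1$ and one index $k = 1$, and hence the height function $f_\mbf{x} = \tilde{\a}_1^{1/\d_\mbf{x}}$ is built from just one vector per $\G$-orbit.

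First I would fix $\b \in (0,1)$ and a non-zero $\mbf{x} \in \R^r\times \C^s$, and define the quantity
\[
\w = \sup_{|r|\leq 1}\,\max\left\{\norm{g_t u(r\mbf{x})}_{V_1},\,\norm{(g_t u(r\mbf{x}))^{-1}}_{V_1}\right\},
\]
which depends on $t$ and $\mbf{x}$ and governs the log-Lipschitz distortion of $f_\mbf{x}$ along the horocycle piece. Then I would invoke Corollary~\ref{cor: shortest are collinear} with the neighborhood $W$ supplied by Lemma~\ref{lemma: short integral vectors are unipotent} and with $\w^{2/\d_\mbf{x}}$ in place of $\w$, obtaining a threshold $M$ above which the almost-shortest integer vectors span a single line.

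Next I would split into two cases. If $f_\mbf{x}(x_0) \leq M$, then the log-Lipschitz bound immediately yields
\[
\tfrac{1}{2}\int_{-1}^{1} f_\mbf{x}^\b(g_t u(r\mbf{x})x_0)\,dr \;\leq\; \w^{\b/\d_\mbf{x}} M^\b \;=:\; b,
\]
which handles the bounded case. If $f_\mbf{x}(x_0) > M$, write $x_0 = g\G$ and introduce the set
\[
\Psi(g) \;=\; \left\{v \in \G F^{-1}\cdot\mf{p}_{\mf{u}_1}\,:\,\norm{g\cdot v}^{-1/\d_\mbf{x}} \geq f_\mbf{x}(x_0)/\w^{2/\d_\mbf{x}}\right\}.
\]
Just as in the higher-rank proof, the choice of $M$ and $\w$ forces that, for every $r\in[-1,1]$, the maximum defining $f_\mbf{x}(g_t u(r\mbf{x})x_0)$ is attained on $\Psi(g)$; and by Corollary~\ref{cor: shortest are collinear}, $\Psi(g)$ lies on a single line, so its primitive representatives $\Psi^0(g)$ consist of at most two vectors.

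Finally, I would sum over $v\in \Psi^0(g)$ and apply Lemma~\ref{lemma: sl2 products non-maximal} (with exponent $\b$ and the given $\mbf{x}$) to each term:
\[
\tfrac{1}{2}\int_{-1}^{1} f_\mbf{x}^\b(g_t u(r\mbf{x})x_0)\,dr \;\leq\; \sum_{v\in\Psi^0(g)} \tfrac{1}{2}\int_{-1}^{1}\norm{g_t u(r\mbf{x})\cdot gv}^{-\b/\d_\mbf{x}}\,dr \;\leq\; 2c\, e^{-\b' t}\, f_\mbf{x}^\b(x_0),
\]
with $\b' = \b(1 - \zeta_\mbf{x}/\d_\mbf{x})$ and $c$ the constant from Lemma~\ref{lemma: sl2 products non-maximal}. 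Taking $c_0 = 2c$ and combining the two cases gives the stated inequality. The main technical point to get right is the interaction of the $\mbf{x}$-dependence of $\w$, $M$, and $b$ with the uniformity claim: the uniformity of $c$ over $\mbf{x}$ in compact sets in Lemma~\ref{lemma: sl2 products non-maximal} passes directly to $c_0$, while $b$ (and hence $M$) is allowed to depend on $t$ and $\mbf{x}$, as in Proposition~\ref{propn: main integral estimate in higher rank}.
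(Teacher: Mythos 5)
Your proposal is correct and follows essentially the same route as the paper: the paper itself gives this proposition as a ``direct analogue'' of Proposition~\ref{propn: main integral estimate in higher rank}, obtained by substituting Lemma~\ref{lemma: sl2 products non-maximal} for Lemma~\ref{lemma: expansion of vectors in higher rank} and noting (as you do) that Corollary~\ref{cor: shortest are collinear} survives the change of exponent from $1/d_1$ to $1/\d_{\mbf{x}}$. Your handling of the bounded/unbounded dichotomy, the at most two primitive vectors in $\Psi^0(g)$, and the uniformity of $c$ over $\mbf{x}$ in compact sets all match the intended argument.
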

    
    \begin{remark}
    The proof of Proposition~\ref{propn: main integral estimate in higher rank} in the $\Q$-rank $1$ case used the function $f = \tilde{\a}_1^{1/d_1}$, where $d_1 =r+2s$ is the dimension of the group $\mrm{U}$.
    However, this different exponent does not change the main properties of $f$. In particular, the key ingredient, Corollary~\ref{cor: shortest are collinear} still holds for our definition of $f_\mbf{x}$.
    \end{remark}

    Given a non-constant differentiable map $\vp = (\vp_i): B \r  \mrm{Lie}(U^+(g_1)) \cong \R^{r}\oplus \C^{s}$ such that $\dot{\vp}_i$ is either identically $0$ or does not vanish on $B$ for $1\leq i \leq r+s$, we observe that $\d_{\dot{\vp}(s)}$ (eqn.~\eqref{eqn: sl2 prods exponents}) is independent of $s$.
    Thus, by taking our height function to be $f^\b_{\dot{\vp}(s)}$ for any $s$, Proposition~\ref{propn: main integral estimate in sl2 products} along with the results of Section~\ref{section: height fun higher rank} and~\ref{section: CH higher rank} imply the following.
    
    \begin{theorem}\label{thrm: CH for sl2 products}
    Suppose $G = \mrm{SL}(2,\R)^r \times \mrm{SL}(2,\C)^s$, $\G$ is an irreducible lattice in $G$ and $g_t$ is a split $1$-parameter subgroup.
    Let $\vp = (\vp_i): B \r  \mrm{Lie}(U^+(g_1)) \cong \R^{r}\oplus \C^{s}$ be a non-constant $C^{1+\e}$-map for some $\e>0$ such that $\dot{\vp}_i$ is either identically $0$ or does not vanish on $B$ for $1\leq i \leq r+s$.
    Define $\b_\vp$ as follows:
    \begin{equation*}
    	\b_\vp := \frac{1}{2} \left[  1-   \frac{ \# \set{1\leqslant i \leqslant r: \dot{\vp}_i \equiv  0 } 
   			+ 2 \cdot \# \set{r< i \leqslant r+s: \dot{\vp}_i \equiv  0 }}
            {\# \set{1\leqslant i \leqslant r: \dot{\vp}_i \not\equiv  0 } 
   			+ 2 \cdot \# \set{r< i \leqslant r+s: \dot{\vp}_i \not\equiv  0 }} \right].
    \end{equation*}
    If $\b_\vp >0$, then $\vp$ is a $g_t$-admissible curve satisfying the $\b$-contraction hypothesis for the $G$ action on $G/\G$ for all $0<\b < \b_\vp$.
    Moreover, the $\b$-contraction hypothesis holds with a height function satisfying Assumption~\ref{assumption: bounded connected components}.
    \end{theorem}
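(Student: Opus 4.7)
The plan is to apply Proposition~\ref{propn: main integral estimate in sl2 products} pointwise along the curve, using the key structural observation that made the theorem plausible: the integers $\zeta_{\dot\vp(s)}$ and $\d_{\dot\vp(s)}$ appearing in~\eqref{eqn: sl2 prods exponents} are locally constant in $s$. First I would verify $g_t$-admissibility in the sense of Definition~\ref{defn: admissible curves}. The image of $\vp$ lies in $\mrm{Lie}(U^+(g_1))$, which sits in the single $\mrm{Ad}(g_t)$-eigenspace with eigenvalue $\a(t)=2t$ and which is abelian, so the ``contained in an abelian subspace of a single positive eigenspace'' condition is automatic. Non-vanishing of $\dot\vp$ on $B$ follows because $\vp$ is non-constant, hence some $\vp_i$ is non-constant, i.e.\ $\dot\vp_i \not\equiv 0$; the dichotomy hypothesis then forces $\dot\vp_i$ to be nowhere zero, and a fortiori $\dot\vp$ is nowhere zero on $B$. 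The regularity condition $C^{1+\e}$ is given. So $\vp$ is admissible.

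Next, because each $\dot\vp_i$ is either identically zero or nowhere zero on $B$, the vanishing pattern of $\dot\vp(s)$ is the \emph{same} for every $s\in B$. In view of the definitions in~\eqref{eqn: sl2 prods exponents}, this means both $\d_{\dot\vp(s)}$ and $\zeta_{\dot\vp(s)}$ are constants independent of $s$, which I will abbreviate by $\d$ and $\zeta$. Consequently the height function $f_{\dot\vp(s)}$ of~\eqref{eqn: height sl2 products} depends only on $\d$, not on $s$, and one can legitimately take a \emph{single} function $f := f_{\dot\vp(\cdot)}$ as the candidate height function. The other properties required by Definition~\ref{defn: height functions} -- properness, log-Lipschitz continuity along the $G$-action, compactness of sublevel sets -- were established for $\tilde\a_1$ in Section~\ref{section: height fun higher rank} and transfer to $f = \tilde\a_1^{1/\d}$ without change. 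Assumption~\ref{assumption: bounded connected components} for $f$ follows directly from Lemma~\ref{lemma: bounded conn. comp. higher rank} applied to $Y = \dot\vp(s)\in \mrm{Lie}(U^+(g_1))\subset\bigoplus_{\b\in\Delta^+}\mf{g}_\b$, noting that as $s$ ranges over the compact interval $B$ the norm $\|\dot\vp(s)\|$ is uniformly bounded.

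The core step is the integral inequality~\eqref{eqn: CH}. Given $\b_0\in(0,\b_\vp)$, I would set $\b_P := 2\b_0/[1-\zeta/\d] \in (0,1)$ (this is admissible because $\b_\vp = \frac{1}{2}[1-\zeta/\d]$) and apply Proposition~\ref{propn: main integral estimate in sl2 products} with this $\b_P$ and with $\mbf{x} = \dot\vp(s)$, for each $s\in B$ separately. Since $\dot\vp(B)$ is a compact subset of $\R^r\oplus\C^s$ avoiding $0$, the constant $c_0$ in the Proposition (or in the underlying Lemma~\ref{lemma: sl2 products non-maximal}, which is uniform on compact sets) is uniform in $s$. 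The Proposition then yields
\begin{equation*}
\frac{1}{2}\int_{-1}^{1} f^{\b_P}\bigl(g_t u(r\dot\vp(s))x\bigr)\,dr \;\leq\; c_0\, e^{-\b_P[1-\zeta/\d]\,t}\, f^{\b_P}(x) + b(t),
\end{equation*}
valid for all $x\in G/\G$, $s\in B$ and $t>0$. Since $\a(t)=2t$ for the eigenspace containing $\dot\vp(s)$, the exponent $\b_P[1-\zeta/\d]\,t$ rewrites as $2\b_0\cdot t = \b_0\cdot\a(t)$, which is precisely the decay rate required in~\eqref{eqn: CH} with $\b = \b_0$. Taking the height function to be $f^{\b_P}$ therefore gives the $\b_0$-contraction hypothesis, and since $\b_0 < \b_\vp$ was arbitrary, this finishes the proof.

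The main obstacle is bookkeeping rather than ideas: one must keep careful track of three distinct exponents -- the $\b$ of Proposition~\ref{propn: main integral estimate in sl2 products}, the ratio $\zeta/\d$ encoding how many coordinates of $\dot\vp$ vanish, and the $\b$ of the contraction hypothesis -- and verify that the constants produced by Lemma~\ref{lemma: sl2 products non-maximal} are indeed uniform along the compact curve $\dot\vp(B)$, which in turn relies on the continuous variation of the highest-weight projections $\pi_\chi$ used in the proof of that lemma.
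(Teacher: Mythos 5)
Your proposal is correct and follows essentially the same route as the paper: one observes that the constant vanishing pattern of $\dot\vp$ makes $\d_{\dot\vp(s)}$ and $\zeta_{\dot\vp(s)}$ independent of $s$, takes the single height function $f_{\dot\vp(s)}^{\b_P}$, and invokes Proposition~\ref{propn: main integral estimate in sl2 products} (with the auxiliary properties and Assumption~\ref{assumption: bounded connected components} coming from Sections~\ref{section: height fun higher rank}--\ref{section: CH higher rank}). Your exponent bookkeeping $\b_{\mathrm{CH}}=\tfrac{\b_P}{2}[1-\zeta/\d]$, i.e.\ $\b_P=\b_0/\b_\vp$, is exactly the translation implicit in the paper.
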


    %%%%%%%%%%%%%%%%%%%%%%%%%%%%%%%%%%%%%%%%%%%%%%%%%%%%%%%%%%%%%%%%%%%%%%%%
    \subsection{Proof of Theorem~\ref{thrm: products of sl2}} \label{section: proof of sl2 products}
    
    When $\G$ is irreducible, the result follows by combining Theorem~\ref{thrm: CH for sl2 products} with Theorems~\ref{thrm: Hdim and non-divergence} and~\ref{thrm: schmidt games}, and Proposition~\ref{propn: non-divergence of shrinking curves}. In particular, the dimension of divergent on average orbits is at most $1-\b_\vp$, where $\b_\vp$ is as in Theorem~\ref{thrm: CH for sl2 products}. Note that this upper bound is less than $1$ if and only if $\b_\vp >0$.

    \subsection{Non-maximal Curves on Products of SO(d,1)} \label{section: products of so(n,1)}
    
    The methods of this section can be used with minor modifications to obtain an analogous result to Theorem~\ref{thrm: products of sl2} when $G$ is a product of of copies of $\mrm{SO}(n,1)$.
    
    \begin{theorem} \label{thrm: products of so(n,1)}
    	Suppose $G = G_1 \times \cdots \times G_k$ is such that for each $1\leq i\leq k$, $G_i \cong \mrm{SO}(d_i,1)$ for some $d_i \geq 2$. Let $\G$ be an irreducible lattice in $G$. For each $1\leq i\leq k$, let $g_t^{(i)}$ be a $1$-parameter subgroup of $G_i$ which is $\mrm{Ad}$-diagonalizable over $\R$, and suppose $ \vp_i: B \r \mrm{Lie}(U^+(g_1^{(i)})) $ is a $C^{1+\e}$-map for some $\e >0$.
        Assume that for each $i$, $\dot{\vp}_i$ is either non-vanishing or vanishes identically on $B$.
    Let $g_t = (g_t^{(i)})_{1\leq i\leq k}$ and $\vp = \oplus_{i=1}^k \vp_i$. Assume that $g_t$ is split and $\vp$ is $g_t$-admissible and non-constant.
	Then, for every $x_0\in X=G/\G$, the Hausdorff dimension of the set of points $s\in B $ for which the forward orbit $(g_tu(\vp(s))x_0)_{t\geqslant 0}$ is divergent on average is at most
    \[ \frac{1}{2} + \frac{1}{2}  \frac{\displaystyle{\sum_{1\leq i \leq k, \dot{\vp}_i\equiv 0} (d_i-1) } }{\displaystyle{\sum_{1\leq i \leq k, \dot{\vp}_i \not\equiv 0} (d_i-1) }   }.  \]
        Moreover, if the above quantity is strictly less than $1$, then parts $(ii)-(iv)$ of Theorem~\ref{thrm: rank 1 DOA} also hold in this setting.
	\end{theorem}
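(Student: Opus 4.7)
The plan is to adapt the strategy of Theorem~\ref{thrm: CH for sl2 products}, replacing the representation-theoretic input specific to $\mrm{SL}(2,\R)$ and $\mrm{SL}(2,\C)$ factors by the analogous input for $\mrm{SO}(d_i,1)$ factors. The case $k=1$ is already covered by Theorem~\ref{thrm: rank 1 DOA}, so we may assume $\mrm{rank}_\R(G) \geqslant 2$, invoke Margulis' arithmeticity theorem to obtain a $\Q$-structure on $G$ with $\mrm{rank}_\Q(G) \leqslant 1$, and dispose of the cocompact case trivially; thus we may assume $\mrm{rank}_\Q(G) = 1$. The height function $f$ from Section~\ref{section: height fun higher rank} applies, with a single representation $V_1$ built from the unipotent radical $\mrm{U} = \prod_i U^+(g_1^{(i)})$ of the minimal $\Q$-parabolic, and $f$ of the form~\eqref{eqn: height sl2 products}.

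The heart of the argument is the analogue of Lemma~\ref{lemma: sl2 products non-maximal}. For each $s \in B$, since each factor $\mf{g}_i$ has restricted root system of type $A_1$ with positive root space of dimension $d_i-1$, the Jacobson--Morozov lemma applied in each factor where $\dot{\vp}_i \not\equiv 0$ yields an $\mf{sl}_2$-subalgebra $\mf{h}_i(s) \subset \mf{g}_i$ containing $\dot{\vp}_i(s)$ and having diagonal element proportional to $H_i$, where $g_t^{(i)} = \exp(tH_i)$. The direct sum $\mf{h}(s) = \bigoplus_{i: \dot{\vp}_i \not\equiv 0} \mf{h}_i(s)$ is an $\mf{sl}_2$-subalgebra of $\mf{g}$ containing $\dot{\vp}(s)$ and having distinguished diagonal element $\underline{\mbf{h}}(s) := \sum_{i: \dot{\vp}_i \not\equiv 0} H_i$. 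Splitting $H_0 = \underline{\mbf{h}}(s) + \underline{\mbf{k}}(s)$ with $\underline{\mbf{k}}(s)$ commuting with $\mf{h}(s)$, we factor $g_t = a_t b_t$ with $a_t = \exp(t\underline{\mbf{h}}(s))$ and $b_t = \exp(t\underline{\mbf{k}}(s))$. On the highest weight vector $v$ of $V_1$ the element $b_t$ acts by the scalar $\exp\bigl(2t\sum_{i: \dot{\vp}_i \equiv 0}(d_i-1)\bigr)$ (since the $H_j$-eigenvalue of $v$ is $2(d_j-1)$), while the top $\mf{h}(s)$-weight in $V_1$ is $2\sum_{i: \dot{\vp}_i \not\equiv 0}(d_i-1)$. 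Applying Proposition~\ref{propn: expansion in linear representations} to the $\mf{h}(s)$-projection then delivers an expansion estimate whose exponent ratio is precisely the characteristic appearing in the statement of the theorem.

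The main obstacle, exactly as in Lemma~\ref{lemma: sl2 products non-maximal}, is producing a uniform lower bound on the norm of the projection of $g \cdot v$ onto the maximal $\mf{h}(s)$-weight subspace. Here Proposition~\ref{propn: avoidance} is the key input: the stabilizer of $\R \cdot v$ is the minimal $\Q$-parabolic $\mrm{P}_0$, whose Levi is abelian and so cannot contain the semisimple $\mf{h}(s)$, and the Weyl-group element sending the top weight to its negative shows that $G \cdot v$ avoids the sum of $\mf{h}(s)$-subrepresentations of non-maximal highest weight; compactness of $B$ together with continuity of $\mf{h}(s)$ in $s$ makes the bound uniform. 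With this in hand, an integral estimate parallel to Proposition~\ref{propn: main integral estimate in sl2 products} establishes the $\b$-contraction hypothesis for all $\b \in (0, \b_\vp)$, where
\[
\b_\vp = \frac{1}{2}\left(1 - \frac{\sum_{i: \dot{\vp}_i \equiv 0}(d_i-1)}{\sum_{i: \dot{\vp}_i \not\equiv 0}(d_i-1)}\right).
\]
Assumption~\ref{assumption: bounded connected components} follows verbatim as in Lemma~\ref{lemma: bounded conn. comp. higher rank}. Theorems~\ref{thrm: Hdim and non-divergence} and~\ref{thrm: schmidt games}, together with Proposition~\ref{propn: non-divergence of shrinking curves}, then yield the dimension bound $1 - \b_\vp$ and, when $\b_\vp > 0$, parts $(ii)$--$(iv)$ of Theorem~\ref{thrm: rank 1 DOA} in this setting.
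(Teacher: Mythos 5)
Your proposal follows exactly the route the paper intends: the paper's proof of this theorem consists of the single remark that the methods of the $\mrm{SL}(2)$-products section apply with minor modifications, and you have correctly identified those modifications — the exponents $\delta_i = 2(d_i-1)$ coming from the $(d_i-1)$-dimensional horospherical subalgebras, the factorization $g_t = a_t b_t$ along the Jacobson--Morozov triple supported on the active factors, the avoidance argument via Proposition~\ref{propn: avoidance}, and the assembly via Theorems~\ref{thrm: Hdim and non-divergence} and~\ref{thrm: schmidt games} and Proposition~\ref{propn: non-divergence of shrinking curves}, yielding precisely the stated bound. The only quibble is your justification that the Levi of $\mrm{P}_0$ is abelian (for $d_i \geq 4$ it contains $\mrm{SO}(d_i-1)$ factors), but the needed conclusion — that $\mf{h}(s)$ lies in no proper parabolic $\Q$-subalgebra — still holds because the $\Q$-anisotropic part of that Levi has no non-compact semisimple factors, and the paper itself uses the same shorthand in Sections~\ref{section: proof of rank 1 thrm} and~\ref{section: sl2 products}.
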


    \section{The Contraction Hypothesis for \rm{SL}(2,R) Actions}
\label{section: height linear forms}

	In this section, we construct a family of functions that will allow us to control recurrence to compact sets in $\mrm{SL}(d,\R)/\mrm{SL}(d,\Z)$.
    This construction was introduced in~\cite{EskinMargulisMozes} and generalized in~\cite{BQ-RandomWalkRecurrence}. Here, we follow the approach of ~\cite{BQ-RandomWalkRecurrence}.
    The main result of this section, Theorem~\ref{thrm: CH for sl2 actions}, establishes the contraction hypothesis in the context of $\mrm{SL}(2,\R)$ actions completing the proof of Theorem~\ref{thrm: sl2 actions}.
    
    We recall the set up and notation of Theorem~\ref{thrm: sl2 actions}.
    Let $L$ be a semisimple algebraic Lie group defined over $\Q$ and let $\G$ be an arithmetic lattice in $L$.
    We let $\rho: \mrm{SL}(2,\R)\r L$ be a non-trivial homomorphism and let $G$ denote the image of $\rho$.
    Let $g_t$ and $u_s$ be as in the statement of Theorem~\ref{thrm: sl2 actions}.
    
    The aim of this section is to show that the ``curve'' $u_s$ satisfies the $\b$-contraction hypothesis for the $G$-action on $L/\G$.
    In light of Lemma~\ref{lemma: stability of CH}, we have the freedom of replacing $\G$ by a commensurable lattice without loss of generality.
    
    In particular, we may regard $L$ as a subgroup of $S= \mrm{SL}(d,\R)$ for some $d \geq 1$ so that $\G=L\cap\L$, for $\L  = \mrm{SL}(d,\Z)$.
    Since $L$ is defined over $\Q$, we have that $L\L$ is closed in $S$ and the homogeneous space $X = L/\G \cong L/L\cap\L$ can be regarded a closed subspace of $X' = S/\L$.
    As a result, the contraction hypothesis for the $G$-action on $L/\G$ will follow from that of the $G$-action on $S/\L$.
    Therefore, without loss of generality, we will assume through the remainder of this section that
    \[ L = \mrm{SL}(d,\R), \qquad \G = \mrm{SL}(d,\Z),\qquad X = L/\G. \]

    Using the results in~\cite{BQ-RandomWalkRecurrence}, Shi showed in~\cite{Shi} the $\b$-contraction hypothesis for the $G$ action on $X$ for some $\b>0$.
    We reproduce the proof in this section with some modifications to obtain a more precise range for the exponent $\b$.

    \subsection{The Contraction Hypothesis in Vector Spaces}
\label{section: CH in vector spaces higher rank}

	As before, we first construct functions in linear representations encoding divergence in $X$ and then convert our linear estimates into a height function on $X$.
    The relevant representation in this case is $\bigoplus_i \bigwedge^i \R^{d}$.

    Let $ H_0,Z \in \mrm{Lie}(G) \cong \mf{sl}(2,\R)$ be such that
    \begin{equation} \label{log g_t}
    	g_t = \exp( t H_0), \qquad u_s  \exp(sZ).
     \end{equation}
     In particular, by definition of $g_t$ and $u_s$, we have
     \[ [H_0,Z] = 2Z.  \]

    Denote by $P^+$ the set of all possible highest weights appearing in linear representations of $G$.
    From the representation theory of $\mrm{SL}(2,\R)$, the set $P^+$ of highest weights can be identified with $\N \cup \set{0}$.
    Given an arbitrary finite dimensional representation $V$ of $G$ and $\l\in P^+$, we use $V^\l$ to denote the direct sum of all irreducible subrepresentations of $V$ whose highest weight is $\l$.
    We denote by $\pi_\l:V \r V^\l$ the associated $G$-equivariant projection.

    Following~\cite{BQ-RandomWalkRecurrence}, we define two sets of exponents. 
	For $i \in \set{1,\dots,d-1}$ and $\l \in P^{+}$, define
    \begin{equation} 
    \d_i = i(d-i), \qquad \d_\l = \l(H_0).
    \end{equation}
    
    In particular, we have
    \begin{equation*}
    	\l(H_0) = 0 \Leftrightarrow \l = 0.
    \end{equation*}
    
    For every $\epsilon >0$ and $0<i <d$, we define a function $\vp_{\epsilon}$ on $\bigwedge^i \R^{d}$ as follows.
    For $v \in \bigwedge^i \R^{d}$, let
    \begin{align} \label{defn: phi}
    \vp_{\epsilon}(v) = \begin{cases}
    	 \min\limits_{\l \in P^+ \backslash\set{0} } \epsilon^{\frac{\d_i}{\d_\l} } \norm{ \pi_\l(v)}^{-1/\d_\l} 
         				& \text{if } \norm{\pi_0(v)} < \epsilon^{\d_i}, \\
         0 & \text{otherwise.}
    \end{cases}
    \end{align}
    Here, we use the convention $1/0 = \infty$.

    The following Lemma is the form in which we use Proposition~\ref{propn: expansion in linear representations} in our setting.
    \begin{lemma} \label{lemma: contraction hypothesis for phi}
    For every $\b \in (0, 1)$, there exists $D \geqslant 1$ such that for all $t,\epsilon >0$ and all $v \in \bigwedge^i \R^{d} $ with $0< i<d$,
    \begin{equation*}
    	\frac{1}{2}\int_{-1}^1 \vp_{\epsilon}^\b(g_t u_s v) \;ds \leqslant D e^{-\b   t} \vp_{\epsilon}^\b(v).
    \end{equation*}
    \end{lemma}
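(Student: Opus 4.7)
The plan is to reduce the statement to a direct application of Proposition~\ref{propn: expansion in linear representations} by exploiting the fact that $\vp_\epsilon$ is defined as a minimum over isotypic components. First I would observe that since $\pi_0$ is $G$-equivariant and $G$ acts trivially on the isotypic component $V^0$, the quantity $\|\pi_0(g_t u_s v)\|$ equals $\|\pi_0(v)\|$ for every $t$ and $s$. This handles the case $\|\pi_0(v)\| \geq \epsilon^{\d_i}$ trivially: both $\vp_\epsilon^\b(g_t u_s v)$ and $\vp_\epsilon^\b(v)$ vanish identically and the inequality holds.

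For the main case, assume $\|\pi_0(v)\| < \epsilon^{\d_i}$. Since $V = \bigwedge^i \R^d$ decomposes into only finitely many isotypic components under $G$, there exists $\l^\ast \in P^+ \setminus \{0\}$ with $\pi_{\l^\ast}(v) \neq 0$ realizing the minimum in the definition of $\vp_\epsilon(v)$; that is, $\vp_\epsilon(v) = \epsilon^{\d_i/\d_{\l^\ast}} \|\pi_{\l^\ast}(v)\|^{-1/\d_{\l^\ast}}$ (if no such minimizer exists because every $\pi_\l(v) = 0$ for $\l \neq 0$, then both sides are $\infty$ by the same $G$-equivariance argument). For every $s \in [-1,1]$, the defining minimum yields the pointwise bound
\[
\vp_\epsilon^\b(g_t u_s v) \;\leq\; \epsilon^{\b \d_i/\d_{\l^\ast}} \|\pi_{\l^\ast}(g_t u_s v)\|^{-\b/\d_{\l^\ast}}.
\]

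Integrating this bound and using the $G$-equivariance of $\pi_{\l^\ast}$ to pull the projection inside the action, I would apply Proposition~\ref{propn: expansion in linear representations} to the subrepresentation $V^{\l^\ast}$, for which $\l^\ast$ is the unique (and therefore maximal) highest weight and $\pi_{\l^\ast}$ restricts to the identity. This gives
\[
\tfrac{1}{2}\!\int_{-1}^1 \|\pi_{\l^\ast}(g_t u_s v)\|^{-\b/\d_{\l^\ast}} ds \;\leq\; D_{\l^\ast} e^{-\b \a(H_0) t/2} \|\pi_{\l^\ast}(v)\|^{-\b/\d_{\l^\ast}},
\]
and since $\a(H_0) = 2$ in our normalization, the exponential factor is $e^{-\b t}$. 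Multiplying by $\epsilon^{\b\d_i/\d_{\l^\ast}}$ and recognizing the result as $D_{\l^\ast} e^{-\b t} \vp_\epsilon^\b(v)$ gives the desired inequality.

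The only subtlety is uniformity of the constant $D$. The Proposition's constant depends on the representation; however, the set of $\l^\ast$ that can arise as a minimizer is contained in the finite set of highest weights appearing in the decomposition of $\bigwedge^i \R^d$ over $i = 1, \dots, d-1$. Taking $D := \max_{\l} D_{\l}(\b)$ over this finite set yields a constant depending only on $\b$ (and on $d$, which is fixed), completing the proof. There is no substantial obstacle here once one observes that the minimum in the definition of $\vp_\epsilon$ decouples the problem into one irreducible-isotypic linear expansion estimate per value of $v$, which is precisely what Proposition~\ref{propn: expansion in linear representations} supplies.
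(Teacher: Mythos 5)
Your proposal is correct and follows essentially the same route as the paper: handle the trivial isotypic component by $G$-invariance of $\pi_0$, then reduce to a single nonzero highest-weight isotypic component and apply Proposition~\ref{propn: expansion in linear representations} with exponent $\b/\d_{\l}$, using $\a(H_0)=2$. The only cosmetic difference is that you bound the minimum by the term at a fixed minimizer $\l^\ast$, whereas the paper bounds the integral of the minimum by the minimum of the integrals over all $\l$; both yield the same finite maximum of constants.
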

    
    \begin{proof}
    	
    	First, we note that for all $g\in G$, $\pi_0(gv) = g\pi_0(v) = \pi_0(v)$.
        In particular, if $\vp_\epsilon(v) = 0$, then
        $\norm{\pi_0(v)} = \norm{\pi_0(g_t u_s v) }\geqslant \epsilon^{\d_i}$ for all $s$ and $t$
        and the statement follows in this case. Hence, we may assume $\vp_\epsilon (v) \neq 0$.

        Moreover, since the integral of the minimum of finitely many functions is bounded by the minimum of their integrals, it suffices to prove
        \begin{equation} \label{contraction in each weight}
        \frac{1}{2} \int_{-1}^1 \norm{  \pi_\l(g_t u_s v)}^{-\b/\d_\l} \;ds = 
        	\frac{1}{2} \int_{-1}^1 \norm{ g_t u_s \pi_\l(v)}^{-\b/\d_\l} \;ds 
            	\leqslant D e^{- \b  t} \norm{\pi_\l(v)}^{-\b/\d_\l},
        \end{equation}
        for each $\l \in P^{+}\backslash\set{0}$ and for some constant $D$ depending only on $\b$.

        If $\pi_\l(v) = 0$ for some $\l$, then the right-hand side of~\eqref{contraction in each weight} is $\infty$ and the claim is proved in this case.        
        Now, suppose that $\pi_\l(v) \neq 0$ for some $\l \in P^+\backslash \set{0}$.

        Denote by $V =\left( \bigwedge^i \R^{d} \right)_\l$, i.e. the image of $\bigwedge^i\R^{d}$ under the projection $\pi_\l$.
        From the representation theory of $\mrm{SL}(2,\R)$, we see that the dimension of an irreducible representation with weight $\l$ is equal to $\d_\l + 1$.
        In particular, choosing $\b\in (0,1)$ allows us to apply
        Proposition~\ref{propn: expansion in linear representations} to get that
        \begin{equation*} 
        	\frac{1}{2} \int_{-1}^1 \norm{ g_t u_s \pi_\l(v)}^{-\b/\d_\l} \;ds
            	\leqslant D e^{- \b t } \norm{\pi_\l(v)}^{-\b/\d_\l}.
        \end{equation*}
        This proves~\eqref{contraction in each weight} and completes the proof.
    \end{proof}

%      where $\l_0$ was defined to be the minimal element of $P^+$ such that there exists a non-trivial irreducible sub-representation of $V$ with weight $\l_0$.
%         But, by definition, the only irreducible representations appearing in $V$ have $\l$ as their highest weight.
%         In particular, $\l_0 = \l$ and hence $\l_0(H_0) = \a(H_0) \d_\l$.

    \subsection{The Contraction Hypothesis on $X$}

	The space $X = S/\L$ may be identified with the space of unimodular lattices in $\R^{d}$ via the map
    $g \mrm{SL}(d,\Z) \mapsto g\Z^{d}$.
    For $x \in X$, let $P(x)$ denote the set of all primitive subgroups of the lattice $x$.
    Recall that a subgroup of a lattice in $\R^{d}$ is primitive if its $\Z$ basis can be completed to a basis of $\R^{d}$ as an $\R$-vector space.
    
    We say a monomial $v_1 \wedge \cdots \wedge v_i \in \bigwedge^i\R^{d} $ is $x$-integral if the abelian subgroup of $\R^{d}$ generated by
    $\set{v_1,\dots,v_i}$ belongs to $P(x)$.
    
    Now, we define the function $f_\epsilon: X \r [0,\infty]$ by 
    \begin{align} \label{defn: alpha}
    	f_\epsilon(x) = \max \vp_{\epsilon}(v),
    \end{align}
    where the maximum is taken over all non-zero $x$-integral monomials $v\in \bigwedge^i\R^{d}$ and all $0<i<d$.

    \begin{remark}
    The function $f_\epsilon$ can assume the value $\infty$. However, for any $x\in X$, one can choose $\epsilon$ to be small enough so that $f_\epsilon(x) <\infty$. In fact, one can choose such $\epsilon$ uniformly for compact subsets of $X$ by Mahler's criterion. 
    \end{remark}
    
    Following the same lines as Proposition 5.3 in~\cite{BQ-RandomWalkRecurrence}, we obtain the following result.
    \begin{proposition} \label{propn: main integral estimate for sl2 actions}
    For all $\b\in (0,1)$, there exists $c_0 \geqslant 1$ such that
    for all $t>0$, there exist constants $\e_0, b>0$, depending on $t$,
    satisfying
    \begin{equation*}
    	\frac{1}{2} \int_{-1}^1 f_{\e_0}^\b(g_tu_s x)\;ds \leqslant c_0 e^{-\b t} f_{\e_0}^\b(x) + b,
    \end{equation*}
    for all $x\in X$.
    \end{proposition}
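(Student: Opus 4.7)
The strategy will be to promote the pointwise linear estimate in Lemma~\ref{lemma: contraction hypothesis for phi} to the maximum defining $f_{\epsilon_0}$, following the method of Eskin--Margulis--Mozes~\cite{EskinMargulisMozes} and Benoist--Quint~\cite{BQ-RandomWalkRecurrence}. A weaker version of this proposition with some positive exponent was already established by Shi~\cite{Shi}; my goal will be to rerun the argument while carefully tracking the exponent arising from Lemma~\ref{lemma: contraction hypothesis for phi}, so as to obtain the contraction for every $\b \in (0,1)$.

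The first step will be a structural lemma: for $\epsilon_0$ small enough (depending on $t$), there is a uniform bound $m = m(d)$ on the number of $x$-integral primitive subgroups $\L$ realizing values $\vp_{\epsilon_0}(\L)$ within a fixed constant factor of $f_{\epsilon_0}(x)$. I plan to prove this via a Minkowski-type submodular inequality of the form
\[
\vp_{\epsilon_0}(\L_1 + \L_2) \cdot \vp_{\epsilon_0}(\L_1 \cap \L_2) \geq C\, \vp_{\epsilon_0}(\L_1) \cdot \vp_{\epsilon_0}(\L_2),
\]
valid whenever $\L_1+\L_2$ is primitive and its representing monomial has trivial $\pi_0$-component. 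Combined with the fact that $\pi_0(v\wedge w)=0$ whenever $\pi_0(v)=0$, and with discreteness of the $\mrm{SL}(d,\Z)$-orbit in each $\bigwedge^i \R^d$ (a consequence of Mahler's criterion), this forces any family of mutually incomparable short $\L$'s to have bounded cardinality.

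Granted this structural step, I will bound $f_{\epsilon_0}^\b(g_tu_sx)$ by a short sum $\sum_{k=1}^m \vp_{\epsilon_0}^\b(g_tu_s v_k)$, where the $v_k$ are monomials representing the collection of short primitive subgroups at $x$ and those that could overtake them along the orbit segment $\{g_t u_s x : s \in [-1,1]\}$. Applying Lemma~\ref{lemma: contraction hypothesis for phi} term-by-term and integrating yields
\[
\frac{1}{2}\int_{-1}^1 f_{\epsilon_0}^\b(g_tu_sx)\, ds \leq m D\, e^{-\b t}\, f_{\epsilon_0}^\b(x)
\]
on the cusp region $\{f_{\epsilon_0} > M\}$. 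On the complementary bounded region $\{f_{\epsilon_0}\leq M\}$, the log-Lipschitz property of each $\vp_\epsilon$, inherited from continuity of the $G$-action on $\bigwedge^i\R^d$, furnishes the additive constant $b = b(t)$. Combining the two regimes gives the statement with $c_0 = mD$.

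The main obstacle will be the submodularity inequality underlying the structural step. Its proof requires combining the geometry of successive minima on lattice sums and intersections with the algebraic vanishing of $\pi_0$ under wedge products, and the resulting inequality must be sharp enough that iterating it yields a uniform cardinality bound independent of $x$. Calibrating the constants so that no loss occurs in the final exponent $\b$ is the delicate point: a weaker form of submodularity would only yield contraction for a proper sub-range of $(0,1)$, which in turn would translate to a weaker dimension bound via Theorem~\ref{thrm: Hdim and non-divergence} and thus fail to produce the sharp $1/2$ upper bound claimed in Theorem~\ref{thrm: sl2 actions}.
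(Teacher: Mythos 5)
Your proposal is correct and follows essentially the same route as the paper: log-Lipschitz control of $\vp_{\e_0}$ along the orbit segment, a uniform bound on the number of short $x$-integral monomials, term-by-term application of Lemma~\ref{lemma: contraction hypothesis for phi} over that finite set, and the two-regime split producing $c_0$ and $b$. The only difference is that the paper imports the structural step wholesale as~\cite[Claim 5.9]{BQ-RandomWalkRecurrence} (at most one short monomial per degree, up to sign) instead of re-deriving it via submodularity; note also that the admissible range $\b\in(0,1)$ is governed entirely by the linear estimate of Lemma~\ref{lemma: contraction hypothesis for phi}, while the sharpness of the cardinality bound only affects the multiplicative constant $c_0$, so your worry that a weaker submodularity statement would shrink the range of $\b$ is unfounded.
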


    \begin{proof}
    	Suppose $\b \in (0,1)$ and let $D\geqslant 1$ be the constant in the conclusion of Lemma~\ref{lemma: contraction hypothesis for phi}.
        For a compact set $Q\subset G$, define
          \begin{equation} \label{eqn: omega Q}
          \omega(Q) = \sup_{g\in Q} \max\set{ \norm{g}, \norm{ g^{-1}}  },
          \end{equation}
        where $\norm{\cdot}$ is the operator norm induced by the euclidean norm on $V = \bigoplus_{i=1}^{d-1} \bigwedge^i \R^{d}$.
        
        Fix some $t >0$ and denote by $\omega$ the following constant.
        \begin{equation} \label{eqn: omega}
        	\w = \w \left( \set{g_t u_s: s\in [-1,1] } \right).
        \end{equation}

        Let $\e_0>0$ be a constant to be determined later.
        Note that the exponents $\d_\l$ in the definition of $\vp_{\e_0}$ satisfy 
        \begin{equation*}
        	\d_\l \geqslant 1/\a(H_0),
        \end{equation*}
        for all $0\neq \l \in P^{+}$.        
        Thus, by definition of $\vp_{\e_0}$, for all $s \in [-1,1]$ and all $v\in V$,
        \begin{equation} \label{log smooth}
             \omega^{-\a(H_0)} \vp_{\e_0}(v) \leqslant \vp_{\e_0}(g_tu_sv) 
             	\leqslant \omega^{\a(H_0)} \vp_{\e_0}(v).
        \end{equation}
        
        It is shown in~\cite[Claim 5.9]{BQ-RandomWalkRecurrence} that given a compact subset $Q$ of $G$, there exists constants $C_1 \geqslant 1$ and $\e_0 >0$, depending on $Q$, such that whenever $f_{\e_0}(x) > C_1$, the set of $x$-integral monomials $v$, satisfying
        \begin{equation} \label{eqn: defn of psi}
        	\vp_{\e_0}(v) \geqslant f_{\e_0}(x)/\omega(Q)^{2\a(H_0)},
        \end{equation}
        contains at most one vector up to a sign in each of $\bigwedge^i \R^{d}$ with $0<i<d$.
        In particular, we may apply this result to the compact set $Q=\set{g_t u_s: s\in [-1,1] }$.
        
        Suppose $x\in X$ satisfies $f_{\e_0}(x) >C_1$ and let $\Psi$ denote the set of $x$-integral monomials satisfying~\eqref{eqn: defn of psi}.
        Then, Lemma~\ref{lemma: contraction hypothesis for phi} implies
        \begin{equation*}
        \frac{1}{2} \int_{-1}^1 f_{\e_0}^\b(g_tu_s x)\;ds
        \leqslant \sum_{v\in \Psi} \frac{1}{2} \int_{-1}^1 \vp_{\e_0}^\b(g_t u_s v)\;ds 
        \leqslant 4d D e^{-\b t} f_{\e_0}^\b(x).
        \end{equation*}
        
        Finally, if $f_{\e_0}(x) < C_1$ for some $x\in X$, then~\eqref{log smooth} implies that
        \begin{equation*}
        \frac{1}{2} \int_{-1}^1 f_{\e_0}^\b(g_tu_s x)\;ds
        \leqslant \omega^{\a(H_0)} C_1.
        \end{equation*}
        Thus, the statement of the Proposition follows by taking $c_0 = 4dD$ and $b = \omega^{\a(H_0)} C_1$.
    \end{proof}

    Proposition~\ref{propn: main integral estimate} establishes that the functions $f_\e^\b$ satisfy the main property in the $\b$-contraction hypothesis (Def.~\ref{defn: height functions}) for the $G=\mrm{SL}(2,\R)$ action on homogeneous spaces $X$.
    Properties $(1)$, $(2)$ and $(4)$ follow from Mahler's compactness criterion and the lower semi-continuity of $f_{\e_0}$.
    Finally, Assumption~\ref{assumption: bounded connected components} follows from the following lemma.
    \begin{lemma}\label{lemma: bounded conn. comp. sl2 actions}
    There exists $N\in \N$, depending only on $G$ and $\G$, such that for every $T>0$, there exists $M_0>0$ such that for all $x\in G/\G$, and $M_1\geq M_0$, the following holds.
    \begin{equation}\label{eqn: cusp set sl2 actions}
    	\text{The set } \set{|s|\leqslant T: f(u_s x) > M_1} \text{ has at most } N \text{ connected components}.
    \end{equation}
    \end{lemma}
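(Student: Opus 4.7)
The plan is to imitate the proof of Lemma~\ref{lemma: bounded conn. comp. higher rank}, with Claim~5.9 of~\cite{BQ-RandomWalkRecurrence}---the uniqueness-of-shortest-monomial statement already invoked in the proof of Proposition~\ref{propn: main integral estimate for sl2 actions}---playing the role that Corollary~\ref{cor: shortest are collinear} played in the higher rank case. The three ingredients are a log-Lipschitz estimate for $f_{\epsilon_0}$ along $u_s$, a pointwise bound on the number of ``near-maximizing'' monomials in each wedge degree, and the fact that $s \mapsto \norm{u_s v}^{2}$ is a polynomial of bounded degree.

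First, set $\omega = \sup_{|s|\leq T}\max\set{\norm{u_s},\norm{u_s^{-1}}}$, with operator norm taken on $V = \bigoplus_{0<i<d}\bigwedge^{i}\R^{d}$. Because each exponent $\delta_\l$ occurring in the definition of $\vp_{\epsilon_0}$ satisfies $\delta_\l \geq 1$, this yields log-Lipschitz bounds $\omega^{-1}\vp_{\epsilon_0}(w) \leq \vp_{\epsilon_0}(u_{s}w)\leq \omega\,\vp_{\epsilon_0}(w)$ for all $w \in V$ and $|s|\leq T$, and hence $\omega^{-1} f_{\epsilon_0}(x)\leq f_{\epsilon_0}(u_{s}x)\leq \omega\, f_{\epsilon_0}(x)$. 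Next I apply Claim~5.9 of~\cite{BQ-RandomWalkRecurrence} to a compact set $\tilde{Q}\supset\set{u_s:|s|\leq T}$ chosen large enough that $\omega(\tilde{Q})\geq\omega^{2}$---this is achievable since $\norm{u_s}$ grows polynomially in $|s|$. The result provides a constant $C_{1}\geq 1$, possibly requiring a further shrinking of $\epsilon_0$, such that whenever $f_{\epsilon_0}(y)>C_{1}$ the set of $y$-integral monomials $v$ with $\vp_{\epsilon_0}(v)\geq f_{\epsilon_0}(y)/\omega^{2}$ contains at most one vector up to a sign in each $\bigwedge^{i}\R^{d}$. Set $M_{0} = \omega^{2}C_{1}$. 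If $f_{\epsilon_0}(u_{s_0}x) > M_{1}\geq M_{0}$ for some $|s_0|\leq T$ (otherwise the level set is empty), the log-Lipschitz bound gives $f_{\epsilon_0}(x)> M_{1}/\omega\geq \omega C_{1}>C_{1}$, so Benoist--Quint applies at $x$ itself and produces a finite set $\Psi=\Psi(x)$ of $x$-integral monomials with $\#\Psi\leq 2(d-1)$.

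The key identity---paralleling the argument following the definition of $\Psi_k^0(g)$ in the proof of Proposition~\ref{propn: main integral estimate in higher rank}---is
\[
f_{\epsilon_0}(u_{s}x)\;=\;\max_{v\in\Psi}\vp_{\epsilon_0}(u_{s}v)\qquad\text{for every }|s|\leq T.
\]
Indeed, if $v_{*}$ realizes the maximum at $u_{s}x$, then the log-Lipschitz bound gives $\vp_{\epsilon_0}(v_{*})\geq \omega^{-1}\vp_{\epsilon_0}(u_s v_{*})= \omega^{-1} f_{\epsilon_0}(u_{s}x)\geq \omega^{-2}f_{\epsilon_0}(x)$, forcing $v_{*}\in\Psi$. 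Consequently,
\[
\set{|s|\leq T: f_{\epsilon_0}(u_{s}x)>M_{1}}\;=\;\bigcup_{v\in\Psi}\,\set{|s|\leq T: \vp_{\epsilon_0}(u_{s}v)>M_{1}}.
\]
For each $v\in\Psi$ of wedge degree $i$, unfolding the definition of $\vp_{\epsilon_0}$ rewrites $\vp_{\epsilon_0}(u_{s}v)>M_{1}$ as the conjunction of the $s$-independent constraint $\norm{\pi_{0}(v)}^{2}<\epsilon_0^{2\delta_{i}}$ with finitely many polynomial-in-$s$ inequalities $\norm{\pi_{\l}(u_{s}v)}^{2}< c_{\l}$ indexed by $\l \in P^{+}\setminus\set{0}$, each polynomial of degree bounded in terms of $\dim V$. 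Standard facts about zero sets of real polynomials then yield a uniform bound $N_{0}$ on the number of connected components of each set appearing in the union, and taking $N = 2(d-1)\,N_{0}$ finishes the proof.

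The principal piece of bookkeeping I expect to require care is the coordination of the parameter $\epsilon_0$: Claim~5.9 of~\cite{BQ-RandomWalkRecurrence} demands $\epsilon_0$ sufficiently small depending on the compact set $\tilde{Q}$, whereas Proposition~\ref{propn: main integral estimate for sl2 actions} has already committed to its own choice. One must verify that a single $\epsilon_0$ works for both applications; shrinking $\epsilon_0$ preserves the contraction estimate of Proposition~\ref{propn: main integral estimate for sl2 actions} (at worst enlarging the constant $b$), so taking the minimum of the two admissible sizes settles the matter.
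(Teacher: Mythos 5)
Your proof is correct and follows the same route the paper takes: the paper's own ``proof'' is a one-line reference to the higher-rank argument of Lemma~\ref{lemma: bounded conn. comp. higher rank}, appealing to the bounded cardinality of the set~$\Psi$ from Proposition~\ref{propn: main integral estimate for sl2 actions}, and your write-up is a careful fleshing out of precisely that argument---log-Lipschitz control of $f_{\epsilon_0}$ along $\set{u_s:|s|\leq T}$, Benoist--Quint's uniqueness of near-minimal $x$-integral monomials in each wedge degree in place of Corollary~\ref{cor: shortest are collinear}, and the polynomial nature of $s\mapsto\norm{\pi_\l(u_sv)}^2$. One detail you handle that the paper's cross-reference glosses over is that here the sets $\set{|s|\leq T:\vp_{\epsilon_0}(u_sv)>M_1}$ are intersections (not unions) of polynomial sublevel sets, because of the $\min$ over $\l$ in the definition of $\vp_{\epsilon_0}$; your observation that an intersection of finitely many sets each cut out by a polynomial of bounded degree still has boundedly many components is the correct fix.
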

    \begin{proof}
    The proof is identical to that of Lemma~\ref{lemma: bounded conn. comp. higher rank} and relies on the bounded cardinality of a set of vectors analogous to the set $\Psi$ in the proof of Proposition~\ref{propn: main integral estimate for sl2 actions}.
    \end{proof}
    
    Thus, we have established the following statement.
    \begin{theorem} \label{thrm: CH for sl2 actions}
    Let $B \subset \R$ be an interval and suppose $L$ is a semisimple algebraic Lie group defined over $\Q$, $\G$ an arithmetic lattice in $L$, and $\rho: \mrm{SL}(2,\R) \r L $ a non-trivial representation. Let
          \[ g_t = \rho \left( \begin{pmatrix}e^t &0\\ 0 & e^{-t}    \end{pmatrix}  \right), \qquad u(\vp(s)) = \rho  \left( \begin{pmatrix}1 &s\\ 0 & 1    \end{pmatrix}  \right), s\in B. \]
    Then, $\vp(s)$ is a $g_t$-admissible curve satisfying the $\b$-contraction hypothesis for the action of $G=\rho(\mrm{SL}(2,\R))$ on $L/\G$ for all $\b\in (0,1/2)$, with a height function satisfying Assumption~\ref{assumption: bounded connected components}.
    \end{theorem}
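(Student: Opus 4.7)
The strategy is to assemble the pieces constructed throughout this section, taking the height function from the family $f_{\epsilon_0}$ of~\eqref{defn: alpha}. First, I would invoke Lemma~\ref{lemma: stability of CH}$(i)$ to pass from the general pair $(L,\Gamma)$ to the standard situation $L=\mathrm{SL}(d,\R)$, $\Gamma=\mathrm{SL}(d,\Z)$ via the closed embedding described at the start of the section. On this homogeneous space, $\vp(s)=sZ$ is linear hence $C^\infty$; its derivative $\dot\vp\equiv Z$ is nonzero; its image $\R\cdot Z$ is one-dimensional and thus abelian; and $Z$ lies in the $\mathrm{Ad}(g_t)$-eigenspace with eigenvalue $e^{2t}$, which is positive for $t>0$. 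Hence $\vp$ is $g_t$-admissible in the sense of Definition~\ref{defn: admissible curves}, with distinguished character $\a(t)=2t$.

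Next, given $\b\in(0,1/2)$, I would take as height function $f:=f_{\epsilon_0}^{2\b}$, where $\epsilon_0$ is supplied by Proposition~\ref{propn: main integral estimate for sl2 actions} applied with exponent $2\b\in(0,1)$. That proposition then gives exactly
\[
\tfrac12\int_{-1}^1 f(g_t u_s x)\,ds \leqslant c_0\, e^{-2\b t} f(x) + b = c_0\, e^{-\b\a(t)} f(x) + b,
\]
which is condition $(3)$ of Definition~\ref{defn: height functions} (noting that $u(r\dot\vp(s)) = \exp(rZ) = u_r$). For the remaining conditions, finiteness of $f$ on compact subsets of $X\setminus\{f=\infty\}$ follows from Mahler's criterion together with the lower semicontinuity of $f_{\epsilon_0}$; the $G$-invariance of $\{f=\infty\}$ follows because $\pi_0$ is $G$-invariant and each $\pi_\l$ is $G$-equivariant, so zeros of weight-space projections are preserved; the uniform log-Lipschitz bound $(2)$ is obtained by extending~\eqref{log smooth} to an arbitrary bounded neighborhood of the identity using the constant $\omega(Q)$ defined in~\eqref{eqn: omega Q}; and precompactness of $\{f\leq M\}$ in $(4)$ is again a direct application of Mahler's compactness criterion.

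Finally, Assumption~\ref{assumption: bounded connected components} is exactly the content of Lemma~\ref{lemma: bounded conn. comp. sl2 actions}, and its conclusion is insensitive to replacing $f_{\epsilon_0}$ by its power $f_{\epsilon_0}^{2\b}$: a threshold $M$ for $f$ corresponds to a threshold $M^{1/(2\b)}$ for $f_{\epsilon_0}$, and the uniform bound $N$ on the number of connected components does not depend on the threshold. Since the substantive work — the decomposition of $f_{\epsilon_0}(x)$ into the contribution of a uniformly bounded number of $x$-integral monomials via~\cite{BQ-RandomWalkRecurrence}, the calibration of $\epsilon_0$ depending on $t$, and the linear-algebraic expansion estimate of Lemma~\ref{lemma: contraction hypothesis for phi} — has already been carried out, the proof of Theorem~\ref{thrm: CH for sl2 actions} amounts to bookkeeping. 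The only real constraint is matching the exponent of the contraction hypothesis to the decay rate afforded by Proposition~\ref{propn: main integral estimate for sl2 actions}, which through the factor of $\a(t)=2t$ accounts for the $1/2$ barrier in the final statement.
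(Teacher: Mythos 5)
Your proposal is correct and follows essentially the same route as the paper: reduce to $\mathrm{SL}(d,\R)/\mathrm{SL}(d,\Z)$ via Lemma~\ref{lemma: stability of CH} and the closed embedding, take a power of $f_{\epsilon_0}$ as the height function, derive condition $(3)$ from Proposition~\ref{propn: main integral estimate for sl2 actions}, get conditions $(1)$, $(2)$, $(4)$ from Mahler's criterion, lower semicontinuity, and the operator-norm bound $\omega(Q)$, and get Assumption~\ref{assumption: bounded connected components} from Lemma~\ref{lemma: bounded conn. comp. sl2 actions}. Your explicit calibration $f=f_{\epsilon_0}^{2\b}$ to reconcile the decay rate $e^{-2\b t}$ with the required $e^{-\b\a(t)}$ for $\a(t)=2t$ is exactly the bookkeeping that produces the $(0,1/2)$ range, and it is handled correctly.
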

    
    %%%%%%%%%%%%%%%%%%%%%%%%%%%%%%%%%%%%%%%%%%%%%%%%%%%%%%%%%%%%%%%%%%%%%%%%%%%%%%%%%%%%%%%%%%%%%%%%%%%%%%%%

    \subsection{Proof of Theorem~\ref{thrm: sl2 actions}} \label{section: proof of sl2 actions}
    
    The result follows by combining Theorem~\ref{thrm: CH for sl2 actions} with Theorems~\ref{thrm: Hdim and non-divergence} and~\ref{thrm: schmidt games}, and Proposition~\ref{propn: non-divergence of shrinking curves}.

    \section{Conclusions and Open Problems} \label{section: conclusion}

	The results of this article leave open several natural questions, which we now discuss.

	\subsection{Lower Bounds}

    It is known (cf.~\cite{KadyrovPohl}) that when $G = \mrm{SL}(2,\R)$, then the upper bound obtained in Theorem~\ref{thrm: rank 1 DOA} on the dimension of divergent on average orbits coincides with the lower bound.
    This fact can be used to deduce a lower bound on the dimension of divergent on average orbits in a special case of Theorem~\ref{thrm: rank 1 DOA} as follows.

\begin{proof}[Proof of Corollary~\ref{cor: lower bound cor}]
	By Theorem~\ref{thrm: rank 1 DOA}, we only need to establish the lower bound.
	First, we observe\footnote{Note that the converse is not true in general.} that if the orbit $(g_t^{(1)} u(\vp_1(s)) x_0)_{t\geqslant 0}$ is divergent on average in $\mrm{SL}(2,\R)/\G_1$, then the orbit $(g_t u(\vp(s)) x_0)_{t\geqslant 0}$ is divergent on average in $G/\G$.
    This follows from the fact that every compact subset $\mc{K} \subset G/\G$ is contained in a set of the form $\mc{K}_1 \times \mc{K}_2 $, where $\mc{K}_1 \subset \mrm{SL}(2,\R)/\G_1$ and $\mc{K}_2 \subset G'/\G'$ are compact sets.
    
	The assumption that $\vp_1$ is non-constant implies that $\vp_1(B)$ is a compact non-trivial interval. It follows from~\cite[Theorem 1.3]{KadyrovPohl} that the set of points $r\in \vp_1(B)$ for which the orbit $(g_t^{(1)} u(r) x_0)_{t\geqslant 0}$ is divergent on average has Hausdorff dimension $1/2$. Since $\vp_1$ is Lipschitz, and Lipschitz maps do not increase Hausdorff dimension, we obtain the desired lower bound.
\end{proof}

	Corollary~\ref{cor: lower bound cor} leaves open the question of whether $1/2$ is in fact a lower bound on the dimension of divergent orbits (not divergent on average) when $\G$ is reducible.
However, this corollary motivates the following natural question.
\begin{question} \label{lower bound question}
	In the settings of Theorems~\ref{thrm: rank 1 DOA},~\ref{thrm: products of sl2}, and~\ref{thrm: sl2 actions}: If $G/\G$ is not compact, is the Hausdorff dimension of the divergent on average orbits of $g_t$ equal to the minimum of $1$ and the upper bounds obtained in \textit{loc. cit.}?
\end{question}

    \subsubsection{A Counter Example}
    As noted in the introduction, Theorem~\ref{thrm: products of sl2} gives a meaningful upper bound on the dimension of divergent on average orbits only when
    \begin{equation}\label{max condition}
    	\# \set{1\leqslant i \leqslant r_k: (\dot{\vp}_k)_i \not\equiv  0 } 
   			+ 2 \cdot \# \set{r_k< i \leqslant r_k+s_k: (\dot{\vp}_k)_i \not\equiv  0 } > \frac{r_k+2s_k}{2},
    \end{equation}
    for all $1\leq k \leq l$. We now show that this condition cannot be relaxed in general.
    
    Suppose $G = \mrm{SL}(2,\R) \times \mrm{SL}(2,\C)$ and $\G$ is an irreducible, non-uniform lattice in $G$. One may construct such a lattice using the Galois embedding of $\mrm{SL}(2,\mc{O}_K)$ into $G$, where $\mc{O}_K$ is the ring of integers in $K = \Q(\sqrt[3]{2})$ for example.
    Let $\vp:B \r\R\times \C$ be given by $\vp(s) = (s,0)$ and let $g_t$ be as in~\eqref{eqn: g_t and u(x)}.
    Let $x_0 = g\G$ for $g\in G$ the Weyl ``element'' given by
    \[ g = \left(\begin{pmatrix}  0 & 1 \\ -1 & 0 \end{pmatrix}, \begin{pmatrix} 0 & 1 \\ -1 & 0 \end{pmatrix}\right). \]
	One can then check directly using the definition of the proper function $f$ in~\eqref{defn: height higher rank} that $f(g_t u(\vp(s)) x_0) $ tends to $\infty$ as $t\r\infty$ for every $s\in B$.
    Roughly the amount of expansion provided by the first factor (an eigenvalue of $2$) is negated by the contraction in the second factor (an eigenvalue of $-4$).
	
    This, however, leaves open the question of whether the dimension of divergent on average orbits is strictly less than $1$ in the critical case when the $2$ sides of inequality~\eqref{max condition} are equal.

    \subsection{More General Diagonal Flows}
    
    The notion of a deformation of a maximal representation a priori restricts our results to curves whose tangents form an $\mf{sl}(2,\R)$-triple with the diagonal flow in question.
    However, the proof of Theorem~\ref{thrm: products of sl2} (particularly Lemma~\ref{lemma: sl2 products non-maximal}) shows that our methods apply to a more general class of curves which we now describe. 
    
    Suppose $\rho$ is a deformation of a maximal representation and let $g_t = \exp(t H_\rho)$ and $\vp(s) = \rho_s(X)$.
    For each $k$, let $V_k^+$ denote the $g_t$-expanding subspace of the vector space $V_k$ defined in~\ref{defn: representations V_k}.    
    Suppose $A$ is a maximal $\R$-split torus containing $g_t$, which we identify with its Lie algebra.
    Denote by $A^+(H_\rho)$ the cone inside $A$ of semisimple elements $H'$ which have positive eigenvalues on $V_k^+$ for each $k$.
    Note that $H_\rho \in A^+(H_\rho)$ by definition.
    Given any $H'\in A^+(H_\rho)$ such that $\vp$ is $\exp(t H')$-admissible, one can check that the proofs of the integral estimates obtained in Section~\ref{section: CH higher rank} go through with $\exp(t H')$ in place of $g_t$, with a contraction rate depending the eigenvalues of $H'$.
    However, it is not clear whether the upper bounds on the dimension of divergent orbits for the flows obtained in this manner are sharp.

% 	\subsection{Khintchine-Groshev Problems}
%     We have shown that for the class of curves studied in this article, the $g_t$ orbit of almost no point on the curve has linear growth.
%     This was used to show that almost no point along such curves is very well-approximable in various settings.
%     This raises the question of whether one can establish a Khintchine-Groshev type of result for such curves. To make the question precise, for a function $\psi: \R_+ \r \R_+$, say that an orbit $(g_t x)_{t \geqslant 0}$ grows faster than $\psi$ if
%     \[ \limsup_{t\r\infty} \frac{d(g_t x,x)}{\psi(t)}  >0 \]
%     where $d(\cdot,\cdot)$ is the Riemannian metric on $G/\G$. 
%     The Khintchine-Groshev problem thus takes the following form in our setting.
    
%     \begin{problem}
% 		Find necessary and sufficient conditions on $\psi$, in terms of the convergence/divergence of a series involving $\psi$ so that the $g_t$ orbit of almost no point along the curves in Theorem~\ref{thrm: rank 1 DOA},~\ref{thrm: products of sl2}, and~\ref{thrm: sl2 actions} grows faster than $\psi$.
% 	\end{problem}

     \section*{Acknowledgements}
 		I would like to thank my advisor, Nimish Shah, for numerous valuable discussions. I would also like to thank Jon Chaika for introducing me to the types of problems studied in this article.
 		Finally, I would like to thank the anonymous referee for a careful reading of the article and for several suggestions and corrections on the previous version.

\bibliography{bibliography}{}
\bibliographystyle{amsalpha}

\end{document}